\DeclareMathOperator{\1}{\mathds{1}}
\DeclareMathOperator{\curl}{curl}
\DeclareMathOperator{\diverg}{div}
\DeclareMathOperator{\Leb}{Leb}
\DeclareMathOperator{\Lie}{Lie}
\newcommand{\pc}[1]{\left(#1\right)}
\newcommand{\pg}[1]{\left\{#1\right\}}
\newcommand{\eqdef}{\coloneqq}
\newcommand{\lref}[1]{Lemma~\ref{#1}}
\newcommand{\eref}[1]{(\ref{#1})}
\newcommand{\Nth}{N\nobreak\hspace{-.1em}\text{th}}
\def\ie{i.e., }
\def\theta{\vartheta}
\def\ZN{\mathbb Z_N^2}
\def\ZNp{\mathbb Z_N^2\times \{+,-\}}
\renewcommand{\epsilon}{\varepsilon}
\renewcommand{\aa}[1]{ #1}
\newcommand{\jcm}[1]{ #1}
\newcommand{\om}[1]{ #1}
\newcommand{\st}[1]{ }
\newcommand{\fref}[1]{Fig.~\ref{#1}}
\newcommand{\tref}[1]{Theorem~\ref{#1}}
\newcommand{\rref}[1]{Remark~\ref{#1}}
\renewcommand{\cref}[1]{Corollary~\ref{#1}}
\newcommand{\pref}[1]{Proposition~\ref{#1}}
\def\phi{\varphi}
\def\II{\mathcal I}
\newcommand{\pit}[1]{\phi_{#1}^\iota}
\newcommand{\Pit}[1]{\Phi_{#1}^\iota}
\def\QQ{\mathcal Q}
\def\EE{\mathcal E}
\def\AA{\mathcal A}
\def\AAc{\AA}
\def\ll{{\bm \ell}}
\def\jj{{\bm j}}
\def\kk{{\bm k}}
\def\llb{{\bm l}}
\def\jkl{{\bm j \bm k \bm \ell}}
\newcommand{\qa}[1]{a_{#1}}
\newcommand{\qb}[1]{b_{#1}}
\newtheoremstyle{}{}{}{}{}{}{}{ }{}
\numberwithin{equation}{section}
\theoremstyle{plain}
\newtheorem{theorem}{Theorem}[section]
\newtheorem{assumption}{Assumption}
\newtheorem{cor}[theorem]{Corollary}
\newtheorem{corollary}[theorem]{Corollary}
\newtheorem{definition}[theorem]{Definition}
\newtheorem{proposition}[theorem]{Proposition}
\newtheorem{lemma}[theorem]{Lemma}
\newtheorem{prop}[theorem]{Proposition}
\newtheorem{remark}[theorem]{Remark}
\newcommand{\QED}{\tag*{\qed}}
\title{Random splitting of fluid models:\\ Ergodicity and convergence}
\author{Andrea Agazzi$^\star$}
\author{Jonathan~C.~Mattingly$^{\star\diamond\dagger }$}
\author{Omar Melikechi$^\star$}
\email{jonm@math.duke.edu}
\address[$\star$]{Department of Mathematics, Duke  University, Durham (NC), USA.}
\address[$\dagger$]{Department of Statistical Science, Duke
  University, Durham (NC), USA.}
\address[$\diamond$]{Institute for Advanced Study, Princeton (NJ), USA.}
\begin{document}

\begin{abstract}
We introduce a  family of stochastic models motivated by the study of nonequilibrium steady states of fluid equations.
These models decompose the deterministic dynamics of interest into fundamental building blocks, \ie minimal
vector fields preserving some fundamental aspects of the original dynamics. Randomness is injected by sequentially following each vector field for a random amount of time. We show under general conditions that these random
dynamics possess a unique\jcm{, ergodic}  invariant measure and converge almost surely
 to the original, deterministic model in the small noise limit.
 We apply our construction to the Lorenz-96
 equations, often used in studies of chaos and data assimilation, and Galerkin approximations of the
 2D Euler and Navier-Stokes equations. An interesting feature of the
 models developed is that they apply directly to the conservative
 dynamics and not just those with excitation and dissipation.
\end{abstract}

\maketitle




\section{Introduction}


This paper studies the long time dynamics of fluid-like
equations that are kept out of equilibrium. Among the simplest examples of
fluid models displaying interesting out-of-equilibrium behavior (such as fluxes
across scales) are the two-dimensional Euler and incompressible Navier-Stokes
equations.
On the $2$-dimensional torus $\mathbb{T}$, \ie $\mathbb{T}\coloneqq
[0,2\pi]^2$ with periodic boundary conditions, the Navier-Stokes equations,
which model the flow of an incompressible fluid, are
\begin{align}\label{eq:2DNS}
	\begin{cases}
		\partial_tu+(u\cdot\nabla)u = -\nabla p +F + \nu
                \Delta u\,,\\
		\diverg(u) \coloneqq \nabla\cdot u = 0\,,
	\end{cases}
\end{align}
where $u:\mathbb{T}\times\mathbb{R}\to\mathbb{R}^2$ is the fluid velocity, $p:\mathbb{T}\times\mathbb{R}\to\mathbb{R}$ the fluid pressure,
\begin{align*}
	(u\cdot\nabla)u &= (u_1\partial_1u_1+u_2\partial_2u_1,
                          u_1\partial_1u_2+u_2\partial_2u_2),
                          \quad\text{and}\quad  \Delta u=
                          \partial_1^2 u_1 + \partial_2^2 u_2\,.
\end{align*}
Here $u=(u_1,u_2)$ and $\partial_j\eqdef  \partial_{x_j}$.  The viscosity
$\nu>0$ measures the strength of the dissipation introduced by the
Laplacian $\Delta$, and $F(x,t)$ is an external driving force whose role is to keep the
system from relaxing to the trivial state $u\equiv 0$.

By balancing the dissipative effect of
$\Delta u$, the forcing term allows the system to establish an out-of-equilibrium
steady state. Such statistical equilibria often develop fluxes across scales, a phenomenon whose study is an active area of research.
Often $F$ is taken to live
on only a few scales so that the flux out of those scales can be
studied \cite{EMattingly,mattingly03,hairermattingly06,KuksinNersesyanShirikyan20}. In practice, the
 forcing $F(x,t)$ is usually taken to be stochastic in space and time
for some stationary distribution which is typically white in time
\cite{FlandoliMaslowski95,EMattingly,DaPratoDebussche02,hairermattingly06}.
A common choice in the literature is
$F(x,t)=\sum \psi_k(x) \dot W_k(t)$ where each $\psi_k(x)$ is a fixed
spatial forcing \jcm{and $\{ \dot W_k(t) \} $ are a collection of  mutually
independent  white in time noise terms written
here as the formal derivative of a Brownian motion.} Stochastic
forcing serves multiple
 purposes in these settings. On one hand, as already
mentioned, it provides the energetic excitation which keeps the system
out of equilibrium and allows for the establishment of a nontrivial
statistical steady state. On the other hand, it
provides local agitation which, \jcm{modulo certain constraints}, ensures the existence of a unique statistical steady state to which the system converges \jcm{for most} initial conditions. In other words, it guarantees the forcing is sufficiently
varied and generic to ensure convergence to a single long time statistical
behavior of the system\jcm{, largely} independent of the system's initial configuration.

 This paper studies a class of processes, introduced in the next
  section, injecting randomness into the fluid models of interest
  while separating in a simple way the various roles served by
  noise in previous works. In particular, the randomness is used
primarily to ensure that when the dynamics is sufficiently generic, unique ergodicity\footnote{\om{See \Cref{def:uniquely_ergodic}.}} holds for a broad class of initial
conditions. This will free one to use a much less disruptive class of
forcing to keep the system out of equilibrium.
More specifically, the class of models introduced below have a number of desirable properties:
\begin{enumerate}
  \item They allow one to separate the effect of forcing, which keeps the system out of equilibrium, and stochastic agitation, which ensures the system has a unique long time statistical behavior.
	\item The stochastic agitation is strongly non-reversible since it is constructed from dynamics which only flow in the
          directions the original dynamics could already move.
\item The stochastic agitation preserves the conserved quantities of
  the original dynamics. This allows the properties of the
  (stochastic) conservative dynamics to be studied directly \jcm{rather than only as a limit of the
    forced-dissipated dynamics.}
\item The model dynamics will be
constructed as the composition of simple dynamics, isolating particular nonlinear
interactions
which are relatively intuitive and can be explicitly analyzed.
\end{enumerate}
By balancing between preservation of fundamental macroscopic properties of the original dynamics as in (3) and simplicity of the fundamental building blocks in our model dynamics as in (4), we expect the stochastic models introduced in this paper will provide meaningful physical and dynamical insight into nonequilibrium steady states of
 models such as \eref{eq:2DNS}.

Our decomposition \om{into fundamental building blocks} is partially motivated by the
classical stylized models of dynamics studied in depth at the dawn of
the theory of dynamical systems. \jcm{Examples include the doubling
  map, quadratic maps,
the Henon map, the Smale horseshoe, and extended systems like
coupled map lattices (see \cite{Devaney,Katok}
and references therein). The form of the decomposition is also motivated by the recent progress in proving ergodic properties of piecewise deterministic Markov processes (PDMPs) and their success as modeling and sampling tools. See for example
 \cite{Dufour11,Bakhtin,Benaim,Lawley14,Hurth15,Lawley15,Monmarche15,Benaim_Hurth_Strickler_2018,Hurth18,Bierkens19,Li_Liu_Cui_2017,Lawley21,Guillin21,Debussche21}.
 }


\subsection{A class of stochastic models}


We now introduce the general idea underlying the class of stochastic models, called \textit{random splitting}, that we study in this paper.  A more systematic definition of these models is deferred to Section~\ref{sec:randomSplittingGeneral}. Consider an ordinary differential equation (ODE)
\begin{align}\label{eq:originalODE}
  	\dot{x} &= V(x)
  		= \sum_{k=1}^n V_k(x)\,,
\end{align}
where $n\in\mathbb{N}$ and $V$ and $\{V_k\}_{k=1}^n$ are vector fields on $\mathbb{R}^d$. In what follows, we choose the $V_k$ so that the dynamics
\begin{align}\label{eq:splitODE}
	\dot{x} &= V_k(x)
\end{align}
are in some sense simpler than the dynamics corresponding
to~\eqref{eq:originalODE}. We then approximate the solution $\Psi_t
\colon x(0) \mapsto x(t)$ of~\eqref{eq:originalODE} with compositions
of the solution maps $\varphi^{(k)}_t \colon x(0) \mapsto x(t)$
of~\eqref{eq:splitODE}. This procedure is known as operator splitting in the
numerical analysis literature and is often used in numerical
simulations of various ordinary, partial, and stochastic differential
equations
\cite{MR1403645,MR1403562,MR1377257,Strang,Kusuoka_2004,Ninomiya_Victoir_2008,Ninomiya_Ninomiya_2009,Childs_Ostrander_Su_2019,Childs_Su_Tran_Wiebe_Zhu_2021}. Typically,
the goal is to leverage the fact that each of the dynamics
in~\eqref{eq:splitODE} is more computationally tractable
than~\eqref{eq:originalODE} to construct an efficient and accurate
numerical method. A variant of these models was also explored in the thesis
\cite{williamson19}.

Here our goal is related but slightly different. Specifically, instead of evolving each $\varphi^{(k)}$ for a fixed time $h$ as in traditional operator splitting methods, we evolve each of the $\varphi^{(k)}$ for a random time with mean $h$. Repeated composition then produces dynamics on $\mathcal{O}(1)$ times. The evolution times for
each $\varphi^{(k)}$, and over each cycle, will be identically
distributed and mutually independent, \om{which implies our models are Markovian}. As in the numerical analysis
context, we hope to leverage the simplified nature of each
$\varphi^{(k)}$, obtained from~\eqref{eq:splitODE}, to gain insight
into the complex dynamics of the composition of maps. We will also see
that as the mean evolution time $h \rightarrow 0$, the random
splitting associated to~\eqref{eq:originalODE} will almost surely converge to the
deterministic dynamics $\Psi_t$ on finite time
intervals. However, we
are most interested in studying the random splitting in its own right
and not \jcm{strictly} as a  approximation of~\eqref{eq:originalODE}. We will be
particularly interested in its long time behavior and qualitative
understanding of the stationary dynamics the random splitting produces
when $h>0$. \aa{More specifically, the property of the system we aim to establish is codified in }\om{ the following standard definition from the theory of Markov processes; the supporting definition of \textit{invariant measure} is given in the first paragraph of \Cref{sec:ergodicity} after the transition kernel of random splitting is explicitly introduced.

\begin{definition}\label{def:uniquely_ergodic}
A Markov process on a manifold $\mathcal{X}$ is {\normalfont uniquely ergodic} on $\mathcal{X}$ if its transition kernel admits
exactly one invariant probability measure  on $\mathcal{X}$.
\end{definition}

\noindent We note that the definition of the set $\mathcal X$ where the above property holds can be quite delicate. While in general there might not exist a $d$-dimensional manifold $\mathcal X$ in $\mathbb R^d$ on which the random splitting is uniquely ergodic (see for example \Cref{rem:uniqueErgodic}), in the examples below we will identify a family of manifolds of lowest co-dimension where the above definition applies.

\begin{remark}
The set of invariant probability measures for a Markov transition kernel is convex, and the extremal points of this set are precisely the ergodic invariant measures \cite{CornfeldFominSinai,HairerConvergence}. In particular, if the transition kernel admits exactly one invariant measure, then it is necessarily extremal and therefore ergodic. This explains the use of the term {\normalfont ergodic} in \Cref{def:uniquely_ergodic}.
\end{remark}
}


\subsection{Two motivating examples}\label{sec:MotivatingExamples}


In this paper, we consider two motivating examples: A conservative version of
the Lorenz-96 model and Galerkin
approximations of the vorticity
formulation of the 2D Euler equations. We then use these analyses to study the full Lorenz-96 model and Galerkin
approximations of the vorticity
formulation of 2D Navier-Stokes.


\subsubsection*{Lorenz-96}


Fix $n\geq 4$ and let $\{e_k\}_{k=1}^n$ denote the standard basis of $\mathbb{R}^n$. The Lorenz-96 model is
\begin{align}\label{lorenz96full}
	\dot{x} &= \sum_{k=1}^n\big((x_{k+1}-x_{k-2})x_{k-1}- \nu x_k+F_k\big)e_k
\end{align}
for $x\in\mathbb{R}^n$, $\nu>0$, and nonnegative constants $F_k$, where the indices are periodized via the identities $x_{-1}\coloneqq x_{n-1}$, $x_0\coloneqq x_n$, and $x_{n+1}\coloneqq
x_1$. The $-\nu x_k$ term in~\eqref{lorenz96full} represents dissipation in the
$k$th coordinate and $F_k$ is a forcing constant. Initially, we
study a variant of Lorenz-96, called \textit{conservative
  Lorenz-96}, obtained by removing the dissipation  and forcing
terms from Lorenz-96. That is,
\begin{align}\label{5.2}
	\dot{x} &= V(x)
		\coloneqq \sum_{k=1}^n(x_{k+1}-x_{k-2})x_{k-1}e_k\,.
\end{align}
We sometimes refer to the original Lorenz-96 model as the
\textit{forced  Lorenz-96} model to emphasize the forcing (though
the dissipation is equally important). For conservative Lorenz-96, we will decompose $V$ into a collection of simple rotations by observing that
\begin{align}\label{5.3}
	V(x) &= \sum_{k=1}^n V_k(x)
\end{align}
where $V_k(x)\coloneqq (x_{k+1}e_k-x_ke_{k+1})x_{k-1}$. The dynamics given by $\dot{x}=V_k(x)$ are easy to understand on
their own; any complex behavior comes from interactions of the
rotations. Importantly,  each $V_k$ is chosen to conserve, like $V$, the
system's \textit{energy}, which for Lorenz-96 is defined to be the square of the usual Euclidean norm, $\lVert x\rVert^2\coloneqq\sum_{k=1}^n x_k^2$.


\subsubsection*{2{D} Euler}


Returning to~\eqref{eq:2DNS},
we begin by defining the scalar vorticity  $q(x,t)=\curl u(x,t)$ of the
velocity field $u(x,t)$. Initially, we will
consider the Euler equations which are obtained from
\eqref{eq:2DNS} by  taking
$\nu=F=0$. Writing the equation for the $j$th
Fourier mode $q_j \in \mathbb{C}$, defined by $q(x,t)=\sum_j q_j(t) e_j(x)$ for
$e_j(x)\eqdef e^{ix\cdot j}$, and
$j \in\{ j\in \mathbb{Z}^2: |j| < N, j \neq 0\}$, we have
\begin{align}
  \label{eq:1}
  \dot{q_j} =  -\sum_{j+k+\ell=0} C_{k\ell} \bar q_k\bar q_\ell
\end{align}
for a constant $C_{k\ell}$ defined in
Section~\ref{sec:ConstructingSplitting}. We will see that this system
has two conserved quantities, the \textit{enstrophy}, $\sum_j |q_j|^2$, and the
\textit{energy}, $\sum_j |j|^{-2} |q_j|^2$. Notice that the definition of energy
differs between this equation and the Lorenz-96 model.

As in the  Lorenz-96 model, we introduce the simpler dynamics $\dot q=
V_{j k \ell}(q)$ where $V_{j k \ell}(q) =- C_{k\ell} \bar q_k \bar q_\ell e_j- C_{j\ell} \bar q_j\bar q_\ell e_k-
C_{jk} \bar q_j \bar q_k e_\ell$ and observe that
\begin{align*}
  V(q)=\sum_{j+k+\ell=0} V_{jk\ell}(q)\,.
\end{align*}
We will see in Section~\ref{sec:Euler} that with this choice of splitting the
dynamics $\dot{q}= V_{jk\ell}(q)$, like the original system $V(q)$,  preserves the important
physical quantities of enstrophy and energy.
\begin{remark}
In Section~\ref{sec:Euler}, we further simplify these complex-valued dynamics by
projecting onto a real basis. The current choice is sufficient for an
introductory discussion.
\end{remark}

\begin{remark}
Our results do not focus on establishing minimal hypoellipticity assumptions \om{for our models of Lorenz-96, 2D Euler and 2D Navier-Stokes}; the stochastic agitation we use is more global than the minimal hypoellipticity forcing considered in \cite{EMattingly,hairermattingly06}. We hope this will allow us to progress further than with previous models while preserving much of the physically interesting dynamics.
\end{remark}

\begin{remark}
  It is important to emphasize that, with regard to unique ergodicity,
  the main role of the forcing\jcm{, when included,} is only to
  destroy the fixed points and other low-dimensional invariant structures of the original flows and not to provide
  the stochastic mixing which ensures \jcm{the existence of a
  unique, ergodic measure to which the system's statistics converge}.
The \jcm{stochastic mixing is largely} provided by
  the random splitting and is in contrast to the results in
  \cite{EMattingly,KuksinShirikyan00,EWeinanMattingly01,KuksinShirikyan03,
    hairermattingly06,KuksinNersesyanShirikyan20,Bedrossian21}.
\end{remark}

\jcm{\begin{remark}\label{rem:uniqueErgodic}
  When considering conservative versions of our split dynamics (those without any explicit dissipation or body forcing),
 we cannot expect there to be a unique invariant measure for
 the system. In particular, since the dynamics will be constrained
 to level sets of the conserved quantities, there will be at
 least one invariant measure per level set. Furthermore,
 we will see that even on such constraint level sets there can be
 multiple ergodic invariant measures. Most will correspond to
fixed points of the original dynamics and other lower-dimensional
 invariant structures. However we will see, in the two examples considered, that when our family of switched
 vector fields is sufficiently rich, there will be a unique ergodic invariant
 measure which is absolutely continuous with respect to the volume
 measure on the level set.
 This implies that in these examples, there
 is a unique ergodic
 invariant measure concentrated on a set of full measure inside
 each constraint level set. In this sense, we will demonstrate a form of uniqueness which aligns with the form of unique
   ergodicity often proven in the smooth deterministic dynamics setting, i.e., that there is only one invariant measure absolutely continuous
 with respect to the setting's natural Lebesgue measure.
\end{remark}}


\subsection*{Organization of paper}


In Section~\ref{sec:randomSplittingGeneral}, we introduce random
splitting \om{and its state spaces, called \textit{$\mathcal{V}$-orbits}. In Section~\ref{sec:ergodicity}, we give conditions for random splitting to be uniquely ergodic on a $\mathcal{V}$-orbit.}
In Section~\ref{sec:Convergence}, we show under general conditions that random splitting converges
to its deterministic counterpart \eqref{eq:originalODE} on finite time
intervals both in terms of its transition kernel and almost surely as
the average time step $h$ goes to zero. In Sections~\ref{sec:Lorenz}
and~\ref{sec:Euler}, we construct random splittings of conservative
Lorenz-96 and Galerkin approximations of 2D Euler and apply the
preceding results to show these splittings are uniquely ergodic
and converge
\jcm{on finite time intervals as $h \to 0$.}
  \aa{In doing so we show each system has a }\jcm{
  unique invariant
measure that is absolutely continuous (with respect to the volume measure) on
the set defined by a given choice of the conserved quantities.} In Section~\ref{sec:ForceAndDissip}, we
consider the Lorenz-96 and Euler models when fixed forcing and
dissipation are added. \jcm{When appropriate dissipation is chosen,  the
latter model corresponds to a random splitting of Galerkin approximations of 2D
Navier-Stokes.} We again construct random splittings of these
models, prove convergence, and show that if the forcing is not aligned
with the equations' invariant structures (such as fixed points) then
both randomly split Lorenz-96 and Galerkin approximations of 2D
Navier-Stokes \om{have a unique invariant measure and the
  distribution starting from any initial condition converges exponentially to this
  measure.}


\section{Random splitting in a general setting}\label{sec:randomSplittingGeneral}


Let $\om{\mathcal{V}\coloneqq}\{V_k\}_{k=1}^n$ be \om{a family of
  complete}\footnote{\jcm{A vector field is \textit{complete} if its flow curve starting from any point exists for all time.
  }}, $\mathcal{C}^2$ vector fields\footnote{We use calligraphic $\mathcal{C}^k$ for $k$-times continuously differentiable maps throughout to avoid confusion with constants which are often denoted by normal script $C$ (for example, the constants $C_{jk}$ in 2D Euler).}  on $\mathbb{R}^d$ and set
\begin{align}\label{2.1}
	V &\coloneqq \sum_{k=1}^n V_k\,.
\end{align}
Denote the flow of $\dot{x}=V(x)$ by $\Psi$ and the flow of
$\dot{x}=V_k(x)$ by $\varphi^{(k)}$. $\Psi$ is the \textit{true dynamics}. To construct a random dynamics approximating $\Psi$, fix $h>0$, let $\tau=(\tau_k)_{k=1}^\infty$ be a sequence of independent exponential random variables with mean $1$, and set $h\tau\coloneqq (h\tau_k)_{k=1}^\infty$. The approximating dynamics, henceforth referred to as the \textit{random splitting associated to \om{$\mathcal{V}$}} or just \textit{random splitting} for short, is the Markov chain $\{\Phi^m_{h\tau}\}_{m=0}^\infty$ defined by $\Phi^0_{h\tau}\coloneqq I$ and, for $m>0$,
\begin{align}\label{eq:Phi}
	\Phi^m_{h\tau} &\coloneqq \varphi^{(n)}_{h\tau_{mn}}\circ\cdots\circ\varphi^{(1)}_{h\tau_{(m-1)n+1}}(\Phi^{m-1}_{h\tau}),
\end{align}
where $I$ is the identity on $\mathbb{R}^d$, $\Phi\coloneqq\varphi^{(n)}\circ\cdots\circ\varphi^{(1)}$, and $\Phi^m$ is the $m$-fold composition of $\Phi$. Note that $h\tau_k\overset{\scriptscriptstyle{iid}}{\sim}\text{Exp}(1/h)$. Therefore, starting from the current step, the next step of the chain is obtained by following each $V_k$ for $\text{Exp}(1/h)$ time in order from $k=1$ to $n$. The chain is Markovian because the random times are independent. Its transition kernel $P_h$ acts on measurable functions $f:\mathbb{R}^d\to\mathbb{R}$ via
\begin{align}\label{2.2}
	P_hf(x) &= \mathbb{E}\big(f(\Phi_{h\tau}(x))\big)
		= \int_{\mathbb{R}^n_+}f(\Phi_{ht}(x))e^{-\sum_{k=1}^n t_k} dt
\end{align}
where $\mathbb{R}_+\coloneqq(0,\infty)$, $t=(t_1,\dots,t_n)$, and $dt=dt_1\cdots dt_n$.

\begin{remark}\label{rmk2.1}
Throughout this paper the superscripts $k$ in $\varphi^{(k)}$ and subscripts $k$ in $V_k$ are understood to be taken modulo $n$ if $k\ \text{mod}\ n\neq 0$ and to be $n$ otherwise. For example, if $n=3$,
\begin{align*}
	\varphi^{(6)}\circ\varphi^{(5)}\circ\varphi^{(4)}\circ\varphi^{(3)}\circ\varphi^{(2)}\circ\varphi^{(1)} &= \varphi^{(3)}\circ\varphi^{(2)}\circ\varphi^{(1)}\circ\varphi^{(3)}\circ\varphi^{(2)}\circ\varphi^{(1)}.
\end{align*}
Also, the $t$ in $\Phi^m_t$ is always a sequence $t=(t_1,\dots, t_{mn})$ or, more generally, $t=(t_k)_{k=1}^\infty$, so that
\begin{align*}
	\Phi^m_t(x) &= \varphi^{(n)}_{t_{mn}}\circ\cdots\circ\varphi^{(1)}_{t_1}(x)\,.
\end{align*}
Note that the above is a composition of $mn$ flows, as in~\eqref{eq:Phi}.
\end{remark}

\begin{remark}\label{rmk2.2}
All results in this paper remain true if at each step we
randomly permute indices in the composition $\Phi$. That is, given a
current state $x$, the next step is
$\varphi^{(\sigma(n))}_{h\tau_n}\circ\cdots\circ\varphi^{(\sigma(1))}_{h\tau_1}(x)$
where $\sigma$ is a random permutation of $\{1,\dots,n\}$. This yields both additional randomness and an avenue to higher order
approximations of the true dynamics
\cite{Kusuoka_2004,Ninomiya_Victoir_2008,Ninomiya_Ninomiya_2009,Childs_Ostrander_Su_2019,Childs_Su_Tran_Wiebe_Zhu_2021}. We forgo this more general setting however
to keep exposition more approachable and notationally light.
\end{remark}

\begin{remark}\label{r:exponential}
  The times are assumed exponentially distributed for convenience. All results extend to any
  distribution on $[0,\infty)$ with positive density \aa{on $(0,\epsilon)$ for some $\epsilon >0$} and
  exponential tails. \aa{The second condition\jcm{, which is not
      sharp,} guarantees sufficient concentration of averages of
    random flow times $\tau_i$ in Lemmas~\ref{lemA1} and \ref{lemA2}
    and is required for the convergence results as $h \to 0$ in
    Section~\ref{sec:Convergence}. The first condition is used in
    Sections~\ref{sec:Lorenz} and \ref{sec:Euler} to guarantee
    sufficient flexibility in the trajectories of the split
    systems of interest to establish \jcm{the global irreducibility needed}
    for ergodicity.}
\end{remark}


\subsection{$\mathcal{V}$-Orbits}\label{sec:orbits}


\om{Throughout this paper we often restrict attention to certain subsets of $\mathbb{R}^d$ affiliated with the family of vector fields $\mathcal{V}$. Specifically, for each $x$ in $\mathbb{R}^d$ define the \textit{$\mathcal{V}$-orbit of $x$} by}
\begin{align}\label{eq:X}
	\mathcal{X}(x) &\coloneqq \big\{ \Phi^m_t(x) : m\geq 0, t\in\mathbb{R}^{mn}\big\}.
\end{align}
This is the set of points in $\mathbb{R}^d$ that can be reached by the split dynamics starting from $x$ in any finite number of steps and over arbitrary \om{positive and negative} times. \om{$\mathcal{X}(x)$ is well-defined since the $V_k$ are complete. Furthermore, since the time vectors $t$ in \eqref{eq:X} admit coordinates that are 0,
\begin{align*}
	\mathcal{X}(x) &= \big\{\varphi^{(i_m)}_{t_{i_m}}\circ\cdots\circ\varphi^{(i_1)}_{t_{i_1}}(x):m\in\mathbb{N}, 1\leq i_j\leq n, t_{i_j}\in\mathbb{R}\big\}.
\end{align*}
Hence \eqref{eq:X} agrees with the definition of $\mathcal{V}$-orbits from control theory \cite{jurdjevic, sussmann}. Note the collection $\{\mathcal{X}(x):x\in\mathbb{R}^d\}$ partitions $\mathbb{R}^d$ and if the random splitting associated to $\mathcal{V}$ starts in $\mathcal{X}(x)$ then it stays in $\mathcal{X}(x)$ for all time. Therefore the random splitting $\{\Phi^m_{h\tau}\}$ previously defined on $\mathbb{R}^d$ also defines a Markov chain on $\mathcal{X}(x)$ whenever it starts in $\mathcal{X}(x)$, and its transition kernel $P_h$ acts on measurable functions $f:\mathcal{X}(x)\to\mathbb{R}$ as in \eqref{2.2}. When $x$ is arbitrary or clear from context, we denote $\mathcal{X}(x)$ by $\mathcal{X}$. A classic result from geometric control theory, sometimes called \textit{the orbit theorem}, says if every $V_k$ in $\mathcal{V}$ is $\mathcal{C}^r$ for some $1\leq r\leq \infty$ (respectively, analytic\footnote{Throughout this paper \textit{analytic} means \textit{real-analytic}.}), then every $\mathcal{X}$ is a $\mathcal{C}^r$ (respectively, analytic) submanifold of $\mathbb{R}^d$ \cite{jurdjevic}. In particular, each $\mathcal{X}$ has a Riemannian structure induced by the Euclidean structure on $\mathbb{R}^d$ and an associated volume form, henceforth denoted $\lambda$, sometimes called \textit{Hausdorff} or \textit{Lebesgue measure on $\mathcal{X}$}, which serves as our reference measure on $\mathcal{X}$.
}


\section{Ergodicity}\label{sec:ergodicity}


\om{Let $\mathcal{V}\coloneqq\{V_k\}_{k=1}^n$ be a family of complete, $\mathcal{C}^2$ vector fields on $\mathbb{R}^d$ as before and fix a $p$-dimensional $\mathcal{V}$-orbit $\mathcal{X}$. Also fix $h>0$ and let $P_h$ be the transition kernel of the associated random splitting on $\mathcal{X}$.} A measure $\mu$ on $\mathcal{X}$ is \textit{$P_h$-invariant} if $\mu P_h=\mu$ where $\mu P_h$ is defined by
\begin{align}\label{eq:action}
	\mu P_h f &\coloneqq \int_\mathcal{X} P_hf(x)\mu(dx)
\end{align}
for all bounded, measurable functions
$f:\mathcal{X}\to\mathbb{R}$. The main result of this section is

\begin{theorem}\label{thrm:Ergodicity}
If there exists $x_*$ in $\mathcal{X}$ such that for all $x$ in
$\mathcal{X}$ there is an $m$ in $\mathbb N$ and $t$ in $\mathbb R_+^{mn}$ with $\Phi^m(x,t)=x_*$ and
$D_t\Phi^m(x,t):T_{t}\mathbb{R}^{mn}_+\to T_{x_*}\mathcal{X}$
surjective, then $P_h$ has at most one invariant measure on
$\mathcal{X}$. \om{Moreover, if such a measure exists, it is
  absolutely continuous with respect to the volume form \jcm{on $\mathcal{X}$}.}
\end{theorem}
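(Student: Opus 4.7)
The plan is to use the surjectivity hypothesis to produce a local minorization of $P_h^m$ near $x_*$, then parlay this into uniqueness, and finally handle absolute continuity.

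First, given $x \in \mathcal{X}$ with the $m$ and $t_0 \in \mathbb{R}_+^{mn}$ supplied by the hypothesis, the implicit function theorem provides an open neighborhood $V$ of $t_0$ on which $\Phi^m(x, \cdot)$ is a submersion with image an open neighborhood $U$ of $x_*$ in $\mathcal{X}$. Splitting the time coordinates in $V$ into $p = \dim \mathcal{X}$ ``pivot'' coordinates that parameterize $U$ (after fixing the remaining coordinates) and $mn - p$ ``fiber'' coordinates, the resulting change of variables converts the exponential density $h^{-mn}e^{-(t_1+\cdots+t_{mn})/h}$ restricted to $V$ into a measure on $U$ of the form $\rho_x(y)\, d\lambda(y)$, where $\rho_x$ is continuous and strictly positive at $x_*$. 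This yields the local minorization $P_h^m(x, \cdot)|_U \geq \rho_x \, \lambda|_U$.

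Next I would observe that surjectivity of $D_t\Phi^m$ is an open condition on $(x,t)$, so $\rho_x$ depends continuously on $x$, and hence for each $x_0$ satisfying the hypothesis at some stage $m$ there exist a neighborhood $N$ of $x_0$, a common open neighborhood $U$ of $x_*$, and a constant $c > 0$ such that $P_h^m(x, A) \geq c\, \lambda(A \cap U)$ for all $x \in N$ and Borel $A \subset \mathcal{X}$. Writing $S_m$ for the set of $x$ satisfying the hypothesis at stage $m$, the assumption gives $\mathcal{X} = \bigcup_m S_m$, so every invariant probability measure $\mu$ charges some $S_m$. Choosing $x_0 \in \mathrm{supp}(\mu) \cap S_m$ and applying invariance $\mu = \mu P_h^m$ together with the uniform minorization on $N$ produces $\mu \geq c' \lambda|_U$ on some neighborhood of $x_*$, for some $c' > 0$. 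Two distinct ergodic invariant probability measures would have to be mutually singular, yet each satisfies such a minorization on a common neighborhood of $x_*$; this contradicts mutual singularity, so there is at most one ergodic invariant measure, and the ergodic decomposition then upgrades this to at most one invariant probability measure.

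The main obstacle is the absolute continuity claim, since the minorization only produces an absolutely continuous component near $x_*$, not global absolute continuity. I would write the Lebesgue decomposition $\mu = \mu_{\mathrm{ac}} + \mu_{\mathrm{s}}$ with respect to $\lambda$ and aim to show $\mu_{\mathrm{s}} = 0$. A natural strategy is iterative: using invariance $\mu = \mu P_h^{mk}$ together with the fact that a non-trivial absolutely continuous mass is generated at every step for $x \in S_m$, one can hope to show $\tvn{\mu_{\mathrm{s}}^{(k)}}$, the total variation of the singular part of $\mu P_h^{mk}$, strictly decreases across iterates, forcing $\mu_{\mathrm{s}} = 0$. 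The delicate piece is controlling how the singular and absolutely continuous components of $\mu P_h$ relate to those of $\mu$ under iteration --- essentially, one must verify that $P_h$ does not create new singular mass from the absolutely continuous component of $\mu$ while shrinking the existing singular component. A Harris-type argument exploiting the uniform minorization on $N$ together with a direct push-forward computation using that $\Phi^m(x,\cdot)$ is a smooth submersion away from a small critical set should deliver this, but turning it into a quantitative statement is the step I expect to require the most care.
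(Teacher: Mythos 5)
Your uniqueness argument is essentially the paper's: you derive a local minorization of $P_h^{m}$ toward a common neighborhood of $x_*$ from the submersion hypothesis (the paper cites this as a lemma of Bena\"{\i}m rather than rederiving it), and then observe that two distinct ergodic invariant measures would have to be mutually singular, which is incompatible with both being minorized by a multiple of $\lambda$ on the same open set $U_*$. That part is fine.

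The gap is in the absolute continuity step. Your proposed iterative argument is not merely unfinished but misformulated: if $\mu$ is invariant then $\mu P_h^{mk}=\mu$ for every $k$, so the quantity $\tvn{\mu_s^{(k)}}$ you hope to drive to zero is identically equal to $\tvn{\mu_s}$ and cannot ``strictly decrease across iterates.'' There is no dynamics to contract. The paper avoids this by invoking a structural fact (attributed to Bena\"{\i}m, Prop.\ 2.7): whenever $\mu$ is $P_h$-invariant, its absolutely continuous and singular parts $\mu_{ac}$ and $\mu_s$ with respect to $\lambda$ are \emph{each} $P_h$-invariant. The underlying reason is that $\Phi_{h\tau}$ is a diffeomorphism for every fixed $\tau$, so $P_h$ pushes absolutely continuous measures to absolutely continuous measures; combined with $\mu=\mu_{ac}P_h+\mu_s P_h$ and a mass-conservation count, one gets $\mu_{ac}P_h=\mu_{ac}$ and $\mu_s P_h=\mu_s$. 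Once you have that, you are done immediately: by the uniqueness you already established there is at most one invariant probability measure, so one of $\mu_{ac}$, $\mu_s$ must be zero; and the minorization forces $\mu_{ac}\neq 0$, so $\mu_s=0$. That is the lemma you are missing, and it replaces your entire proposed Harris-type/quantitative machinery with a two-line observation.
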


\noindent Here $T_{x_*}\mathcal{X}$ is the
tangent space of $\mathcal{X}$ at $x_*$. The proof of
Theorem~\ref{thrm:Ergodicity}  follows from the classical minorization
condition \cite{Nummelin,MeynTweedie,MattinglyStuart,yetMH} given by
the following result, which appears in \cite[Lemma~6.3]{Benaim}.

\begin{lemma}\label{lem4.1}
\om{Let $p\leq m$ and let $F:\mathcal{X}\times U\to\mathcal{X}$ be $\mathcal{C}^1$, where $U$ is an open subset of $\mathbb{R}^m$.} Suppose $\tau$ is a $U$-valued random variable with continuous density $\rho$. If for some $(x,t)$ in $\mathcal{X}\times U$ the map $D_tF(x,t)$ is surjective and $\rho$ is bounded below by $c_0>0$ on a neighborhood of $t$, then there exists a constant $c>0$ and neighborhoods $U_x$ of $x$ and $U_*$ of $x_*\coloneqq F(x,t)$ such that
\begin{align}\label{4.1}
	\mathbb{P}\big(F(y,\tau)\in B\big) &\geq c\lambda(B\cap U_*)
\end{align}
for all \om{$y$ in $U_x$ and $B$ in the Borel $\sigma$-algebra $\mathcal{B}(\mathcal{X})$ of $\mathcal{X}$ (recall $\lambda$ is the volume form on $\mathcal{X}$).}
\end{lemma}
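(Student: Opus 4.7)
The plan is to reduce the statement to a standard change-of-variables estimate via the submersion theorem with parameters. First I would work in a chart on $\mathcal{X}$: choose a $\mathcal{C}^1$ chart $\psi : W \to \mathbb{R}^p$ on a neighborhood $W$ of $x_*$ in $\mathcal{X}$, and note that the volume form $\lambda$ corresponds to $g(z)\,dz$ for some continuous density $g$ which is bounded above and below by positive constants on compact subsets of $\psi(W)$. Working with $\widetilde F \coloneqq \psi \circ F$ near $(x,t)$, the hypothesis that $D_t F(x,t)$ is surjective becomes the statement that $D_t \widetilde F(x,t) : \mathbb{R}^m \to \mathbb{R}^p$ has rank $p$. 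Hence there is a choice of $p$ coordinates of $t$, say (after relabeling) $s = (t_1,\dots,t_p)$ with complementary coordinates $u = (t_{p+1},\dots,t_m)$, such that the $p\times p$ Jacobian $\partial_s \widetilde F(x, s_0, u_0)$ is invertible, where $t = (s_0, u_0)$.

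Next I would invoke the inverse function theorem with parameters applied to the map $(y, u, s) \mapsto \widetilde F(y, s, u)$, with $(y, u)$ playing the role of parameters. This produces an open product neighborhood $U_x \times W_u \times W_s$ of $(x, u_0, s_0)$ in $\mathcal{X} \times \mathbb{R}^{m-p} \times \mathbb{R}^p$, an open neighborhood $U_*^\circ$ of $\psi(x_*)$ in $\psi(W) \subset \mathbb{R}^p$, and a constant $M > 0$ such that for every $(y, u) \in U_x \times W_u$ the map $s \mapsto \widetilde F(y, s, u)$ restricts to a $\mathcal{C}^1$ diffeomorphism from $W_s$ onto an open set containing $U_*^\circ$, and its Jacobian determinant satisfies $|\det \partial_s \widetilde F(y, s, u)| \leq M$ throughout $W_s$. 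By shrinking $W_s$ and $W_u$ further if necessary, we may also assume $W_s \times W_u \subset U$ and $\rho \geq c_0$ on $W_s \times W_u$. Set $U_* \coloneqq \psi^{-1}(U_*^\circ)$.

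For any Borel set $B \subset \mathcal{X}$ and any $y \in U_x$, Fubini and the lower bound on $\rho$ give
\begin{align*}
	\mathbb{P}\big( F(y, \tau) \in B \big)
	&\geq \int_{W_u} \int_{W_s} \1_{B}\big(F(y, s, u)\big)\, \rho(s, u)\, ds\, du \\
	&\geq c_0 \int_{W_u} \int_{W_s} \1_{\psi(B)}\big(\widetilde F(y, s, u)\big)\, ds\, du.
\end{align*}
For each fixed $(y, u) \in U_x \times W_u$, the change of variables $z = \widetilde F(y, s, u)$ transforms the inner integral into an integral over a set containing $U_*^\circ$, with Jacobian bounded by $M$, yielding
\begin{align*}
	\int_{W_s} \1_{\psi(B)}\big(\widetilde F(y, s, u)\big)\, ds
	&\geq \frac{1}{M} \int_{U_*^\circ} \1_{\psi(B)}(z)\, dz
	\geq \frac{1}{M\, \|g\|_{L^\infty(U_*^\circ)}} \lambda(B \cap U_*).
\end{align*}
Integrating over $u \in W_u$ multiplies by the positive constant $|W_u|$, and combining the estimates yields $\mathbb{P}(F(y, \tau) \in B) \geq c\, \lambda(B \cap U_*)$ for the desired constant $c \coloneqq c_0 |W_u| / (M \|g\|_{L^\infty(U_*^\circ)}) > 0$, uniformly in $y \in U_x$.

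The main technical point is securing the uniform lower bound on the image $\widetilde F(y, W_s, u) \supseteq U_*^\circ$ and the uniform upper bound on the Jacobian as $(y, u)$ varies; both come from the parameter-dependent inverse function theorem applied to the composite map $(y, u, s) \mapsto \widetilde F(y, s, u)$. Everything else is essentially bookkeeping — switching between chart and manifold, keeping track that the density $g$ of $\lambda$ stays bounded, and restricting to neighborhoods small enough that the density $\rho$ is controlled below by $c_0$.
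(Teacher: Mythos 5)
The paper does not prove this lemma; it cites it directly from \cite[Lemma~6.3]{Benaim}, so there is no internal proof to compare against. Your argument is the standard one — and almost certainly the one behind Benaim's lemma: pass to charts, extract $p$ of the $m$ time-coordinates along which the differential is nonsingular, apply the inverse function theorem with the remaining variables as parameters to get a uniform local diffeomorphism with uniformly controlled Jacobian, and finish with Fubini and the change of variables formula. The constant you produce, $c = c_0 |W_u| / (M \|g\|_{L^\infty(U_*^\circ)})$, is independent of $y$ and $B$ as required, and the reduction from $\lambda(B\cap U_*)$ to $\int_{U_*^\circ}\1_{\psi(B)}\,dz$ via the upper bound on the chart density $g$ is correct.

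One small ordering issue worth cleaning up: you first invoke the parameter-dependent IFT to produce $U_x$, $W_u$, $W_s$, and $U_*^\circ$ with the property that $\widetilde F(y,W_s,u)\supseteq U_*^\circ$ for all $(y,u)\in U_x\times W_u$, and only \emph{afterwards} say ``by shrinking $W_s$ and $W_u$ further if necessary'' to arrange $W_s\times W_u\subset U$ and $\rho\geq c_0$. Shrinking $W_s$ after the fact can destroy the containment $\widetilde F(y,W_s,u)\supseteq U_*^\circ$. The fix is immediate — either impose those constraints on the $(s,u)$-neighborhood \emph{before} choosing $U_*^\circ$ from the image of $G(y,u,s)\coloneqq(y,u,\widetilde F(y,s,u))$, or note that $U_*^\circ$ can be shrunk concurrently since $G^{-1}$ is continuous — but as written the step is out of order. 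Likewise, implicitly $W_u$ should be taken bounded so that $|W_u|<\infty$, and $W_s$ small enough that $F(U_x\times W_s\times W_u)\subset W$ so that $\widetilde F$ is defined throughout. None of this affects the validity of the method; the core argument is sound.
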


\begin{remark}\label{remark4.1}
In our setting, $U=\mathbb{R}^{mn}_+$, $F=\Phi^m:\mathcal{X}\times\mathbb{R}^{mn}_+\to\mathcal{X}$, and $\tau=(\tau_1,\dots,\tau_{mn})$ with the $\tau_k$ independent exponential random variables with mean $h$. In this case, if $x_*=\Phi^m(x,t)$ for some $t$ with $D_t\Phi^m(x,t)$ surjective, then Lemma~\ref{lem4.1} guarantees the existence of a constant $c>0$ and neighborhoods $U_x$ of $x$ and $U_*$ of $x_*$ such that, for all $y$ in $U_x$ and $B$ in $\mathcal{B}(\mathcal{X})$,
\begin{align}\label{4.2}
	P^m(y,B) &\geq c\lambda(B\cap U_*)\,.
\end{align}
\end{remark}

\noindent\begin{proof}[Proof of Theorem~\ref{thrm:Ergodicity}]
The proof is by contradiction. Suppose $\mu_1$ and $\mu_2$ are
distinct $P_h$-invariant probability measures. Assume without loss of
generality both $\mu_i$ are ergodic and therefore mutually singular \cite{CornfeldFominSinai,Kifer86}. Then there exist disjoint measurable sets $A_1$ and $A_2$ partitioning $\mathcal{X}$ such that $\mu_i(B)=\mu_i(B\cap A_i)$ for all $B$ in $\mathcal{B}(\mathcal{X})$. Fix $x_i$ in the support of $\mu_i$ so, by definition, $\mu_i$ gives positive measure to every neighborhood of $x_i$. By hypothesis and Remark~\ref{remark4.1} there exist $c_i>0$, $m_i\in\mathbb{N}$, and neighborhoods $U_i$ of $x_i$ and $U_*$ of $x_*$ such that $P_h^{m_i}(x,\cdot)\geq c_i\lambda(\cdot\cap U_*)$ for all $x$ in $U_i$. So,
\begin{align}\label{eq:abs_cnts}
	\mu_i(B) &= \mu_iP_h^{m_i}(B)
		\geq \int_{U_i} P_h^{m_i}(x,B)\mu_i(dx)
		\geq c_i\lambda(B\cap U_*)\mu_i(U_i)
\end{align}
for all $B$ in $\mathcal{B}(\mathcal{X})$. In particular, $\mu_i(B)=0$ implies $\lambda(B\cap U_*)=0$ since $c_i$ and $\mu_i(U_i)$ are strictly positive. But $\mu_1(A_2\cap U_*)=\mu_2(A_1\cap U_*)=0$ and hence
\begin{align*}
	0 &< \lambda(U_*)
		= \lambda(A_1\cap U_*)+\lambda(A_2\cap U_*)
		= 0,
\end{align*}
\jcm{which is} a contradiction. \om{Absolute continuity of the $P_h$-invariant measure $\mu$, provided it exists, follows from
  uniqueness together with the fact that the absolutely
  continuous part, $\mu_{ac}$, and singular part, $\mu_s$, of $\mu$
  are $P_h$-invariant whenever $\mu$ is \cite[Proposition
  2.7]{Benaim}. Specifically, since $\mu_{ac}$ and $\mu_s$ are
  $P_h$-invariant and there can be at most one $P_h$-invariant
  probability measure, either $\mu_{ac}$ or $\mu_s$ is identically
  zero. Since $\mu_{ac}$ is nonzero by \eqref{eq:abs_cnts}, it follows
  that $\mu_s=0$ and therefore $\mu=\mu_{ac}$.}
\end{proof}

\jcm{
  \begin{remark}\label{rem:invariant}
    The invariant measure $\mu$, which we
    defined as a
    fixed point of the left action of the Markov semigroup $P$, is often called a
    stationary measure. This is since the sequence of random variables
    generated by the Markov process starting from an initial condition
    distributed according to $\mu$ will be stationary. This helps distinguish from the invariant measure of the skew flow
     $(x, \tau)\mapsto (\Psi_{h\tau}(x), \theta \tau)$ where
    the shift $\theta$ is defined by
    $\theta \tau: \tau= (\tau_1,\tau_2,\cdots) \mapsto
    (\tau_{n+1},\tau_{n+2},\cdots)$. The skew perspective captures more information
    about the dynamics and is preferred for many
    questions. However, we will not pursue it here as it complicates
    the simple picture we explore in this note.
  \end{remark}
  }

\om{

\subsection{The Lie bracket condition}\label{sec:lie}


Let $\mathfrak{X}(\mathcal{X})$ be the Lie algebra of smooth vector fields on $\mathcal{X}$ and assume throughout this subsection the vector fields in $\mathcal{V}$ are smooth. Then the smallest subalgebra $\Lie(\mathcal{V})$ of $\mathfrak{X}(\mathcal{X})$ containing $\mathcal{V}$ is well-defined, and for each $x$ in $\mathcal{X}$ the collection $\Lie_x(\mathcal{V})\coloneqq\{V(x):V\in\Lie(\mathcal{V})\}$ is a subspace of the tangent space $T_x\mathcal{X}$ at $x$.

\begin{definition}
The {\normalfont Lie bracket condition} holds at $x$ in $\mathcal{X}$ if $\Lie_x(\mathcal{V})=T_x\mathcal{X}$.
\end{definition}

\noindent The Lie bracket condition is called the \textit{weak bracket condition} in \cite{Benaim} and \textit{Condition B} in \cite{Bakhtin}. Both papers also consider a \textit{strong bracket condition} (\textit{Condition A}) which is used for results about continuous time Markov processes and is therefore not needed here. The Lie bracket condition has the following important consequence. Note $\mathbb{R}_+\coloneqq(0,\infty)$ throughout this paper.

\begin{theorem}\label{thrm:submersion}
If the Lie bracket condition holds at a point $x_*$ in $\mathcal{X}$ then for every neighborhood $U$ of $x_*$ in $\mathcal{X}$ and every $T>0$ there exists an $x$ in $U$, an $m$, and a $t$ in $\mathbb{R}^{mn}_+$ such that $\sum_{k=1}^{mn} t_k\leq T$ and $t\mapsto \Phi^m(x_*,t)=x$ is a submersion at $t$, i.e. $D_t\Phi^m(x_*,t):T_t\mathbb{R}^{mn}\to T_x\mathcal{X}$ is surjective.
\end{theorem}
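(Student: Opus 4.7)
The plan is to combine the Chow--Rashevsky theorem of geometric control theory with Sard's theorem. Let $p \coloneqq \dim \mathcal{X}$; by the Lie bracket condition at $x_*$, we have $\dim \Lie_{x_*}(\mathcal{V}) = p$. For each $m \in \mathbb{N}$ set $F_m \colon \mathbb{R}_+^{mn} \to \mathcal{X}$, $t \mapsto \Phi^m(x_*, t)$, and let $A_T^m \coloneqq \{t \in \mathbb{R}_+^{mn} : \sum_k t_k \leq T\}$. The goal is to find $m$ and $t \in A_T^m$ with $F_m(t) \in U$ and $D_t F_m$ surjective.

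The crux of the argument is to show that the short-time reachable set $R_T \coloneqq \bigcup_m F_m(A_T^m)$ contains an open set $\Omega \subset \mathcal{X}$ whose closure contains $x_*$, so that in particular $\Omega \cap U \neq \emptyset$. To do this I would pick $X_1, \ldots, X_p \in \Lie(\mathcal{V})$ whose values at $x_*$ form a basis of $T_{x_*}\mathcal{X}$, each $X_j$ realized as an iterated Lie bracket of elements of $\mathcal{V}$. The classical commutator approximation $\varphi^{(i)}_{-s} \circ \varphi^{(j)}_{-s} \circ \varphi^{(i)}_{s} \circ \varphi^{(j)}_{s}(x) = x + s^2[V_j,V_i](x) + \mathcal{O}(s^3)$, together with its analogues for deeper brackets, allows one to assemble a smooth map $G \colon (0,\delta)^p \to \mathcal{X}$ sending $(s_1,\ldots, s_p)$ to a composition of flows of the $V_k$, and whose differential at a reference point $s^0$ has rank $p$. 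Since all our flows appear only with strictly positive times in $\Phi^m$, the negative-time segments must be emulated by positive-time segments of the other $V_k$'s using $\Phi^m$'s cyclic form, which is arranged by padding with additional (arbitrarily small) positive times; the total flow time of $G(s^0)$ is then of order $\sum s_j^0$, which can be taken $\leq T$ by choosing $\delta$ small. By the inverse function theorem, the image of $G$ contains an open set $\Omega$ in $\mathcal{X}$, and by construction $x_* \in \overline{\Omega}$.

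Once $\Omega \subset R_T$ is established with $x_* \in \overline{\Omega}$, the submersion property is obtained via Sard's theorem. For each $m$, let $C_m \subset A_T^m$ denote the set of critical points of $F_m$. Since $F_m$ is $\mathcal{C}^1$ and $\dim \mathcal{X} = p \leq mn = \dim A_T^m$, Sard's theorem gives $\lambda\bigl(F_m(C_m)\bigr) = 0$. Hence the set of regular values in $R_T$, namely $R_T \setminus \bigcup_m F_m(C_m)$, still contains a dense subset of $\Omega$ and therefore intersects the neighborhood $U$ of $x_*$. Any $x$ in this intersection is of the form $F_m(t)$ for some $m$ and some $t \in A_T^m$ at which $D_t F_m = D_t \Phi^m(x_*, t)$ is surjective, which is precisely the conclusion.

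The main obstacle is the first step: faithfully turning the classical commutator realization of Lie brackets, which intrinsically uses both forward and backward flow times, into a composition of the specific cyclic form $\Phi^m$ with all $t_k > 0$ and $\sum t_k \leq T$. This is handled by the standard trick of approximating negative-time segments by sequences of positive-time segments of complementary vector fields and then exploiting the open density of the maximal rank locus of $F_m$ to perturb any resulting zero-time coordinates to small positive ones; the details are classical but somewhat bookkeeping-heavy.
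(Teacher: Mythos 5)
Your proposal takes a genuinely different route from the one the paper relies on, and the first step has a gap. The paper does not prove Theorem~\ref{thrm:submersion} itself; it cites versions from the geometric control literature (Theorem~3.1 of \cite{jurdjevic}, Theorem~5 of \cite{Bakhtin}, Theorem~4.4 of \cite{Benaim}). Those proofs use the Krener--Sussmann--Jurdjevic inductive rank-building argument: starting from a single positive-time flow through $x_*$, one repeatedly appends a short positive-time flow of some $V_k$ transversal to the image submanifold of the current composition, raising the rank by one at each step; if no such $V_k$ exists, then every $V_k$ (and hence every bracket in $\Lie(\mathcal{V})$) is tangent to a submanifold of dimension $<p$ near its points, contradicting the Lie bracket condition, which by continuity of a spanning set of brackets persists on a neighborhood of $x_*$. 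This constructs the submersion point directly, uses only positive times, and never invokes Sard's theorem.

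The gap in your proposal is in the reduction to positive times, which you defer to ``the standard trick of approximating negative-time segments by sequences of positive-time segments of complementary vector fields.'' No such trick exists in general. The commutator realization of $[V_i,V_j]$ intrinsically runs $\varphi^{(i)}$ and $\varphi^{(j)}$ backwards to cancel their first-order contributions; with all times positive, the displacement of $\varphi^{(i_k)}_{s_k}\circ\cdots\circ\varphi^{(i_1)}_{s_1}(x_*)$ is, to leading order, $\sum_j s_j\,V_{i_j}(x_*)$, which lies in the closed positive cone generated by the $V_k(x_*)$ and cannot be made to vanish unless every $s_j\to 0$. Consequently the positive-time reachable set is in general a cone with $x_*$ on its boundary rather than in its interior (for $V_1=\partial_{x_1}$, $V_2=\partial_{x_2}$ on $\mathbb{R}^2$ it is a quadrant), which is exactly the distinction between the two-sided-time Chow--Rashevsky/orbit picture your map $G$ addresses and the one-sided-time Krener accessibility needed here; perturbing a genuinely negative flow time to a small positive one is not a small perturbation, so ``perturb zero-time coordinates to small positive'' does not rescue the construction. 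Your second step -- Sard's theorem extracting a regular value of some $F_m$ once $R_T$ contains an open set with $x_*$ in its closure -- is correct given the smoothness of the $V_k$ assumed in this subsection, but it is also superfluous: the Krener argument produces a concrete submersion point rather than a point of existence by a measure-zero argument, and replacing your first step with that inductive construction would make the Sard step unnecessary.
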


A version of \Cref{thrm:submersion} appears as Theorem 3.1 in \cite{jurdjevic}; the equivalent version given here is better suited to random splitting and other classes of piecewise deterministic Markov processes. See Theorem 5 in \cite{Bakhtin} and Theorem 4.4 in \cite{Benaim} and their corresponding discussions for details. Intuitively, Theorem \ref{thrm:submersion} says that if the Lie bracket condition holds at $x_*$ then, as a consequence of surjectivity, the random splitting can move in any infinitesimal direction from $x_*$ in arbitrarily small positive times. The next result is an immediate consequence of Theorems \ref{thrm:Ergodicity} and \ref{thrm:submersion}.

\begin{cor}\label{cor:ergodic}
Suppose there is an $x_*$ in $\mathcal{X}$ at which the Lie bracket condition holds and such that for every $x$ in $\mathcal{X}$ there is an $m \in \mathbb N$ and a $t \in \mathbb{R}^{mn}_+$ satisfying $\Phi^m(x,t)=x_*$. Then $P_h$ has at most one invariant measure on $\mathcal{X}$. Furthermore, if such a measure exists, it is absolutely continuous with respect to the volume form $\lambda$.
\end{cor}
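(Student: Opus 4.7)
The plan is to verify the hypothesis of Theorem \ref{thrm:Ergodicity} for this $x_*$, after which the conclusion is immediate. Specifically, I fix an arbitrary $y \in \mathcal{X}$ and aim to produce $m \in \mathbb{N}$ and $t \in \mathbb{R}_+^{mn}$ with $\Phi^m(y, t) = x_*$ and $D_t \Phi^m(y, t)$ surjective onto $T_{x_*}\mathcal{X}$.

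The natural strategy is a three-leg concatenation. First, the controllability hypothesis of the corollary gives $m_1 \in \mathbb{N}$ and $s_1 \in \mathbb{R}_+^{m_1 n}$ with $\Phi^{m_1}(y, s_1) = x_*$. Second, Theorem \ref{thrm:submersion} applied at $x_*$ (with, say, $T = 1$ and any fixed neighborhood $U$ of $x_*$) produces some $z \in U$, an $m_2 \in \mathbb{N}$, and a $t_2 \in \mathbb{R}_+^{m_2 n}$ such that $\Phi^{m_2}(x_*, t_2) = z$ and $D_{t_2}\Phi^{m_2}(x_*, t_2)$ is surjective onto $T_z \mathcal{X}$. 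Third, applying the controllability hypothesis to $z$ furnishes $m_3 \in \mathbb{N}$ and $s_3 \in \mathbb{R}_+^{m_3 n}$ with $\Phi^{m_3}(z, s_3) = x_*$.

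Setting $m \coloneqq m_1 + m_2 + m_3$ and $t \coloneqq (s_1, t_2, s_3) \in \mathbb{R}_+^{mn}$, the concatenated trajectory satisfies $\Phi^m(y, t) = x_*$ by construction. By the chain rule, the restriction of $D_t \Phi^m(y, t)$ to the $t_2$-block is
\begin{align*}
D_z \Phi^{m_3}(z, s_3) \circ D_{t_2}\Phi^{m_2}(x_*, t_2).
\end{align*}
Since each flow map $\varphi^{(k)}_s$ is a $\mathcal{C}^2$-diffeomorphism of $\mathbb{R}^d$ preserving $\mathcal{V}$-orbits, the map $z \mapsto \Phi^{m_3}(z, s_3)$ restricts to a diffeomorphism of $\mathcal{X}$ and its spatial derivative $D_z \Phi^{m_3}(z, s_3)$ is a linear isomorphism $T_z \mathcal{X} \to T_{x_*}\mathcal{X}$. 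Composed with the surjection $D_{t_2}\Phi^{m_2}(x_*, t_2)$ onto $T_z \mathcal{X}$, this block is surjective onto $T_{x_*}\mathcal{X}$, so the full differential $D_t \Phi^m(y, t)$ is as well. Theorem \ref{thrm:Ergodicity} then delivers both at-most-uniqueness of the invariant measure and, if one exists, its absolute continuity with respect to $\lambda$.

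The main subtlety to anticipate is that a naive two-leg argument, traveling from $y$ to $x_*$ and then directly invoking Theorem \ref{thrm:submersion} at $x_*$, does not close the loop: the submersive trajectory supplied by Theorem \ref{thrm:submersion} terminates at a \emph{nearby} point $z$, not at $x_*$ itself. The third leg, which returns from $z$ to $x_*$ via the controllability hypothesis, is essential, and it is the invertibility of each flow map that allows us to transport the surjective block from $T_z \mathcal{X}$ to $T_{x_*}\mathcal{X}$ without loss.
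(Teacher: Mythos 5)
Your argument is correct, and it fills in the detail behind the paper's remark that the corollary is an immediate consequence of Theorems \ref{thrm:Ergodicity} and \ref{thrm:submersion}. Each step is sound: the chain rule gives the $t_2$-block as $D_z\Phi^{m_3}(z,s_3)\circ D_{t_2}\Phi^{m_2}(x_*,t_2)$, and since each $\varphi^{(k)}_s$ is a $\mathcal{C}^2$-diffeomorphism of $\mathbb{R}^d$ preserving $\mathcal{V}$-orbits, $D_z\Phi^{m_3}(z,s_3)$ is indeed an isomorphism $T_z\mathcal{X}\to T_{x_*}\mathcal{X}$. One remark, though: the third leg, while valid, is not actually needed. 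Theorem~\ref{thrm:Ergodicity} does not require the distinguished target to be the point at which the Lie bracket condition holds; it only requires \emph{some} common endpoint reachable from every $x$ with a surjective differential. Once Theorem~\ref{thrm:submersion} produces a fixed $z$, $m_2$, $t_2$ (chosen once, independently of the starting point $y$) with $\Phi^{m_2}(x_*,t_2)=z$ and $D_{t_2}\Phi^{m_2}(x_*,t_2)$ surjective onto $T_z\mathcal{X}$, the two-leg concatenation $\Phi^{m_1+m_2}(y,(s_1,t_2))=z$ already has a surjective $t_2$-block, so applying Theorem~\ref{thrm:Ergodicity} with $z$ in the role of the distinguished point closes the argument. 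Your three-leg version pays a small price to return to $x_*$ itself, which is unnecessary but harmless; the subtlety you flag is real, but re-targeting $z$ resolves it more economically than adding a return trip.
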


\noindent One benefit of \Cref{cor:ergodic} is that it replaces the need to check the surjectivity assumption of \Cref{thrm:Ergodicity}, which can be challenging in practice, with the verification of the Lie bracket condition. The next result provides a further convenience in the analytic setting which will be used in the specific examples considered below. See \cite{jurdjevic, Nagano} for further discussion and proof.

\begin{theorem}[Nagano]\label{thrm:nagano}
Suppose the vector fields in $\mathcal{V}$ are analytic. If the Lie bracket condition holds at any point in $\mathcal{X}$, then it holds at every point in $\mathcal{X}$.
\end{theorem}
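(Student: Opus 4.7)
The plan is to reduce the theorem to showing $L(x) \coloneqq \dim \Lie_x(\mathcal{V})$ is constant on $\mathcal{X}$. Once that is done, combine this with two facts: (i) $\dim T_x\mathcal{X}$ is constant on $\mathcal{X}$, since by the orbit theorem invoked in Section~\ref{sec:orbits}, $\mathcal{X}$ is an analytic submanifold; and (ii) $\Lie_x(\mathcal{V}) \subseteq T_x\mathcal{X}$ for every $x \in \mathcal{X}$, since each $V_k$ is tangent to $\mathcal{X}$ and the Lie bracket of two vector fields tangent to a submanifold is itself tangent, so all iterated brackets in $\Lie(\mathcal{V})$ take values in $T\mathcal{X}$. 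The hypothesis $\Lie_{x_*}(\mathcal{V}) = T_{x_*}\mathcal{X}$ then reads $L(x_*) = \dim T_{x_*}\mathcal{X}$, and constancy of both quantities on $\mathcal{X}$ forces equality at every point.

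To establish constancy of $L$, it suffices to show invariance under each flow $\varphi^{(k)}_t$, since any two points of $\mathcal{X}$ are connected by a finite composition of such flows with arbitrary real times. The central tool is the analytic pullback identity
\begin{equation*}
(\varphi^V_t)^* W(x) \;=\; \sum_{j\geq 0} \frac{t^j}{j!}\,(\mathrm{ad}^j_V W)(x),
\end{equation*}
valid for $|t|$ small whenever $V$ and $W$ are real-analytic vector fields, where $\mathrm{ad}_V W \coloneqq [V,W]$. For $V \in \mathcal{V}$ and $W \in \Lie(\mathcal{V})$, each coefficient $\mathrm{ad}^j_V W$ lies in $\Lie(\mathcal{V})$, so its value at $x$ belongs to the finite-dimensional (hence closed) subspace $\Lie_x(\mathcal{V}) \subseteq T_x\mathbb{R}^d$; the sum therefore lies in $\Lie_x(\mathcal{V})$ as well. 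Setting $y \coloneqq \varphi^V_t(x)$ and unwinding $(\varphi^V_t)^* W(x) = (D\varphi^V_t(x))^{-1} W(y)$ gives $W(y) \in D\varphi^V_t(x) \cdot \Lie_x(\mathcal{V})$, and ranging over $W \in \Lie(\mathcal{V})$ yields $\Lie_y(\mathcal{V}) \subseteq D\varphi^V_t(x)\cdot \Lie_x(\mathcal{V})$. The same argument with $t$ replaced by $-t$ (swapping $x$ and $y$) gives the reverse inclusion, so $\Lie_y(\mathcal{V}) = D\varphi^V_t(x)\cdot \Lie_x(\mathcal{V})$, and since $D\varphi^V_t(x)$ is a linear isomorphism, $L(y) = L(x)$. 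A standard connectedness argument extends this from small $|t|$ to the full interval of existence of $\varphi^V_t(x)$: the set of $t$ on which the equality holds is nonempty, open (by the local argument applied at each point of the integral curve), and closed (by continuity of all objects involved).

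The main obstacle is the rigorous justification of the analytic pullback identity. A standard route is to verify $\frac{d}{dt}(\varphi^V_t)^* W = (\varphi^V_t)^*(\mathrm{ad}_V W)$, iterate to compute all Taylor coefficients at $t=0$, and use real-analyticity of $t \mapsto (\varphi^V_t)^* W(x)$ to conclude convergence of the Maclaurin series on a nontrivial interval. Real-analyticity is truly essential here: without it, the formal coefficients $\mathrm{ad}^j_V W(x)$ do not determine $(\varphi^V_t)^* W(x)$, and Nagano's conclusion is well known to fail in the merely $\mathcal{C}^\infty$ category.
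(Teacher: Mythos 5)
The paper does not actually prove Theorem~\ref{thrm:nagano}: it states it as a known result and refers the reader to \cite{jurdjevic, Nagano}. So there is no in-paper proof to compare against; you are supplying the argument the paper chose to cite rather than reproduce.

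What you have written is the standard Nagano--Sussmann argument, and it is correct. The essential chain is: (i) for $V\in\mathcal{V}$ and $W\in\Lie(\mathcal{V})$, each $\mathrm{ad}^j_V W$ remains in $\Lie(\mathcal{V})$, so the analytic series for $(\varphi^V_t)^*W(x)$ has all partial sums in the finite-dimensional (hence closed) subspace $\Lie_x(\mathcal{V})$; (ii) unwinding the pullback gives $\Lie_{\varphi^V_t(x)}(\mathcal{V})\subseteq D\varphi^V_t(x)\Lie_x(\mathcal{V})$, and reversing $t$ gives equality, so $L(x)\coloneqq\dim\Lie_x(\mathcal{V})$ is locally constant along the flow of each $V\in\mathcal{V}$; (iii) since $\mathcal{X}$ is a single $\mathcal{V}$-orbit, any two of its points are joined by finitely many such flows, so $L$ is constant on $\mathcal{X}$; and (iv) constancy of $\dim T_x\mathcal{X}$ (from the orbit theorem) together with $\Lie_x(\mathcal{V})\subseteq T_x\mathcal{X}$ propagates the hypothesis from $x_*$ to all of $\mathcal{X}$. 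You have also correctly pinpointed where analyticity is load-bearing: without it, the formal series $\sum_j \tfrac{t^j}{j!}\,\mathrm{ad}^j_V W$ does not determine $(\varphi^V_t)^*W$, and the theorem is false in the $\mathcal{C}^\infty$ category.

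One small exposition note: in the globalization step, the appeal to ``closed by continuity of all objects involved'' is not quite right as stated, since $x\mapsto L(x)$ is only lower semicontinuous in general, not continuous. You do not in fact need closedness: the local two-sided inclusion shows $t\mapsto L(\varphi^V_t(x))$ is \emph{locally constant} on the whole time axis (apply the small-time argument based at each $t_0$, regardless of whether equality with $L(x)$ holds there), and a locally constant function on $\mathbb{R}$ is constant. Phrasing it that way removes the only questionable sentence in the proof.
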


\begin{cor}\label{cor:nagano}
Suppose the vector fields in $\mathcal{V}$ are analytic and there is a point $x_*$ in $\mathcal{X}$ such that for every $x$ in $\mathcal{X}$ there is an $m \in \mathbb N$ and a $t \in \mathbb{R}^{mn}_+$ satisfying $\Phi^m(x,t)=x_*$. If the Lie bracket condition holds at any point in $\mathcal{X}$, then $P_h$ has at most one invariant measure on $\mathcal{X}$. Furthermore, if such a measure exists, it is absolutely continuous with respect to the volume form $\lambda$.
\end{cor}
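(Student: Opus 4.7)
The plan is to show that Corollary \ref{cor:nagano} reduces essentially immediately to Corollary \ref{cor:ergodic} once Nagano's theorem is invoked to propagate the Lie bracket condition. The hypotheses of Corollary \ref{cor:nagano} differ from those of Corollary \ref{cor:ergodic} only in \emph{where} the Lie bracket condition is assumed: Corollary \ref{cor:ergodic} requires it at the specific point $x_*$ to which all of $\mathcal{X}$ is driven, whereas Corollary \ref{cor:nagano} allows it to be checked at an arbitrary point of $\mathcal{X}$, under the additional assumption of analyticity of the vector fields in $\mathcal{V}$.

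First I would invoke Theorem \ref{thrm:nagano} (Nagano): since every $V_k \in \mathcal{V}$ is analytic and the Lie bracket condition holds at \emph{some} point of $\mathcal{X}$, it holds at \emph{every} point of $\mathcal{X}$. In particular, it holds at the distinguished point $x_*$ singled out by the controllability hypothesis (the point reachable in positive time from every $x \in \mathcal{X}$).

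Second, with the Lie bracket condition now in hand at $x_*$, and with the reachability of $x_*$ from every $x \in \mathcal{X}$ in positive time via the split flow, the two hypotheses of Corollary \ref{cor:ergodic} are verified verbatim. Applying that corollary yields both uniqueness of a $P_h$-invariant probability measure on $\mathcal{X}$ and, when such a measure exists, its absolute continuity with respect to the volume form $\lambda$.

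There is essentially no obstacle here: the entire content of the corollary is the bookkeeping observation that analyticity, via Nagano, upgrades a pointwise Lie bracket check to a global one, and this global version trivially covers the particular point $x_*$ required by Corollary \ref{cor:ergodic}. The substance of the argument lies upstream, in the proofs of Theorem \ref{thrm:Ergodicity}, Theorem \ref{thrm:submersion}, and Theorem \ref{thrm:nagano}; Corollary \ref{cor:nagano} is merely their convenient packaging for the analytic examples studied in the remainder of the paper.
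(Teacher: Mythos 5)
Your proposal is correct and is exactly the paper's argument: invoke Nagano's theorem to propagate the Lie bracket condition from the assumed point to $x_*$, then apply Corollary~\ref{cor:ergodic} verbatim. The paper's proof is simply a two-sentence version of what you wrote.
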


\begin{proof}
Since the Lie bracket condition holds at one point in $\mathcal{X}$, it also holds at $x_*$ by Nagano's theorem. The result follows by \Cref{cor:ergodic}.
\end{proof}
}


\section{Convergence as mean time step goes to zero}\label{sec:Convergence}


A well-known result in the operator splitting literature is that the error incurred in approximating $\Psi$ by the deterministic splitting scheme $\Phi_h=\varphi^{(n)}_h\circ\cdots\circ\varphi^{(1)}_h$ is $\mathcal{O}(h)$ \cite{Strang}. That is,  \aa{$\Phi_h$} converges to the true dynamics $\Psi$ at worst linearly in $h$ as $h\to 0$. In this section we give analogous results for random splitting; the pluralized ``results" reflects that with randomness comes several different notions of convergence. Specifically, we give two main results. First, as in the deterministic case, the transition kernel $P_h$ of random splitting converges to the true dynamics linearly in $h$ as $h\to 0$. Second, random splitting converges almost-surely to the true dynamics
as $h\to 0$.  \jcm{Each case requires a slightly different notion of
$\mathcal{O}(h)$.} These statements are made precise in Theorems~\ref{thrm3.1} and~\ref{thrm3.2}, respectively, but to make sense of them we first introduce the appropriate setting.

The following assumption on $\mathcal{V}$-orbits is used throughout this section.

\begin{assumption}\label{assump:Bounded}
$\mathcal{X}(x)$ is bounded for each $x$ in $\mathbb{R}^d$.
\end{assumption}

\noindent Since the vector fields $V_k$ are assumed $\mathcal{C}^2$, Assumption~\ref{assump:Bounded} implies the $V_k$ are bounded with bounded first and second derivatives on every $\mathcal{X}$. In particular,
\begin{align}\label{eq:Bounded}
	C_*(x_0) &\coloneqq \sup_{x\in\mathcal{X}(x_0)}\left\{\lVert V_k(x)\rVert, \lVert DV_k(x)\rVert, \lVert D^2V_k(x)\rVert : 1\leq k\leq n\right\}
		< \infty\,,
\end{align}
where $\lVert V_k(x)\rVert$ is the usual Euclidean norm, $\lVert
DV_k(x)\rVert$ is the operator norm of the linear map
$DV_k(x):\mathbb{R}^d\to\mathbb{R}^d$, and $\lVert D^2V_k(x)\rVert$ is
the operator norm of the bilinear map
$D^2V_k(x):\mathbb{R}^d\times\mathbb{R}^d\to\mathbb{R}^d$.

For a positive integer $k$ let $\mathcal{C}^k(\mathcal{X})$ be the space of $k$-times continuously differentiable functions $f:\mathcal{X}\to\mathbb{R}$. For $f$ in $\mathcal{C}^k(\mathcal{X})$ and $\ell\leq k$, the $\ell$th derivative $D^\ell f(x)$ of $f$ at $x$ is a multilinear operator from $\otimes_1^\ell T_x\mathcal{X}$ to $\mathbb{R}$. The operator norm of $D^\ell f(x)$ is then
\begin{align*}
	\lVert D^\ell f(x)\rVert &\coloneqq \sup_{\lVert\eta\rVert=1}\left\{\lvert D^\ell f(x)\eta\rvert\right\}\,,
\end{align*}
where $\eta\in \otimes_1^\ell T_x\mathcal{X}$. Defining $D^0 f(x) \coloneqq f(x)$, this in turn induces a norm on $\mathcal{C}^k(\mathcal{X})$ given by
\begin{align*}
	\lVert f\rVert_k &\coloneqq \sup_{x\in\mathcal{X}}\left\{\lVert D^\ell f(x)\rVert : 0\leq \ell\leq k\right\}.
\end{align*}
The corresponding operator norm is denoted $\lVert\cdot\rVert_{k\to k}$. More generally, for any $k$ and $\ell$ define a norm $\lVert\cdot\rVert_{k\to \ell}$ on the space of linear operators $L:\mathcal{C}^k(\mathcal{X})\to\mathcal{C}^\ell(\mathcal{X})$ by
\begin{align*}
	\lVert L\rVert_{k\to \ell} &\coloneqq \sup_{\lVert f\rVert_k=1} \lVert Lf\rVert_\ell\,.
\end{align*}
We make frequent use of the \textit{submultiplicity} of $\lVert\cdot\rVert_{k\to\ell}$. Namely, if $A$ and $B$ are bounded linear operators from $\mathcal{C}^j(\mathcal{X})$ to $\mathcal{C}^k(\mathcal{X})$ and from $\mathcal{C}^k(\mathcal{X})$ to $\mathcal{C}^\ell(\mathcal{X})$, respectively, then
\begin{align*}
	\lVert BA\rVert_{j\to\ell} &\leq \lVert B\rVert_{k\to\ell}\lVert A\rVert_{j\to k}\,.
\end{align*}

The results below are stated in terms of semigroups of the flows $\Psi$ and $\varphi^{(j)}$, which are $\mathcal{C}^2$ by assumption. Hence for all $k\leq 2$ the semigroup $\{S_t\}_{t\geq 0}$ corresponding to $\Psi$ acts on $f\in\mathcal{C}^k(\mathcal{X})$ via
\begin{align}\label{3.20}
	S_tf(x) &= e^{tV}f(x)
		= f(\Psi_t(x))
\end{align}
and, similarly, the semigroup $\{\widetilde{S}^{(j)}_t\}_{t\geq 0}$ corresponding to $\varphi^{(j)}$ is given by
\begin{align}\label{3.21}
	\widetilde{S}^{(j)}_tf(x) &= e^{tV_j}f(x)
		= f(\varphi^{(j)}_t(x))\,.
\end{align}
In particular, $m$ steps of random splitting corresponds to $\widetilde{S}^m_{h\tau} \coloneqq \widetilde{S}^{(1)}_{h\tau_1}\cdots\widetilde{S}^{(mn)}_{h\tau_{mn}}$ with superscripts taken as in Remark~\ref{rmk2.2}. The transition kernel $P^m_h$ and semigroup composition $\widetilde{S}^m_{h\tau}$ are related via
\begin{align}\label{3.2}
P^m_hf=\mathbb{E}(f(\Phi^m_{h\tau}))=\mathbb{E}(\widetilde{S}^m_{h\tau}f)\,.
\end{align}
With the above notation we now present the two main results of this section, Theorems \ref{thrm3.1} and \ref{thrm3.2}, which follow from Lemmas \ref{lem3.1} and \ref{lem3.2}, respectively. The full proofs of both lemmas are given in the Appendix, but we discuss the general idea behind each at the end of this section.

\begin{theorem}\label{thrm3.1}
Suppose Assumption~\ref{assump:Bounded} holds and fix $t>0$. For all $h$ sufficiently small and satisfying $mh=t$ for some $m\in\mathbb{N}$, there exists a constant $C(t)$ depending on $t$ but not on $h$ such that
\begin{align}\label{3.1}
	\lVert P_h^m-S_t\rVert_{2\to 0} &\leq C(t)h.
\end{align}
\end{theorem}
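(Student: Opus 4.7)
The plan is a standard local-to-global telescoping argument tailored to the operator norm $\|\cdot\|_{2\to 0}$. Since $mh = t$ by assumption and $\{S_t\}$ is a semigroup, $S_t = S_h^m$, so we may write
\begin{align*}
	P_h^m - S_t &= P_h^m - S_h^m
		= \sum_{k=0}^{m-1} P_h^{k}\,(P_h - S_h)\,S_h^{m-1-k}\,.
\end{align*}
Using submultiplicativity of the operator norms, this reduces the theorem to three ingredients: (i) a one-step local error estimate $\|P_h - S_h\|_{2\to 0} \leq C h^2$; (ii) uniform control $\|P_h^k\|_{0\to 0} \leq 1$; and (iii) uniform control $\|S_h^{m-1-k}\|_{2\to 2} \leq C(t)$ for all $0\leq k\leq m-1$. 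Combining these gives the claimed bound since $m\cdot h^2 = t\cdot h$.

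The heart of the argument, and the step I expect to be the main obstacle, is the local error estimate (i). It comes from a second-order Taylor expansion of both $S_h$ and $P_h$ along the corresponding flows. For $f \in \mathcal{C}^2(\mathcal{X})$, the fundamental theorem of calculus applied twice gives
\begin{align*}
	S_h f(x) &= f(x) + h\,Vf(x) + \int_0^h\!\!\int_0^s V^2 f(\Psi_r(x))\, dr\, ds\,,
\end{align*}
and the remainder is bounded by $\tfrac{h^2}{2}\|V^2 f\|_0 \leq C_1 h^2 \|f\|_2$, where $C_1$ depends on $C_*(x_0)$ from \eqref{eq:Bounded} via the chain-rule identity $V^2 f = V(Vf)$ applied to $f\in\mathcal{C}^2$. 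For each factor of $P_h$, the same expansion applied to $\widetilde{S}^{(j)}_{h\tau_j}f$ with $h\tau_j$ replacing $h$, followed by taking expectation and using $\mathbb{E}\tau_j = 1$ and $\mathbb{E}\tau_j^2 = 2$, yields
\begin{align*}
	\mathbb{E}\bigl[\widetilde{S}^{(j)}_{h\tau_j}\bigr] f &= f + h\,V_j f + R_j(h,f)\,, \qquad \|R_j(h,\cdot)\|_{2\to 0} \leq C_2 h^2\,.
\end{align*}
By independence of the $\tau_j$, $P_h$ factors as the composition $\mathbb{E}[\widetilde{S}^{(1)}_{h\tau_1}]\cdots\mathbb{E}[\widetilde{S}^{(n)}_{h\tau_n}]$; expanding the product and using submultiplicativity collapses the first-order terms to $h\sum_j V_j = hV$ and absorbs the rest into an $O(h^2)$ remainder in $\|\cdot\|_{2\to 0}$. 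Subtracting the expansion of $S_h$ then yields (i). The exponential tails of the $\tau_j$ (cf. \Cref{r:exponential}) are exactly what guarantees finiteness of the moments appearing in the remainder.

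For (ii), $P_h$ is a Markov kernel so $\|P_h f\|_0 \leq \|f\|_0$, giving $\|P_h^k\|_{0\to 0}\leq 1$ immediately. For (iii), $\|S_h^{m-1-k}\|_{2\to 2}$ requires control on the first two derivatives of the flow $\Psi_r$ for $0\leq r\leq t$. Differentiating $\Psi_r$ with respect to its initial condition gives the variational equation $\partial_r D\Psi_r = DV(\Psi_r)D\Psi_r$, and a similar equation for $D^2\Psi_r$, both linear in the highest derivative with coefficients bounded on $\mathcal{X}$ by \aref{assump:Bounded} and \eqref{eq:Bounded}. Grönwall's inequality then gives $\|D^\ell \Psi_r\|_0 \leq e^{C_* r}$ for $\ell = 1,2$, which together with the chain and product rules for $D^\ell(f\circ\Psi_r)$ produces $\|S_r\|_{2\to 2} \leq C(r)$; in particular this is uniformly bounded for $r\leq t$.

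Putting the pieces together, the telescoping sum is bounded by $m\cdot 1\cdot C_3 h^2 \cdot C(t) = (C_3 C(t))\,t\,h$, yielding the theorem with $C(t) = C_3 C(t)\, t$. I expect the bookkeeping in the local estimate (tracking norms across the $n$-fold product of Taylor-expanded factors, and confirming that all error terms can be dominated in the $\mathcal{C}^2\to\mathcal{C}^0$ operator norm using only the bounds in \eqref{eq:Bounded} and the moment bounds on the $\tau_j$) to be the only technically delicate part; the remaining pieces are standard Markov-kernel and ODE-flow estimates.
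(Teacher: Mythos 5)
Your telescoping sum and the uniform bounds $\|P_h^k\|_{0\to 0}=1$ and $\|S_r\|_{2\to 2}\leq K(t)$ for $r\leq t$ are exactly what the paper does (compare \eqref{eq:Telescope}), so the global part of the argument is sound. The crux is your proof of the local estimate (i), i.e.\ Lemma~\ref{lem3.1}, and there you take a genuinely different route from the paper's variation-of-constants argument. That route can be made to work, but as sketched it contains a real gap.

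You correctly factor $P_h = \mathbb{E}[\widetilde{S}^{(1)}_{h\tau_1}]\cdots\mathbb{E}[\widetilde{S}^{(n)}_{h\tau_n}]$ using independence of the $\tau_j$, and write each factor as $I + hV_j + R_j$ with $\|R_j\|_{2\to 0}\leq Ch^2$. The problematic step is ``expanding the product and using submultiplicativity \ldots\ absorbs the rest into an $O(h^2)$ remainder in $\|\cdot\|_{2\to 0}$.'' If you literally multiply out $\prod_j(I+hV_j+R_j)$, the cross terms with three or more non-identity factors, e.g.\ $h^3 V_{j_1}V_{j_2}V_{j_3}$, are \emph{not} bounded operators on $\mathcal{C}^2(\mathcal{X})$: each $V_j$ costs one derivative, so $V_{j_1}V_{j_2}V_{j_3}f$ requires $f\in\mathcal{C}^3$. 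Submultiplicativity of $\|\cdot\|_{k\to\ell}$ cannot save this because there is no admissible chain of spaces for the third application. Worse, $R_j$ only maps $\mathcal{C}^2\to\mathcal{C}^0$, so any $R_j$ that is not the rightmost factor in a cross term produces a composition that is not even well-defined. The formal power-series expansion thus yields terms that do not individually make sense, even though their sum, $P_h$, is of course a bounded operator. Multiplying an ill-defined operator by $h^3$ does not repair it, so you cannot conclude the cross terms are ``absorbed'' into an $O(h^2)$ remainder.

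The fix is to telescope at the level of the factors rather than expand as a series. For instance, compare first to the deterministic Lie--Trotter map: $P_h - \prod_j e^{hV_j} = \sum_k \bigl(\prod_{j<k}P^{(j)}_h\bigr)\bigl(P^{(k)}_h - e^{hV_k}\bigr)\bigl(\prod_{j>k}e^{hV_j}\bigr)$, where $P^{(j)}_h\coloneqq\mathbb{E}[\widetilde{S}^{(j)}_{h\tau_j}]$. Each summand is controlled via $\|\prod_{j<k}P^{(j)}_h\|_{0\to 0}=1$, $\|\prod_{j>k}e^{hV_j}\|_{2\to 2}\leq C$, and $\|P^{(k)}_h - e^{hV_k}\|_{2\to 0}\leq Ch^2$ (one-variable Taylor expansion about $s=h$ plus $\mathbb{E}\tau_k=1$ and $\mathbb{E}(\tau_k-1)^2=1$). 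This handles the stochastic contribution. You must then separately bound the deterministic splitting error $\|\prod_j e^{hV_j}-e^{hV}\|_{2\to 0}=O(h^2)$, which is where the commutators $[V_j,V_k]$ appear and which again requires a variation-of-constants or commutator-telescoping argument rather than a naive series expansion. The paper avoids splitting the two sources of error by applying variation of constants directly to the random operator $\widetilde{S}_{h\tau}$ before taking expectations (see \eqref{4.10}), which produces two integral remainders -- one whose expectation vanishes to first order because $\mathbb{E}(V_\tau-V)=0$ and one built from commutators that is pointwise $O(r)$ -- and both integrate to $O(h^2)$.
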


\begin{lemma}\label{lem3.1}
If Assumption~\ref{assump:Bounded} holds then there exists a constant $C$ such that
\begin{align}\label{3.3}
	\lVert P_h-S_h\rVert_{2\to 0} &\leq Ch^2
\end{align}
for all $h$ sufficiently small.
\end{lemma}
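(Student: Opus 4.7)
The plan is to show that $P_h$ and $S_h$ share a common first-order expansion $I + hV + O(h^2)$ in the operator norm $\|\cdot\|_{2\to 0}$ and then conclude by the triangle inequality. Set $B_k \coloneqq \mathbb{E}\widetilde{S}^{(k)}_{h\tau_k}$. By independence of the $\tau_k$ and Fubini applied to \eqref{2.2}, $P_h = B_1 B_2 \cdots B_n$ as operators on $\mathcal{C}^0(\mathcal{X})$, and each $B_k$ is a Markov operator satisfying $\|B_k\|_{0\to 0}\leq 1$. Iterating the semigroup identity $\widetilde{S}^{(k)}_s f - f = \int_0^s \widetilde{S}^{(k)}_u V_k f\, du$ gives, for any $f\in\mathcal{C}^2(\mathcal{X})$,
\begin{align*}
\widetilde{S}^{(k)}_s f - f - sV_kf = \int_0^s\!\int_0^u \widetilde{S}^{(k)}_v V_k^2 f\, dv\, du,
\end{align*}
and \aref{assump:Bounded} yields the pointwise bound $\|V_k^2 f\|_0 \leq C\|f\|_2$ with $C$ depending only on $C_*(x_0)$ from \eqref{eq:Bounded}. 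Taking $s = h\tau_k$ and using $\mathbb{E}[\tau_k^2]=2$ produces the decomposition $B_k = I + hV_k + R_k$ with $\|R_k\|_{2\to 0}\leq Ch^2$. The same computation applied to the full semigroup $S_h$ with $V$ in place of $V_k$ gives $S_h = I + hV + R^S$ with $\|R^S\|_{2\to 0}\leq Ch^2$.

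The main step is to bound $\|P_h - (I+hV)\|_{2\to 0}$. Define partial products $P^{(k)} \coloneqq B_1\cdots B_k$ with $P^{(0)}\coloneqq I$, and telescope:
\begin{align*}
P_h - (I+hV) = \sum_{k=1}^n P^{(k-1)} R_k + h \sum_{k=1}^n (P^{(k-1)} - I) V_k.
\end{align*}
The first sum is $O(h^2)$ by submultiplicativity: $\|P^{(k-1)}R_k\|_{2\to 0}\leq \|P^{(k-1)}\|_{0\to 0}\|R_k\|_{2\to 0}\leq Ch^2$. For the second sum, the boundedness of $V_k$ and $DV_k$ on $\mathcal{X}$ implies $\|V_k\|_{2\to 1}\leq C$, so it suffices to show $\|P^{(k-1)}-I\|_{1\to 0}\leq Ch$. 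This last estimate follows pointwise from the mean value theorem: for $g\in\mathcal{C}^1(\mathcal{X})$,
\begin{align*}
|P^{(k-1)}g(x) - g(x)| \leq \|g\|_1\, \mathbb{E}\|\Phi^{(k-1)}_{h\tau}(x) - x\| \leq C_* h\|g\|_1 \sum_{j=1}^{k-1}\mathbb{E}[\tau_j],
\end{align*}
where the second inequality uses a telescoping bound on the flow displacement together with the uniform bound $\|V_j\|_0\leq C_*$. Combining these estimates yields $\|P_h - (I+hV)\|_{2\to 0}\leq Ch^2$, and triangle inequality against the analogous expansion of $S_h$ delivers \eqref{3.3}.

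The main obstacle is the tight regularity accounting required throughout. Since each $V_k$ costs one order of differentiability, expanding $B_k$ beyond first order in $h$ already consumes the full $\mathcal{C}^2$ regularity of $f$, and iterated products like $V_iV_jV_k f$ are not in general pointwise defined for $f \in \mathcal{C}^2(\mathcal{X})$. The proof is therefore structured to stop at first order in $h$ and absorb all higher-order contributions into the remainder terms $R_k$ and $R^S$, while the loss of one derivative by $V_k$ in the telescope is compensated by the gain of one power of $h$ in the bound $\|P^{(k-1)}-I\|_{1\to 0}\leq Ch$. The interplay of the mixed norms $\|\cdot\|_{j\to\ell}$ with $j,\ell\in\{0,1,2\}$, their submultiplicativity, and the Markov contraction $\|B_k\|_{0\to 0}\leq 1$ make this bookkeeping tractable.
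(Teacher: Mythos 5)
Your proof is correct, and it takes a genuinely different route from the paper's. The paper proves the lemma via a variation-of-constants identity
\begin{align*}
\widetilde{S}_{h\tau}-S_h = \int_0^h S_{h-r}(V_\tau-V)\widetilde{S}_{r\tau}\,dr+\int_0^hS_{h-r}E_{r\tau}\,dr,
\end{align*}
where $V_\tau=\sum_k\tau_kV_k$ and $E_{r\tau}$ collects commutator terms $[\widetilde{S}^{(1,k-1)}_{r\tau},V_k]$; the expectation then kills the first integrand at leading order because $\mathbb{E}(V_\tau-V)=0$, and a second variation-of-constants estimate shows the commutators are themselves $\mathcal{O}(r)$. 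You instead exploit the factorization $P_h=B_1\cdots B_n$ (valid by independence of the $\tau_k$), Taylor-expand each factor as $B_k=I+hV_k+R_k$ with $\|R_k\|_{2\to 0}=\mathcal{O}(h^2)$, and telescope the product, controlling the cross terms via $\|P^{(k-1)}-I\|_{1\to 0}=\mathcal{O}(h)$. This is more elementary — no Lie brackets, no Duhamel formula — and the regularity bookkeeping you highlight (each application of a $V_k$ costs one derivative, repaid by a power of $h$) is exactly the right thing to track. The trade-off is that the paper's variation-of-constants framework is deliberately set up to be reused almost verbatim in the proof of Lemma~\ref{lem3.2}, where $m$ cycles are expanded at once and one needs to track the sum $\sum_{k=1}^{mn}(\tau_k-1)V_k$ as a single concentrating quantity; your one-cycle Taylor expansion would require more reorganization to extend to that setting. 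For the single lemma in question, both routes deliver the same $\mathcal{O}(h^2)$ bound.
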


\jcm{
\noindent Recalling from~\eqref{3.2} that
$P_h=\mathbb{E}(\widetilde{S}_{h\tau}^1)$, informally Lemma~\ref{lem3.1}
states
that the average difference between one step of random splitting and
the true dynamics is $\mathcal{O}(h^2)$ for sufficiently small
$h$. For any finite time interval $[0,t]$ we can leverage this result
to approximate $S_t$ by successive steps of $P_h$. Specifically,
choose $h$ sufficiently small so that~\eqref{3.3} holds and there
exists an integer $m$ with $mh=t$. Then the composition $P^m_h$
corresponds to $\mathcal{O}(1/h)$ steps of $P_h$. Consequently, since
the difference between $P_h$ and $S_h$ is $\mathcal{O}(h^2)$, the
difference between $P^m_h$ and $S_t$ is $\mathcal{O}(h)$.

\aa{A possible interpretation} of $\mathcal{O}(h^p)$ is given in \Cref{thrm3.1} and \Cref{3.1}. This choice matched the particular results being proved.
In  \Cref{thrm3.2} and
\Cref{lem3.2} below, we chose to quantify the error in another fashion,
though the same order of magnitude statements hold true. The same
basic reasoning can be used to prove \aa{the following.}
}
\begin{remark}
  As we have made minimal assumptions on the splitting, we will only
  be able to deduce that $P_h-S_h= \mathcal{O}(h^2)$. In specific examples, it is often
  possible to  arrange the splitting so that $P_h-S_h=
  \mathcal{O}(h^p)$ with $p > 2$. An example of a higher order
  splitting  is Strang splitting \cite{Strang}. Alternatively, higher order can also
  be obtained by fully randomizing the order \cite{Childs_Ostrander_Su_2019} or randomly choosing
  between one ordering and its reverse \cite{Kusuoka_2004,Ninomiya_Victoir_2008,Ninomiya_Ninomiya_2009}.
\end{remark}

\noindent\begin{proof}[Proof of Theorem~\ref{thrm3.1}] Let $h$ be sufficiently small that~\eqref{3.3} holds and such that $mh=t$ for some $m\in\mathbb{N}$. The quantity of interest can be written as the following telescoping sum:
\begin{align}\label{eq:Telescope}
	P^m_h-S_t &= \sum_{k=1}^m P^{k-1}_h(P_h-S_h)S_{h(m-k)}\,.
\end{align}
For any $k$ and continuous function $f$ with $\lVert f\rVert_0=1$,
\begin{align*}
	\lVert P^k_h f\rVert_0 &\leq \mathbb{E}\left(\big\lVert f\big(\Phi^k_{h\tau}\big)\big\rVert_0\right)
		= 1\,.
\end{align*}
Hence $\lVert P^k_h\rVert_{0\to 0}=1$. Similarly, since $mh=t$ implies $h(m-k)\leq t$ for $k\geq 0$ and $\mathcal{X}$ is bounded by Assumption~\ref{assump:Bounded} (so $\Psi$ and its first and second derivatives are bounded on $\mathcal{X}$, uniformly on $[0,t]$),
\begin{align*}
	\lVert S_{h(m-k)}\rVert_{2\to 2} &\leq K(t)
\end{align*}
for some $K(t)$ depending on $t$ but not $h$. Hence, by submultiplicity, \eqref{eq:Telescope}, and \Cref{lem3.1}, we have
\begin{align*}
	\lVert P^m_h-S_t\rVert_{2\to 0} &\leq \sum_{k=1}^m \lVert P^{k-1}_h\rVert_{0\to 0}\lVert P_h-S_h\rVert_{2\to 0}\lVert S_{h(m-k)}\rVert_{2\to 2}
		\leq K(t)\sum_{k=1}^m Ch^2
		= C(t)h\,,
\end{align*}
where $C(t) \coloneqq K(t)C$, with $C$ the constant from~\eqref{3.3} in Lemma~\ref{lem3.1}.
\end{proof}

\begin{remark}
Theorem \ref{thrm3.1} had the relation $h=t/m$, while in the almost-sure results below we will take $h=t/m^2$ (note we explicitly write $t/m^2$, making no reference to the variable $h$). The reason, loosely speaking, is that the transition kernel depends only on the expectation of the randomness, while the almost-sure results additionally depend on fluctuations of the randomness about its mean. For example, Lemma \ref{lem3.2} prepares for an application of the Borel-Cantelli lemma by establishing the summability of probabilities of ``large'' fluctuations over sets of $\mathcal{O}(m)=\mathcal{O}(1/\sqrt h)$ cycles. This is discussed in more detail at the end of this section and worked out in full in the Appendix.
\end{remark}

\begin{theorem}\label{thrm3.2}
Suppose Assumption~\ref{assump:Bounded} holds and fix $t>0$. Then for any $\epsilon > 0$,
\begin{align}\label{3.4}
\mathbb P \left( \limsup_{m \to \infty}	\lVert \widetilde{S}^{m^2}_{t\tau/m^2}-S_t\rVert_{2\to 0} > \epsilon\right) = 0\,.
\end{align}
\end{theorem}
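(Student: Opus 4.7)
The plan is to deduce \tref{thrm3.2} from \lref{lem3.2} via the first Borel-Cantelli lemma. The event $\{\limsup_{m\to\infty}\lVert\widetilde{S}^{m^2}_{t\tau/m^2} - S_t\rVert_{2\to 0} > \epsilon\}$ coincides with the event that $\lVert\widetilde{S}^{m^2}_{t\tau/m^2} - S_t\rVert_{2\to 0} > \epsilon$ occurs for infinitely many $m$, so it suffices to verify the summability $\sum_m \mathbb{P}(\lVert\widetilde{S}^{m^2}_{t\tau/m^2} - S_t\rVert_{2\to 0} > \epsilon) < \infty$. This is exactly the kind of bound I expect \lref{lem3.2} to supply, after which the theorem follows at once.

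The substance therefore lies in establishing \lref{lem3.2}. I would begin with the cycle-level telescoping already used in the proof of \tref{thrm3.1}, namely
\begin{align*}
\widetilde{S}^{m^2}_{t\tau/m^2} - S_t &= \sum_{k=1}^{m^2} \widetilde{S}^{k-1}_{t\tau/m^2}\bigl(\widetilde{S}^{1}_{t\tau^{(k)}/m^2} - S_{t/m^2}\bigr)S_{(m^2-k)t/m^2},
\end{align*}
where $\tau^{(k)}=(\tau_{(k-1)n+1},\ldots,\tau_{kn})$ is the $k$th batch of $n$ flow times. Using $\lVert\widetilde{S}^{k-1}_{t\tau/m^2}\rVert_{0\to 0}=1$ and $\lVert S_s\rVert_{2\to 2}\leq K(t)$ uniformly in $s\in[0,t]$ (by \aref{assump:Bounded} and \eqref{eq:Bounded}), submultiplicity reduces matters to a pathwise bound on each per-cycle error $\widetilde{S}^{1}_{t\tau^{(k)}/m^2} - S_{t/m^2}$. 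Expanding $\widetilde{S}^{(n)}_{t\tau_{kn}/m^2}\cdots\widetilde{S}^{(1)}_{t\tau_{(k-1)n+1}/m^2}$ and $S_{t/m^2}$ to second order along their flows, a Baker--Campbell--Hausdorff-style calculation identifies the leading discrepancy as $(t/m^2)\sum_{j=1}^n (\tau_{(k-1)n+j}-1)V_j$, producing the pathwise bound $C(t/m^2)\sum_{j=1}^n |\tau_{(k-1)n+j}-1| + C(t/m^2)^2$ in $\lVert\cdot\rVert_{2\to 0}$. Summing over $k$, the total pathwise error is at most $C(t/m^2)\sum_{k=1}^{m^2 n}|\tau_k - 1| + C t^2/m^2$.

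The main obstacle is then concentrating the random sum $\sum_{k=1}^{m^2 n}|\tau_k - 1|$ in a way whose probability tails sum over $m$. Because the $\tau_k$ are i.i.d.\ with exponential tails (\rref{r:exponential}), a Bernstein-type inequality gives $\mathbb{P}\bigl((t/m^2)\sum_{k=1}^{m^2 n}|\tau_k - 1| > \epsilon\bigr) \leq 2\exp\bigl(-c\, m^2(\epsilon^2\wedge\epsilon)\bigr)$, which decays faster than any polynomial in $m$ and is therefore summable. This is precisely why the scaling $h=t/m^2$ in \tref{thrm3.2} must be stronger than the $h=t/m$ of \tref{thrm3.1}: the standard deviation of $\sum_{k=1}^{m^2 n}(\tau_k-1)$ grows like $m$, so only division by $m^2$ simultaneously kills the pathwise fluctuations in the limit and yields a per-$m$ tail bound summable in $m$. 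Feeding this concentration estimate into the telescoping bound produces the summable probability statement required by \lref{lem3.2}, and the first Borel-Cantelli lemma then completes the proof of \eqref{3.4}.
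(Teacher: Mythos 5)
Your reduction via Borel--Cantelli, the telescoping structure, and the scaling heuristic $h=t/m^2$ all match the paper. The gap is in the proposed proof of \lref{lem3.2}. You telescope over the $m^2$ individual cycles and bound the per-cycle discrepancy pathwise by $C(t/m^2)\sum_{j=1}^n\lvert\tau_{(k-1)n+j}-1\rvert + C(t/m^2)^2$. Summing over $k$, the first piece becomes $C(t/m^2)\sum_{k=1}^{m^2 n}\lvert\tau_k-1\rvert$. But this quantity does \emph{not} vanish as $m\to\infty$: since $\mathbb{E}\lvert\tau_k-1\rvert=2/e>0$, its expectation equals $2Cnt/e$ independently of $m$. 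The asserted tail bound $\mathbb{P}\bigl((t/m^2)\sum_{k=1}^{m^2 n}\lvert\tau_k-1\rvert>\epsilon\bigr)\leq 2\exp(-c\,m^2(\epsilon^2\wedge\epsilon))$ is therefore false for $\epsilon$ below that mean; the random variable concentrates around $2Cnt/e$, not around $0$. Applying the triangle inequality cycle by cycle, before the signed sum $\sum_k(\tau_k-1)$ has a chance to cancel, destroys the mechanism that makes the theorem true.

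What the argument actually requires --- and what the paper does --- is to apply variation of constants \emph{once over each block of $m$ cycles}: one obtains
$\widetilde S^{m}_{h\tau} - S_{mh} = \int_0^h S_{m(h-r)}\bigl(\sum_{k=1}^{mn}(\tau_k-1)V_k\bigr)\widetilde S^{m}_{r\tau}\,dr + \int_0^h S_{m(h-r)} E^{(m)}_{r\tau}\,dr$, where $\sum_{k=1}^{mn}(\tau_k-1)V_k$ is kept as a \emph{single} operator. Its $1\to 0$ norm is then controlled by $C_*\sum_{k=1}^{n}\bigl\lvert\sum_{j=1}^{m}(\tau_j^{(k)}-1)\bigr\rvert$, where the absolute value is taken only \emph{after} summing the $m$ i.i.d.\ mean-zero fluctuations attached to each fixed $V_k$. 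By Lemmas~\ref{lemA1} and \ref{lemA2} these signed sums are $O(\sqrt m)$ with tail probabilities summable in $m$, so the first integral contributes $O(\sqrt m\,h)=O(m^{-3/2})$ per block; the commutator term $E^{(m)}_{r\tau}$, carrying $O(m^2)$ double sums, contributes $O(m^2 h^2)=O(m^{-2})$. Both are $o(1/m)$, and summing over the $m$ telescoping blocks closes the argument. Your BCH expansion correctly isolates the per-cycle first-order discrepancy $(t/m^2)\sum_j(\tau_j-1)V_j$; the error is in passing from this signed operator to $\sum_j\lvert\tau_j-1\rvert$ at the cycle level rather than at the block level, which forfeits the $\sqrt m$ cancellation the result depends on.
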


\begin{lemma}\label{lem3.2}
Suppose Assumption~\ref{assump:Bounded} holds and fix $t>0$. Then for any $\epsilon > 0$,
\begin{align}\label{3.5}
	\sum_{m=1}^\infty \mathbb{P}\left(\lVert \widetilde{S}^m_{t\tau/m^2}-S_{t/m}\rVert_{2\to 0} > \tfrac{\epsilon}{m}\right) &< \infty\,.
\end{align}
\end{lemma}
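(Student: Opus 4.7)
The plan is to reduce the operator norm estimate to a uniform-in-$x$ estimate on the random flow map, telescope over cycles, apply a martingale concentration inequality to the leading-order fluctuations, and upgrade the resulting pointwise tail bound to a supremum bound via a metric-entropy argument. Setting $h\coloneqq t/m^2$, for any $f$ with $\lVert f\rVert_2\leq 1$,
\begin{align*}
\lvert(\widetilde{S}^m_{h\tau}-S_{mh})f(x)\rvert=\lvert f(\Phi^m_{h\tau}(x))-f(\Psi_{mh}(x))\rvert\leq\lvert\Phi^m_{h\tau}(x)-\Psi_{mh}(x)\rvert,
\end{align*}
so it suffices to prove $\sum_m\mathbb{P}\bigl(\sup_{x\in\mathcal{X}}\lvert\Phi^m_{h\tau}(x)-\Psi_{mh}(x)\rvert>\epsilon/m\bigr)<\infty$.

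For fixed $x$, I would telescope by writing
\begin{align*}
\Phi^m_{h\tau}(x)-\Psi_{mh}(x)=\sum_{k=1}^m\bigl[\Psi_{(m-k)h}(\Phi^k_{h\tau}(x))-\Psi_{(m-k)h}(\Psi_h(\Phi^{k-1}_{h\tau}(x)))\bigr]
\end{align*}
and using that each $\Psi_s$ is Lipschitz with constant $\leq e^{C_* s}\leq e^{C_* t}$ for $s\leq mh=t/m$. With $y_{k-1}\coloneqq\Phi^{k-1}_{h\tau}(x)$ and the one-cycle error $E_k(y)\coloneqq\varphi^{(n)}_{h\tau_n^{(k)}}\circ\cdots\circ\varphi^{(1)}_{h\tau_1^{(k)}}(y)-\Psi_h(y)$, this yields $\lvert\Phi^m_{h\tau}(x)-\Psi_{mh}(x)\rvert\leq e^{C_* t}\sum_{k=1}^m\lvert E_k(y_{k-1})\rvert$. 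A second-order Taylor expansion of each $\varphi^{(j)}_{h\tau_j^{(k)}}$ and of $\Psi_h$ (justified by the uniform bounds on $V_k$, $DV_k$, $D^2V_k$ from \aref{assump:Bounded}) gives
\begin{align*}
E_k(y)=h\sum_{j=1}^n(\tau_j^{(k)}-1)V_j(y)+h^2 R_k(y,\tau^{(k)}),
\end{align*}
where $R_k$ is polynomial (quadratic) in $\tau^{(k)}$ with coefficients bounded uniformly on $\mathcal{X}$.

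The leading-order sum $M\coloneqq h\sum_{k=1}^m\sum_{j=1}^n(\tau_j^{(k)}-1)V_j(y_{k-1})$ is a vector-valued martingale in $k$ with respect to the filtration generated by the first $k-1$ cycles of randomness, since $y_{k-1}$ is measurable with respect to this filtration while $\tau_j^{(k)}-1$ is mean-zero, independent of it, and sub-exponential. Its per-step conditional variance is $\mathcal{O}(h^2)$ and its total variance is $\mathcal{O}(mh^2)=\mathcal{O}(t^2/m^3)$, so a Bernstein-type inequality for martingale differences with sub-exponential increments (e.g.\ Freedman's inequality combined with a standard truncation) yields $\mathbb{P}(\lvert M\rvert>\epsilon/(4e^{C_* t}m))\leq C\exp(-c\epsilon^2 m)$ for $m$ large. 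The remainder $h^2\sum_k R_k(y_{k-1},\tau^{(k)})$ is a sum of iid-in-$k$ nonnegative terms with mean $\mathcal{O}(mh^2)=\mathcal{O}(1/m^3)$ and exponential tails (using \rref{r:exponential}), and exceeds $\epsilon/(4e^{C_* t}m)$ with probability at most $\exp(-cm)$ for $m$ large. Together these yield $\mathbb{P}(\lvert\Phi^m_{h\tau}(x)-\Psi_{mh}(x)\rvert>\epsilon/(2m))\leq C\exp(-c\epsilon^2 m)$ at any fixed $x$.

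To upgrade to $\sup_x$, note that $\Phi^m_{h\tau}$ is Lipschitz with constant at most $\exp\bigl(C_* h\sum_{k=1}^m\sum_{j=1}^n\tau_j^{(k)}\bigr)$, which by concentration of the sum of $mn$ iid Exp(1) variables around its mean $mn$ is bounded by some deterministic $L>0$ off an event of probability $\leq\exp(-cm)$, while $\Psi_{mh}$ is Lipschitz with constant $\leq e^{C_* t}$. Since $\mathcal{X}$ is bounded by \aref{assump:Bounded}, a $(\epsilon/(4Lm))$-net of $\mathcal{X}$ has cardinality $\mathcal{O}(m^d)$. Applying the pointwise bound at each net point and taking a union bound yields
\begin{align*}
\mathbb{P}\Bigl(\sup_{x\in\mathcal{X}}\lvert\Phi^m_{h\tau}(x)-\Psi_{mh}(x)\rvert>\epsilon/m\Bigr)\leq Cm^d\exp(-c\epsilon^2 m)+\exp(-cm),
\end{align*}
which is summable. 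The main obstacle is the simultaneous bookkeeping of two concentration layers, namely the martingale concentration for the leading-order fluctuations and the auxiliary concentration controlling the random Lipschitz constant of $\Phi^m_{h\tau}$, while ensuring the polynomial-in-$m$ factor from the union bound is absorbed by the exponentially small pointwise tails.
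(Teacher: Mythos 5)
Your overall strategy (pointwise reduction, telescoping over cycles, martingale concentration for the leading-order fluctuations, chaining/net argument to upgrade to the supremum) is genuinely different from the paper's, which works at the semigroup level: the paper writes $\widetilde{S}^m_{h\tau}-S_{mh}$ via variation of constants as $\int_0^h S_{m(h-r)}\big(\sum_{k}(\tau_k-1)V_k\big)\widetilde{S}^m_{r\tau}\,dr+\int_0^h S_{m(h-r)}E^{(m)}_{r\tau}\,dr$ and then applies Chernoff-type concentration (Lemmas~\ref{lemA1}, \ref{lemA2}) directly to $\sum_k(\tau_k-1)$, with no pointwise reduction or net needed. But as written your proof has a real gap.

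The problem is the step ``this yields $|\Phi^m_{h\tau}(x)-\Psi_{mh}(x)|\leq e^{C_*t}\sum_{k=1}^m|E_k(y_{k-1})|$.'' Taking absolute values inside the telescope destroys exactly the cancellation your martingale argument is supposed to exploit. Writing $L_k:=h\sum_j(\tau_j^{(k)}-1)V_j(y_{k-1})$, one has $\sum_k|E_k|\leq\sum_k|L_k|+h^2\sum_k|R_k|$, but $\sum_k|L_k|$ is \emph{not} controlled by $|M|=\big|\sum_k L_k\big|$; the inequality runs the wrong way. Quantitatively, $\mathbb{E}|L_k|\asymp h$, so $\mathbb{E}\sum_k|L_k|\asymp mh=t/m$ with a constant of order $tC_*n$ (independent of $\epsilon$), and by the law of large numbers $\sum_k|L_k|$ concentrates near this mean. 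Thus for any fixed $\epsilon$ smaller than that constant, $\mathbb{P}\big(\sum_k|L_k|>\epsilon/(Cm)\big)\to 1$, and the claimed exponential bound on $\mathbb{P}\big(|\Phi^m_{h\tau}(x)-\Psi_{mh}(x)|>\epsilon/(2m)\big)$ cannot follow from your displayed inequality. The martingale concentration you invoke applies to the signed sum $M$, not to $\sum_k|L_k|$.

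The fix is to \emph{linearize} $\Psi_{(m-k)h}$ rather than merely invoke its Lipschitz constant: keep the telescope as an identity and expand each term as $D\Psi_{(m-k)h}(\Psi_h(y_{k-1}))\,E_k(y_{k-1})+O(\|D^2\Psi_{(m-k)h}\|_\infty\,|E_k(y_{k-1})|^2)$. Since $D\Psi_{(m-k)h}(\Psi_h(y_{k-1}))$ is $\mathcal{F}_{k-1}$-measurable, $D\Psi_{(m-k)h}(\Psi_h(y_{k-1}))\,h\sum_j(\tau_j^{(k)}-1)V_j(y_{k-1})$ is a genuine martingale difference with conditional variance $O(h^2)$, giving total variance $O(mh^2)=O(t^2/m^3)$ and hence deviations $O(m^{-3/2})\ll\epsilon/m$; the quadratic remainder contributes $O(mh^2)=O(1/m^3)$. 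With that repair (and the minor point that $\|Df\|\leq 1$ gives Lipschitz in the geodesic metric on $\mathcal{X}$, which is comparable to the Euclidean one on the compact orbit), the rest of your scheme --- Freedman's inequality for the martingale, tail bounds for the remainder, and the $\epsilon/(4Lm)$-net of $\mathcal{X}$ with union bound absorbed by the exponential tails --- should go through.
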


\jcm{  \begin{remark}
   There is a relationship between \Cref{thrm3.1,thrm3.2} and the averaging results from
    Wentzell-Freidlin theory, e.g.,
    \cite[Theorem 2.1, Chapter 7]{WentzellFreidlin}.  This theorem builds on local results like \Cref{lem3.1,lem3.2}. Since our averaging is
    that of a deterministic, cyclic process, the calculations can be
    more explicit and more precise. We are able to prove using simple
    calculations that the local error is  \aa{$\mathcal O(h^2)$} which leads to $\mathcal O(h)$ error over order one times. Typical soft averaging
    results prove a local error of $o(h)$\,\footnote{\jcm{$f(h)$ is
      $o(g(h))$ when $h \to 0$  if $\lim  f(h)/g(h)=0$ as $h \to
      0$. $f(h)$ is
      $O(g(h))$ when $h \to 0$ if $\lim |f(h)/g(h)|  \in (0,\infty)$. }} and then simply conclude
    that the order one error goes to 0. Of course, more  careful
    calculations are possible in the averaging setting. However, the
    simple structure of our problems, where the only randomness is in
    the switching times and not the orderings, allows for the direct,
    straightforward proofs we have presented.
  \end{remark}}

\begin{proof}[Proof of Theorem~\ref{thrm3.2}] By the Borel-Cantelli Lemma it suffices to show
\begin{align*}
	\sum_{m=1}^\infty \mathbb{P}\left(\lVert \widetilde{S}^{m^2}_{t\tau/m^2}-S_t\rVert_{2\to 0} > \epsilon\right) &< \infty\,.
\end{align*}
Consider the telescoping sum
\begin{align}\label{eq:Telescope2}
	\widetilde{S}^{m^2}_{t\tau/m^2}-S_t &= \sum_{k=1}^m \widetilde{S}^{(k-1)}_{t\tau/m^2}\left(\widetilde{S}^m_{t\tau/m^2}-S_{t/m}\right)S_{(m-k)t/m}\,.
\end{align}
For any $k$ and continuous function $f$ with $\lVert f\rVert_0=1$,
\begin{align*}
	\big\lVert \widetilde{S}^k_{t\tau/m^2}f \big\rVert_0 &= \big\lVert f\big(\Phi^k_{h\tau}\big)\big\rVert_0
		= 1\,.
\end{align*}
Hence $\lVert \widetilde{S}^{(k-1)}_{t\tau/m^2}\rVert_{0\to 0}=1$. Similarly, since $(m-k)t/m\leq t$ for $k\geq 0$ and $\mathcal{X}$ is bounded by Assumption~\ref{assump:Bounded} (so $\Psi$ and its first and second derivatives are bounded on $\mathcal{X}$, uniformly on $[0,t]$),
\begin{align*}
	\lVert S_{(m-k)t/m}\rVert_{2\to 2} &\leq K(t)
\end{align*}
for some $K(t)$ depending on $t$ but not $h$. Hence, by submultiplicity, \eqref{eq:Telescope2}, and \Cref{lem3.2}, we have
\begin{align*}
	\big\lVert\widetilde{S}^{m^2}_{t\tau/m^2}-S_t\big\rVert_{2\to 0} &\leq K(t)\sum_{k=1}^m \big\lVert\widetilde{S}^m_{t\tau/m^2}-S_{t/m}\big\rVert_{2\to 0}
		= K(t)m\big\lVert\widetilde{S}^m_{t\tau/m^2}-S_{t/m}\big\rVert_{2\to 0}\,,
\end{align*}
and hence by Lemma~\ref{lem3.2},
\begin{align*}
	\sum_{m=1}^\infty \mathbb{P}\left(\big\lVert \widetilde{S}^{m^2}_{t\tau/m^2}-S_t\big\rVert_{2\to 0} > \epsilon\right) &\leq \sum_{m=1}^\infty \mathbb{P}\left(\big\lVert\widetilde{S}^m_{t\tau/m^2}-S_{t/m}\big\rVert_{2\to 0} > \tfrac{\epsilon}{K(t)m}\right)
		< \infty. \qedhere
\end{align*}
\end{proof}

We conclude this section by sketching the proofs of Lemmas \ref{lem3.1} and \ref{lem3.2}, which are inspired by ideas from \cite{Childs_Ostrander_Su_2019,Childs_Su_Tran_Wiebe_Zhu_2021} and given in full detail in the Appendix. \om{In what follows we set $\widetilde{S}_{h\tau}\coloneqq \widetilde{S}^1_{h\tau}$ and define $\widetilde{S}^{(i,j)}_{h\tau}\coloneqq \widetilde{S}^{(i)}_{h\tau}\cdots\widetilde{S}^{(j)}_{h\tau}$.} Consider first Lemma \ref{lem3.1}. Differentiating $\widetilde{S}_{h\tau}$ in $h$ gives
\begin{align*}
	\partial_h\widetilde{S}_{h\tau} &= \sum_{k=1}^n \tau_k e^{h\tau_1}\cdots e^{h\tau_{k-1}}V_k e^{h\tau_k}\cdots e^{h\tau_n}
		= \sum_{k=1}^n \tau_k \widetilde{S}^{(1,k-1)}_{h\tau}V_k\widetilde{S}^{(k,n)}_{h\tau}.
\end{align*}
Next, commute $\widetilde{S}^{(1,k-1)}_{h\tau}$ and $V_k$ via the Lie bracket $[\widetilde{S}^{(1,k-1)}_{h\tau}, V_k]\coloneqq\widetilde{S}^{(1,k-1)}_{h\tau}V_k-V_k\widetilde{S}^{(1,k-1)}_{h\tau}$ to get
\begin{align*}
		\partial_h\widetilde{S}_{h\tau} &= \sum_{k=1}^n \tau_kV_k\widetilde{S}_{h\tau}+\sum_{k=1}^n \tau_k[\widetilde{S}^{(1,k-1)}_{h\tau}, V_k]\widetilde{S}^{(k,n)}_{h\tau}
		= V\widetilde{S}_{h\tau}+(V_\tau-V)\widetilde{S}_{h\tau}+E_{h\tau}
\end{align*}
where $V_\tau\coloneqq \sum_{k=1}^n \tau_kV_k$ and $E_{h\tau}\coloneqq \sum_{k=1}^n \tau_k[\widetilde{S}^{(1,k-1)}_{h\tau}, V_k]\widetilde{S}^{(k,n)}_{h\tau}$. So, by variation of constants,
\begin{align}\label{4.10}
	\widetilde{S}_{h\tau}-S_h &= \int_0^h S_{h-r}(V_\tau-V)\widetilde{S}_{r\tau} dr+\int_0^hS_{h-r}E_{r\tau} dr.
\end{align}
Loosely speaking, the first integrand is $\mathcal{O}(h)$ because
\begin{equ}\label{e:exp=0}
	\mathbb{E}(V_\tau-V) = \sum_{k=1}^n \mathbb{E}(\tau_k-1)V_k
		= 0
\end{equ}
cancels first order terms from the full expression, $S_{h-r}(V_\tau-V)\widetilde{S}_{r\tau}$. On the other hand the second integrand is $\mathcal{O}(h)$ because the bracket terms in $E_{h\tau}$ also cancel first order terms (most of the work of the proof in the Appendix is making these two statements precise). Thus, integrating these $\mathcal{O}(h)$ terms over the interval $(0,h)$, the difference on the right side of \eqref{4.10} is $\mathcal{O}(h^2)$ as claimed.

\aa{The proof of Lemma \ref{lem3.2} is structurally similar to the one sketched above in that it again begins with an application of variation of constants. However, in this case our analysis aims to establish a concentration estimate and can therefore not rely solely on the vanishing first moment in as in \eref{e:exp=0}. Instead, morally speaking, we expect the desired estimate to hold because of the averaging of iid flow times $\tau_i$ in the homologue of \eref{e:exp=0}. In order to capture such averaging, we cannot limit our analysis to one cycle, but have to consider a variation of constants estimate on $m\gg 1$ such cycles:
\begin{align}
	\widetilde{S}_{h\tau}^m-S_{mh} &= \int_0^h S_{m(h-r)} (V_\tau-V)\widetilde{S}_{r\tau}^m dr+\int_0^hS_{m(h-r)}^m E_{r\tau}^{(m)} dr
\end{align}\
where now $E_{r\tau}^{(m)} \coloneqq \sum_{k=1}^{mn} \tau_k[\widetilde{S}^{(1,k-1)}_{h\tau}, V_k]\widetilde{S}^{(k,n)}_{h\tau}$. Note that the second term contains $\mathcal O(m^2)$ commutators, each contributing $\mathcal O(h^2)$ as in the previous analysis. On the other hand, once integrated, the difference in the first integral, $\sum_{k=1}^{mn} (\tau_k-1)V_k$, scales as $\mathcal O(\sqrt{m}h )$ by the central limit theorem for iid random variables. In order to have both terms decay faster than $\mathcal O(1/m)$ we choose $m\sim \mathcal O(1/\sqrt{h})$, whence the relation $h=t/m^2$.}


\section{Conservative Lorenz-96}\label{sec:Lorenz}


In this section, we apply results of the previous sections to the conservative Lorenz-96 model introduced in Section~\ref{sec:MotivatingExamples}. There we noted the vector field $V$ in~\eqref{5.2} splits as~\eqref{5.3} where the flow of each $V_k$ is a rotation. Specifically, each flow $\varphi^{(k)}$ of the splitting vector fields
\begin{align}\label{eq:LorenzSplittingVF}
	V_k(x)	&= (x_{k+1}e_k-x_ke_{k+1})x_{k-1}
\end{align}
is a rotation in the $(x_k,x_{k+1})$-plane with angular velocity $x_{k-1}$ and therefore preserves Euclidean norm, which we refer to as the \textit{energy} of the system. Throughout this section $\mathcal{V}$ denotes the family of splitting vector fields corresponding to \eqref{eq:LorenzSplittingVF}. By the preceding remarks \om{every $\mathcal{V}$-orbit lies on a sphere} centered at the origin in $\mathbb{R}^n$. In particular, we have

\begin{prop}\label{prop:LorenzConvergence}
\jcm{All the finite time convergence results} of Section~\ref{sec:Convergence} apply to the random splitting \eqref{5.3} of conservative Lorenz-96 starting
from any initial condition.
\end{prop}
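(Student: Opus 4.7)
The plan is simply to verify that \Cref{assump:Bounded} holds for the splitting \eqref{5.3}, since \Cref{thrm3.1} and \Cref{thrm3.2} (the finite time convergence results) apply immediately once we know that every $\mathcal{V}$-orbit is bounded. So the whole task reduces to showing that each $\mathcal{X}(x)$ lies in a bounded subset of $\mathbb{R}^n$, and this will follow from conservation of Euclidean norm by each flow $\varphi^{(k)}$.

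First I would check conservation of energy along each individual flow. For $V_k(x) = (x_{k+1} e_k - x_k e_{k+1}) x_{k-1}$, observe that the only coordinates affected by $V_k$ are $x_k$ and $x_{k+1}$; in particular $x_{k-1}$ is constant along the flow of $V_k$. A direct computation gives
\begin{equation*}
\tfrac{d}{dt}\bigl(x_k^2 + x_{k+1}^2\bigr) = 2x_k \dot x_k + 2 x_{k+1}\dot x_k = 2 x_{k-1}\bigl(x_k x_{k+1} - x_{k+1}x_k\bigr) = 0,
\end{equation*}
so $\|x\|^2 = \sum_j x_j^2$ is preserved by $\varphi^{(k)}_t$ for every $t \in \mathbb{R}$ and every $k \in \{1,\dots,n\}$. (In fact each $\varphi^{(k)}_t$ is a rotation in the $(x_k,x_{k+1})$-plane with constant angular velocity $x_{k-1}$, as noted before the statement.)

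Next I would use this to bound $\mathcal{V}$-orbits. Since each $\varphi^{(k)}_t$ preserves $\|\cdot\|$, so does any composition $\Phi^m_t$ appearing in the definition \eqref{eq:X}. Consequently,
\begin{equation*}
\mathcal{X}(x) \subseteq \bigl\{ y \in \mathbb{R}^n : \|y\| = \|x\| \bigr\},
\end{equation*}
which is bounded. Thus \Cref{assump:Bounded} is satisfied for the splitting $\mathcal{V}$ of conservative Lorenz-96, starting from any $x \in \mathbb{R}^n$.

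Finally, since the vector fields $V_k$ in \eqref{eq:LorenzSplittingVF} are polynomial, hence $\mathcal{C}^2$ and complete on each bounded $\mathcal{V}$-orbit, \Cref{thrm3.1} and \Cref{thrm3.2} apply directly on $\mathcal{X}(x)$ for any initial condition $x$, giving the claimed finite time convergence of the random splitting to the true dynamics $\Psi_t$. There is no real obstacle here; the only thing to be careful about is confirming that $x_{k-1}$ is untouched by $V_k$ so that the computation of $\tfrac{d}{dt}(x_k^2 + x_{k+1}^2)$ really does collapse, and this is immediate from the explicit form of $V_k$.
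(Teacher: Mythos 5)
Your proof is correct and takes essentially the same approach as the paper: both reduce to verifying Assumption~\ref{assump:Bounded} by observing that each $\varphi^{(k)}$ is a norm-preserving rotation, so every $\mathcal{V}$-orbit lies on a sphere; you simply spell out the computation the paper treats as already established. (Minor typo only: in the displayed derivative, the second term should read $2x_{k+1}\dot{x}_{k+1}$, not $2x_{k+1}\dot{x}_k$.)
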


\begin{proof}
The splitting vector fields are smooth and Assumption \ref{assump:Bounded} is satisfied since every $\mathcal{V}$-orbit lies on a sphere, so the conclusions of Theorems \ref{thrm3.1} and \ref{thrm3.2} both hold.
\end{proof}


\subsection{Ergodicity}\label{sec:ErgodicityLorenz-96}


A complicating feature of the conservative Lorenz-96 dynamics is that it has fixed points. Specifically, a point $x$ in $\mathbb{R}^n$ is a fixed point of~\eqref{5.2} if and only if $\sum_{k=1}^n (x_k^2+x_{k+1}^2)x_{k-1}^2=0$. For a 2-sphere embedded in $\mathbb{R}^3$ these are precisely the 6 points of intersection of the sphere with the standard coordinate axes. In higher dimensions, these fixed points lie on submanifolds that in general have dimension greater than 0 and in particular are no longer isolated. Nevertheless, nonfixed points cannot reach fixed points in finite time; \om{in fact, the following result shows there is precisely one $\mathcal{V}$-orbit on each sphere that contains all the nonfixed points on that sphere.}

\begin{prop}\label{prop5.1}
If $x$ is a nonfixed point of the conservative Lorenz-96 equations, then
\begin{align}\label{eq:X_L96}
	\mathcal{X}(x) &= \mathcal{X}
		\coloneqq \bigg\{y\in\mathbb{R}^n: \lVert y\rVert = R\ \text{and}\ \sum_{k=1}^n (y_k^2+y_{k+1}^2)y_{k-1}^2\neq 0\bigg\},
\end{align}
where $R=\lVert x\rVert$. Furthermore \jcm{the random splitting of conservative Lorenz-96 is uniquely ergodic on $\mathcal{X}$:} for all $h>0$
the volume form $\lambda$ is the unique
$P_h$-invariant probability measure on $\mathcal{X}$.
\end{prop}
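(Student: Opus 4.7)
The plan is to establish the two assertions of the proposition—the orbit identification $\mathcal{X}(x)=\mathcal{X}$ and unique ergodicity with invariant measure $\lambda$—by applying \Cref{cor:nagano} and separately exhibiting $\lambda$ as an invariant probability measure. For the orbit identification, the inclusion $\mathcal{X}(x)\subseteq\mathcal{X}$ rests on two observations. First, each $V_k$ generates a rotation in the $(y_k,y_{k+1})$-plane with angular velocity $y_{k-1}$, which is conserved along that flow, so $\varphi^{(k)}$ preserves the Euclidean norm and hence the sphere of radius $R=\lVert x\rVert$. Second, the closed set $F\coloneqq\{y:V_k(y)=0 \text{ for all } k\}=\{y:\sum_k (y_k^2+y_{k+1}^2)y_{k-1}^2=0\}$ consists of simultaneous equilibria of every $\varphi^{(k)}$, so $F$ is invariant under each flow and, by time reversibility, so is its complement. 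Hence $\mathcal{X}$ is preserved by every $\varphi^{(k)}$, giving $\mathcal{X}(x)\subseteq\mathcal{X}$.

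The reverse inclusion $\mathcal{X}\subseteq\mathcal{X}(x)$ would follow from the orbit theorem combined with Nagano's theorem \Cref{thrm:nagano}, provided I can verify the Lie bracket condition at some point of $\mathcal{X}$. Since the $V_k$ are polynomial, hence analytic, Nagano's theorem propagates the bracket condition from one point to all of $\mathcal{X}$; the orbit theorem then guarantees each $\mathcal{V}$-orbit is an open submanifold of $\mathcal{X}$ of full dimension, and $\mathcal{X}$ is connected (as the complement in the sphere $S^{n-1}_R$ of the low-dimensional analytic set $F\cap S^{n-1}_R$, which for $n\geq 4$ does not disconnect the sphere), so $\mathcal{X}$ consists of a single orbit. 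Verifying the Lie bracket condition is the main obstacle of the proof: I would choose an explicit $x_*\in\mathcal{X}$ with generic coordinates (nonzero and distinct, chosen to avoid degenerate cancellations) and compute iterated commutators of the $V_k$ at $x_*$. Since each $[V_j,V_k]$ introduces mixing among coordinates indexed by neighbors of $j$ and $k$ in the cyclic structure, I expect repeated bracketing to propagate support around the index cycle and eventually span an $(n-1)$-dimensional subspace—the tangent space $T_{x_*}\mathcal{X}$.

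With $\mathcal{X}(x)=\mathcal{X}$ in hand, the positive-time accessibility hypothesis of \Cref{cor:nagano} is verified by noting that along any $\mathcal{V}$-orbit each $\varphi^{(k)}$ is either the identity (if $y_{k-1}=0$) or periodic with period $2\pi/\lvert y_{k-1}\rvert$, so any negative-time flow can be replaced by a sufficiently long positive-time flow within the same orbit. Thus \Cref{cor:nagano} yields at most one $P_h$-invariant probability measure on $\mathcal{X}$. For existence and identification with $\lambda$, I would verify that each $V_k$ is divergence-free on $\mathbb{R}^n$—the two nonzero components of $V_k$ are independent of $y_k$ and $y_{k+1}$, so the relevant partial derivatives vanish—and that $V_k$ is tangent to every concentric sphere. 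Disintegrating Lebesgue measure on $\mathbb{R}^n$ into a radial measure and the induced volume form on each sphere, the flow $\varphi^{(k)}$ preserves the spherical volume. Hence $P_h$ preserves $\lambda$ on $\mathcal{X}$, and normalizing $\lambda$ (which has finite mass since $\mathcal{X}$ is an open subset of a compact sphere) yields the unique $P_h$-invariant probability measure.
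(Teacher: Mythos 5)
Your proposal has a genuine gap in the orbit-identification step. After correctly establishing $\mathcal{X}(x)\subseteq\mathcal{X}$, you attempt the reverse inclusion by checking the Lie bracket condition at a single generic point $x_*$ and invoking Nagano's theorem together with connectedness of $\mathcal{X}$. But \Cref{thrm:nagano} propagates the bracket condition along a single $\mathcal{V}$-orbit, not across the set $\mathcal{X}$, which is not yet known to be an orbit---that is precisely what must be proved. The bracket condition at $x_*$ implies that $\mathcal{X}(x_*)$ is open in the sphere; it does not imply that \emph{every} orbit contained in $\mathcal{X}$ is open, so connectedness alone cannot force $\mathcal{X}$ to be one orbit: a connected set can split into an open orbit and a closed, lower-dimensional residual orbit, and the assertion that ``each $\mathcal{V}$-orbit is an open submanifold of $\mathcal{X}$ of full dimension'' is unjustified. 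At a nongeneric nonfixed point (some $x_j=0$, e.g. $x_1=0$ forces $V_2(x)=0$), entire columns of the matrix of the $V_k$ vanish, and whether iterated brackets restore full rank there is exactly what would need to be computed. The paper sidesteps this by an explicit controllability argument: it steers any nonfixed point, by positive-time rotations, first to a generic point and then coordinate-by-coordinate to the single target $x_*=(R/\sqrt n,\dots,R/\sqrt n)$, which simultaneously shows all nonfixed points share one orbit and supplies the accessibility hypothesis of \Cref{cor:ergodic}.

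Two further remarks. First, your plan to ``compute iterated commutators'' and the expectation that bracketing ``propagates support around the index cycle'' overcomplicates the verification at $x_*$: the paper observes that the $V_k$ themselves already span $T_{x_*}\mathcal{X}$ at any point with all coordinates nonzero, since the $n\times(n-1)$ matrix of the splitting fields has a triangular structure with diagonal entries $x_2x_n,\,x_3x_1,\dots,x_nx_{n-2}$, all nonzero; no brackets are required. Second, your treatment of positive-time accessibility via periodicity of the rotations, and of the invariance of $\lambda$ via divergence-freeness and disintegration of Lebesgue measure into radial and spherical parts, both align with the paper's argument and are correct.
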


\begin{corollary}\label{cor:lorenz_sphere}
For all $h>0$ the volume form $\lambda$ on $S^{n-1}(R)\coloneqq\{x\in\mathbb{R}^n:\lVert x\rVert=R\}$ is the unique ergodic $P_h$-invariant probability measure on $S^{n-1}(R)$ that is absolutely continuous with respect to $\lambda$.
\end{corollary}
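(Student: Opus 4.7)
The plan is to reduce Corollary~\ref{cor:lorenz_sphere} to Proposition~\ref{prop5.1} by showing that the complement $F \coloneqq S^{n-1}(R) \setminus \mathcal{X}$ is a $\lambda$-null, $P_h$-invariant subset of the sphere. By \eqref{eq:X_L96}, $F$ is the intersection with $S^{n-1}(R)$ of the zero set of the polynomial $g(x) \coloneqq \sum_{k=1}^n (x_k^2+x_{k+1}^2)x_{k-1}^2$. Since $g$ is a nonzero real-analytic function on the connected real-analytic manifold $S^{n-1}(R)$ (for instance, $g(\tfrac{R}{\sqrt{2}}(e_1+e_2)) = R^4/4 \neq 0$), its vanishing set has $\lambda$-measure zero. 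Moreover every $x \in F$ satisfies $V_k(x)=0$ for all $k$, so $F$ consists entirely of common fixed points of the flows $\varphi^{(k)}$, and consequently both $F$ and $\mathcal{X}$ are $P_h$-invariant.

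With these two facts in hand, $P_h$-invariance of $\lambda$ on $S^{n-1}(R)$ follows at once: Proposition~\ref{prop5.1} already provides $P_h$-invariance of $\lambda|_\mathcal{X}$ on $\mathcal{X}$, and extending by zero across the null, invariant set $F$ does not disturb invariance, giving the $P_h$-invariant probability measure $\lambda$ on all of $S^{n-1}(R)$.

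For uniqueness and ergodicity I would argue as follows. Let $\mu$ be any ergodic $P_h$-invariant probability measure on $S^{n-1}(R)$ with $\mu \ll \lambda$. Then $\mu(F) = 0$, so $\mu$ is concentrated on the invariant set $\mathcal{X}$ and its restriction $\mu|_\mathcal{X}$ is a $P_h$-invariant probability measure on $\mathcal{X}$. Proposition~\ref{prop5.1} then forces $\mu|_\mathcal{X} = \lambda|_\mathcal{X}$, and hence $\mu = \lambda$ on $S^{n-1}(R)$. The same comparison shows $\lambda$ itself is ergodic: any nontrivial convex decomposition of $\lambda$ into distinct ergodic $P_h$-invariant probabilities would yield at least two invariant probability measures on $\mathcal{X}$ that are absolutely continuous with respect to $\lambda|_\mathcal{X}$, contradicting the uniqueness part of Proposition~\ref{prop5.1}.

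The only genuinely delicate step I anticipate is the bookkeeping used to transfer $P_h$-invariance between $S^{n-1}(R)$ and the open invariant submanifold $\mathcal{X}$. Both directions rest on the two properties of $F$ already verified, that $F$ is forward-invariant under $P_h$ and $\lambda$-null, which together allow measures absolutely continuous with respect to $\lambda$ to be freely restricted to or extended from $\mathcal{X}$ without disturbing invariance; once this is recognized, the content of the corollary is really just that of Proposition~\ref{prop5.1}.
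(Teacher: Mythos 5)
Your proposal is correct and reduces the corollary to Proposition~\ref{prop5.1} in essentially the same way the paper does: both arguments rest on the fact that $F = S^{n-1}(R)\setminus\mathcal{X}$ is a closed, $\lambda$-null, $P_h$-invariant set, so that the volume form on the sphere coincides with $\lambda$ on the orbit $\mathcal{X}$. The only (minor) difference is in the final uniqueness step: the paper simply invokes mutual singularity of ergodic invariant measures to conclude any other ergodic invariant measure is singular with respect to $\lambda$, whereas you restrict a competing $\mu\ll\lambda$ to $\mathcal{X}$ and re-apply the uniqueness clause of Proposition~\ref{prop5.1} directly (and similarly to rule out a nontrivial ergodic decomposition of $\lambda$); both routes are valid and equally short, with yours being slightly more self-contained at the cost of reusing the invariance/restriction bookkeeping twice.
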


\begin{proof}
$\mathcal{X}$ in \eqref{eq:X_L96} is the complement of a closed, measure zero subset of $S^{n-1}(R)$. Thus $\lambda$ on $\mathcal{X}$ agrees with the volume form, also denoted $\lambda$, on $S^{n-1}(R)$. In particular, $\lambda$ is an ergodic invariant measure on $S^{n-1}(R)$ by \Cref{prop5.1}. Since ergodic invariant measures are mutually singular, see e.g. \cite{HairerConvergence}, any other ergodic invariant measure on $S^{n-1}(R)$ must be singular with respect to $\lambda$.
\end{proof}

\begin{proof}[Proof of \Cref{prop5.1}]
Let $x$ be a nonfixed point with $\lVert x\rVert=R$. We first prove $x$ can be mapped via the split dynamics to $x_*\coloneqq(R/\sqrt{n},\dots, R/\sqrt{n})$. Since $x$ is a nonfixed point, i.e. $\sum_{k=1}^n (x_k^2+x_{k+1}^2)x_{k-1}^2\neq 0$, there exists $k$ such that $x_{k-1}\neq 0$ and $x_k$ or $x_{k+1}$ is nonzero.
Now, since $\varphi^{(k)}$ is a rotation in the $(x_k,x_{k+1})$-plane with angular velocity $x_{k-1}$, there is a $t_k$ such that both $k$ and $k+1$ coordinates of $\varphi^{(k)}(x,t_k)$ are nonzero. By the same argument there is a $t_{k+1}$ such that the $k$, $k+1$, and $k+2$ coordinates of $x^{(k+1)}=\varphi^{(k+1)}(\varphi^{(k)}(x,t_k),t_{k+1})$ are nonzero. Continuing this way, we see $x$ can be made to have nonzero coordinates in a finite number of steps.

Now since $\lVert x\rVert=R$, there exists an index $k$ such that
$\lvert x_k\rvert\geq R/\sqrt{n}$. If $k=n$, rotate in the
$(n-1,n)$-plane so that the $n$th coordinate of $x$ becomes
$R/\sqrt{n}$. If $k<n$, rotate in the $(k,k+1)$-plane so that the
$k+1$ coordinate of $x$ becomes $R/\sqrt{n}$, then rotate in the
$(k+1,k+2)$-plane so that the $k+2$ coordinate of $x$ becomes
$R/\sqrt{n}$, and so on until the $n$th-coordinate of $x$ becomes
$R/\sqrt{n}$. Such rotations are always possible because all
coordinates of $x$ are nonzero by the preceding argument. Thus,
whether $k=n$ or $k<n$ we can evolve $x$ via the split dynamics so
that its last coordinate, $x_n$, is $R/\sqrt{n}$. In particular, there
now must exist an index $k<n$ such that
$\lvert x_k\rvert\geq R/\sqrt{n}$. By the same procedure, and without
disturbing the last coordinate, we can use rotations to make the $n-1$
coordinate of $x$ equal $R/\sqrt{n}$. Iterating this process maps $x$
to $x_*$ in a finite number of steps. Since $x$ was arbitrary it
follows that every nonfixed point with norm $R$ belongs to the same
orbit, which is precisely the set $\mathcal{X}$ defined in
\eqref{eq:X_L96}.

Next we prove there is at most one $P_h$-invariant measure on $\mathcal{X}$. First note that since the split dynamics are all rotations, the above procedure mapping any arbitrary $x$ in $\mathcal{X}$ to $x_*$ can be done using strictly positive times. Furthermore, by direct observation, the matrix of splitting vector fields
\begin{align*}
	\begin{pmatrix}
	 	\vline & \vline & 	& \vline \\
	 	V_1(x) & V_2(x) & \cdots & V_{n-1}(x) \\
	 	\vline & \vline & & \vline
	 \end{pmatrix} &=
	\begin{pmatrix}
	 	\phantom{-}x_2x_n & \phantom{-}0 & \hdots & \phantom{-}0 \\
	 	-x_1x_n & \phantom{-}x_3x_1 & \hdots & \phantom{-}0 \\
	 	\phantom{-}0 & -x_2x_1 & \cdots & \phantom{-}\vdots \\
	 	\phantom{-}\vdots & \phantom{-}\vdots & \ddots & \phantom{-}x_nx_{n-2} \\
	 	\phantom{-}0 & \phantom{-}0 &  & -x_{n-1}x_{n-2}
	 \end{pmatrix}
\end{align*}
has rank $n-1$ whenever all $x_k$ are nonzero. In particular, since $\mathcal{X}$ is an open subset of the sphere of radius $R$ and therefore itself an $n-1$-dimensional manifold, the splitting vector fields $V_k$ span $T_{x_*}\mathcal{X}$. Hence $\Lie_{x_*}(\mathcal{V})=T_{x_*}\mathcal{X}$. By \Cref{cor:ergodic}, $P_h$ has at most one invariant measure on $\mathcal{X}$.

We next show Lebesgue measure, $\Leb$, in $\mathbb R^n$ is
$P_h$-invariant. Let $S^{n-1}(R)$ denote the sphere of radius $R$ in
$\mathbb{R}^n$ and let $\Leb^{(k)}_t\coloneqq
(\varphi^{(k)}_t)_\#\Leb$ be the pushforward of $\Leb$ by
$\varphi^{(k)}_t$. Since the $V_k$ in~\eqref{eq:LorenzSplittingVF} are
divergence free, the continuity equation, \aa{intended in the
  weak sense\footnote{\jcm{This equation should be interpreted as an
    equation on measures or, equivalently, as holding in the weak
    sense. In other words, the left and right side are equal when
    integrated against any
    compactly supported, smooth test function.}}}, becomes
\begin{equs}
	0 &= \partial_t\Leb^{(k)}_t + \diverg\left(V_k\Leb^{(k)}_t\right)
		= \partial_t\Leb^{(k)}_t + \nabla\Leb^{(k)}_t\cdot V_k\,.
\end{equs}
The latter is a transport equation with constant initial condition
$\Leb^{(k)}_0\equiv 1$ and hence $\Leb^{(k)}_t=\Leb$ for all $t$.
Because the trajectories of all $V_k$ conserve the energy $\|x\|$, we
fiber $\mathbb R^n$ using spherical coordinates
$(r, \theta) \in \mathbb R_+ \times S^{n-1}(R)$. In these coordinates,
we have that
$V_k(r, \theta) = 0\, \partial_r + r v_k(\theta) \nabla_\theta$ and by
a change of coordinates of the divergence operator the stationarity
equation becomes
\begin{equ}
  0 = \diverg\left(V_k(x)\lambda(x)\right) = u(r) w(\theta)
  \diverg_\theta (\lambda(r, \theta) v_k(\theta))\,,
\end{equ}
where $\diverg_\theta$ denotes the angular terms of the divergence in spherical coordinates, and $u(r), w(\theta)$ result from the change of variables.
Hence, we can factor the solution $\lambda(r, \theta) = \bar
\lambda(\theta|r) \cdot \mu_R(d r) = \bar \lambda(\theta) \cdot
\mu_R(d r)$, where $ \bar \lambda(\theta|r)$ is the conditional
density of Lebesgue measure on a fiber. The measure $\bar \lambda$
solves $w(\theta) \diverg_\theta (\bar \lambda(\theta) v_k(\theta) ) =
0$ and is therefore invariant under the flows $\phi_t^{(k)}$. By
rotational symmetry of $\Leb$, we must have that $\bar
\lambda(\theta)$ is the volume form on $S^{n-1}(R)$. And since
$\mathcal{X}$ is a full-measure open subset of $S^{n-1}(R)$, the
volume form $\lambda$ on $\mathcal{X}$ is just the restriction of
$\bar\lambda$ to $\mathcal{X}$. Thus $\lambda$ is also invariant under
the flows and is therefore the unique $P_h$-invariant measure on
$\mathcal{X}$.
\end{proof}


\section{Galerkin approximations of {2D} Euler}\label{sec:Euler}


The 2D Euler equations on the torus $\mathbb{T}$ are obtained from the 2D Navier-Stokes equations~\eqref{eq:2DNS} by dropping the dissipative and forcing terms:
\begin{align}\label{6.1}
	\begin{cases}
		\partial_tu+(u\cdot\nabla)u = -\nabla p \\
		\diverg(u) \coloneqq \nabla\cdot u = 0
	\end{cases}
\end{align}
where, as before, $u:\mathbb{T}\times\mathbb{R}\to\mathbb{R}^2$ is the fluid velocity, $p:\mathbb{T}\times\mathbb{R}\to\mathbb{R}$ the fluid pressure, and
\begin{align*}
	(u\cdot\nabla)u &= (u_1\partial_1u_1+u_2\partial_2u_1, u_1\partial_1u_2+u_2\partial_2u_2)\,.
\end{align*}
In this section we construct a convenient random splitting of~\eqref{6.1}. To do so we first write~\eqref{6.1} in vorticity form and apply the Fourier transform. This yields an infinite system of ODEs which we truncate to systems of arbitrary finite size, referred to throughout as Galerkin approximations. Finally, we split these Galerkin approximations and apply the results of Sections~\ref{sec:ergodicity} and~\ref{sec:Convergence} to the associated random splitting.


\subsection{Constructing the splitting}\label{sec:ConstructingSplitting}


The vorticity formulation of~\eqref{6.1} is obtained by taking the curl of velocity. Specifically, setting $q\coloneqq\curl(u)\coloneqq \partial_2u_1-\partial_1u_2$, equation~\eqref{6.1} becomes
\begin{align}\label{6.2}
	\begin{cases}
		\partial_tq+(\mathcal{K}q\cdot\nabla)q = 0 \,,\\
		\diverg(q) = 0\,,
	\end{cases}
\end{align}
where $\mathcal{K}\coloneqq \nabla^\perp(-\Delta)^{-1}$ with $\nabla^\perp\coloneqq (\partial_2,-\partial_1)$.
To express~\eqref{6.2} in Fourier space, set $\mathbb{Z}^2_\infty\coloneqq \mathbb{Z}^2\setminus\{(0,0)\}$ and let $\{e_j\}_{j\in\mathbb{Z}^2_\infty}$ be the orthonormal basis of $L^2(\mathbb{T},\mathbb{R})$ given by $e_j(x)\coloneqq (2\pi)^{-1}\exp(ix\cdot j)$. Then $q(x,t)=\sum_{j\in\mathbb{Z}^2_\infty}q_j(t)e_j(x)$ where
\begin{align*}
	q_j(t) &\coloneqq \langle q,e_j\rangle_{L^2}
		=\int_{\mathbb{T}} q(x,t)\overline{e}_j(x) dx
\end{align*}
is the $j$th Fourier mode of $q$. Here $\langle \cdot,\cdot\rangle_{L^2}$ is the standard inner product on $L^2(\mathbb{T},\mathbb{R})$ with $\overline{e}_j$ denoting the complex conjugate of $e_j$. The $j$th Fourier mode of $(\mathcal{K}q\cdot\nabla)q$ is
\begin{align*}
	\langle (\mathcal{K}q\cdot\nabla)q,e_j\rangle_{L^2} &= \sum_{k+\ell=j} C_{k\ell}q_kq_\ell
\end{align*}
where
\begin{equ}\label{e:c}
	C_{k\ell} \coloneqq \frac{\langle k,\ell^\perp\rangle}{4\pi}\bigg(\frac{1}{\lvert k\rvert^2}-\frac{1}{\lvert\ell\rvert^2}\bigg)
\end{equ}
with $\langle\cdot,\cdot\rangle$ the standard inner product in $\mathbb R^2$, $\ell^\perp\coloneqq (\ell_2,-\ell_1)$, and $\lvert\ell\rvert^2\coloneqq\ell_1^2+\ell_2^2$. Therefore
\begin{align*}
	\sum_j \dot{q}_je_j &= \partial_t q
		= -(\mathcal{K}q\cdot\nabla)q
		= -\sum_j\bigg(\sum_{k+\ell=j} C_{k\ell}q_kq_\ell\bigg)e_j
\end{align*}
and hence $\dot{q}_j=-\sum_{k+\ell=j} C_{k\ell}q_kq_\ell$. Moreover, since $q$ is real-valued,
\begin{align*}
	\sum_j q_je_j &= q
		= \overline{q}
		= \sum_j \overline{q}_je_{-j}
\end{align*}
which gives $q_j=\overline{q}_{-j}$. In particular,
\begin{align*}
	\dot{q}_j &= \dot{\overline{q}}_{-j}
		= -\sum_{j+k+\ell=0} C_{k\ell}\overline{q}_k\overline{q}_\ell\,.
\end{align*}
Writing each Fourier mode $q_j=a_j+ib_j$ in terms of real and imaginary parts then gives
\begin{align*}
	\dot{a}_j+i\dot{b}_j = \dot{q}_j
		&= -\sum_{j+k+\ell=0} C_{k\ell}(a_k-ib_k)(a_\ell-ib_\ell) \\
		&= \sum_{j+k+\ell=0} C_{k\ell}(b_kb_\ell-a_ka_\ell)+i\sum_{j+k+\ell=0} C_{k\ell}(a_kb_\ell+a_\ell b_k)\,.
\end{align*}
Thus the Fourier modes of solutions to the Euler equation in vorticity form satisfy
\begin{equation}
 \label{6.3}\left\{
  \begin{aligned}
		\dot{a}_j &= \sum_{j+k+\ell=0}^{~} C_{k\ell}(b_kb_\ell-a_ka_\ell) \\
		\dot{b}_j &= \sum_{j+k+\ell=0} C_{k\ell}(a_kb_\ell+a_\ell b_k)
\end{aligned}\right.
\end{equation}
for all $j\in\mathbb{Z}^2_\infty$. While~\eqref{6.3} could be studied as is, notice the constraint $q_{-j}=\overline{q}_j$ implies $a_{-j}=a_j$ and $b_{-j}=-b_j$, which introduces redundancy in~\eqref{6.3}. Therefore we restrict to the subset
\begin{align*}
	\mathbb{Z}^2_+ &\coloneqq \{j\in\mathbb{Z}^2 : j_2>0\}\cup\{j\in\mathbb{Z}^2 : j_2=0\ \text{and}\ j_1>0\}\,.
\end{align*}
Specifically, by straightforward computation together with the identities $a_{-j}=a_j$, $b_{-j}=-b_j$, and $C_{k\ell}=C_{-k,-\ell}=-C_{-k,\ell}=-C_{k,-\ell}$, the system~\eqref{6.3} can be re-expressed as
\begin{equation}\label{6.4}
\left\{	\begin{aligned}
		\dot{a}_j =& \sum_{j+k-\ell=0} C_{k\ell}(a_ka_\ell+b_kb_\ell)+\sum_{j-k-\ell=0} C_{k\ell}(b_kb_\ell-a_ka_\ell) \\
		\dot{b}_j =& \sum_{j+k-\ell=0} C_{k\ell}(a_k b_\ell-b_k a_\ell)-\sum_{j-k-\ell=0} C_{k\ell}(a_kb_\ell+b_k a_\ell)
	\end{aligned}\right.
\end{equation}
for all $j\in\mathbb{Z}^2_+$ with each sum running over all pairs $k,\ell\in\mathbb{Z}^2_+$ satisfying the specified identity. To split~\eqref{6.4} note that for any $j,k,\ell\in\mathbb{Z}^2_+$ satisfying $j+k-\ell=0$ (and hence $\ell-j-k=0$) we can isolate from the above sums exactly $6$ equations involving only these indices:
\begin{equation}\label{6.5}
\begin{aligned}
	\dot{a}_j &= C_{k\ell}(a_ka_\ell+b_kb_\ell)\,, \qquad \dot{a}_k = C_{j\ell}(a_ja_\ell+b_jb_\ell)\,, \qquad \dot{a}_\ell = C_{jk}(b_jb_k-a_ja_k)\,, \\
	\dot{b}_j &= C_{k\ell}(a_kb_\ell-b_k a_\ell)\,, \qquad \dot{b}_k = C_{j\ell}(a_jb_\ell-b_j a_\ell)\,, \qquad\, \dot{b}_\ell = -C_{jk}(a_jb_k+b_j a_k)\,.
\end{aligned}
\end{equation}
For reasons to be made clear shortly, we recombine~\eqref{6.5} into $4$ groups of $3$ equations:
\begin{align}\label{e:vf}
	\begin{cases}
		\dot{a}_j = C_{k\ell}a_ka_\ell \\
		\dot{a}_k = C_{j\ell}a_ja_\ell \\
		\dot{a}_\ell = -C_{jk}a_ja_k
	\end{cases}
	\begin{cases}
		\dot{a}_j = C_{k\ell}b_kb_\ell \\
		\dot{b}_k = C_{j\ell}a_jb_\ell \\
		\dot{b}_\ell = -C_{jk}a_jb_k
	\end{cases}
	\begin{cases}
		\dot{b}_j = C_{k\ell}a_kb_\ell \\
		\dot{a}_k = C_{j\ell}b_jb_\ell \\
		\dot{b}_\ell = -C_{jk}b_ja_k
	\end{cases}
	\begin{cases}
		\dot{b}_j = -C_{k\ell}b_ka_\ell \\
		\dot{b}_k = -C_{j\ell}b_ja_\ell \\
		\dot{a}_\ell = C_{jk}b_jb_k
	\end{cases}\,.
\end{align}
Let $V_{a_ja_ka_\ell}$, $V_{a_jb_kb_\ell}$, $V_{b_ja_kb_\ell}$, and $V_{b_jb_ka_\ell}$ be the vector fields associated to the equations of~\eqref{e:vf} from left to right. For example, $V_{a_ja_ka_\ell}$ is the vector field on $\mathbb{R}^\infty$ mapping the $a_j$ coordinate to $-C_{k\ell}a_ka_\ell$, the $a_k$ coordinate to $-C_{j\ell}a_ja_\ell$, the $a_\ell$ coordinate to $-C_{jk}a_ja_k$, and all other coordinates to $0$. These are the \textit{splitting vector fields}. Our sought-after splitting is
\begin{align}\label{6.7}
	V &= \sum_{j+k-\ell=0}V_{a_ja_ka_\ell}+V_{a_jb_kb_\ell}+V_{b_ja_kb_\ell}+V_{b_jb_ka_\ell}\,,
\end{align}
where $V$ is the vector field associated to~\eqref{6.4}. As noted earlier, our focus will be on finite truncations of the infinite-dimensional system~\eqref{6.4}. Thus we fix an integer $N\geq 2$ and define the \textit{$\Nth$ Galerkin approximation} of~\eqref{6.4} to be~\eqref{6.4} with indices restricted to the set
\begin{align*}
	\mathbb{Z}^2_N &\coloneqq \big\{j\in\mathbb{Z}^2_+ : \max\{\lvert j_1\rvert, \lvert j_2\rvert\}\leq N\big\}\,.
\end{align*}
The splitting~\eqref{6.7} remains valid in this finite-dimensional setting, bearing in mind that now all indices lie in $\mathbb{Z}^2_N$. By a slight abuse of notation, we denote the finite-dimensional counterpart of $V$ by $V$ and similarly for the splitting vector fields. Thus our family of splitting vector fields is
\begin{align}\label{eq:euler_splitting}
	\mathcal{V} &=\left\{V_{a_ja_ka_\ell}, V_{a_jb_kb_\ell}, V_{b_ja_kb_\ell}, V_{b_jb_ka_\ell} : j,k,\ell\in\mathbb{Z}^2_N \text{ and } j+k-\ell=0\right\}.
\end{align}
Since $\mathbb{Z}^2_N$ has cardinality $2N(N+1)$ and each index $j\in\mathbb{Z}^2_N$ has an associated $a_j$ and $b_j$ coordinate, these are all vector fields on $\mathbb{R}^n$, where throughout this section we set $n\coloneqq 4N(N+1)$. We also abuse notation by conflating elements $j$ in $\mathbb{Z}^2_N$ with elements $j$ in $\{1,\dots,n/2\}$, which can be formalized via any bijection between the two sets. Moreover, we denote elements of $\mathbb{R}^n$ by $q=(a_j,b_j)_{j=1}^{n/2}$. This reflects that the $a_j$ and $b_j$ coordinates of $q$ in $\mathbb{R}^n$ are in one-to-one correspondence with the real and imaginary parts of the $j$th mode of $q$.

\begin{remark}\label{rem:diffSplit}
  There are many possible splittings of a given
  equation. For the Euler equations, we made the particular choice we
  have so that both energy and enstrophy are conserved but the
  dynamics of each splitting are still relatively easily understood. We could have
  further decomposed the three-dimensional dynamics in the above
  splitting into a number of two-dimensional dynamics, similar in spirit to the decomposition into rotations used in Lorenz-96. However, that
  would have necessitated only conserving either the energy or the
  enstrophy.
\end{remark}


\subsection{Conservation and convergence}


The conservative Lorenz-96 dynamics discussed in Section~\ref{sec:Lorenz} conserves Euclidean norm (energy in that case) and therefore remains on whichever sphere it starts on. So too do the flows of each of the splitting vector fields \eqref{eq:LorenzSplittingVF}. We now show a similar thing is true for Galerkin approximations of 2D Euler. Define the \textit{energy} and \textit{enstrophy} of $q=(a_j,b_j)_{j=1}^{n/2}$ by
\begin{equ}\label{e:uandv}
	E(q) \coloneqq \sum_{j\in\mathbb{Z}^2_N}\frac{a_j^2+b_j^2}{\lvert j\rvert^2}
	\qquad\text{and}\qquad
	\mathcal{E}(q) \coloneqq \sum_{j\in\mathbb{Z}^2_N} a_j^2+b_j^2\,,
\end{equ}
respectively (note the aforementioned conflation of $j$ in $\mathbb{Z}^2_N$ and $j\in\{1,\dots,n/2\}$ in the summations). Straightforward computation shows that for all $j,k,\ell\in\mathbb{Z}^2_N$ with $j+k-\ell=0$,
\begin{align*}
	C_{k\ell}+C_{j\ell}-C_{jk} &= \frac{C_{k\ell}}{\lvert j \rvert^2}+\frac{C_{j\ell}}{\lvert k \rvert^2}-\frac{C_{jk}}{\lvert \ell \rvert^2}
		= 0\,,
\end{align*}
which in turn implies that under the dynamics~\eqref{6.4},
\begin{align*}
	\partial_tE(q) &= \partial_t\mathcal{E}(q)
		= 0
\end{align*}
for all $q\in\mathbb{R}^n$. That is, both energy and enstrophy are conserved by the true dynamics and the set
\begin{align}\label{eq:EngEst}
	\mathcal{Q}_0(E,\mathcal{E}) \coloneqq \big\{q\in\mathbb{R}^n : E(q)=E,\
     	\mathcal{E}(q)=\mathcal{E} \big\}\,.
\end{align}
is invariant under \eref{6.4}. This is a well-established property of the 2D Euler equations. Moreover, if we flow by $V_{a_ja_ka_\ell}$ starting from $q$ for any $j,k,\ell\in\mathbb{Z}^2_N$ with $j+k-\ell = 0$, then
\begin{align*}
	\tfrac{1}{2}\partial_tE(q) &= \frac{a_j\dot{a}_j}{\lvert j\rvert^2}+\frac{a_k\dot{a}_k}{\lvert k\rvert^2}+\frac{a_\ell\dot{a}_\ell}{\lvert \ell\rvert^2}
		= \bigg(\frac{C_{k\ell}}{\lvert j \rvert^2}+\frac{C_{j\ell}}{\lvert k \rvert^2}-\frac{C_{jk}}{\lvert \ell \rvert^2}\bigg)a_ja_ka_\ell
		= 0\,,
\end{align*}
and similarly $\partial_t\mathcal{E}(q)=0$. The same computation shows energy and enstrophy are conserved by \textit{all} of the splitting vector fields in $\mathcal{V}$, which provides the motivation for recombining~\eqref{6.5} as~\eqref{e:vf} in the first place. In particular, we have

\begin{prop}\label{prop:EulerConvergence}
\jcm{All of the finite time convergence results} of Section~\ref{sec:Convergence} apply to the random splitting \eqref{6.7} of every Galerkin approximation of 2D Euler starting
from any initial condition.
\end{prop}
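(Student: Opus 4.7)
The plan is to mirror the proof of Proposition~\ref{prop:LorenzConvergence} by verifying the only two hypotheses required to invoke Theorems~\ref{thrm3.1} and~\ref{thrm3.2}: that the splitting vector fields are $\mathcal{C}^2$, and that Assumption~\ref{assump:Bounded} holds for every $\mathcal{V}$-orbit in $\mathbb{R}^n$.

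First, I would note that each of the four families of splitting vector fields in~\eqref{e:vf} is polynomial in the coordinates $(a_j, b_j)_{j=1}^{n/2}$; indeed every component is either zero or a quadratic monomial with coefficient of the form $\pm C_{jk}$, $\pm C_{j\ell}$, or $\pm C_{k\ell}$. In particular the vector fields in $\mathcal{V}$ are analytic, hence certainly $\mathcal{C}^2$, and complete on each bounded invariant set once we establish boundedness.

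The main (and only nontrivial) point is to verify Assumption~\ref{assump:Bounded}. Here I would invoke the conservation computation carried out just before the statement of the proposition: every splitting vector field in $\mathcal{V}$ preserves the enstrophy $\mathcal{E}(q)=\sum_{j\in\mathbb{Z}^2_N}(a_j^2+b_j^2)$. Since $\mathcal{E}(q)=\lVert q\rVert^2$ is the squared Euclidean norm on $\mathbb{R}^n$, every trajectory of every $\varphi^{(k)}$ remains on the sphere $\{q\in\mathbb{R}^n:\lVert q\rVert^2 = \mathcal{E}(q_0)\}$ through its initial point $q_0$. Consequently, for any $q_0\in\mathbb{R}^n$,
\begin{equation*}
    \mathcal{X}(q_0) \subseteq \left\{q\in\mathbb{R}^n:\lVert q\rVert = \sqrt{\mathcal{E}(q_0)}\right\},
\end{equation*}
which is a bounded subset of $\mathbb{R}^n$. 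This immediately yields Assumption~\ref{assump:Bounded}, and in particular the uniform bound~\eqref{eq:Bounded} on the splitting vector fields and their first two derivatives on each $\mathcal{V}$-orbit (as each $V_\bullet$ is a polynomial of degree two restricted to a bounded set).

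Having verified both hypotheses, Theorems~\ref{thrm3.1} and~\ref{thrm3.2} apply on every $\mathcal{V}$-orbit, giving both the kernel-level convergence $\lVert P_h^m - S_t\rVert_{2\to 0} = \mathcal{O}(h)$ with $mh=t$ and the almost sure convergence $\lVert \widetilde{S}^{m^2}_{t\tau/m^2} - S_t\rVert_{2\to 0} \to 0$ from any initial condition $q_0\in\mathbb{R}^n$. I do not anticipate any real obstacle here; the entire content of the statement is bookkeeping around the observation that enstrophy conservation by each splitting vector field automatically confines the dynamics to a sphere.
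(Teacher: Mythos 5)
Your proposal is correct and follows essentially the same argument as the paper: verify smoothness of the splitting vector fields and observe that enstrophy conservation confines every $\mathcal{V}$-orbit to a sphere in $\mathbb{R}^n$, so that Assumption~\ref{assump:Bounded} holds and Theorems~\ref{thrm3.1} and~\ref{thrm3.2} apply. The paper's proof is a one-line version of this same reasoning.
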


\begin{proof}
The splitting vector fields are smooth and Assumption \ref{assump:Bounded} is satisfied since every $\mathcal{V}$-orbit lies on a sphere, so the conclusions of Theorems \ref{thrm3.1} and \ref{thrm3.2} both hold.
\end{proof}


\subsection{Ergodicity}\label{s:eulergo}


Fix energy and enstrophy values $E$ and $\mathcal{E}$ and set $\mathcal{Q}_0\coloneqq\mathcal{Q}_0(E,\mathcal{E})$. \om{$\mathcal{Q}_0$ is an $n-2$-dimensional submanifold of $\mathbb{R}^n$ where, recall, $n\coloneqq 4N(N+1)$; denote its volume form by $\lambda$}. As with conservative Lorenz-96, the $\Nth$ Galerkin approximation of 2D
Euler has points $q$ in $\mathcal Q_0$ whose $\mathcal{V}$-orbits are not dense in $\mathcal Q_0$. For example, any $q$ with exactly one nonzero coordinate is a fixed point of~\eqref{6.4} and of all the
equations~\eqref{e:vf}. In this subsection we characterize these
points and prove there is exactly one $\mathcal{V}$-orbit
  $\mathcal{Q}$ on $\mathcal{Q}_0$ such that
  $\lambda(\mathcal{Q})=1$. By a slight abuse of notation we denote the restriction of $\lambda$ to $\mathcal{Q}$ by $\lambda$ as well.
  We then show \aa{there exists a}
  unique $P_h$-invariant measure on $\mathcal{Q}$ \aa{-- and hence on $\mathcal{Q}_0$ --  that is absolutely continuous with respect to $\lambda$ on $\mathcal{Q}_0$.}

To make the above statements precise, we begin by enumerating the coordinates of $q \in \mathbb R^n$ by extending the indices $j\in \ZN$ with an element $\chi \in \{+, -\}$ which denotes the real ($+$) or imaginary ($-$) part of the corresponding mode. Then, for ${\bf j} = (j, \chi) \in \ZNp$, we define the \emph{type}
of such coordinates via the function $\mathrm{T}({\bf j}) = \chi$ so that $q_{{\bf j}}$ is identified with $a_j$ if $\mathrm{T}({\bf j}) = +$ and with $b_j$ if $\mathrm{T}({\bf j}) = -$.
For $q\in \mathbb R^n$, denote by
\begin{equ}
	\AAc(q)\eqdef\big\{{\bf j} \in \ZNp~:~q_{{\bf j}}\neq 0\big\}
	\end{equ}
	the set of ``active'' coordinates.
	To streamline our analysis, we define the following operation to expand the set $\mathcal A$:
	\begin{equ}\label{e:+}
		\AA \oplus {\bm \ell}  \coloneqq \begin{cases} \AA \cup \{{\bm \ell}\}\qquad&\text{if }  \ell \in \{j+k,j-k\}\cap \ZN \text{ for }{\bf j}, {\bf k} \in \AA, C_{jk}\neq 0, \mathrm{T}({\bf j})\cdot  \mathrm{T}({\bf k}) =  \mathrm{T}({\bm \ell})\,,\\ \AA & \text{else}\,, \end{cases}
  \end{equ}
where $\mathrm{T}({\bf j})\cdot  \mathrm{T}({\bf k})$ is $+$ if $\mathrm{T}({\bf j})=  \mathrm{T}({\bf k})$  and $-$ if $\mathrm{T}({\bf j})\neq  \mathrm{T}({\bf k})$. This operation corresponds to extending the nonzero coordinates of $q$ from $\jj,\kk$ to $\ll$ by letting a triple $\iota = \jkl$ interact.

We assume that the initial condition is sufficiently nondegenerate, as stated in the following assumption similar to the one made in \cite[Thm.~2.1]{hairermattingly06}.
\om{
\begin{definition}[Nondegenerate point]\label{def:nondegenerate}
A point $q$ in $\mathcal{Q}_0$ is {\normalfont nondegenerate} if there exists $M \in \mathbb N$, $j^*\in \ZN$ with $|j^*|^2>1$, and an ordered set of indices $(\ll_i)_{i=1}^M$ in $\ZNp$ such that
	\begin{equ}\label{eq:nondegenerate}
		\big\{(1,0,+),(0,1,+),  (j^*,-)\big\}\subseteq \big((\AAc(q)\oplus \bm\ell_1)\oplus \bm\ell_2\big) \dots \oplus \bm\ell_M.
	\end{equ}
\end{definition}

\begin{definition}[Generic point]\label{def:generic}
A point in $\mathbb{R}^n$ is {\normalfont generic} if all of its coordinates are nonzero.
\end{definition}

\begin{remark}\label{rmk:nondegenerate}
Every point with all coordinates nonzero is a nonfixed point of conservative Lorenz-96;
similarly, every generic point in $\mathcal{Q}_0$ is
nondegenerate. However, comparing~\eqref{eq:nondegenerate}
with~\eqref{eq:X_L96}, we see the conditions defining nondegenerate
points in $\mathcal{Q}_0$ are more complicated than the easily characterized nonfixed
points of conservative Lorenz-96. The difference is that, unlike
spheres in conservative Lorenz-96, there are proper subspaces of
$\mathcal{Q}_0$ which are invariant for our splitting of the Euler
dynamics but are not fixed points. One such subspace is the collection
of purely real points; another is the purely imaginary
points.
\end{remark}

\noindent The following analogs of \Cref{prop5.1} and \Cref{cor:lorenz_sphere} are the main results of this subsection.

\begin{prop}\label{prop6.1}
Every nondegenerate point in $\mathcal{Q}_0$ belongs to the same
$\mathcal{V}$-orbit, $\mathcal{Q}$, and for all $h>0$ \aa{there exists a} unique $P_h$-invariant probability measure on
$\mathcal{Q}$. Furthermore, this unique invariant measure is absolutely continuous with respect to the volume form on $\mathcal{Q}$.
\end{prop}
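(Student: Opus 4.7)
The plan is to mirror the blueprint used in the proof of \Cref{prop5.1}: first establish controllability from any nondegenerate point to a common reference point $q_*$ using strictly positive flow times, then verify the Lie bracket condition at $q_*$ so that \Cref{cor:nagano} delivers at-most-uniqueness together with absolute continuity, and finally exhibit an invariant probability measure absolutely continuous with respect to $\lambda$.

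For controllability, I would argue in two stages. The $\oplus$ operation is defined precisely so that $\AA \oplus \ll$ strictly enlarges $\AA$ exactly when one of the splitting vector fields in \Cref{e:vf}, associated to a triadic interaction $\jkl$, has $\ll$-component with nonvanishing derivative at any point whose active set contains $\AA$. Consequently, a short positive-time flow along that vector field activates $\ll$ without deactivating existing modes, and iterating along the sequence $(\ll_i)_{i=1}^M$ from \Cref{eq:nondegenerate} yields a state in which the three canonical modes $(1,0,+)$, $(0,1,+)$, $(j^*,-)$ are simultaneously active. Using $|j^*|^2>1$, a combinatorial check shows these three modes generate all of $\ZNp$ under repeated $\oplus$, so a further sequence of positive-time flows activates every coordinate. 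A final tuning step, using the richness of the splitting vector fields at a fully-active point together with the compactness of $\mathcal{Q}_0$ (which gives recurrence enabling one to realize ``backward-time'' targets via positive times), adjusts to the prescribed target $q_* \in \mathcal{Q}$.

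For the Lie bracket condition at $q_*$, I would dimension-count directly. At a fully-active point the splitting vector fields in $\mathcal{V}$, possibly together with a few of their first-order Lie brackets, span an $(n-2)$-dimensional subspace of $T_{q_*}\mathbb{R}^n$; the two missing directions are $\nabla E(q_*)$ and $\nabla \mathcal{E}(q_*)$, which are annihilated by every $V \in \mathcal{V}$ thanks to the conservation property verified in Section~\ref{s:eulergo}. Hence $\mathrm{Lie}_{q_*}(\mathcal{V}) = T_{q_*}\mathcal{Q}$, and because the splitting vector fields are polynomial (hence analytic), \Cref{thrm:nagano} upgrades this to every point of $\mathcal{Q}$. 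Existence and absolute continuity of an invariant measure then follow by adapting the divergence-free/spherical-coordinate argument from \Cref{prop5.1}: each splitting vector field in \Cref{e:vf} is divergence free on $\mathbb{R}^n$, so Lebesgue measure is preserved by every $\varphi^{(k)}$, and disintegrating Lebesgue measure along the two-parameter foliation $\{\mathcal{Q}_0(E,\mathcal{E})\}$ shows the conditional measures are proportional to the volume form $\lambda$ on each leaf. Restricting to the full-$\lambda$-measure subset $\mathcal{Q}$ of $\mathcal{Q}_0$ then yields the desired invariant probability measure.

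The main obstacle is the controllability argument. The $\oplus$ operation cleanly encodes which coordinates one can hope to activate, but realizing these activations through \emph{positive-time} flows of the polynomial vector fields in \Cref{e:vf} requires avoiding the low-dimensional invariant subvarieties highlighted in \Cref{rmk:nondegenerate}, particularly the purely real and purely imaginary sectors. Moreover, the final tuning to an exact target $q_*$ relies on showing that a composition of positive-time flows can reach an arbitrary nearby point on $\mathcal{Q}$, which is a delicate exercise in combining the local surjectivity from \Cref{thrm:submersion} with the recurrence properties of the flows on compact energy-enstrophy level sets.
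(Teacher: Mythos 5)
Your high-level architecture matches the paper's proof exactly: controllability to a common reference point with positive times, verification of the Lie bracket condition at a generic point, Nagano's theorem via \Cref{cor:nagano} for at-most-uniqueness and absolute continuity, and existence of the invariant measure by disintegrating Lebesgue measure along the energy--enstrophy foliation (this last piece is precisely \Cref{lem:EulerInvariant}, and your account of it is essentially correct). Likewise, the observation that the splitting vector fields alone (without brackets) span an $(n-2)$-dimensional subspace at generic points is exactly the content of \Cref{lem6.1}; your hedge ``possibly together with a few first-order brackets'' is unnecessary.

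The genuine gap is the controllability argument, which is the bulk of the work in the paper (\pref{p:conteuler} and the supporting Lemmas~\ref{l:midmode}--\ref{l:contr4}). You compress this into a ``final tuning step'' and explicitly flag it as the hard part, but the mechanism you sketch --- local surjectivity from \Cref{thrm:submersion} combined with recurrence on compact level sets --- does not close the gap. Both tools are soft: submersion gives you that the reachable set contains an open neighborhood of some nearby point, and Poincar\'e-type recurrence on the compact leaf $\mathcal{Q}_0(E,\mathcal{E})$ gives only that almost-every orbit returns arbitrarily close, not exactly. Corollary~\ref{cor:ergodic} requires $\Phi^m(q,t)=q_*$ \emph{exactly} for some $t\in\mathbb{R}^{mn}_+$. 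The paper instead achieves exactness constructively: it first activates $(1,0)$, $(0,1)$, $(1,1)$, then (in the opposite direction of what you propose) \emph{deactivates} coordinates via the $\ominus$ operation, transfers the residual amplitude to $b_{(N,N)}$, and lands exactly on the point $q^*$ of \eqref{e:qstar}, which is special precisely because it has only three nonzero coordinates whose values are pinned down by the energy and enstrophy constraints --- a fully-active target would not have this uniqueness property. Negative times that appear along the way are converted to positive times not by recurrence but by the exact \emph{periodicity} of each triadic flow established in \Cref{l:midmode}, a strictly stronger property than recurrence. Your proposal would need to replace the ``tuning step'' with something playing the role of these constructive lemmas; as stated, it identifies the right obstacle but does not resolve it.
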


\begin{proof}
By \pref{p:conteuler} there is a $q^*$ in $\mathcal{Q}_0$ such that
every nondegenerate point in $\mathcal{Q}_0$ belongs to the
$\mathcal{V}$-orbit $\mathcal{Q}\coloneqq \mathcal{Q}(q^*)$, and for
every $q$ in $\mathcal{Q}$ there is an $m  \in \mathbb N$ and a $t \in \mathbb{R}^{mn}_+$ satisfying $\Phi^m(q,t)=q^*$. By \Cref{lem6.1} the
splitting vector fields span the tangent space of $\mathcal{Q}$ at
generic points; in particular, the Lie bracket condition holds at
every generic point. Thus, since the vector fields in $\mathcal{V}$
are analytic, \Cref{cor:nagano} implies $P_h$ has at most one
invariant probability measure on $\mathcal{Q}$, which is necessarily
the \aa{one identified} by \Cref{lem:EulerInvariant}.
\end{proof}
}

\begin{corollary}\label{cor:euler_shell}
For all $h>0$ the measure from \Cref{prop6.1} is the unique $P_h$-invariant ergodic probability measure on $\mathcal{Q}_0$ that is absolutely continuous with respect to the volume form on $\mathcal{Q}_0$.
\end{corollary}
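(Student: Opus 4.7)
The plan is to follow the template of \cref{cor:lorenz_sphere}. The unique $P_h$-invariant probability measure $\mu$ on $\mathcal{Q}$ from \pref{prop6.1}, extended by zero to $\mathcal{Q}_0 \setminus \mathcal{Q}$, gives an ergodic $P_h$-invariant probability measure on $\mathcal{Q}_0$ (ergodicity follows from unique invariance on its support). This measure is absolutely continuous with respect to the volume form on $\mathcal{Q}_0$ provided $\mathcal{Q}$ is a full-volume subset of $\mathcal{Q}_0$, which is the geometric content to be verified.

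For uniqueness, let $\nu$ be any ergodic $P_h$-invariant probability measure on $\mathcal{Q}_0$ absolutely continuous with respect to the volume form on $\mathcal{Q}_0$. Once the zero-volume claim for $\mathcal{Q}_0 \setminus \mathcal{Q}$ is established, absolute continuity immediately yields $\nu(\mathcal{Q}_0 \setminus \mathcal{Q}) = 0$. Since $\mathcal{Q}$ is $\mathcal{V}$-invariant, the restriction $\nu|_{\mathcal{Q}}$ is then a $P_h$-invariant probability measure on $\mathcal{Q}$, and \pref{prop6.1} forces $\nu|_{\mathcal{Q}} = \mu$, so $\nu = \mu$ on $\mathcal{Q}_0$.

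The main obstacle is therefore the zero-volume claim for $\mathcal{Q}_0 \setminus \mathcal{Q}$. By \rref{rmk:nondegenerate}, every generic point in $\mathcal{Q}_0$ is nondegenerate and therefore lies in $\mathcal{Q}$, so
\begin{equation*}
\mathcal{Q}_0 \setminus \mathcal{Q} \subseteq \bigcup_{{\bm j} \in \ZNp} \big(\mathcal{Q}_0 \cap \{q_{{\bm j}} = 0\}\big).
\end{equation*}
Each set on the right is the zero locus in the analytic manifold $\mathcal{Q}_0$ of the coordinate function $q_{\bm j}$. Provided $\mathcal{Q}_0$ admits a generic point, which holds for every admissible pair $E,\mathcal{E}>0$ because $E$ and $\mathcal{E}$ are positive-definite quadratic forms in all coordinates, each such locus is a proper real-analytic subvariety of $\mathcal{Q}_0$, hence has Hausdorff dimension strictly less than $n-2$ and zero $(n-2)$-dimensional volume. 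A finite union of such sets also has zero volume, closing the argument.
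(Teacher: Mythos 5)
Your proposal is correct and shares the paper's overall strategy — both reduce the corollary to the fact that $\mathcal{Q}_0\setminus\mathcal{Q}$ has zero volume — but the uniqueness step is handled differently, and your route is arguably cleaner.

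The paper mirrors the proof of \cref{cor:lorenz_sphere}: it observes $\lambda(\mathcal{Q})=1$, notes the measure from \pref{prop6.1} is ergodic on $\mathcal{Q}_0$, and then invokes mutual singularity of ergodic invariant measures to conclude that any other ergodic invariant measure must be singular with respect to the volume form. As written that last inference is a little terse: mutual singularity with $\mu$ only rules out absolute continuity with respect to $\lambda$ if $\mu$ is \emph{equivalent} to $\lambda$ on $\mathcal{Q}$, not merely absolutely continuous; this is true here because the invariant measure from \lref{lem:EulerInvariant} is conditioned Lebesgue measure, but \pref{prop6.1} itself only asserts absolute continuity. Your argument sidesteps this entirely: absolute continuity of $\nu$ plus $\lambda(\mathcal{Q}_0\setminus\mathcal{Q})=0$ gives $\nu(\mathcal{Q}_0\setminus\mathcal{Q})=0$, and then $\nu$ restricts to an invariant probability measure on the invariant set $\mathcal{Q}$, where \pref{prop6.1} forces $\nu|_\mathcal{Q}=\mu$. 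This does not require equivalence of $\mu$ and $\lambda$, and in fact does not even use the ergodicity hypothesis on $\nu$, so you prove the slightly stronger statement that $\mu$ is the unique absolutely continuous invariant measure on $\mathcal{Q}_0$. You also supply more detail on the zero-volume claim (a finite union of proper real-analytic zero loci), which the paper compresses into the assertion $\lambda(\mathcal{Q})=1$.

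One small caveat, which applies to the paper's argument as well: the analytic-subvariety step requires that no coordinate function $q_{\bm j}$ vanish identically on any connected component of $\mathcal{Q}_0$, and "$\mathcal{Q}_0$ admits a generic point" only certifies this for the component containing that point. For the intersection of a sphere and ellipsoid of the relevant dimensions this is not hard to verify, but it is worth being aware that it is being used.
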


\begin{proof}
Let $\lambda$ denote volume form on $\mathcal{Q}_0$. Since $\mathcal{Q}$ contains all generic points in $\mathcal{Q}_0$, it is an open subset of $\mathcal{Q}_0$ satisfying $\lambda(\mathcal{Q})=1$. In particular, the unique invariant measure on $\mathcal{Q}$ from \Cref{prop6.1} is an ergodic invariant measure on $\mathcal{Q}_0$. Since ergodic invariant measures are mutually singular, see e.g. \cite{HairerConvergence}, any other ergodic invariant measure on $\mathcal{Q}_0$ must be singular with respect to $\lambda$.
\end{proof}

\begin{remark}\label{rm:randomSplit}
  Continuing in the spirit of Remark~\ref{rem:diffSplit}, we observe the
  splitting in~\eqref{e:vf} splits $q_j$ into its real and
\jcm  imaginary parts. We could have chosen another basis of $\mathbb{C}$
  and even randomized over this choice for each evolution of an
  interacting triple $(j,k,\ell)$. More explicitly, if we define
  $e(\theta)=cos(\theta)+i \sin(\theta)$ then $e(\theta)$ and
  $e(\theta+\frac\pi2)$ form an orthonormal basis of $\mathbb{C}$ for
  any $\theta$. Then we can drive a system analogous to~\eqref{e:vf}
  by setting $q_\ell = a_\ell^\theta e(\theta) + b_\ell^\theta
  e(\theta+\frac\pi2)$. As the form is similar to~\eqref{e:vf},
  the results of the paper extend to this system. In particular, by randomizing the choice of $\theta$ for each such triple $(j,k,\ell)$,
  we can relax the characterization of nondegenerate points in \Cref{def:nondegenerate} by destroying some of the
  invariant structures discussed in Remark~\ref{rmk:nondegenerate} which obstruct controllability starting from
  some initial conditions.
\end{remark}


\subsubsection{Controllability}\label{sec:EulerControl}


In this section, we prove controllability of the dynamics \eref{e:vf}. By \om{conservation of energy and enstrophy}, the \om{$\mathcal{V}$-}orbit of an initial condition $q^{(0)}$ \om{in $\mathcal{Q}_0$ is contained in $\mathcal{Q}_0$}. Recalling the definition of extended indices in Section~\ref{s:eulergo}, we define the set of \emph{interacting coordinate triples}
\begin{equ}
	\II \eqdef \big\{({\bf j},{\bf k},{\bm \ell}) \in (\ZNp)^3~:~
  j+k=\ell,~(C_{jk}, C_{j \ell}, C_{k\ell})\neq (0,0,0),~\mathrm{T}({\bf j})\cdot\mathrm{T}({\bf k})=\mathrm{T}({\bm \ell})\big\}\,.
\end{equ}
Then, for any such triple of interacting indices $\iota \in \II$
we denote by $\pit{t}~:~\QQ_0 \to \QQ_0$ the flow of the ODEs \eref{e:vf} evolving the corresponding coordinates. The dynamics we consider is then obtained by cycling through the set $\II$ in a fixed or random order. For any $\iota \in \II$ we denote by $\Pit{t}~:~\QQ_0\to\QQ_0$ the flow of \eref{e:vf} after one such full cycle where the flow times are chosen as
\begin{equ}\label{e:tauxis}
\tau^\xi =\begin{cases}t\qquad &\text{if } \xi = \iota\,,\\0 & \text{else}\,, \end{cases}
\end{equ}
so that for any $q \in \QQ_0$, $\Pit{t}(q) = \pit{t}(q)$.

\om{Let $q^* = (a_j^*,b_j^*)_{j=1}^{n/2}$ be the point in $\mathcal{Q}_0$ defined as follows:}
 \begin{equ}\label{e:qstar}
   q_{(1,0)}^* = q_{(0,1)}^* = (a^*,0) \,,\qquad q_{(N,N)}^*  = (0,b^*)\,,
 \end{equ}
 for $a^*, b^*\geq 0$ and $q_{j}^*  = (0,0)$ for all other $j\in  \ZN$. We show below that for any \om{nondegenerate initial condition $q^{(0)} \in \QQ_0$} the system can be driven to this unique point $q^*$\,.

\begin{proposition}\label{p:conteuler}
For any \om{nondegenerate point $q^{(0)} = (a_j^{(0)},b_j^{(0)})_{j=1}^{n/2}$ in $\mathcal{Q}_0$} there exists $M \in \mathbb N$ and a joint sequence of transition times and coordinate triples $\{(\iota(m),\tau(m))\}_{m = 1}^M$  such that
\begin{equ}\label{e:conteuler}
	\Phi_{\tau(M)}^{\iota(M)}\circ\dots\circ \Phi_{\tau(1)}^{\iota(1)} (q^{(0)}) = q^*\,.
\end{equ}
\om{Thus every nondegenerate point belongs to the same orbit, $\mathcal{Q}\coloneqq\mathcal{Q}(q^*)$. Furthermore, for every $q$ in $\mathcal{Q}$ there is an $m \in \mathbb N$ and a $t \in \mathbb{R}^{mn}_+$ such that $\Phi^m(q,t)=q^*$.}
\end{proposition}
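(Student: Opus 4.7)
The plan is to construct, for any nondegenerate $q^{(0)}$, a finite sequence of positive-time triple flows $\pit{t}$ driving $q^{(0)}$ to $q^*$, and then to lift this sequence to the cyclic format $\Phi^m(q,t)$ with $t\in\mathbb{R}^{mn}_+$ required by the proposition. The construction splits into an \emph{activation phase} that exploits the nondegeneracy assumption to populate a sufficiently large set of active coordinates, followed by a \emph{concentration phase} that redistributes energy and enstrophy onto the three target modes $(1,0,+)$, $(0,1,+)$, and $(N,N,-)$.

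For activation, I would first translate the combinatorial $\oplus$ operation of \eref{e:+} into a dynamical statement: if $\jj,\kk\in\AAc(q)$ and $\ll\in\ZN$ satisfy the hypotheses defining $\AA\oplus\ll$, and the associated triple is $\iota=(\jj,\kk,\ll)$, then inspection of the four systems in \eref{e:vf} shows that whenever $q_\ll=0$ the derivative $\dot q_\ll=\pm C_{jk}q_\jj q_\kk$ is nonzero. Hence for all sufficiently small $t>0$, the flow $\pit{t}$ makes $q_\ll$ nonzero while, by continuity, preserving the nonvanishing of all previously active coordinates. Iterating along the sequence $(\ll_i)_{i=1}^M$ supplied by \dref{def:nondegenerate} drives $q^{(0)}$ to a state whose active set contains $\{(1,0,+),(0,1,+),(j^*,-)\}$, and further iteration (taking the maximal set generated by $\oplus$ from this seed) activates all modes needed for the next phase.

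For concentration, I would use that each triple flow $\pit{t}$ with $\iota=(\jj,\kk,\ll)$ preserves a local enstrophy $q_\jj^2+q_\kk^2+q_\ll^2$ and a local energy $q_\jj^2/|j|^2+q_\kk^2/|k|^2+q_\ll^2/|\ell|^2$, by virtue of the identities $C_{k\ell}+C_{j\ell}-C_{jk}=0$ and $C_{k\ell}/|j|^2+C_{j\ell}/|k|^2-C_{jk}/|\ell|^2=0$ recorded above \eqref{eq:EngEst}. Each triple flow therefore traces a bounded one-dimensional curve in $\QQ_0$ (the intersection of two quadrics in the three coordinates of the triple), and every point on this curve is reachable from every other in positive time by periodicity. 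I would then chain such moves, first linking $(j^*,-)$ to $(N,N,-)$ through a path of interacting triples, and then draining each remaining nonzero coordinate into the target triple, in an order ensuring that no previously zeroed coordinate is touched again. Global conservation of energy and enstrophy forces the final values on the target triple to equal $(a^*,a^*,b^*)$ exactly, up to a one-dimensional rotational freedom resolved by a last triple flow.

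The principal obstacle is the positive-time constraint throughout: the split flows cannot be reversed, so draining an unwanted coordinate cannot be done by symbolic inversion and must instead be arranged using periodicity on the triple curves together with the rich interaction graph on $\ZN$; equally, the draining order must be chosen so that the set of active coordinates decreases monotonically after the first coordinate is removed. A secondary difficulty is lifting the constructed sequence of triple flows to the cyclic composition $\Phi^m(q,t)$ with all times strictly positive, which is handled by inserting arbitrarily small positive times for the unused triples in each cycle and correcting by invoking the surjectivity statement of \Cref{thrm:submersion} at $q^*$. The final clause of the proposition---positive-time reachability of $q^*$ from every point of the orbit $\QQ$---then follows by first applying a brief triple flow, if necessary, to return to a nondegenerate configuration, and then invoking the main construction.
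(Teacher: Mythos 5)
Your high-level plan matches the paper's strategy: an activation step exploiting nondegeneracy, followed by a concentration step draining amplitude onto the three target coordinates. The paper carries this out via Lemmas~\ref{l:contr1}--\ref{l:contr4}. But three places in the proposal have real gaps.

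First, the claim that one can drain coordinates ``in an order ensuring that no previously zeroed coordinate is touched again'' is not achievable, and indeed the paper's construction does not attempt it. After the bulk of the modes are zeroed (Lemma~\ref{l:contr2}), the remaining constrained modes $\{(1,0),(0,1),(1,1),(N,N),(-N,N)\}$ cannot be finished off monotonically: Lemma~\ref{l:contr4} is forced to \emph{re-activate} the modes $(2,1,-)$ and $(1,2,-)$ (already zeroed in Step~1) to shuttle the $(1,1)$ amplitude up to $(N,N)$ along the diagonal, and also uses a continuity/intermediate-value argument to force $a_{(1,0)}=a_{(0,1)}$ exactly. This symmetrization step, and the requirement that the endpoint is the \emph{specific} point $q^*$ with $a^*=a_{(1,0)}=a_{(0,1)}$, is not something ``global conservation laws force'' directly.

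Second, the transfer between antipodal high modes $(-N,N)$ and $(N,N)$ cannot generally be done in a single pass, because a triple interaction with $(1,0,+)$ only exchanges a bounded amount of amplitude before the triple curve wraps around. Lemma~\ref{l:contr3} handles this with an induction showing that a fixed positive amount of $|q_{(-i,N,\chi)}|^2$, independent of the current value, is removed per cycle, so finitely many cycles suffice. Your proposal's appeal to ``chaining moves along a path of interacting triples'' does not address why the chain terminates in finite time.

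Third, your lifting of the sequence of triple flows into the cyclic format $\Phi^m(q,t)$ with $t\in\mathbb{R}^{mn}_+$ by ``inserting small positive times and correcting via \Cref{thrm:submersion}'' has a directional error. \Cref{thrm:submersion} gives, from $q^*$, a submersion onto a neighborhood of \emph{some other point} $x$ near $q^*$; it does \emph{not} give positive-time reachability of $q^*$ itself from nearby states, nor does it say the positive-time reachable set from $q^*$ contains $q^*$ in its interior. Absent more structure this is genuinely wrong. The paper avoids the whole issue: each triple flow $\pit{t}$ is periodic (Lemma~\ref{l:midmode} for type-a interactions, Lemma~\ref{l:samenorm} for type-b, and trivially for the degenerate case), so any $t_i\le 0$ in $\Phi^m(q,t)=q^*$ can be replaced by a strictly positive $t_i'$ producing the same map on the relevant state, and the cyclic composition with all positive times lands exactly on $q^*$. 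This periodicity argument also disposes of your final step (``applying a brief triple flow to return to a nondegenerate configuration''), which is a plausible but unproven detour the paper does not need.
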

\aa{ Recall from \rref{r:exponential} that the only property of the exponential distribution used in this proof is the fact that it has a density around $0$, allowing to choose the flow of some of the split vector fields to be the identity as, e.g., in \eref{e:tauxis}. This comment also applies to the proof of Proposition~\ref{prop5.1} in the previous section. We further note that, since the trajectories of each of the $\phi^{\iota(m)}$ in the above theorem are periodic (see \lref{l:midmode} and \lref{l:samenorm}), each of these transformations can be inverted by choosing complementary transition times to $\tau(m)$. Inverting the order of the transformations yields the converse statement:

\begin{corollary}\label{c:reverse}
For any nondegenerate point $q^{(0)} = (a_j^{(0)},b_j^{(0)})_{j=1}^{n/2}$ in $\mathcal{Q}_0$ there exists $M \in \mathbb N$ and a joint sequence of transition times and coordinate triples $\{(\tilde \iota(m),\tilde \tau_0(m))\}_{m = 1}^M$  such that
	\begin{equ}
		\Phi_{\tilde \tau(M)}^{\tilde \iota(M)}\circ\dots\circ \Phi_{\tilde \tau(1)}^{\tilde \iota(1)} (q^*) = q^{(0)}\,.
	\end{equ}
\end{corollary}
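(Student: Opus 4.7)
The plan is to apply Proposition~\ref{p:conteuler} to $q^{(0)}$ and then reverse the resulting forward trajectory using the periodicity of each split flow. By Proposition~\ref{p:conteuler}, there exist $M$ and a sequence $\{(\iota(m),\tau(m))\}_{m=1}^M$ such that, writing $q^{(m)}\coloneqq\Phi^{\iota(m)}_{\tau(m)}\circ\cdots\circ\Phi^{\iota(1)}_{\tau(1)}(q^{(0)})$, one has $q^{(M)}=q^*$. The goal is then to produce, for each $m\in\{1,\dots,M\}$, a strictly positive time $\tilde\tau_m$ with $\Phi^{\iota(m)}_{\tilde\tau_m}(q^{(m)})=q^{(m-1)}$, and to concatenate these reversals in the opposite order so as to unwind the trajectory from $q^*$ back to $q^{(0)}$.

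The key input, as noted in the remark preceding the corollary, is that each split flow restricted to an orbit is periodic. Concretely, Lemmas~\ref{l:midmode} and~\ref{l:samenorm} ensure that $\Phi^\iota$ restricted to the orbit through any given point is either stationary or periodic with a finite minimal period $T>0$. For each $m$, let $T_m$ denote the period of $\Phi^{\iota(m)}$ on the orbit through $q^{(m-1)}$; this orbit also contains $q^{(m)}$ since it is $\Phi^{\iota(m)}$-invariant. Setting $\tilde\tau_m\coloneqq T_m-(\tau(m)\bmod T_m)$, with the convention $\tilde\tau_m=T_m$ whenever the reduction vanishes, gives $\tilde\tau_m\in(0,T_m]$ and $\Phi^{\iota(m)}_{\tilde\tau_m}(q^{(m)})=q^{(m-1)}$. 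In the degenerate case where $q^{(m-1)}$ is a fixed point of $\Phi^{\iota(m)}$, we have $q^{(m)}=q^{(m-1)}$ and any positive $\tilde\tau_m$ works.

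Finally, set $\tilde\iota(m)\coloneqq\iota(M-m+1)$ and $\tilde\tau(m)\coloneqq\tilde\tau_{M-m+1}$ for $1\leq m\leq M$. A straightforward induction on $m$ gives
\[
\Phi^{\tilde\iota(m)}_{\tilde\tau(m)}\circ\cdots\circ\Phi^{\tilde\iota(1)}_{\tilde\tau(1)}(q^*)=q^{(M-m)}\,,
\]
and taking $m=M$ yields the desired identity. I do not anticipate a serious obstacle: the only delicate point is ensuring strict positivity of each $\tilde\tau_m$, which is handled by the modular adjustment, while the existence of a finite period along each orbit is supplied by the cited lemmas.
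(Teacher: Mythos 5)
Your proposal is correct and follows the same route as the paper: apply Proposition~\ref{p:conteuler} to get a forward sequence, use the periodicity of each split flow on its orbit (Lemmas~\ref{l:midmode} and~\ref{l:samenorm}) to choose complementary transition times, and reverse the order of the transformations. The paper gives exactly this argument in the short discussion immediately preceding the corollary; your write-up merely spells out the modular-time bookkeeping and the fixed-point degeneracy in more detail.
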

\noindent While the \Cref{c:reverse} will not be used in the remainder of the paper, it offers an alternative to \tref{thrm:nagano} in proving that, when applying \Cref{cor:ergodic}, it is sufficient to verify that Lie bracket condition holds at \emph{any} point in $\mathcal Q$, not necessarily at $q^*$.
}

\begin{proof}[Proof of \Cref{p:conteuler}]
We prove the \om{first statement} by first evolving the initial condition $q^{(0)}$ into a sufficiently nondegenerate state $q^{(1)}$, and then by sequentially shrinking the set of active components of the coordinate vector $q$ to the ones listed in \eref{e:qstar}. We realize this program by following, in order, the sequence of steps described below, represented schematically in \fref{f:steps}:\\[-5pt]

\begin{enumerate}
	\setcounter{enumi}{-1}
\item If it is not the case at initialization, \lref{l:contr1} shows that we can ``prepare'' our state by evolving $q^{(0)}$ into $q^{(1)}$ such that
\begin{equ}\label{e:q0}
	a_{(1,0)}^{(1)}, b_{(1,0)}^{(1)},a_{(0,1)}^{(1)}, b_{(0,1)}^{(1)},a_{(1,1)}^{(1)},b_{(1,1)}^{(1)}\neq 0\,,
	\end{equ}
as	represented in \fref{f:steps1}.

\item As shown in \lref{l:contr2}, we can then transform $q^{(1)}$ into $q^{(2)}$
with the property
\begin{equ}\label{e:q20}
q_{j}^{(2)}=(0,0)\qquad \text{for all } j\in \ZN \setminus \{(0,1),(1,0),(1,1),(N,N),(-N,N)\}\,,
\end{equ}
as represented in \fref{f:steps2}, and
\begin{equ}\label{e:q21}
	a_{(1,0)}^{(2)}, b_{(1,0)}^{(2)},	a_{(0,1)}^{(2)}, b_{(0,1)}^{(2)},a_{(1,1)}^{(2)},b_{(1,1)}^{(2)} \neq 0\,.
	\end{equ}

		\item \lref{l:contr3} shows that we can then ``transfer'' the amplitude from modes $a_{(-N,N)}$, $b_{(-N,N)}$, $a_{(N,N)}$ to mode $b_{(N,N)}$ \ie we can reach a state $q^{(3)}$ that satisfies
		\begin{equs}
		q_{j}^{(3)}=(0,0)\qquad \qquad \, &\text{for all } j\in \ZN\setminus \{(0,1),(1,0),(1,1),(N,N)\}\,,\label{e:q30}\\
		q_{(N,N)}^{(3)}=(0,b_{(N,N)}^{(3)})\quad&\text{with }b_{(N,N)}^{(3)} \geq 0\,.\label{e:q31}
		\end{equs}
		This state is represented in \fref{f:steps3}.

  \item  Finally, \lref{l:contr4} shows that we can ``transfer'' the amplitude from modes $a_{(1,1)}$, $b_{(1,1)}$, $b_{(0,1)}$ and $b_{(1,0)}$ to modes $a_{(0,1)}, a_{(1,0)}, b_{(N,N)}$ so that, after the transfer, $a_{(0,1)} = a_{(1,0)}$ and $a_{(0,1)},a_{(1,0)}, b_{(N,N)}>0$ \ie we reach the unique state $q^*$ from \eref{e:qstar} (represented in \fref{f:steps4}).
\end{enumerate}

\noindent\om{This proves the first part of \Cref{p:conteuler}, which immediately implies nondegenerate points in $\mathcal{Q}_0$ belong to $\mathcal{Q}=\mathcal{Q}(q^*)$. Let $q$ be \textit{any} point in $\mathcal{Q}$. By definition there exist $m$ and $t$ in $\mathbb{R}^{mn}$ such that
\begin{align*}
	\Phi^m(q,t) &= \varphi^{(n)}_{t_{mn}}\circ\cdots \varphi^{(1)}_{t_1}(q)
		= q^*.
\end{align*}
Note that the times $t_i$ may be negative; however, by \Cref{l:midmode} each $\varphi^{(i)}$ is periodic. Thus for every $t_i\leq 0$ there exists a $t_i'>0$ such that $\varphi^{(i)}_{t_i}(q') = \varphi^{(i)}_{t_i'}(q')$ for all $q'$ in $\mathcal{Q}$. Let $t'$ be $t$ with all $t_i\leq 0$ replaced by $t_i'$. Then $t'$ is in $\mathbb{R}^{mn}_+$ and $\Phi^m(q,t')=\Phi^m(q,t)=q^*$.}
\end{proof}

\begin{figure}
	\centering
  \begin{subfigure}{.4\textwidth}
  \centering
  \includegraphics[width=\linewidth]{./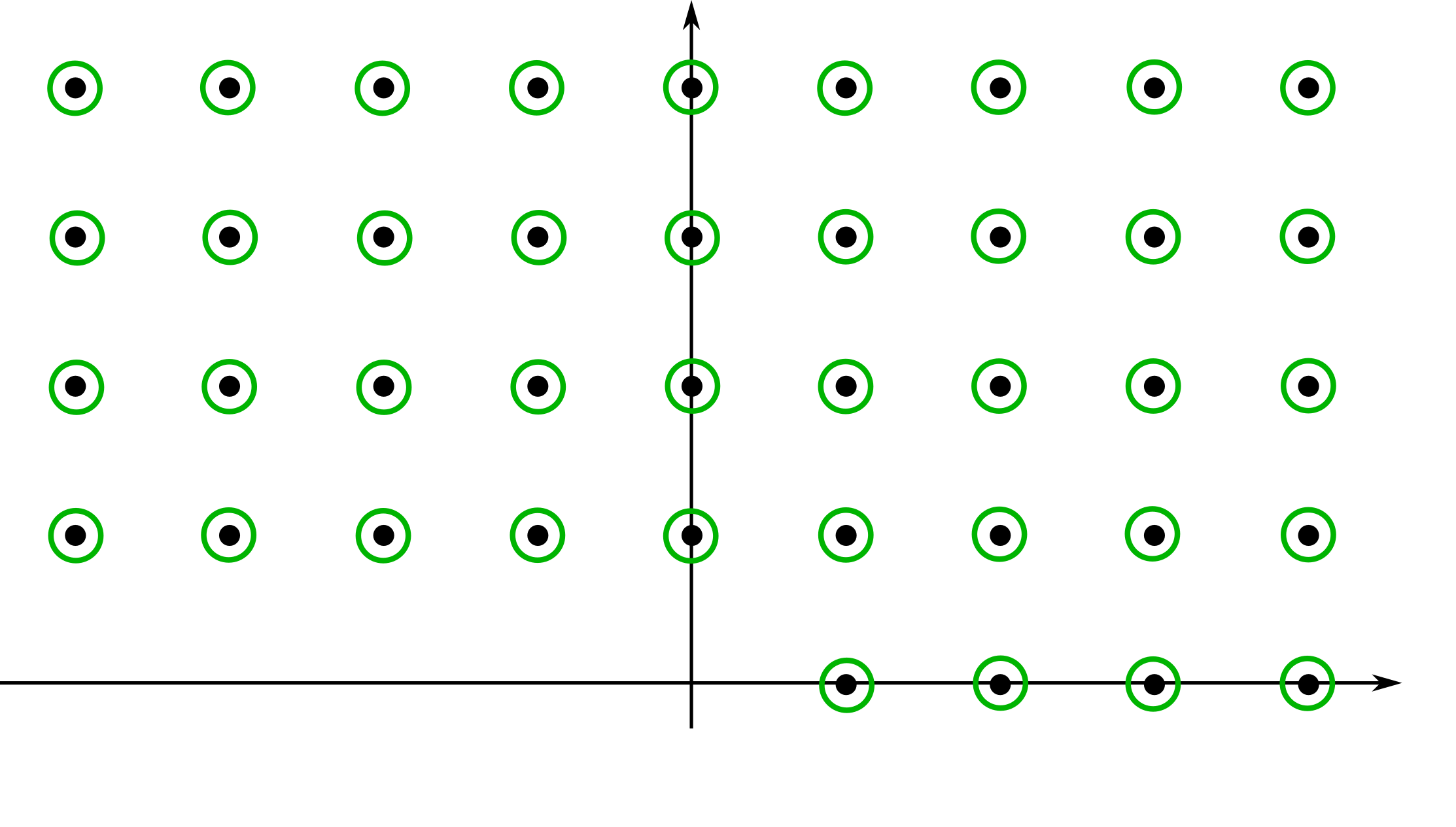}
  \caption{ }
  \label{f:steps1}
  \end{subfigure}%
  \,
  \begin{subfigure}{.4\textwidth}
    \centering
    \includegraphics[width=\linewidth]{./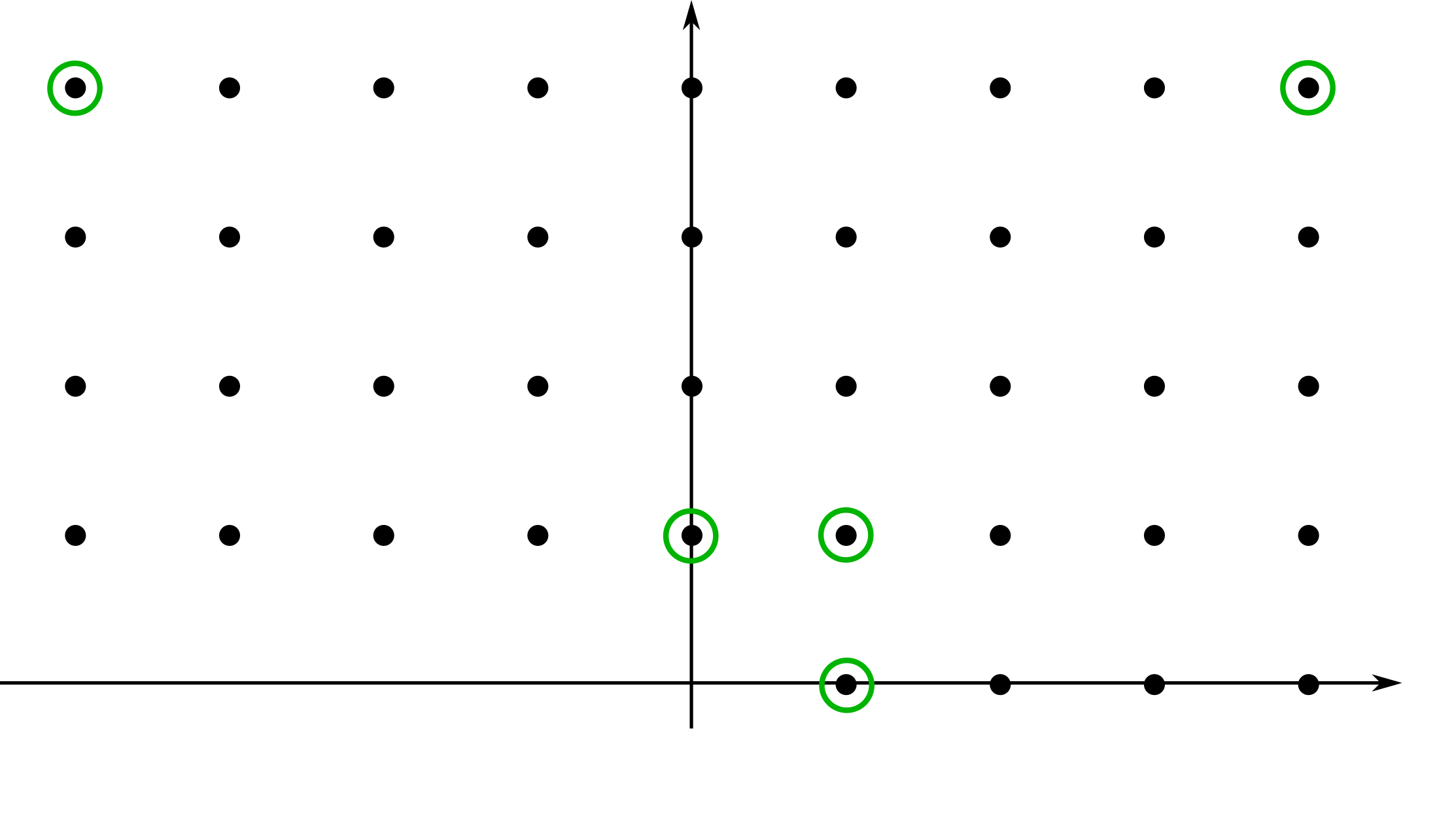}
    \caption{ }
    \label{f:steps2}
    \end{subfigure}%
    \,
    \begin{subfigure}{.4\textwidth}
      \centering
      \includegraphics[width=\linewidth]{./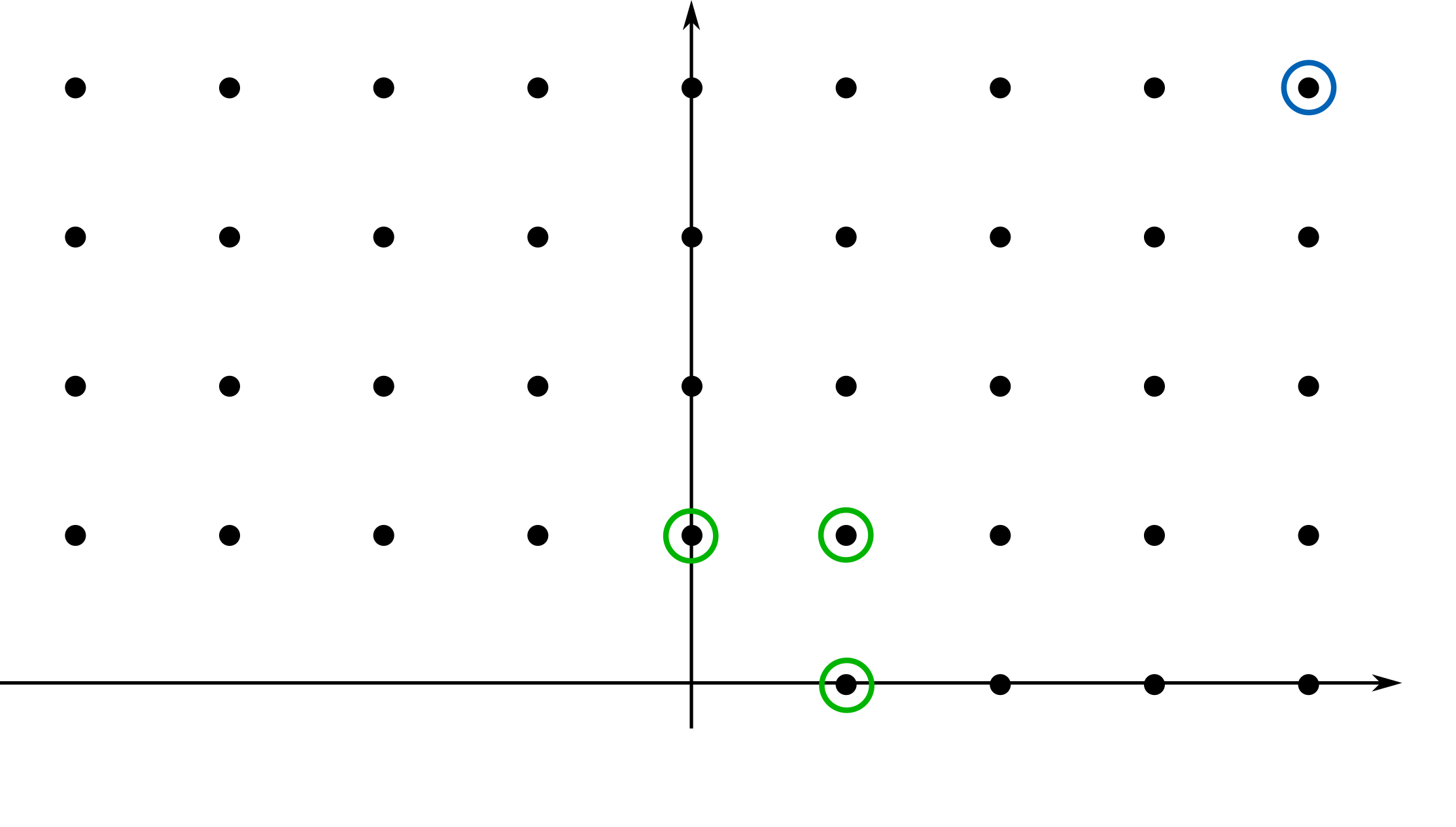}
      \caption{ }
      \label{f:steps3}
      \end{subfigure}%
			\,
			\begin{subfigure}{.4\textwidth}
				\centering
				\includegraphics[width=\linewidth]{./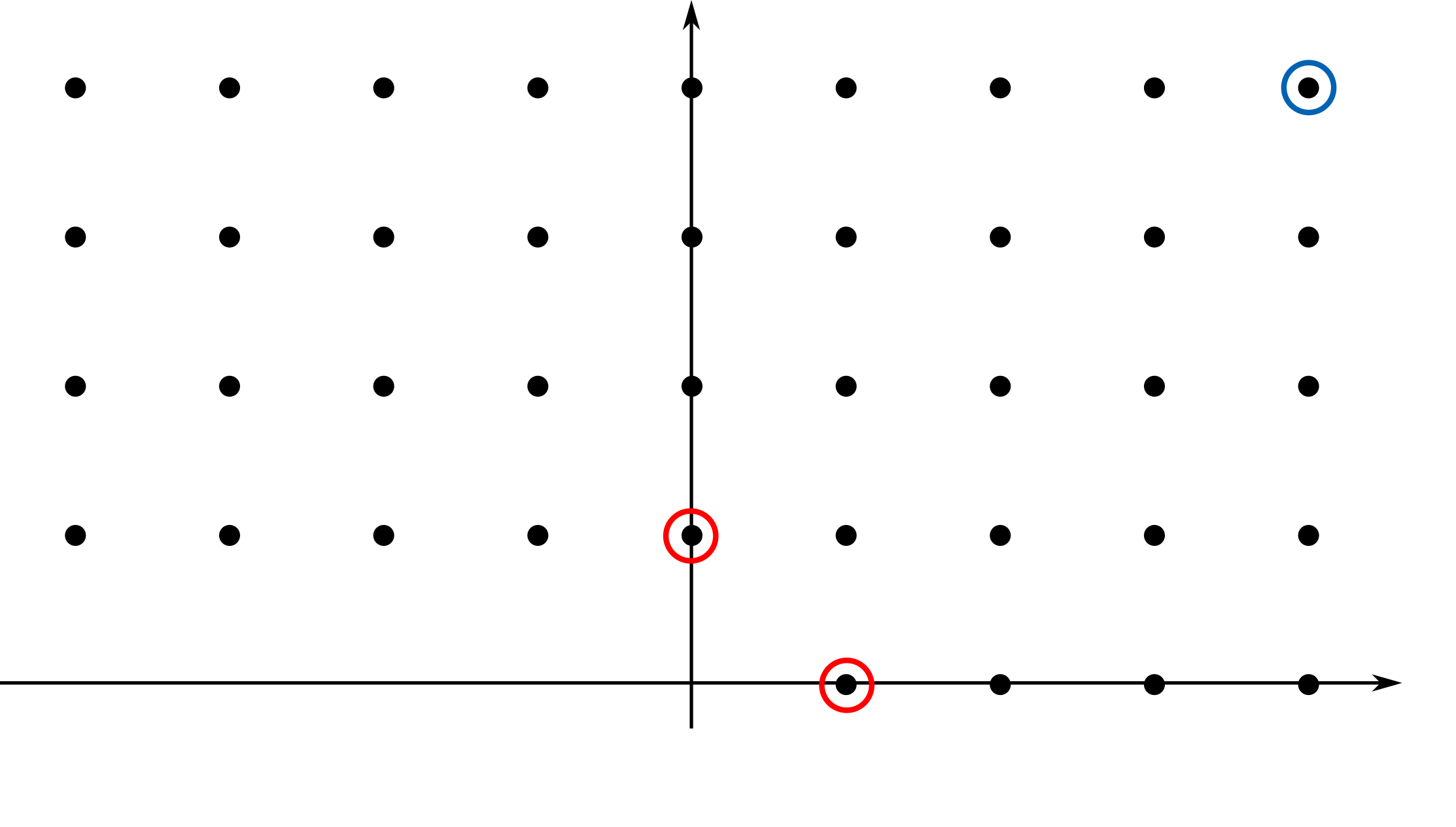}
				\caption{ }
				\label{f:steps4}
				\end{subfigure}%
    \caption{Representation of the state of the network in a generic initial state (a), after step 1 of the procedure in the proof of \pref{p:conteuler} (b), and after step 2 (c) and after step 3 (d) of the same procedure. In the above pictures, each point corresponds to a mode, \ie an element of $\mathbb{Z}^2_N$ while the color of each circle represents the real/complex value of the corresponding mode: zero (white, no circle), purely imaginary (red), purely real (blue) or having both nonvanishing real and imaginary parts (green). }
    \label{f:steps}
\end{figure}

Defining similarly to \eref{e:+} the operation of removing a coordinate from the set $\mathcal A$
  \begin{equ}\label{e:-}
		\AA \ominus {\bm \ell}  = \begin{cases} \AA \setminus \{{\bm \ell}\}\qquad&\text{if }  \ell \in \{j+k,j-k\}\cap \ZN \text{ for }{\bf j}, {\bf k} \in \AA, C_{jk}\neq 0, \mathrm{T}({\bf j})\cdot  \mathrm{T}({\bf k}) =  \mathrm{T}({\bm \ell})\,,\\ \AA & \text{else}\,, \end{cases}
  \end{equ}
  we now proceed to \emph{construct} (sequences of) times $\tau$ and interacting triples $\iota$ such that the transformations $\Phi_\tau^{(\iota)}$ of $q$ implement the operations $\oplus, \ominus$  from \eref{e:+}, \eref{e:-} through the flow of \eref{e:vf}, \ie such that $\AAc(q) \oplus {\bm \ell} = \AAc(\Phi_\tau^{\iota}(q))$ or $\AAc(q) \ominus {\bm \ell} = \AAc(\Phi_\tau^{\iota}(q))$ respectively. To do so we separate the possible interactions between the modes in two types:
\begin{equs}\label{e:types}
	a) &\quad \iota = \jkl\in\II~:~ |j|\neq |k|\neq|\ell|\,,\\
  b)& \quad	\iota = \jkl\in \II~:~ |j|= |k|\neq|\ell|\,.
	\end{equs}
	Note that these two types of interactions are exhaustive, since if $|j|=|k|=|\ell|$, $C_{j\ell} = C_{jk} = C_{k\ell} = 0$.

	The following preparatory lemmas describe the properties of these two types of interactions that we will leverage throughout our proof. The first one shows that for interactions of type a), ordering the indices so that $|j|<|k|<|\ell|$, it is always possible to activate all modes $\jj, \kk, \ll$ or to distribute the amplitude of the $k$-mode to the $j$ and $\ell$-modes reaching, in finite time, a state with $q_\kk=0$. As we show in the proof below, while such a point with $q_\kk=0$ always exists on the orbits of \eref{e:vf}, this point is reachable in finite time for $\iota = \jkl\in \II$ with $|j|< |k|< |\ell|$ only if
	\begin{equ}\label{e:degcond}
		E_{\iota}(q) \neq |k|^2 \EE_{\iota}(q)\,,
	\end{equ}
where $E_{\iota}(q)$ and $\EE_{\iota}(q)$ denote the energy and enstrophy of the coordinates in $\iota \in \II$:
\begin{equ}\label{e:partiales}
	E_{\iota}(q) \eqdef \sum_{\ll \in \iota} |q_\ll|^2\,,\qquad \EE_{\iota}(q) \eqdef \sum_{\ll \in \iota} \frac{|q_\ll|^2}{|\ell|^2}\,.
\end{equ}
\begin{figure}
  \centering
  \begin{subfigure}{.4\textwidth}
\centering
\includegraphics[width=0.8\textwidth]{./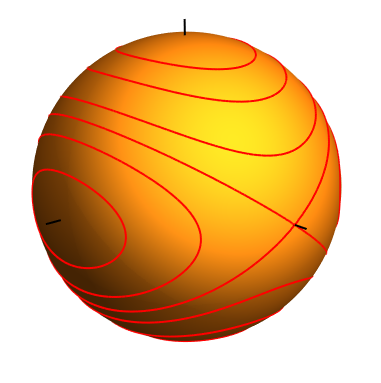}
\caption{ }
\end{subfigure}%
\,
\begin{subfigure}{.4\textwidth}
  \centering
  \includegraphics[width=0.9\textwidth]{./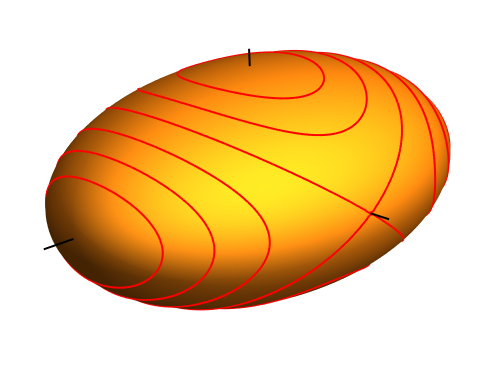}
  \caption{ }
  \end{subfigure}%
\caption{Orbits $\mathcal Q_\iota$ of \eref{e:vf} (in red) corresponding in (A) to various values of the energy $\mathcal E_\iota(q)$ on the sphere of constant enstrophy $E_\iota(q)$ and in (B) to various values of the enstrophy $E_\iota(q)$ on the ellipsoid of constant energy $\mathcal E_\iota(q)$. The axes are, sequentially, $q_\kk, q_\jj, q_\ll$. The orbit with a degenerate point at the pole of the sphere or ellipsoid corresponds to values of $E_\iota , \mathcal E_\iota$ violating \eref{e:degcond}.  }
\label{f:sphere}
\end{figure}

In the following lemma and throughout the section, we abuse notation slightly by defining $\text{sign}(x) = +1$ for $x \in [0,\infty)$ and $-1$ otherwise.
	\begin{lemma}\label{l:midmode}
	  Fix  $\iota = \jkl \in \II$ with $|j|<|k|<|\ell|$. Let \om{$q$ be a nondegenerate point in $\mathcal{Q}_0$ satisfying} \eref{e:degcond} and let $q_\llb = 0$ for at most an index $\llb \in \{\jj,\kk,\ll\}$. \aa{Then the orbit of $V_\iota$ is periodic and} there exist $\tau_-^\iota, \tau_+^\iota \geq 0$ such that
		\begin{enumerate}[label=(\alph*)]
			\item $\pit{\tau_-^\iota}(q) = q'$ with $q_\kk' = 0$, $\mathrm{sign}(q_\jj) = \mathrm{sign}(q_\jj')$ and $\mathrm{sign}(q_\ll) = \mathrm{sign}(q_\ll')$,
			\item $\pit{\tau_+^\iota}(q) = q''$ with $q_\jj'', q_\kk'', q_\ll'' \neq 0$, $\mathrm{sign}(q_\jj) = \mathrm{sign}(q_\jj'')$ and $\mathrm{sign}(q_\ll) = \mathrm{sign}(q_\ll'')$.
		\end{enumerate}
    Furthermore, if $|j|^2 \EE_{\iota}(q) < E_{\iota}(q)< |k|^2 \EE_{\iota}(q)$, there exists $\tau_=^\iota \geq 0$ such that
    \begin{enumerate}[label=(\alph*)]
      \setcounter{enumi}{2}
    \item $\pit{\tau_=^\iota}(q) = q'''$ with $q_\ll''' = 0$, $\mathrm{sign}(q_\jj) = \mathrm{sign}(q_\jj''')$ and $\mathrm{sign}(q_\kk) = \mathrm{sign}(q_\kk''')$.
  \end{enumerate}
	\end{lemma}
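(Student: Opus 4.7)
My plan exploits that each vector field $V_\iota$ from \eqref{e:vf}, viewed on the three-dimensional slice of coordinates in $\iota = (\jj, \kk, \ll)$, admits the two independent polynomial conserved quantities $E_\iota$ and $\EE_\iota$ (conservation follows from $C_{k\ell}+C_{j\ell}-C_{jk}=0$ together with its weighted counterpart $C_{k\ell}/|j|^2+C_{j\ell}/|k|^2-C_{jk}/|\ell|^2=0$ verified in Section~\ref{sec:Euler}). Consequently the orbit through $q$ lies in the level set $L \coloneqq \{q' : E_\iota(q')=E_\iota(q),\, \EE_\iota(q')=\EE_\iota(q)\}$ inside that slice. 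A direct Lagrange-multiplier computation shows that under \eqref{e:degcond} together with the hypothesis that at most one of $q_\jj, q_\kk, q_\ll$ vanishes, the gradients of $E_\iota$ and $\EE_\iota$ are linearly independent at every point of the connected component $L_0$ of $L$ containing $q$. Hence $L_0$ is a smooth compact $1$-manifold, necessarily diffeomorphic to $S^1$, and since $V_\iota|_{L_0}$ is nowhere vanishing, the orbit through $q$ coincides with $L_0$ and the flow is periodic with some period $T > 0$.

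For part (a), I would compute $L \cap \{q_\kk = 0\}$ using the conservation laws: the equations $q_\jj^2 + q_\ll^2 = E_\iota$ and $q_\jj^2/|j|^2 + q_\ll^2/|\ell|^2 = \EE_\iota$ force
\[
q_\jj^2 = \frac{|j|^2(|\ell|^2\EE_\iota - E_\iota)}{|\ell|^2 - |j|^2}\,,\qquad q_\ll^2 = \frac{|\ell|^2(E_\iota - |j|^2\EE_\iota)}{|\ell|^2 - |j|^2}\,,
\]
both strictly positive under \eqref{e:degcond} and the universally valid inequalities $|j|^2 \EE_\iota \leq E_\iota \leq |\ell|^2 \EE_\iota$ (which are strict because degeneracy in either inequality would force two of $q_\jj, q_\kk, q_\ll$ to vanish identically on $L_0$, contrary to assumption). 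Thus $L_0 \cap \{q_\kk = 0\}$ consists of two or four points differing by sign flips of $q_\jj, q_\ll$. I would then use the explicit integrability of $V_\iota$ via Jacobi elliptic functions (standard for Euler-top-type systems) to confirm that $q_\kk$ crosses zero exactly twice per period along $L_0$, while one of $q_\jj, q_\ll$ stays sign-constant on $L_0$ and the other changes sign at times disjoint from those of $q_\kk$. Picking $\tau_-^\iota \in [0, T)$ to be the first zero-crossing time of $q_\kk$ at which the oscillating coordinate still agrees in sign with its value at $q$ (the sign-constant coordinate agrees automatically) yields the required conclusion.

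Part (b) is easier: by the same conservation-law computation applied also to $\{q_\jj = 0\}$ and $\{q_\ll = 0\}$, the set $L_0 \cap \{q_\jj q_\kk q_\ll = 0\}$ is finite, so its complement is open and dense in the closed curve $L_0$. Any sufficiently small $\tau_+^\iota > 0$ with $\pit{\tau_+^\iota}(q)$ in that complement works, with sign preservation of initially nonzero coordinates immediate by continuity. For part (c), I would repeat the analysis of (a) with $\{q_\ll = 0\}$ in place of $\{q_\kk = 0\}$, obtaining
\[
q_\jj^2 = \frac{|j|^2(|k|^2\EE_\iota - E_\iota)}{|k|^2-|j|^2}\,,\qquad q_\kk^2 = \frac{|k|^2(E_\iota - |j|^2\EE_\iota)}{|k|^2-|j|^2}\,,
\]
both positive exactly under the stronger hypothesis $|j|^2\EE_\iota < E_\iota < |k|^2\EE_\iota$ stated for (c). In this regime $q_\jj$ is sign-constant on $L_0$ while $q_\kk$ oscillates through zero, so the same elliptic-function analysis yields $\tau_=^\iota \geq 0$ realizing $q_\ll''' = 0$ with the required signs of $q_\jj, q_\kk$.

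The main obstacle is the sign-preservation claim in (a) and (c): among the several candidate points of $L_0$ where the targeted coordinate vanishes, one must select the one whose remaining signs match those at $q$. The cleanest resolution is via explicit Jacobi elliptic function parametrization of $L_0$, which makes the period, the number of zero-crossings, and the sign pattern of each coordinate across one period fully transparent; a more elementary route using the discrete $\mathbb{Z}_2 \times \mathbb{Z}_2$ symmetries of $V_\iota$ generated by the sign-flip involutions $(q_\jj, q_\kk, q_\ll) \mapsto (-q_\jj, -q_\kk, q_\ll)$ and $(-q_\jj, q_\kk, -q_\ll)$ together with connectedness of $L_0$ should also suffice.
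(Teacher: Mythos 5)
Your plan follows the paper's proof structure almost exactly: identify the orbit of $V_\iota$ with a connected component of the curve $\mathcal{Q}_\iota = \{E_\iota = E_\iota(q),\ \EE_\iota = \EE_\iota(q)\}$, deduce periodicity from compactness together with a nonvanishing-velocity argument (fixed points would need two coordinates zero, impossible on the curve), and compute explicitly where the targeted coordinate vanishes. The explicit formulas for $q_\jj^2$ and $q_\ll^2$ on $L \cap \{q_\kk = 0\}$, and the strictness of $|j|^2\EE_\iota < E_\iota < |\ell|^2\EE_\iota$ under the lemma's hypotheses, are correct and match the paper's implicit calculation.

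Where you diverge is the sign-preservation step, which you correctly flag as the crux but leave partially open. The paper's resolution is lighter than either of your proposed alternatives and closes the gap cleanly: split into the two regimes $|j|^2\EE_\iota < E_\iota < |k|^2\EE_\iota$ and $|k|^2\EE_\iota < E_\iota < |\ell|^2\EE_\iota$. In the first, $q_\jj$ never vanishes anywhere on $\mathcal{Q}_\iota$ — a one-line inequality shows $q_\jj = 0$ would force $E_\iota > |k|^2\EE_\iota$ — hence $q_\jj$ has constant sign on the connected component through $q$, and therefore \emph{both} points $(p_\jj, 0, p_\ll)$ and $(p_\jj, 0, -p_\ll)$ with $p_\ll > 0$ and $\mathrm{sign}(p_\jj) = \mathrm{sign}(q_\jj)$ lie on that same component; flowing to whichever one matches the remaining sign gives (a). The second regime is symmetric with the roles of $q_\jj$ and $q_\ll$ swapped, and (c) is the identical argument applied to $\{q_\ll = 0\}$ in the first regime. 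No Jacobi-elliptic-function parametrization or zero-crossing count per period is required, and your ``more elementary route via the $\mathbb{Z}_2\times\mathbb{Z}_2$ symmetries and connectedness'' is essentially this argument once worked out. One small imprecision to correct: $L_0 \cap \{q_\kk = 0\}$ consists of exactly two points (not ``two or four''); the four sign patterns are split evenly across the two connected components of $\mathcal{Q}_\iota$, each component carrying the fixed sign of the coordinate that never vanishes in that regime.
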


\noindent \begin{proof}
	  We consider the intersection between the sphere and the ellipse corresponding to the enstrophy and the energy in the coordinates $\iota = \jkl\in \II$ of interest, resulting in the set
		\begin{equ}
			\QQ_\iota \eqdef \pg{ (q_\jj',q_\kk',q_\ll') \in \mathbb R^3~:~|q_\jj'|^2+|q_\kk'|^2+|q_\ll'|^2 = E_{\iota}(q), \frac{|q_\jj'|^2}{|j|^2}+\frac{|q_\kk'|^2}{|k|^2}+\frac{|q_\ll'|^2}{|\ell|^2} = \EE_{\iota}(q)}\,.
			\end{equ}
			This set is represented in \fref{f:sphere}. We observe that this set has exactly $2$ disjoint simply connected components when $|j|^2 \EE_{\iota}(q) < E_{\iota}(q)< |k|^2 \EE_{\iota}(q)$ and
$|k|^2 \EE_{\iota}(q) < E_{\iota}(q)< |\ell|^2 \EE_{\iota}(q)$. These components are diffeomorphic to $S^1$.
		By continuity the dynamics are limited to one such component of $\QQ_\iota$. Furthermore, $|\dot q|^2$ is uniformly bounded away from $0$ on each such component: the fixed points of \eref{e:vf} must have at least two coordinates vanishing, which cannot be realized on the curves of interest.  Therefore the dynamics on these sets are periodic.

We start by proving part (b) of the lemma. If $q_\jj, q_\kk, q_\ll \neq 0$ the result follows by choosing $\tau_+^\iota =0$. Else, if $q_\llb =0$ for $\llb \in \iota$ the result follows immediately choosing $\tau_+^\iota$ small enough by combining the continuity of the flow $\Phi_t^\iota$ and the fact that $\dot q_\llb = C_{l'l''}q_{\llb'}q_{\llb''}\neq  0$ for $\{\llb',\llb''\} = \iota \setminus \{\llb\}$.

To prove part (a) we consider the cases where $|j|^2 \EE_{\iota}(q) < E_{\iota}(q)< |k|^2 \EE_{\iota}(q)$ and $|k|^2 \EE_{\iota}(q) < E_{\iota}(q)< |\ell|^2 \EE_{\iota}(q)$ separately. In the first case, we see that there is no point $q \in \QQ_\iota$ with $q_\jj = 0$: if that were the case we would have
\begin{equ}
	E_{\iota}(q) = q_\kk^2+q_\ll^2 = |k|^2\pc{\frac{q_\kk^2}{|k|^2}+\frac{q_\ll^2}{|k|^2}} > |k|^2 \EE_\iota(q)\,,
\end{equ}
contradicting our assumption. Consequently the points $(p_\jj,0,p_\ll), (p_\jj,0,-p_\ll)$ with $p_\ll >0$, $\mathrm{sign}(p_\jj)=\mathrm{sign}(q_\jj)$ and
\begin{equ}
	p_\jj^2 + p_\ll^2 = E_\iota(q)\,,\qquad \frac{p_\jj^2}{|j|^2} + \frac{p_\ll^2}{|\ell|^2} = \EE_\iota(q)\,,
\end{equ}
belong to the same connected component as $q$ and by the lower bound on the velocity on this connected component both these points are reachable in finite time from $q$. This also proves part (c) by continuity of the dynamics. The second case where $|k|^2 \EE_{\iota}(q) < E_{\iota}(q)< |\ell|^2 \EE_{\iota}(q)$ can be handled analogously: in this case we have $\QQ_\iota \cap \{q_\ll=0\} = \emptyset$ and we can reach $(p_\jj,0,p_\ll), (-p_\jj,0,p_\ll)$ with $p_\jj >0$, $\mathrm{sign}(p_\ll)=\mathrm{sign}(q_\ll)$ in finite time.
	\end{proof}

	The following lemma considers interactions of type b) in \eref{e:types}. Recalling the definition $j^\perp\eqdef (j_2, -j_1)$
	 we show that interactions with $|j|=|k|\neq|\ell|$ leave component $\ll$ fixed and move $\jj, \kk$ in a circle at constant angular speed.

	\begin{lemma}\label{l:samenorm}
	  Fix an unordered interacting triple $\iota = \jkl$ with $|k| = |j|$ and $q_\ll \neq 0$. For all $\theta$ in $[0,2\pi)$ there exists $t\geq 0$ such that $\pit{t}(q) = q'$ with $(q_\jj',q_\kk') = \sqrt{q_\jj^2+q_\kk^2}(\cos(\theta), \sin(\theta))$ and $q_\ll' = q_\ll\,$.
	\end{lemma}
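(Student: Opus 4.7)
The plan is to reduce the three-dimensional dynamics of \eref{e:vf} for the triple $\iota=\jkl$ with $|j|=|k|\neq|\ell|$ to a planar rotation at constant angular speed, and then invoke periodicity to reach any target angle.

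First I would examine the three structure constants. Since $|j|=|k|$, the definition \eref{e:c} immediately gives $C_{jk}=0$. Using $\ell=j+k$, we have $\ell^\perp=j^\perp+k^\perp$, so
\begin{equation*}
\langle k,\ell^\perp\rangle=\langle k,j^\perp\rangle=-\langle j,k^\perp\rangle,\qquad \langle j,\ell^\perp\rangle=\langle j,k^\perp\rangle,
\end{equation*}
and the second factor $\bigl(\tfrac{1}{|j|^2}-\tfrac{1}{|\ell|^2}\bigr)=\bigl(\tfrac{1}{|k|^2}-\tfrac{1}{|\ell|^2}\bigr)$ agrees for $C_{j\ell}$ and $C_{k\ell}$. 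Setting $\alpha\coloneqq C_{j\ell}$, this yields $C_{k\ell}=-\alpha$. Moreover, the membership $\iota\in\II$ forces $(C_{jk},C_{j\ell},C_{k\ell})\neq(0,0,0)$, and since $C_{jk}=0$ we must have $\alpha\neq0$ (equivalently $j$ and $k$ are not parallel, which is automatic because $j=-k$ would violate $\ell\in\ZN$ while $j=k$ would force $C_{j\ell}=0$).

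Next I would substitute these values into whichever of the four blocks of \eref{e:vf} matches the types $\mathrm{T}(\jj)$, $\mathrm{T}(\kk)$, $\mathrm{T}(\ll)$. In every one of the four cases the vanishing of $C_{jk}$ makes $\dot q_\ll=0$, while the remaining two equations assume the common form
\begin{equation*}
\dot q_\jj=-\alpha\,q_\kk q_\ll,\qquad \dot q_\kk=\alpha\,q_\jj q_\ll,
\end{equation*}
(up to an irrelevant overall sign absorbed into the orientation). Since $q_\ll$ is conserved, this is a linear planar system with constant angular velocity $\omega\coloneqq\alpha\,q_\ll$, and $q_\jj^2+q_\kk^2$ is a first integral.

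Finally, the hypotheses $q_\ll\neq0$ and $\alpha\neq0$ give $\omega\neq0$, so the solution $(q_\jj(t),q_\kk(t))$ traverses the entire circle of radius $\sqrt{q_\jj^2+q_\kk^2}$ with period $2\pi/|\omega|$. Hence for any $\theta\in[0,2\pi)$ we may choose $t\in[0,2\pi/|\omega|)$ so that $(q_\jj(t),q_\kk(t))=\sqrt{q_\jj^2+q_\kk^2}(\cos\theta,\sin\theta)$ while $q_\ll(t)=q_\ll$, yielding the desired $\pit{t}(q)=q'$.

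I do not anticipate a genuine obstacle here: the content is essentially bookkeeping with the coefficients $C_{\cdot\cdot}$ and identification of the four type-blocks in \eref{e:vf} with the unified rotation picture. The only subtle point is confirming $\alpha\neq0$ from the definition of $\II$, which is handled by the parallelism argument above.
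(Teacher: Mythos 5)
Your proposal is correct and follows the same route as the paper's proof: note $C_{jk}=0$ from $|j|=|k|$, compute that $C_{k\ell}=-C_{j\ell}$ via the identity for $\langle\cdot,\cdot^\perp\rangle$ applied to $\ell=j+k$, conclude $\dot q_\ll=0$ and that $(q_\jj,q_\kk)$ evolves by a rigid rotation with angular speed $C_{j\ell}q_\ll\neq 0$. The one small addition you make — explicitly arguing $C_{j\ell}\neq 0$ from the definition of $\II$ and $C_{jk}=0$ — is a welcome clarification of a step the paper leaves implicit.
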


	\begin{corollary}\label{c:samenorm}
		Fix an (unordered) interacting triple  $\iota = \jkl \in \II$ with $|k| = |j|$ and let $q_\ll, q_\kk \neq 0$. Then there exist $\tau_+^{\iota}, \tau_-^{\iota} \geq 0 $ such that $(\phi_{\tau_+^\iota}^{\iota} (q))_{\mathbf j} > 0$ and $(\phi_{\tau_-^\iota}^{\iota} (q))_{\mathbf j} = 0$\,.
	\end{corollary}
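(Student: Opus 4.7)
The plan is to apply \lref{l:samenorm} directly, picking two specific values of the rotation angle $\theta$ to realize the two desired outcomes.

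First observe that the hypotheses of \lref{l:samenorm} are met: we have an interacting triple $\iota = \jkl$ with $|j|=|k|$ and $q_\ll \neq 0$. The lemma tells us that flowing under $\phi_t^\iota$ acts on the two-dimensional $(q_\jj, q_\kk)$ subspace as a rotation at constant angular speed (with $q_\ll$ held fixed), and that every angle $\theta \in [0, 2\pi)$ is attainable in some nonnegative time $t$. Moreover, because $q_\kk \neq 0$ by assumption, the radius $r \coloneqq \sqrt{q_\jj^2 + q_\kk^2}$ is strictly positive.

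To produce $\tau_+^\iota$, apply the lemma with $\theta = 0$: there exists $\tau_+^\iota \geq 0$ such that $\bigl((\phi^\iota_{\tau_+^\iota}(q))_\jj, (\phi^\iota_{\tau_+^\iota}(q))_\kk\bigr) = (r,0)$. Since $r > 0$, this yields $(\phi^\iota_{\tau_+^\iota}(q))_\jj > 0$ as required. To produce $\tau_-^\iota$, apply the lemma with $\theta = \pi/2$: there exists $\tau_-^\iota \geq 0$ such that $\bigl((\phi^\iota_{\tau_-^\iota}(q))_\jj, (\phi^\iota_{\tau_-^\iota}(q))_\kk\bigr) = (0,r)$, giving $(\phi^\iota_{\tau_-^\iota}(q))_\jj = 0$.

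There is no real obstacle here; the only thing worth flagging is that the assumption $q_\kk \neq 0$ is genuinely used (it ensures $r > 0$, so that $\theta = 0$ truly delivers a strictly positive $\jj$-component rather than the trivial case $r = 0$ in which both coordinates remain zero throughout the rotation). The hypothesis $q_\ll \neq 0$ is likewise essential, as it is what makes the angular velocity of the rotation in \lref{l:samenorm} nonzero and hence all angles $\theta$ reachable in finite nonnegative time.
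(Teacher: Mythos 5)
Your proof is correct and is exactly the intended direct application of \lref{l:samenorm} (taking $\theta = 0$ and $\theta = \pi/2$ to realize the two conclusions); the paper leaves the corollary unproved precisely because it follows this immediately. Your remarks on where $q_\kk \neq 0$ and $q_\ll \neq 0$ are used are also accurate.
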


\noindent	\begin{proof}[Proof of \lref{l:samenorm}]
Recall from \eref{e:c} that if $|j|=|k|\neq |\ell|$ we have $C_{jk} = 0$. This implies that, by our choice of $|k| = |j|$, $\dot q_\ll = 0$ and $q'_\ll = q_\ll$. Again by \eref{e:c} and since to have an interacting triple $\ell = j+k$ we must have
\begin{equ}
	{\langle k^\perp,\ell\rangle} = {\langle k^\perp,k+j\rangle} = {\langle k^\perp,j\rangle} =  {\langle (k + j)^\perp - j^\perp,j\rangle} = {\langle \ell^\perp,j\rangle} = -{\langle j^\perp,\ell\rangle}\,,
\end{equ}
	 so that
\begin{equ}
	C_{k\ell} = \frac{\langle k,\ell^\perp\rangle}{4\pi}\bigg(\frac{1}{\lvert k\rvert^2}-\frac{1}{\lvert\ell\rvert^2}\bigg) = -\frac{\langle j,\ell^\perp\rangle}{4\pi}\bigg(\frac{1}{\lvert j\rvert^2}-\frac{1}{\lvert\ell\rvert^2}\bigg)=-C_{j\ell}\,.
\end{equ}
This implies that the dynamics of the vector $\tilde q \eqdef (q_\jj,q_\kk)$ can be written as $\dot{\tilde q} = \tilde C \tilde q^{\perp}$ for $\tilde C \eqdef C_{j\ell}q_\ll\neq 0$, proving the claim.
	\end{proof}


\subsubsection{Existence of invariant measure}\label{sec:EulerInvariant}


As with conservative Lorenz-96, each vector field of the 2D Euler splitting is divergence free and so Lebesgue measure in $\mathbb{R}^n$ is invariant. Consequently, we have

\begin{lemma}\label{lem:EulerInvariant}
Let $\lambda$ denote the Lebesgue measure on $\mathbb R^n$. \jcm{The
measure obtained by conditioning  $\lambda$
to lie on $\mathcal Q \subset Q_0(E,\mathcal{E})$, (or equivalently
conditioned to lie on $Q_0(E,\mathcal{E})$) is
$P_h$-invariant.}
\end{lemma}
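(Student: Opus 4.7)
The plan is to mirror the structure of the Lebesgue-measure argument from the conservative Lorenz-96 proof (Proposition~\ref{prop5.1}), replacing the single conserved quantity (norm) with the two conserved quantities $E$ and $\mathcal{E}$. The first step is to verify that every splitting vector field in $\mathcal{V}$ is divergence-free. This is immediate from inspection of the four systems in \eqref{e:vf}: each vector field has exactly three nonzero components, and in each system the right-hand side of the equation for the coordinate $\xi$ depends only on the remaining two coordinates of the interacting triple, so $\partial_{\xi}\dot{\xi}=0$ for every active $\xi$. Combined with the conservation of $E$ and $\mathcal{E}$ under each $V_k$ (already established in Section~\ref{sec:Euler}), this yields that each flow $\varphi^{(k)}_t$ preserves both Lebesgue measure on $\mathbb{R}^n$ and the foliation by level sets of the map $\Xi\colon q\mapsto (E(q),\mathcal{E}(q))$. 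The argument for invariance of $\lambda$ itself is identical to the one in Proposition~\ref{prop5.1}: the pushforward $(\varphi^{(k)}_t)_\#\lambda$ satisfies the continuity equation, which reduces to a transport equation by divergence-freeness, and so coincides with the constant initial density $\lambda$ for all $t$.

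Next, I would disintegrate $\lambda$ along the fibration by $(E,\mathcal{E})$. On the open set where $\nabla E$ and $\nabla\mathcal{E}$ are linearly independent (which contains $\mathcal{Q}$ since every generic point has nonvanishing gradients), the co-area formula gives
\begin{equation*}
  d\lambda \;=\; \frac{d\sigma_{E,\mathcal{E}}}{\|\nabla E\wedge \nabla\mathcal{E}\|}\,dE\,d\mathcal{E},
\end{equation*}
where $\sigma_{E,\mathcal{E}}$ denotes the Hausdorff measure on the regular level set $\mathcal{Q}_0(E,\mathcal{E})$. Denote the conditional measure on the fiber by $\mu_{E,\mathcal{E}}\eqdef d\sigma_{E,\mathcal{E}}/\|\nabla E\wedge\nabla\mathcal{E}\|$. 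Because each $\varphi^{(k)}_t$ preserves $\lambda$ and leaves $(E,\mathcal{E})$ fixed pointwise, uniqueness of disintegration forces $(\varphi^{(k)}_t)_\#\mu_{E,\mathcal{E}}=\mu_{E,\mathcal{E}}$ on each regular fiber. This is the analog of the factorization $\lambda(r,\theta)=\bar\lambda(\theta)\mu_R(dr)$ used in the Lorenz-96 proof, now with two ``radial" coordinates $(E,\mathcal{E})$ replacing the single $r$.

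Finally, because one cycle $\Phi_{h\tau}$ of random splitting is a composition of the individually $\mu_{E,\mathcal{E}}$-preserving flows $\varphi^{(k)}_{h\tau_k}$, $\mu_{E,\mathcal{E}}$ is preserved pathwise by $\Phi_{h\tau}$; taking expectation over $\tau$ then gives $P_h$-invariance. Restricting to $\mathcal{Q}$, which carries full $\mu_{E,\mathcal{E}}$-measure by Proposition~\ref{prop6.1}, completes the proof.

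The main subtlety will be in the middle step: the co-area disintegration is only guaranteed for $\nu$-a.e.\ value $(E,\mathcal{E})$, and I want pointwise invariance on each regular fiber. The cleanest way to bypass this is to introduce the ambient volume form $dq$ on $\mathbb{R}^n$ and define the fiber volume form $\omega$ on $\mathcal{Q}_0(E,\mathcal{E})$ pointwise by the relation $dE\wedge d\mathcal{E}\wedge \omega = dq$. Since each $V_k$ is divergence-free and annihilates both $E$ and $\mathcal{E}$, Cartan's magic formula yields $\mathcal{L}_{V_k}\omega = 0$, giving pointwise invariance of $\omega$ (hence of $\mu_{E,\mathcal{E}}$) under each $\varphi^{(k)}_t$ on every regular level set. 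This differential-geometric identity is the only piece that goes beyond the Lorenz-96 template, and it replaces the explicit spherical-coordinate computation used there.
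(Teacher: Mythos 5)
Your proposal is correct, and the underlying idea---disintegrate Lebesgue measure along the two conserved quantities and exploit divergence-freeness plus conservation of $E,\mathcal{E}$ to conclude the fiber measure is preserved---is the same one the paper uses. Where you differ is in the formalization: the paper passes to local coordinates $(E,\mathcal{E},\theta)$ and writes out the resulting ``angular'' continuity equation $u\,\diverg_\theta(\bar\lambda\, v_k)=0$, from which the factorization $\lambda = \bar\lambda(\theta\,|\,E,\mathcal{E})\cdot\lambda^\perp(E,\mathcal{E})$ is read off; your volume-form argument with $dE\wedge d\mathcal{E}\wedge\omega = dq$ is coordinate-free and, as you note, neatly closes the ``a.e.\ vs.\ pointwise'' gap that the bare co-area disintegration would leave open. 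In that sense your second formulation is a cleaner route to the same endpoint.

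Two small points worth tightening. First, the relation $dE\wedge d\mathcal{E}\wedge\omega = dq$ determines $\omega$ only modulo the ideal generated by $dE$ and $d\mathcal{E}$, so the clean statement is not $\mathcal{L}_{V_k}\omega = 0$ but rather $dE\wedge d\mathcal{E}\wedge\mathcal{L}_{V_k}\omega = 0$, i.e.\ $\mathcal{L}_{V_k}\omega$ lies in that ideal; since $V_k$ is tangent to the level set, pulling back by the inclusion $\iota\colon\mathcal{Q}_0\hookrightarrow\mathbb{R}^n$ gives $\mathcal{L}_{V_k|_{\mathcal{Q}_0}}(\iota^*\omega) = \iota^*(\mathcal{L}_{V_k}\omega) = 0$, which is the invariance you actually need. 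Second, the identity you use is the Leibniz rule for $\mathcal{L}_{V_k}$ on wedge products combined with $\mathcal{L}_{V_k}dq = (\diverg V_k)\,dq$ and $\mathcal{L}_{V_k}dE = d(V_k E) = 0$; calling this ``Cartan's magic formula'' is a slight mislabel (the magic formula $\mathcal{L}_X = d\iota_X + \iota_X d$ is only used indirectly to see that $\mathcal{L}_X$ commutes with $d$). Neither issue affects the validity of the argument.
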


\begin{proof}
As in the proof of \pref{prop5.1} we have that Lebesgue measure in $\mathbb R^n$ is $P_h$-invariant. Since the vector fields $V_k$ defined in~\eqref{eq:euler_splitting} are divergence free, the continuity equation\footnote{\aa{As in the proof of \Cref{prop5.1}, the continuity equation is intended here in the weak sense.}} reads
\begin{equs}
	\partial_t\lambda + \diverg\left(V_k\lambda \right)
		= \partial_t\lambda + \nabla\lambda \cdot V_k = 0\,.
\end{equs}
Because each flow $\varphi^{(k)}$ conserves energy $E$ and enstrophy $\mathcal E$, we locally fiber $\mathbb R^n$ using coordinates $(E, \mathcal E, \theta) \in \mathbb R_+\times \mathbb R_+ \times \mathbb R^{n-2}$. In these coordinates, we have $V_k(E, \mathcal E, \theta) = 0\, \partial_E + 0\, \partial_{\mathcal E} +  v_k(E, \mathcal E, \theta) \nabla_\theta$ so by a change of coordinates of the divergence operator the stationary equation becomes
\begin{equ}
	0 = \diverg\left(V_k(x)\lambda(x)\right) = u(E, \mathcal E, \theta)  \diverg_\theta (\lambda(E, \mathcal E, \theta) v_k(E, \mathcal E, \theta))\,,
\end{equ}
where $\diverg_\theta$ denotes the ``angular'' terms of the divergence in  $(E, \mathcal E, \theta)$-coordinates, and $u(E, \mathcal E, \theta)$ result from the change of variables.
Hence, we can factor the solution
$\lambda(E, \mathcal E, \theta) = \bar \lambda(\theta|E, \mathcal E) \cdot \lambda^\perp(E, \mathcal E)$,
where $ \bar \lambda(\theta|E, \mathcal E)$ is the conditional density of Lebesgue measure on a fiber, solving $u(E, \mathcal E, \theta)  \diverg_\theta (\bar \lambda(\theta|E, \mathcal E) v_k(E, \mathcal E, \theta)) = 0$ for any choice of $E/(2N^2)< \mathcal E < E$.
This proves the invariance of $\bar \lambda(\theta|E, \mathcal E)$ under the flow map for any value of the flow times $\tau$.
The stationarity of $\bar \lambda(\theta)$  under $P_h$ follows immediately as in \pref{prop5.1}
\end{proof}


\subsubsection{Spanning}\label{sec:EulerSpanning}


For $j,k,\ell\in\mathbb{Z}^2_N$ with $j+k-\ell=0$ define $M_{jk\ell}$ to be the matrix
\begin{align}\label{100}
	M_{jk\ell}\eqdef\begin{pmatrix}
			\vline & \vline & \vline & \vline \\
			V_{a_ja_ka_\ell} & V_{a_jb_kb_\ell} & V_{b_ja_kb_\ell} & V_{b_jb_ka_\ell} \\
			\vline & \vline & \vline & \vline
		\end{pmatrix}
			&=
			{\scriptsize
			\begin{pmatrix}
				\phantom{-}C_{k\ell}a_ka_\ell & \phantom{-}C_{k\ell}b_kb_\ell & \phantom{-}0 & \phantom{-}0 \\
				\phantom{-}0 & \phantom{-}0 & \phantom{-}C_{k\ell}a_kb_\ell & -C_{k\ell}b_ka_\ell \\
				\phantom{-}C_{j\ell}a_ja_\ell & \phantom{-}0 & \phantom{-}C_{j\ell}b_jb_\ell & \phantom{-}0 \\
				\phantom{-}0 & \phantom{-}C_{j\ell}a_jb_\ell & \phantom{-}0 & -C_{j\ell}b_ja_\ell \\
				-C_{jk}a_ja_k & \phantom{-}0 & \phantom{-}0 & \phantom{-}C_{jk}b_jb_k \\
				\phantom{-}0 & -C_{jk}a_jb_k & -C_{jk}b_ja_k & \phantom{-}0
			\end{pmatrix}
			}
\end{align}
and let $M_{jk\ell}'$ and $M_{jk\ell}''$ be the $4$-by-$4$ and $2$-by-$4$ matrices consisting of the bottom four and bottom two rows of $M_{jk\ell}$, respectively.  Straightforward Gaussian elimination shows that $M$, $M'$, and $M''$ have ranks $4$, $3$, and $2$ whenever $C_{jk}$, $C_{j\ell}$, $C_{k\ell}$, $a_j$, $b_j$, $a_k$, $b_k$, $a_\ell$, and $b_\ell$ are nonzero.

Recalling that a point $q\in\mathbb{R}^n$ is \textit{generic} if all its coordinates are nonzero, we have

\begin{lemma}\label{lem6.1}
The family of vector fields
\begin{align*}
	\mathcal{V} &\coloneqq \big\{V_{a_ja_ka_\ell}, V_{a_jb_kb_\ell}, V_{b_ja_kb_\ell}, V_{b_jb_ka_\ell} : j,k,\ell\in\mathbb{Z}^2_N\ \text{and}\ j+k-\ell=0\big\}
\end{align*}
span $T_q\mathcal{Q}$ at every generic point $q$ in $\mathcal{Q}$.
\end{lemma}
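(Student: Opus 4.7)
The strategy is to identify the orthogonal complement in $\mathbb R^n$ of $W \coloneqq \mathrm{span}(\mathcal V(q))$ and show it coincides with $\mathrm{span}(\nabla E(q), \nabla \mathcal E(q))$. Because each vector field in $\mathcal V$ annihilates both $E$ and $\mathcal E$, we have $W \subseteq T_q\mathcal Q$ and both gradients lie in $W^\perp$. Since $T_q \mathcal Q = T_q\mathcal Q_0$ has dimension $n-2$ and the two gradients are linearly independent at generic $q$, the lemma reduces to the reverse inclusion $W^\perp \subseteq \mathrm{span}(\nabla E(q), \nabla \mathcal E(q))$.

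Take $w \in W^\perp$ and fix a triple $\iota = \jkl$ with $j+k=\ell$ and $|j|,|k|,|\ell|$ pairwise distinct, so that $C_{jk}, C_{j\ell}, C_{k\ell}$ are all nonzero. The restriction $w|_\iota$ to the six-coordinate block $(a_j,b_j,a_k,b_k,a_\ell,b_\ell)$ must be orthogonal to the image of $M_{jk\ell}$, which by the rank computation preceding the lemma has rank $4$ in $\mathbb R^6$. The corresponding two-dimensional orthogonal complement is spanned by the restrictions of $\nabla E$ and $\nabla \mathcal E$ to the triple, since both lie in the complement by conservation and are linearly independent whenever $|j|,|k|,|\ell|$ are distinct. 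Hence there exist scalars $\alpha_\iota,\beta_\iota$ with
\[
  w_{a_m} = \Bigl(\tfrac{\alpha_\iota}{|m|^2} + \beta_\iota\Bigr) a_m, \qquad w_{b_m} = \Bigl(\tfrac{\alpha_\iota}{|m|^2} + \beta_\iota\Bigr) b_m \qquad (m \in \{j,k,\ell\}).
\]
Because $q$ is generic, $\gamma_m \coloneqq w_{a_m}/a_m = w_{b_m}/b_m$ is well defined and $m \mapsto \gamma_m$ is affine in $1/|m|^2$ on $\{j,k,\ell\}$. When instead $|j|=|k| \neq |\ell|$, then $C_{jk}=0$ makes the $\ell$-rows of $M_{jk\ell}$ vanish, leaving a matrix of rank $3$ on the four-coordinate block $(a_j,b_j,a_k,b_k)$; the analogous normal-form computation shows this triple contributes only the weaker constraint $\gamma_j = \gamma_k$.

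It remains to chain the per-triple constraints into a global statement: there exist $\alpha,\beta \in \mathbb R$ with $\gamma_m = \alpha/|m|^2 + \beta$ for every $m \in \mathbb Z^2_N$. I would first confirm that every mode $m$ appears in at least one triple with pairwise distinct norms --- for most $m$ the triples $(m,(1,0), m+(1,0))$ or $(m,(0,1), m+(0,1))$ work, and the handful of exceptional small modes are handled by inspection. Then pick anchor modes $j_0, k_0$ sharing such a triple with $|j_0| \neq |k_0|$ and define $\alpha,\beta$ by the two-equation system $\gamma_{j_0} = \alpha/|j_0|^2 + \beta$, $\gamma_{k_0} = \alpha/|k_0|^2 + \beta$. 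For any other mode $m$, construct a finite chain of nondegenerate interacting triples linking the anchors to $m$: along each link, two of the three $\gamma$-values on the current triple are already pinned to the line $y = \alpha x + \beta$ at distinct abscissae, so affineness forces the third mode onto the same line. Modes sharing a norm with an already-anchored mode are linked via equal-norm triples of the second type, which propagate $\gamma_m = \gamma_{m'}$.

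Once $\gamma_m = \alpha/|m|^2 + \beta$ for every $m$, coordinate inspection yields $w = \tfrac{\alpha}{2}\nabla E(q) + \tfrac{\beta}{2}\nabla \mathcal E(q)$, so $W^\perp = \mathrm{span}(\nabla E(q),\nabla \mathcal E(q))$ and $W = T_q\mathcal Q$. The main obstacle is the combinatorial propagation step: verifying that the graph on $\mathbb Z^2_N$ whose edges are nondegenerate interacting triples, augmented with equal-norm links, is connected enough to reach every mode from two anchors. This is a finite and explicit check, but requires some care at the boundary of $\mathbb Z^2_N$ and for modes whose $|m|^2$ coincides with few others.
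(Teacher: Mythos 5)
Your approach is genuinely different from the paper's. The paper exhibits an explicit subset of vector fields and orders the coordinates so that the resulting matrix $B$ is block upper-triangular with diagonal blocks whose ranks are read off directly, handling the degenerate cases (the $(1,0)$--$(0,1)$--$(1,1)$ triple where $C_{jk}=0$, and the $y=0$ row which is collinear with $(1,0)$) by the special blocks $\widehat{M}$ and $B_{N+1}$. You instead characterize the orthogonal complement of the span: using the rank-$4$ fact for $M_{jk\ell}$ established just before the lemma, you deduce that on each nondegenerate triple the restriction of any $w$ orthogonal to $\mathcal V(q)$ must lie in the two-dimensional complement in $\mathbb{R}^6$, which is indeed spanned by the restricted gradients (they both lie there by conservation and are linearly independent exactly when $|j|,|k|,|\ell|$ are not all equal), so $\gamma_m := w_{a_m}/a_m = w_{b_m}/b_m$ is affine in $1/|m|^2$ on the triple; and your rank-$3$ analysis of the $|j|=|k|$ case yielding only $\gamma_j=\gamma_k$ checks out as well. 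This dual viewpoint is cleaner in that it explains structurally why the span can be nothing smaller than $T_q\mathcal Q$.

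However, the propagation step is where essentially all the combinatorial content lives, and you assert it rather than carry it out. You need, for every $m\in\mathbb{Z}^2_N$, a chain of interacting triples from the anchors to $m$ in which, at every link, two of the three modes are already pinned to the line, have \emph{distinct} Euclidean norms (so the affine constraint determines the third), and the relevant $C$-coefficient is nonzero --- which in view of $C_{jk}=\tfrac{\langle j,k^\perp\rangle}{4\pi}(|j|^{-2}-|k|^{-2})$ requires $j$ and $k$ to be non-collinear in addition to $|j|\neq|k|$. The latter is precisely why the paper cannot use $(1,0)$ to reach the $y=0$ axis and must route those modes through $(0,1)$, and why $y=1$ needs the special block $\widehat M$. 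Your chaining must replicate this care; as written it reduces the lemma to a finite verification but does not do the verification, so the proof is incomplete at the decisive step.
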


\noindent\begin{proof} Fix a generic point $q$ in $\mathcal{Q}$. The main idea of the proof is to choose an enumeration of $\mathbb{Z}^2_N$ and a subset of vector fields from $\mathcal{V}$ so that the matrix made up of these vector fields evaluated at $q$ is in a convenient form whose rank is readily deduced. Formally, the enumeration is the bijection $F:\mathbb{Z}^2_N\to\{1,\dots,2N(N+1)\}$ given by
\begin{align*}
	F(j) &\coloneqq
		\begin{cases}
			1 & j=(1,0)\,, \\
			5+N & j=(2,0)\,, \\
			j_1+N(2N+1) & j=(j_1,0)\ \text{with}\ j_1>2\,, \\
			j_1+2+N & j=(j_1,1)\ \text{with}\ j_1<3\,, \\
			j_1+3+N & j=(j_1,1)\ \text{with}\ j_1\geq 3\,, \\
			j_1+2-N+(2N+1)j_2 & j=(j_1,j_2)\ \text{with}\ j_2>1\,.
		\end{cases}
\end{align*}
Figure $2$ gives this enumeration in the case $N=4$. Informally, $F$ starts at $(1,0)$, then counts lattice points from left to right along the horizontal line $y=1$ until the point $(2,1)$, which corresponds to $4+N$. It then assigns $5+N$ to $(2,0)$ and continues counting along the line $y=1$. From there it moves up to the lines $y=2$, $y=3$, and so on, counting from left to right along each. Finally, it goes back down to the line $y=0$ and counts the remaining indices from left to right.

\begin{figure}
\centering
\includegraphics[width=0.58\textwidth]{./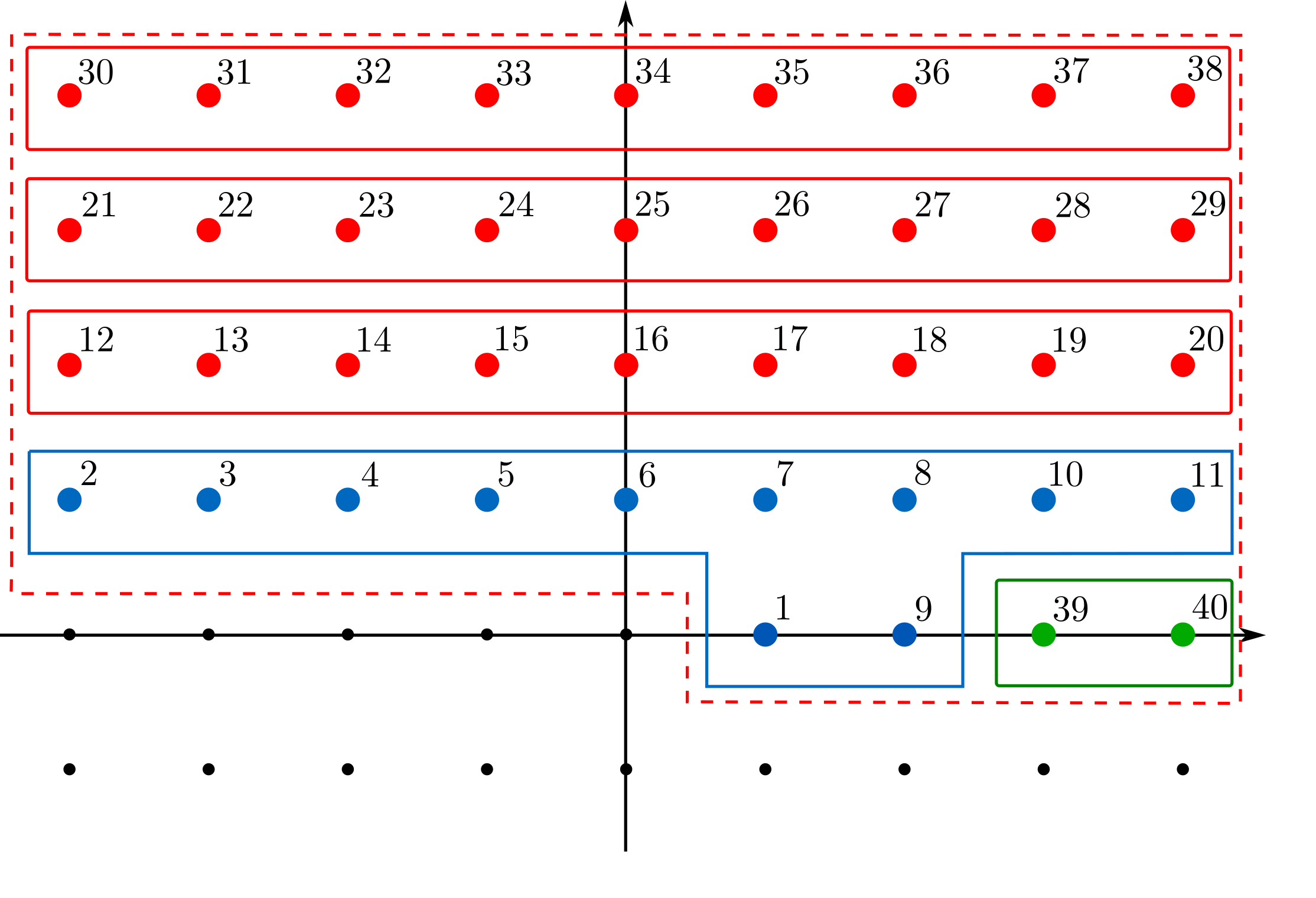}
\caption{Ordering of $\mathbb{Z}^2_N$ when $N=4$\,.}
\label{f:spanning}
\end{figure}

The motivation for $F$ is that all horizontally-adjacent indices $(j_1,j_2)$ and $(j_1+1,j_2)$ form an interacting triple together with $(1,0)$. Fix for the moment an integer $y>1$ and consider the $y$th horizontal line of $\mathbb{Z}^2_N$; that is, the points with second coordinate $y$. These are outlined by red blocks in Figure~\ref{f:spanning}. By the preceding remarks we can choose the vector fields corresponding to the horizontally-adjacent indices \aa{and concatenate them column-wise} to get the block matrix
\begin{align*}
	B_y \coloneqq
		\left(\begin{array}{c|c|c|c}
			\widetilde{M}_{\aa{y}} & * & * & *  \\
  			\hline
  	 		0 & M_{\aa{y, -N+2}}'' & * &  * \\
  			\hline
  	 		0 & 0 & \ddots & * \\
  	 		\hline
  	 		0 & 0 & 0 & M_{\aa{y,N}}''
		\end{array}\right).
\end{align*}
Here, \aa{slightly abusing notation, each $M_{y,i}''$} is the $2$-by-$4$ matrix consisting of the bottom two rows of~\eqref{100} for the \aa{indices $j = (1,0), k= (i-1,y), \ell = (j,y)$}  and
\begin{align*}
	\widetilde{M}_y &\coloneqq
		\begin{pmatrix}
			C_{j\ell}a_ja_\ell & 0 & C_{j\ell}b_jb_\ell & 0 & 0 & 0 \\
			0 & C_{j\ell}a_jb_\ell & 0 & -C_{j\ell}b_ja_\ell & 0 & 0 \\
			-C_{jk}a_ja_k & 0 & 0 & C_{jk}b_jb_k & -C_{j'k'}a_{j'}a_{k'} \\
			0 & -C_{jk}a_jb_k & -C_{jk}b_ja_k & 0 & 0 & -C_{j'k'}a_{j'}b_{k'}
		\end{pmatrix}
\end{align*}
where $j=(1,0), k=(-N,y), \ell=(-N+1,y)$ and $j'=(0,1)$ and $k'=(-N+1,y-1)$. This is $M'$ with two columns from the interacting triple $(0,1), (-N+1,y-1), (-N+1,y)$ adjoined to the end. Note that these adjoined columns contribute entries in the coordinates corresponding to $(0,1)$ and $(-N+1,y-1)$, but these come before all indices in the $y$th row for our ordering. By adding the latter two columns, $\widetilde{M}_y$ has rank $4$ at any generic point. Further,  since each $M_{\aa{y,j}}''$ has rank $2$, each $B_y$ has rank $4+2(2N-1) = 4N+2$. This establishes spanning of the red blocks in Figure~\ref{f:spanning}.

For the blue block we perform a similar procedure to the one above to get
\begin{align*}
	B_1 \coloneqq
		\left(\begin{array}{c|c|c|c|c|c}
			M_{123} & * & * & * & * & * \\
  			\hline
  	 		0 & M_{\aa{1,-N+2}}'' & * &  * & * & * \\
  			\hline
  	 		0 & 0 & \ddots & * & * & * \\
  	 		\hline
  	 		0 & 0 & 0 & \widehat{M} & * & * \\
  	 		\hline
  	 		0 & 0 & 0 & 0 & \ddots & * \\
  	 		\hline
  	 		0 & 0 & 0 & 0 & 0 & M_{\aa{1,N}}''
		\end{array}\right)
\end{align*}
where $M_{123}$ is the matrix from~\eqref{100} for the interacting triple $(1,0), (-N,1), (-N+1,1)$, each $M''$ is as before, and $\widehat{M}$ is the $6$-by-$8$ matrix
\begin{align*}
	\widehat{M} &\coloneqq
		\left(\begin{array}{ccc|cc}
			&
			\begin{matrix}
			& & \\
			& M_{1,N+3,N+4}' & \\
			& &
			\end{matrix}
			&
			&
			\begin{matrix}
			0 &  0 & 0 & 0 \\
			0 & 0 & 0 & 0 \\
			--- & --- & --- & ---
			\end{matrix}
			\\
			&
			\begin{matrix}
			--- & --- & --- & --- \\
			0 & 0 & 0 & 0 \\
			0 & 0 & 0 & 0 \\
			\end{matrix}
			&
			&
  	 		\begin{matrix}
  	 		 & & \\
			& M_{N+2,N+4,N+5}' & \\
			& &
			\end{matrix}
		\end{array}\right)
\end{align*}
located at the rows corresponding to $N+3, N+4$, and $N+5$. The reason for $\widehat{M}$, and for considering the blue block separately, is that $C_{jk}=0$ when $j=(1,0)$ and $k=(0,1)$. The matrix $M$ has rank $6$ at a generic point. Since $M_{123}$ has rank $4$, $\widehat{M}$ has rank $6$, and each of the $2N-3$ remaining $M''$ blocks has rank $2$, the matrix $B_y$ has rank $4+6+2(2N-3) = 4N+4$.

Finally, none of the indices of the green block interact with $(1,0)$ since the $C_{jk}$ are all $0$ in this case. However, by an entirely similar procedure to above, we can use the interactions between $(0,1), (x,0)$, and $(x,1)$ for $x>1$ to get a rank $2(N-2)$ block matrix for the last $N-2$ coordinates of the form
\begin{align*}
	B_{N+1} \coloneqq
		\left(\begin{array}{c|c|c|c}
			\tilde M_{\aa{0,2}}'' & * & * & *  \\
  			\hline
  	 		0 & \tilde M_{\aa{0,3}}'' & * &  * \\
  			\hline
  	 		0 & 0 & \ddots & * \\
  	 		\hline
  	 		0 & 0 & 0 &  \tilde M_{\aa{0,N}}''
		\end{array}\right)
\end{align*}
\aa{where $\tilde M_{\aa{0,x}}'' = M_{(0,1),(x,0),(x,1)}''$ for $M_{jk\ell}''$ consisting of the two bottom rows of \eref{100}.}
Combining the above results we observe that there is an ordering of indices and vector fields such that the matrix whose columns consist of these vector fields has the form
\begin{align*}
	B \coloneqq
		\left(\begin{array}{c|c|c|c}
			B_1 & * & * & *  \\
  			\hline
  	 		0 & B_2 & * &  * \\
  			\hline
  	 		0 & 0 & \ddots & * \\
  	 		\hline
  	 		0 & 0 & 0 &  B_{N+1}
		\end{array}\right).
\end{align*}
Moreover, $B$ has rank
\begin{align*}
	\text{rank}(B) &= \text{rank}(B_1) + \text{rank}(B_{N+1}) + \sum_{y=2}^N \text{rank}(B_y)
		= 4N(N+1)-2
		= n-2
\end{align*}
at every generic point in $\mathcal{Q}$. Now since the dynamics conserve energy and enstrophy, every tangent vector to $\mathcal{Q}$ is perpendicular to the normal vectors for these two quantities which are linearly independent at every generic point. Therefore the maximum dimension of $T_q\mathcal{Q}$ is $n-2$, and by the above argument we have shown the vector fields $\mathcal{V}$ span $T_q\mathcal{Q}$ at $q$.
\end{proof}


\section{Adding forcing and dissipation: Lorenz-96 and 2D Navier-Stokes}\label{sec:ForceAndDissip}


In this section we add dissipation and fixed body forcing to both conservative Lorenz-96 and Galerkin approximations of 2D Euler by introducing a new vector field
\begin{align}
  \label{eq:fluctuationDisiaption}
  V_0(x)= -\nu\Lambda x + F
\end{align}
to the splittings constructed in Sections~\ref{sec:Lorenz} and~\ref{sec:Euler}, where $\nu>0$ is an arbitrary constant, $F$ a fixed nonzero vector with nonnegative entries, and $\Lambda$ a linear operator satisfying
\begin{align}
  \label{eq:disiaptionOperator}
  \Lambda x \cdot x \geq
\alpha \lVert x\rVert^2
\end{align}
for some $\alpha>0$. For the remainder of this section we consider random splittings associated to \om{families of complete, smooth vector fields $\mathcal{V}=\{V_k\}_{k=0}^n$} on $\mathbb{R}^d$ satisfying

\begin{assumption}\label{assump:VectorFields}
$V_0$ is as in~\eqref{eq:fluctuationDisiaption} and the flows of the other $V_k$ conserve Euclidean norm.
\end{assumption}

Fix $h>0$ and let $P_h$ be the transition kernel of a random splitting satisfying Assumption \ref{assump:VectorFields}. When $\Lambda$ is the identity matrix, the addition of $V_0$ to the splitting of conservative Lorenz-96 gives a splitting of the full Lorenz-96 model,~\eqref{lorenz96full}, while for 2D Euler the resulting $V_0$ corresponds to a friction or drag term sometimes called \textit{Ekman damping}. When $\Lambda$ is diagonal with diagonal entry $\lvert k\rvert^2$ in the spots associated to\footnote{Recall that for each index $k\in\mathbb{Z}_N^2$, we have two real coordinates $a_k$ and $b_k$.} $a_k$ and $b_k$, which corresponds to
a Laplacian written in Fourier space, the addition of $V_0$ to the splitting of 2D Euler gives a splitting of 2D Navier-Stokes,~\eqref{eq:2DNS}.

Note that the dissipative part of $V_0$ in~\eqref{eq:fluctuationDisiaption} depends linearly on $x$ whereas the forcing is constant. Thus dissipation dominates forcing for sufficiently large $x$ and, since the remaining vector fields are conservative, the splitting dynamics cannot grow too large. Specifically, letting $\Phi_{h\tau}$ be as in~\eqref{eq:Phi} but with the solution $\phi^{(0)}$ of $\dot x=V_0(x)$
appended to the beginning of each cycle, we have

\begin{lemma}\label{lem:Dissipative}
Under Assumption~\ref{assump:VectorFields} for any initial $x$ and $m>0$,
  \begin{align}
     \|\Phi_{h\tau}^m(x)\|^2 \leq \|x\|^2e^{-\nu\alpha h
    \sum_{k=0}^{m} \tau_{k(n+1)}} + \frac{1}{\nu^2\alpha^2}\|F\|^2\left(  1 -  e^{-\nu\alpha h
    \sum_{k=0}^m \tau_{k(n+1)}}\right).
  \end{align}
\end{lemma}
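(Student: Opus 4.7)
The plan is to analyze one application of the flow $\phi^{(0)}$ of $V_0$ in isolation, then exploit the norm-preserving nature of the remaining $V_k$ to turn the estimate into a simple recursion over cycles. Specifically, let $y(t) \coloneqq \phi^{(0)}_t(y_0)$. Differentiating the squared norm gives
\begin{equation*}
	\tfrac{d}{dt} \lVert y(t) \rVert^2 = 2 \langle y(t), -\nu \Lambda y(t) + F\rangle \leq -2\nu\alpha \lVert y(t)\rVert^2 + 2\lVert y(t)\rVert \lVert F\rVert,
\end{equation*}
using the dissipation bound \eqref{eq:disiaptionOperator}. Applying Young's inequality $2\lVert y\rVert\lVert F\rVert \leq \nu\alpha \lVert y\rVert^2 + \frac{\lVert F\rVert^2}{\nu\alpha}$ yields the linear differential inequality $\tfrac{d}{dt}\lVert y\rVert^2 \leq -\nu\alpha \lVert y\rVert^2 + \frac{\lVert F\rVert^2}{\nu\alpha}$, and Gr\"onwall's lemma gives
\begin{equation*}
	\lVert \phi^{(0)}_t(y_0)\rVert^2 \leq \lVert y_0\rVert^2 e^{-\nu\alpha t} + \frac{\lVert F\rVert^2}{\nu^2\alpha^2}\bigl(1 - e^{-\nu\alpha t}\bigr).
\end{equation*}

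Next, since each cycle of $\Phi_{h\tau}$ begins with $\phi^{(0)}$ and is followed by $\varphi^{(1)},\dots,\varphi^{(n)}$, which preserve Euclidean norm by \aref{assump:VectorFields}, the squared norm after $k$ cycles equals the squared norm just after the $k$-th application of $\phi^{(0)}$. Writing $a_k$ for the random time assigned to that $k$-th application (so $a_k = h\tau_{(k-1)(n+1)+1}$ in the appropriate indexing), and setting $r_k \coloneqq \lVert \Phi_{h\tau}^k(x)\rVert^2$, the first step yields the recursion
\begin{equation*}
	r_k \leq r_{k-1} e^{-\nu\alpha a_k} + \frac{\lVert F\rVert^2}{\nu^2\alpha^2}\bigl(1 - e^{-\nu\alpha a_k}\bigr).
\end{equation*}

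The final step is to unroll this recursion by a straightforward induction on $k$. Assuming the claimed bound with $S_{k-1} \coloneqq a_1 + \cdots + a_{k-1}$ holds at step $k-1$, substitution and the telescoping identity $(1 - e^{-\nu\alpha S_{k-1}})e^{-\nu\alpha a_k} + (1 - e^{-\nu\alpha a_k}) = 1 - e^{-\nu\alpha S_k}$ give the bound at step $k$, producing the stated inequality after $m$ cycles. The main subtlety, and really the only nontrivial point, is choosing the right application of Young's inequality so that the coefficient of $\lVert y\rVert^2$ becomes $-\nu\alpha$ (matching the exponent in the claim) and the constant term becomes $\lVert F\rVert^2/(\nu\alpha)$ (matching the prefactor $1/(\nu^2\alpha^2)$ after Gr\"onwall). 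Once this calibration is set, the rest is routine: bookkeeping of the cycle structure and a one-line induction.
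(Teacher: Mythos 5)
Your proof is correct and follows essentially the same route as the paper: differentiate $\lVert\phi^{(0)}_t\rVert^2$, apply the Young-type bound $2\langle F,\varphi_t\rangle\le(\nu\alpha)^{-1}\lVert F\rVert^2+\nu\alpha\lVert\varphi_t\rVert^2$ to obtain the linear differential inequality $\partial_t\lVert\varphi_t\rVert^2\le\frac{1}{\nu\alpha}\lVert F\rVert^2-\nu\alpha\lVert\varphi_t\rVert^2$, close it by Gr\"onwall (the paper phrases this as the ODE comparison theorem), and then use norm conservation of $\varphi^{(1)},\dots,\varphi^{(n)}$ together with an induction on cycles. The only cosmetic difference is that you pass through Cauchy--Schwarz before Young whereas the paper applies the bound directly to the inner product, and your index $a_k=h\tau_{(k-1)(n+1)+1}$ is off by one relative to the paper's convention where $\tau_0$ governs the first $\phi^{(0)}$, but you hedge this appropriately and it does not affect the argument.
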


\noindent \begin{proof} Letting $\varphi=\varphi^{(0)}$, we have
\begin{align*}
	\partial_t\lVert\varphi_t\rVert^2 &= 2\langle F,\varphi_t\rangle - 2\nu\langle \Lambda\varphi_t,\varphi_t\rangle
		\leq \frac{1}{\nu\alpha}\lVert F\rVert^2+\nu\alpha\lVert\varphi_t\rVert^2-2\nu\alpha\lVert\varphi_t\rVert^2
		= \frac{1}{\nu\alpha}\lVert F\rVert^2-\nu\alpha\lVert\varphi_t\rVert^2,
\end{align*}
where the inequality follows from~\eqref{eq:disiaptionOperator} and $2\langle F,\varphi_t\rangle\leq (\nu\alpha)^{-1}\lVert F\rVert^2+\nu\alpha\lVert\varphi_t\rVert^2$. Solving
\begin{align*}
	\dot{y} = \frac{1}{\nu\alpha}\lVert F\rVert^2-\nu\alpha y
\end{align*}
from $y(0)=\lVert x\rVert$ together with the comparison theorem for ODEs \cite{McNabb} then gives
\begin{align*}
	\lVert\varphi_t(x)\rVert^2 &\leq \lVert x\rVert^2 e^{-\nu\alpha t}+\frac{1}{\nu^2\alpha^2}\lVert F\rVert^2\left(1-e^{-\nu\alpha t}\right)
\end{align*}
for all time. Furthermore, since $\varphi^{(k)}$ conserves norm for $1\leq k\leq n$, the above implies
\begin{align*}
	\lVert \Phi_{h\tau}(x)\rVert^2 &= \lVert\varphi^{(n)}_{h\tau_n}\circ\cdots\circ\varphi^{(0)}_{h\tau_0}(x)\rVert^2
		= \lVert \varphi^{(0)}_{h\tau_0}(x)\rVert^2
		\leq \lVert x\rVert^2 e^{-\nu\alpha \tau_0}+\frac{1}{\nu^2\alpha^2}\lVert F\rVert^2\left(1-e^{-\nu\alpha \tau_0}\right).
\end{align*}
The result then follows by straightforward induction on the number of
cycles, namely $m$. \end{proof}

\begin{remark}
The convergence results of \Cref{sec:Convergence} do not directly apply to Lorenz-96 and Galerkin approximations of 2D Navier-Stokes since $\mathcal{V}$-orbits are generally unbounded in both models. However, \Cref{lem:Dissipative} implies that any splitting starting from $x$ whose vector fields satisfy \Cref{assump:VectorFields} will lie inside the ball of radius $\lVert x\rVert^2+(\nu\alpha)^{-2}\lVert F\rVert^2$ centered at the origin for all nonnegative times. In particular, since the splitting vector fields are smooth, a bound analogous to \eqref{eq:Bounded} holds for all $x$ in the ball $B_r(0)$ of radius $r$ centered at the origin in the ambient Euclidean space. Thus all convergence results of Section~\ref{sec:Convergence} hold for these random splittings when $\mathcal{C}^k(\mathcal{X})$ is replaced by $\mathcal{C}^k_r(\mathcal{X})$, the space of $k$-times continuously differentiable functions that vanish outside $B_r(0)$. Intuitively, this says that for any initial condition $x$, the trajectories of a random splitting satisfying \Cref{assump:VectorFields} will converge on average and almost surely as $h\to 0$ to the trajectory of the true dynamics starting from $x$.
\end{remark}

\begin{corollary}\label{cor:LyapunovFunction}
The Euclidean norm is a Lyapunov function for $P_h$. That is, there exist constants $K\geq 0$ and $\gamma\in(0,1)$ such that for all $x\in\mathbb{R}^d$,
\begin{align*}
	\left(P_h\lVert\cdot\rVert\right)(x) &\leq \gamma\lVert x\rVert+K.
\end{align*}
\end{corollary}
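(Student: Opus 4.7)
The plan is to bound $P_h\|\cdot\|(x) = \mathbb{E}\|\Phi_{h\tau}(x)\|$ by applying \Cref{lem:Dissipative} with $m=1$ and then integrating against the exponential law of the extra flow time $\tau_0$. First I would write, using \Cref{lem:Dissipative} and the subadditivity $\sqrt{a+b}\leq \sqrt a + \sqrt b$ for $a,b\geq 0$,
\begin{align*}
  \|\Phi_{h\tau}(x)\| &\leq \|x\|\,e^{-\nu\alpha h \tau_0/2} + \tfrac{1}{\nu\alpha}\|F\|\,\sqrt{1-e^{-\nu\alpha h \tau_0}}
  \leq \|x\|\,e^{-\nu\alpha h \tau_0/2} + \tfrac{1}{\nu\alpha}\|F\|,
\end{align*}
where the last bound uses $1-e^{-\nu\alpha h \tau_0}\leq 1$. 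The right-hand side depends on $\tau$ only through $\tau_0$.

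Next I would take expectation in $\tau_0\sim\text{Exp}(1)$ and use the elementary moment generating function identity $\mathbb{E}(e^{-c\tau_0}) = (1+c)^{-1}$ for $c\geq 0$ to obtain
\begin{align*}
  (P_h\|\cdot\|)(x) \leq \|x\|\,\mathbb{E}\bigl(e^{-\nu\alpha h \tau_0/2}\bigr) + \tfrac{1}{\nu\alpha}\|F\|
  = \frac{\|x\|}{1+\nu\alpha h/2} + \tfrac{1}{\nu\alpha}\|F\|.
\end{align*}
Setting $\gamma \coloneqq (1+\nu\alpha h/2)^{-1}$ and $K \coloneqq \|F\|/(\nu\alpha)$, we have $\gamma\in(0,1)$ since $h,\nu,\alpha>0$, and $K\geq 0$, giving the desired Lyapunov inequality $(P_h\|\cdot\|)(x)\leq \gamma\|x\| + K$.

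I don't expect a serious obstacle: the only mild point is that \Cref{lem:Dissipative} is stated in terms of $\|\cdot\|^2$, so one must pass to $\|\cdot\|$ via the subadditive square root, which fortunately preserves the contractive factor $e^{-\nu\alpha h\tau_0/2}$ and keeps the constant contribution bounded. (Equivalently, one could first derive the Lyapunov inequality for $\|\cdot\|^2$ from \Cref{lem:Dissipative} directly and then take a square root; either route works, but the route above gives cleaner constants.) If one instead used a non-exponential waiting time satisfying the assumptions of \Cref{r:exponential}, the only change would be to replace $(1+\nu\alpha h/2)^{-1}$ by $\mathbb{E}(e^{-\nu\alpha h\tau_0/2})$, which is still strictly less than $1$ for $h>0$.
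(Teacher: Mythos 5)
Your proof is correct and takes essentially the same route as the paper: both extract $\lVert\Phi_{h\tau}(x)\rVert\leq \lVert x\rVert e^{-\nu\alpha h\tau_0/2}+(\nu\alpha)^{-1}\lVert F\rVert$ from \Cref{lem:Dissipative} (the paper asserts this pointwise bound directly, you spell out the subadditivity-of-$\sqrt{\cdot}$ step) and then integrate against the $\mathrm{Exp}(1)$ law of $\tau_0$ to obtain $\gamma=(1+\tfrac12\nu\alpha h)^{-1}$ and $K=(\nu\alpha)^{-1}\lVert F\rVert$, matching the paper's constants exactly.
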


\noindent \begin{proof} By Lemma~\ref{lem:Dissipative}, specifically $\lVert\Phi_{ht}(x)\rVert\leq \lVert x\rVert e^{-\frac{1}{2}\nu\alpha t_0}+(\nu\alpha)^{-1}\lVert F\rVert$, we have
\begin{align*}
	\left(P_h\lVert\cdot\rVert\right)(x) &= \int_{\mathbb{R}^{n+1}_+} \lVert\Phi_{ht}(x)\rVert e^{-\sum t_k} dt
		\leq \frac{1}{1+\frac{1}{2}\nu\alpha h}\lVert x\rVert+\frac{1}{\nu\alpha}\lVert F\rVert
\end{align*}
for any $x$. The result follows with $K=(\nu\alpha)^{-1}\lVert F\rVert$ and $\gamma=(1+\tfrac{1}{2}\nu\alpha h)^{-1}$. \end{proof}


\subsection{Ergodicity}


We now present a variation of Theorem~\ref{thrm:Ergodicity}, namely Theorem~\ref{thrm:Ergodicity2}, which simplifies verification of ergodicity in the present setting. Recall from Sections~\ref{sec:Lorenz} and~\ref{sec:Euler} that one of the difficulties in verifying Theorem~\ref{thrm:Ergodicity} was proving controllability, \ie the existence of a distinguished point $x_*$ that could be reached by the splitting dynamics in finite time from any other point. With the addition of dissipation, the fixed point $\nu^{-1}\Lambda^{-1}F$ of $\dot{x}=V_0(x)$ is a natural candidate for $x_*$ and, as we will see, the fact that it is globally attracting obviates several technicalities associated with controllability in the conservative cases discussed above.

\begin{theorem}\label{thrm:Ergodicity2}
Suppose Assumption~\ref{assump:VectorFields} holds and set
$x_*=\nu^{-1}\Lambda^{-1}F$. If there exist $m \geq 0 $ and $t$ in $\mathbb{R}^{mn}_+$ such that the Lie bracket condition holds at $\widetilde{x}\coloneqq\Phi^m_{ht}(x_*)$, then $P_h$ has a unique invariant measure $\mu$ for all $h>0$. Furthermore, there exist $C>0$ and $\gamma$ in $(0,1)$ such that for all $x$ in $\mathbb{R}^d$,
\begin{align}\label{eq:ExpConvergence}
	\lVert P_h^m(x,\cdot)-\mu \rVert &\leq C\gamma^m
\end{align}
where $\lVert\cdot\rVert$ is the norm on probability measures induced by the weighted supremum norm $\lVert f\rVert\coloneqq \sup_x \lvert f(x)\rvert/(1+\lVert x\rVert)$ on bounded measurable functions $f:\mathbb{R}^d\to \mathbb{R}$.
\end{theorem}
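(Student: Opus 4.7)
The plan is to apply Harris' theorem, which requires a Lyapunov function for $P_h$ together with a uniform minorization of $P_h^N$ on sublevel sets of that function. The Lyapunov estimate is already in hand from \Cref{cor:LyapunovFunction}. The bulk of the work will be to establish, for every $R>0$, a bound
$P_h^N(y,B)\geq c\,\lambda(B\cap U_\sharp)$
uniformly in $y$ in the closed ball $B_R(0)$ and in Borel sets $B$, with $N$, $c>0$, and open $U_\sharp$ independent of $y$. Once this minorization is in place, the Harris theorem of \cite{MeynTweedie} (see also \cite{MattinglyStuart}) delivers uniqueness of $\mu$ and the advertised exponential convergence in the weighted norm $\lVert f\rVert=\sup_x|f(x)|/(1+\lVert x\rVert)$.

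The minorization will be built in two stages. First, by hypothesis the Lie bracket condition holds at $\widetilde{x}=\Phi^m_{ht}(x_*)$, so \Cref{thrm:submersion} produces $m'$, positive times $t'$, and a point $x^\sharp$ near $\widetilde{x}$ with $\Phi^{m'}(\widetilde{x},t')=x^\sharp$ and $D_{t'}\Phi^{m'}(\widetilde{x},t')$ surjective. Concatenating with the trajectory from $x_*$ to $\widetilde{x}$ shows that $\Phi^{m+m'}$ is a submersion in its last time arguments along the path from $x_*$ to $x^\sharp$. \Cref{lem4.1}, in the form of \Cref{remark4.1}, then gives neighborhoods $U_*$ of $x_*$ and $U_\sharp$ of $x^\sharp$ together with a constant $c>0$ such that $P_h^{m+m'}(y,B)\geq c\,\lambda(B\cap U_\sharp)$ for every $y\in U_*$.

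The second stage promotes this local estimate to a uniform one over $B_R(0)$ by exploiting the global attractor. Since $x_*=\nu^{-1}\Lambda^{-1}F$ is the globally attracting fixed point of the linear flow $V_0$, \Cref{lem:Dissipative} implies $\varphi^{(0)}_T(y)\to x_*$ uniformly for $y\in B_R(0)$ as $T\to\infty$. Fix $T$ large enough that $\varphi^{(0)}_T(B_R(0))$ lies well inside $U_*$. In a single cycle $\Phi_{h\tau}$, if $\tau_0$ sits in a small window around $T/h$ and $\tau_1,\ldots,\tau_n$ are small, continuity of the flows and the identity $\varphi^{(k)}_0=\mathrm{id}$ for $k\geq 1$ force $\Phi_{h\tau}(y)\in U_*$ uniformly over $y\in B_R(0)$. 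The exponential density is strictly positive and continuous on any compact box in $\mathbb{R}_+^{n+1}$, so this event has probability at least some $\delta>0$ independent of $y$, giving $P_h(y,U_*)\geq \delta$ on $B_R(0)$.

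Combining via Chapman--Kolmogorov yields $P_h^{1+m+m'}(y,B)\geq c\delta\,\lambda(B\cap U_\sharp)$ uniformly on $B_R(0)$, so together with the Lyapunov bound Harris' theorem closes the argument. The main obstacle will be the careful uniform-in-$y$ control in the second stage: a single time-window of flow-times must suffice for every $y\in B_R(0)$. This should follow from compactness of $B_R(0)$, joint continuity of $(y,\tau)\mapsto \Phi_{h\tau}(y)$, and the uniform contraction of $\varphi^{(0)}_T$ toward $x_*$, but requires tracking the small perturbations introduced by the short-time runs of the norm-preserving fields.
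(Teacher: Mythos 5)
Your proposal is correct and follows essentially the same route as the paper: a Lyapunov bound from \Cref{cor:LyapunovFunction}, a local minorization near the globally accessible point obtained from the Lie bracket condition at $\widetilde{x}$ (the paper packages your \Cref{thrm:submersion}-plus-\Cref{lem4.1} step into \Cref{lem:WeakBracket}), a uniform lower bound $\inf_{y\in B_R}P_h(y,U_*)>0$ exploiting the contraction of $\varphi^{(0)}$ toward $x_*$ together with the identity flows at short times, and finally a Harris-type conclusion. The only organizational difference is that the paper first secures existence via Krylov--Bogolubov and proves uniqueness by the mutual-singularity contradiction before citing the Harris theorem of \cite{yetMH} for the exponential rate, whereas you obtain all three conclusions in one stroke from Harris.
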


\noindent The proof of Theorem~\ref{thrm:Ergodicity2} uses the following lemmas. The first, due to Krylov-Bogolubov, is a standard result from the theory of Markov processes \cite{HairerConvergence}. The second, which follows from \Cref{lem4.1} and \Cref{thrm:submersion}, is from \cite[Theorem~4.4]{Benaim}. For the statement of Lemma~\ref{lem:Krylov}, recall a transition kernel $P$ on $\mathbb{R}^d$ is \textit{Feller} if $Pf$ is continuous whenever $f:\mathbb{R}^d\to \mathbb{R}$ is continuous and bounded. Also, a sequence of probability measures $\{\mu_m\}$ on $\mathbb{R}^d$ is \textit{tight} if for every $\epsilon>0$ there exists a compact subset $K$ of $\mathbb{R}^d$ such that $\mu_m(K)\geq 1-\epsilon$ for all $m$.

\begin{lemma}\label{lem:Krylov}
Let $P$ be a Feller probability transition kernel on $\mathbb{R}^d$. If there exists $x$ in $\mathbb{R}^d$ such that $\{P^m(x,\cdot)\}_{m=0}^\infty$ is tight, then $P$ has an invariant probability measure.
\end{lemma}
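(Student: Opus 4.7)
The plan is to follow the classical Krylov--Bogolubov construction, producing an invariant measure as a weak subsequential limit of Ces\`aro averages of the iterated transition probabilities from the point $x$ where tightness is assumed. Concretely, I would define the sequence of probability measures
\begin{equation*}
\mu_m \coloneqq \frac{1}{m}\sum_{k=0}^{m-1} P^k(x,\cdot), \qquad m \geq 1,
\end{equation*}
and aim to show (i) the family $\{\mu_m\}$ is tight, (ii) any weak subsequential limit $\mu$ is $P$-invariant, and (iii) $\mu$ is a probability measure.

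For step (i), I would fix $\epsilon>0$ and use the tightness hypothesis on $\{P^k(x,\cdot)\}_{k \geq 0}$ to choose a compact $K\subset\mathbb{R}^d$ with $P^k(x,K)\geq 1-\epsilon$ for every $k\geq 0$; averaging then gives $\mu_m(K)\geq 1-\epsilon$ for every $m$, so $\{\mu_m\}$ is tight. Prokhorov's theorem then yields a subsequence $\{\mu_{m_j}\}$ converging weakly to some probability measure $\mu$ on $\mathbb{R}^d$. Note that the mass is preserved in the limit precisely because tightness rules out escape to infinity, which disposes of step (iii).

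For step (ii), I would exploit the telescoping identity
\begin{equation*}
\mu_m P - \mu_m = \frac{1}{m}\bigl(P^m(x,\cdot) - P^0(x,\cdot)\bigr).
\end{equation*}
Testing this identity against an arbitrary bounded continuous function $f\colon\mathbb{R}^d\to\mathbb{R}$ gives
\begin{equation*}
\bigl|\mu_m(Pf) - \mu_m(f)\bigr| \leq \frac{2\lVert f\rVert_\infty}{m},
\end{equation*}
which tends to zero as $m\to\infty$. The Feller property ensures $Pf$ is itself bounded and continuous, so both $\mu_{m_j}(Pf)\to\mu(Pf)$ and $\mu_{m_j}(f)\to\mu(f)$ along the weak subsequence. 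Hence $\mu(Pf)=\mu(f)$ for every $f$ in $C_b(\mathbb{R}^d)$, which is exactly the statement $\mu P = \mu$.

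The only subtle point is verifying that the total mass $\mu(\mathbb{R}^d)=1$ survives passage to the weak limit; this is where tightness (as opposed to mere boundedness in total variation) is essential, and it is also what makes the Feller assumption sufficient rather than requiring any stronger regularity of $P$. Everything else is a routine application of Prokhorov's theorem combined with the telescoping bound above, so I do not anticipate a substantive obstacle beyond being careful to test against $C_b(\mathbb{R}^d)$ functions when invoking weak convergence.
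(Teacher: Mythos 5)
Your argument is correct and is exactly the standard Krylov--Bogolubov construction (Ces\`aro averages, tightness plus Prokhorov, telescoping bound, Feller to pass to the limit); the paper does not supply its own proof of this lemma but simply cites it as a classical fact from \cite{HairerConvergence}, and that classical proof is the one you have reproduced. The only point worth double-checking in your write-up is that the identity $\mu(Pf)=\mu(f)$ for all $f\in C_b(\mathbb{R}^d)$ does indeed determine the finite Borel measure $\mu P$ as equal to $\mu$ (because finite Borel measures on $\mathbb{R}^d$ are uniquely determined by their integrals against $C_b$), which closes the loop you left implicit.
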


\begin{lemma}\label{lem:WeakBracket}
Suppose $\Phi^m_{ht}(x)=\widetilde{x}$ and the Lie bracket condition holds at $\widetilde{x}$. Then there exists a $c>0$, an $\widetilde{m}$, and neighborhoods $U_x$ of $x$ and $\widetilde{U}$ of $\widetilde{x}$ such that for all $y$ in $U_x$ and $B$ in $\mathcal{B}(\mathcal{X})$,
\begin{align*}
	P_h^{\widetilde{m}}(y,B) &\geq c\lambda\left(B\cap \widetilde{U}\right).
\end{align*}
\end{lemma}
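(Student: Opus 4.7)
The plan is to combine Theorem~\ref{thrm:submersion}, which converts the Lie bracket condition at $\widetilde x$ into concrete surjectivity information in arbitrarily short positive times, with Lemma~\ref{lem4.1}, which in turn converts such surjectivity into the desired minorization.

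Since the Lie bracket condition holds at $\widetilde x$, Theorem~\ref{thrm:submersion} yields, for any prescribed neighborhood $U$ of $\widetilde x$ and any $T>0$, an integer $m'\geq 0$, a point $\widehat x\in U$, and a time vector $s\in\mathbb{R}_+^{m'(n+1)}$ with $\sum_k s_k\leq T$ such that $\Phi^{m'}(\widetilde x,s)=\widehat x$ and $D_s\Phi^{m'}(\widetilde x,s)$ is surjective onto $T_{\widehat x}\mathcal{X}$. Set $\widetilde m\coloneqq m+m'$ and consider $F\coloneqq\Phi^{\widetilde m}:\mathcal{X}\times\mathbb{R}_+^{\widetilde m(n+1)}\to\mathcal{X}$. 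By the semigroup property,
\begin{align*}
F(x,(ht,s)) &= \Phi^{m'}\big(\Phi^m(x,ht),s\big) = \Phi^{m'}(\widetilde x,s) = \widehat x.
\end{align*}
Varying only the last $m'(n+1)$ coordinates of the time argument affects only the final $m'$ cycles, which begin at the fixed intermediate point $\widetilde x$; hence the block of $D_{(t,s)}F(x,(ht,s))$ corresponding to those coordinates is exactly $D_s\Phi^{m'}(\widetilde x,s)$, and the full derivative with respect to the time variables is therefore surjective.

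Now apply Lemma~\ref{lem4.1} with this $F$, with $\tau$ a vector of $\widetilde m(n+1)$ independent exponentials of mean $h$ (whose joint density is continuous and strictly positive on the open orthant, hence bounded below by some $c_0>0$ on a small compact neighborhood of $(ht,s)$), and at the base point $(x,(ht,s))$. Lemma~\ref{lem4.1} produces a constant $c>0$ and neighborhoods $U_x$ of $x$ and $\widetilde U$ of $\widehat x$ for which
\begin{align*}
P_h^{\widetilde m}(y,B) &= \mathbb{P}(F(y,\tau)\in B) \geq c\,\lambda(B\cap\widetilde U)
\end{align*}
for all $y\in U_x$ and $B\in\mathcal{B}(\mathcal{X})$. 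Choosing $U$ at the outset small enough that $\widehat x$, and hence $\widetilde U$, lies inside any prescribed neighborhood of $\widetilde x$ completes the proof.

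The argument presents no real obstacle; the only bookkeeping point is that Lemma~\ref{lem4.1} yields a neighborhood of $\widehat x$ rather than of $\widetilde x$, and this is absorbed by the freedom in Theorem~\ref{thrm:submersion} to place $\widehat x$ arbitrarily close to $\widetilde x$.
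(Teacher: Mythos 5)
Your plan is exactly what the paper intends: combine Theorem~\ref{thrm:submersion} with Lemma~\ref{lem4.1}, which is precisely what the text says this lemma follows from. The surjectivity step is correct: since $\Phi^m(x,ht)=\widetilde x$ is constant in the last block of time variables, the corresponding block of $D_{(t,s)}\Phi^{\widetilde m}(x,(ht,s))$ is exactly $D_s\Phi^{m'}(\widetilde x,s)$, which Theorem~\ref{thrm:submersion} makes surjective onto $T_{\widehat x}\mathcal X$, so the full time-derivative is surjective and Lemma~\ref{lem4.1} applies.

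The one genuine gap is in your closing sentence, and it is not closed by the device you describe. Theorem~\ref{thrm:submersion} places the submersion point at some $\widehat x\in U$ near $\widetilde x$ rather than at $\widetilde x$ itself, and Lemma~\ref{lem4.1} accordingly hands back a neighborhood of $\widehat x=\Phi^{\widetilde m}(x,(ht,s))$, not of $\widetilde x$. Shrinking $U$ makes $\widehat x$ close to $\widetilde x$, and intersecting lets you place $\widetilde U$ inside any prescribed neighborhood of $\widetilde x$, but this does not force $\widetilde x\in\widetilde U$, since the radius of the set returned by Lemma~\ref{lem4.1} is not under your control and could be smaller than the distance from $\widehat x$ to $\widetilde x$. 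So what you prove is strictly weaker than the lemma's literal wording: $\widetilde U$ is a nonempty open set contained in any given neighborhood of $\widetilde x$, not necessarily a neighborhood of $\widetilde x$. Happily, this weaker form is all that the proof of Theorem~\ref{thrm:Ergodicity2} actually uses (only $\lambda(\widetilde U)>0$ and reachability of $U_x$ from $\nu^{-1}\Lambda^{-1}F$ matter), so the discrepancy is harmless in context; but to genuinely prove the lemma as stated one must either weaken its phrasing or invoke the precise formulation of \cite[Theorem~4.4]{Benaim} directly.
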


\noindent\jcm{The following proof is another instance of the rather classical idea, dating
at least back to the split chains of Nummelin \cite{Nummelin94} and
work of Meyn and Tweedie \cite{MeynTweedie}, that the existence of a  globally
accessible point at which the dynamics is continuous in the right sense implies the
transition densities converge to a unique equilibrium measure. If the
return to the globally
accessible point has finite expectation, then mixing is
exponential. The same basic structure of the SDE version of our system
was leveraged in \cite{EMattingly} to prove exponential mixing (see
also \cite{MattinglyStuart}). In
the closely related PDMP setting, analogous results are found in
\cite{Li_Liu_Cui_2017} in a specific example and
\cite{Benaim_Hurth_Strickler_2018} in a more general context.}

\begin{proof}[Proof of \Cref{thrm:Ergodicity2}]
We first prove existence. Continuity of $\Phi_{ht}$ immediately implies $P_h$ is Feller. Furthermore, Lemma~\ref{lem:Dissipative} implies that random splitting starting from any $x$ is constrained to lie in a compact subset of $\mathbb{R}^d$, namely the closed ball of radius $\lVert x\rVert^2+(\nu\alpha)^{-2}\lVert F\rVert^2$ centered at the origin. Thus, for any $x$, the sequence $\{P_h^m(x,\cdot)\}_{m=0}^\infty$ is tight and existence follows from Lemma~\ref{lem:Krylov}.

Next we prove uniqueness. The hypothesis and Lemma~\ref{lem:WeakBracket} together imply the existence of $c>0$, $\widetilde{m}$, and neighborhoods $U_*$ of $x_*$ and $\widetilde{U}$ of $\widetilde{x}$ such that
\begin{align}\label{eq:InequalityLem7.1}
	P_h^{\widetilde{m}}(x,B) &\geq c\lambda\left(B\cap \widetilde{U}\right)
\end{align}
for all $x\in U_*$ and Borel sets $B$. Also, positive-definiteness of $\Lambda$ implies
\begin{align*}
	\lVert\phi^{(0)}_t(x) - x_*\rVert &\leq e^{-\alpha t} \lVert x-x_*\rVert
\end{align*}
for any $x\in\mathbb{R}^d$ and $t\geq 0$. In particular, for any open ball $B_r$ of radius $r$ centered at the origin, there exists $T_0>0$ such that $\varphi^{(0)}_{ht}(B_r)$ is properly contained in $U_*$ whenever $ht>T_0$. And since $\varphi^{(0)}_{ht}(B_r)$ is properly contained in $U_*$ and the $\varphi^{(k)}$ are continuous, there exist $T_k>0$ such that $\Phi_{ht}=\varphi^{(n)}_{ht_n}\circ\cdots\circ\varphi^{(0)}_{ht_0}(x)\in U_*$ for all $x\in B_r$ and $ht_k\in (0,T_k)$. So, for any $x\in B_r$,
\begin{align*}
	P_h(x, U_*) &\geq \int_0^{T_n}\cdots\int_0^{T_1}\int_{T_0}^\infty\1_{U_*}\left(\Phi_{ht}(x)\right) e^{-\sum t_k} dt
		= \frac{1}{T_0}\prod_{k=1}^n \left(1-e^{-T_k}\right)
		> 0
\end{align*}
and hence $\inf _{x\in B_r} P_h(x, U_*)>0$.

As in the proof of Theorem~\ref{thrm:Ergodicity}, suppose toward a contradiction that $\mu_1$ and $\mu_2$ are distinct $P_h$-ergodic probability measures and that $A_1$ and $A_2$ are disjoint measurable sets partitioning $\mathbb{R}^d$ with $\mu_i(B)=\mu_i(B\cap A_i)$ for all Borel sets $B$. Fix $x_i$ in the support of $\mu_i$, let $r$ be sufficiently large that $x_1,x_2\in B_r$, and set $\kappa\coloneqq \inf_{x\in B_r} P_h(x,U_*)>0$. Then by~\eqref{eq:InequalityLem7.1} for any Borel set $B$,
\begin{equation}\label{eq:Minorization}
\begin{aligned}
	\mu_i(B) &= \mu_i P_h^{\widetilde{m}+1}(B)
		= \int_{\mathbb{R}^d}\int_{\mathbb{R}^d} P_h^{\widetilde{m}}(y,B)P_h(x,dy)\mu_i(dx) \\
		&\geq \int_{B_r}\int_{U_*} P_h^{\widetilde{m}}(y,B)P_h(x,dy)\mu_i(dx)
		\geq \kappa c\lambda\left(B\cap \widetilde{U}\right)\mu_i\left(B_r\right).
\end{aligned}
\end{equation}
In particular, $\mu_i(B)=0$ implies $\lambda(B\cap \widetilde{U})=0$ since $c$, $\kappa$, and $\mu_i(B_r)$ are all strictly positive (the latter because $B_r$ is an open set containing both $x_1$ and $x_2$ which were chosen to be in the supports of $\mu_1$ and $\mu_2$, respectively). But $\mu_1(A_2\cap \widetilde{U})=\mu_2(A_1\cap \widetilde{U})=0$ and so we obtain the contradiction
\begin{align*}
	0 &< \lambda\left(\widetilde{U}\right)
		= \lambda\left(A_1\cap \widetilde{U}\right)+\lambda\left(A_2\cap \widetilde{U}\right)
		= 0,
\end{align*}
which concludes the proof of uniqueness.

Finally, for the exponential convergence statement~\eqref{eq:ExpConvergence}, we have from~\eqref{eq:Minorization} that for any $r>0$,
\begin{align*}
	\inf_{x\in B_r} P_h^{\widetilde{m}+1}(x,B) &\geq \kappa c\lambda\left(B\cap\widetilde{U}\right)
\end{align*}
for all Borel sets $B$. That is, the transition probabilities $P_h^{\widetilde{m}+1}(x,\cdot)$ are minorized uniformly over $B_r$ by the probability measure $\widetilde{\lambda}\coloneqq \lambda(\widetilde{U})^{-1}\lambda(\cdot\cap\widetilde{U})$. Exponential convergence then follows from Corollary~\ref{cor:LyapunovFunction} upon taking $r>2K/(1-\gamma)$. See for example Theorem $1.2$ in \cite{yetMH}. \end{proof}

\begin{corollary}\label{cor:Lorenz96}
Consider the random splitting of Lorenz-96 associated
to the vector fields $\{V_k\}_{k=0}^n$, where $V_0(x)=-\nu x+F$ and
$\{V_k\}_{k=1}^n$ are the splitting vector fields of conservative
Lorenz-96 from Section~\ref{sec:Lorenz}. If $x_*\coloneqq -\nu F$ is
not a fixed point of conservative Lorenz-96, \ie $\nu^2\sum_{k=1}^n
(F_k^2+F_{k+1}^2)F_{k-1}^2\neq 0$, then the random splitting has a
unique\jcm{, and hence ergodic,} invariant measure on $\mathbb R^n$ and the dynamics converge to this measure at an exponential rate in the sense of~\eqref{eq:ExpConvergence}.
\end{corollary}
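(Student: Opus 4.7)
The plan is to verify the hypotheses of \Cref{thrm:Ergodicity2}. Since $V_0(x)=-\nu x+F$ corresponds to taking $\Lambda=I$ in~\eqref{eq:fluctuationDisiaption}, condition~\eqref{eq:disiaptionOperator} holds with $\alpha=1$; and the rotations $\{V_k\}_{k=1}^n$ from~\eqref{eq:LorenzSplittingVF} preserve Euclidean norm, so \Cref{assump:VectorFields} is satisfied. The fixed point of $V_0$ is $\nu^{-1}F$, and the non-degeneracy condition in the corollary is precisely (up to a nonzero scalar) the statement that this fixed point is \emph{not} a fixed point of the conservative Lorenz-96 vector field $V$ from~\eqref{5.2}.

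The substantive step is to exhibit a $\widetilde x=\Phi^m_{ht}(x_*)$ at which the Lie bracket condition holds, for some $m$ and strictly positive time vector. I would target a $\widetilde x$ satisfying (a) all coordinates nonzero and (b) $V_0(\widetilde x)\cdot\widetilde x\neq 0$. At such a point, the matrix whose columns are $V_1(\widetilde x),\ldots,V_{n-1}(\widetilde x)$ has rank $n-1$ by the explicit computation in the proof of \Cref{prop5.1}, so these vectors span the tangent space at $\widetilde x$ of the sphere through $\widetilde x$ centered at the origin; condition (b) forces $V_0(\widetilde x)$ to be transverse to that sphere. Together, $V_0(\widetilde x),V_1(\widetilde x),\ldots,V_{n-1}(\widetilde x)$ thus span $\mathbb R^n$, so \emph{a fortiori} $\Lie_{\widetilde x}(\mathcal V)=\mathbb R^n$.

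To find such a $\widetilde x$, I would start from the controllability argument of \Cref{prop5.1}: because $x_*$ is not a fixed point of $V$, a finite sequence of rotations $\varphi^{(1)},\ldots,\varphi^{(n)}$ with strictly positive times can be chosen so that their composition drives $x_*$ to a point $\widetilde x_0$ with all nonzero coordinates and $\widetilde x_0\neq x_*$ (appending one extra small rotation if the procedure happens to return to $x_*$). Organizing those flows into $m$ cycles and then inserting a small positive time $\epsilon$ for each $V_0$-flow at the start of each cycle produces an endpoint $\widetilde x(\epsilon)$ depending continuously on $\epsilon$, with $\widetilde x(0)=\widetilde x_0$ because $\varphi^{(0)}_\epsilon(x_*)=x_*$ in the first cycle. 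Since conditions (a) and (b) are open, and condition (b) at $\widetilde x_0$ reduces via $\|\widetilde x_0\|=\|x_*\|=\nu^{-1}\|F\|$ and Cauchy--Schwarz to the single inequality $\widetilde x_0\neq x_*$, both persist for all sufficiently small $\epsilon>0$. \Cref{thrm:Ergodicity2} then delivers the unique invariant measure and the exponential convergence~\eqref{eq:ExpConvergence}.

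The main subtlety lies in this continuity step, which is needed precisely because \Cref{thrm:Ergodicity2} demands strictly positive $V_0$-times rather than allowing them to be zero; the rest of the argument is essentially verification of \Cref{assump:VectorFields}, identification of the fixed point, and combining the rank count for the conservative rotations with a single additional transverse direction supplied by $V_0$.
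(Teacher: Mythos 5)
Your proposal is correct and follows essentially the same route as the paper: verify \Cref{assump:VectorFields}, use the controllability from \Cref{prop5.1} to move $x_*=\nu^{-1}F$ to a point with all coordinates nonzero, then show the vector fields together with $V_0$ span $\mathbb R^n$ there and invoke \Cref{thrm:Ergodicity2}. The two proofs differ only in presentation: the paper packages the spanning condition algebraically by computing the determinant of the matrix with columns $V_0,V_1,\dots,V_{n-1}$ (obtaining the factor $\nu^2\|x\|^2-\langle F,x\rangle$, though geometrically the correct factor is $\nu\|x\|^2-\langle F,x\rangle$, i.e.\ your $V_0(\widetilde x)\cdot\widetilde x\neq0$ condition), whereas you argue geometrically that the conservative fields span the tangent sphere and $V_0$ is transverse. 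Your Cauchy--Schwarz observation — that at a rotated image of $x_*$ the transversality condition can only fail if the point equals $x_*$ itself — is a cleaner justification than the paper's ``rotate slightly more on the last step if necessary,'' and your explicit treatment of the strictly positive $V_0$-flow times via continuity in $\epsilon$ makes precise a step the paper glosses over.
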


\noindent\begin{proof} The determinant of the $n$-by-$n$ matrix
\begin{align*}
	\begin{pmatrix}
	 	\vline & \vline & 	& \vline \\
	 	V_0(x) & V_1(x) & \cdots & V_{n-1}(x) \\
	 	\vline & \vline & & \vline
	 \end{pmatrix} &=
	\begin{pmatrix}
	 	-\nu x_1+F_1 & \phantom{-}x_2x_n & \phantom{-}0 & \hdots & \phantom{-}0 \\
	 	-\nu x_2+F_2 & -x_1x_n & \phantom{-}x_3x_1 & \hdots & \phantom{-}0 \\
	 	 -\nu x_3+F_3 & \phantom{-}0 & -x_2x_1 & \cdots & \phantom{-}\vdots \\
	 	\phantom{-}\vdots & \phantom{-}\vdots & \phantom{-}\vdots & \ddots & \phantom{-}x_nx_{n-2} \\
	 	-\nu x_n+F_n &\phantom{-}0 & \phantom{-}0 &  & -x_{n-1}x_{n-2}
	 \end{pmatrix}
\end{align*}
is
\begin{align*}
	x_1x_{n-1}x_n\left(\prod_{x=2}^{n-2}x_k^2\right)\left(\nu^2\lVert x\rVert^2-\langle F,x\rangle\right).
\end{align*}
So the $\{V_k\}_{k=0}^n$ span $\mathbb{R}^n$ at every $x$ with nonzero coordinates and satisfying $\nu^2\lVert x\rVert^2\neq \langle F,x\rangle$. In particular, since $x_*$ is not a fixed point of conservative Lorenz-96, we showed in the proof of Proposition~\ref{prop5.1} that $x_*$ can be moved via the splitting dynamics to some $\widetilde{x}$ with nonzero coordinates. Finally, by rotating slightly more on the last step if necessary, we can also guarantee $\nu^2\lVert \widetilde{x}\rVert^2\neq \langle F,\widetilde{x}\rangle$. Thus the Lie bracket condition holds at $\widetilde{x}$ and the result follows by Theorem~\ref{thrm:Ergodicity2}. \end{proof}

\begin{corollary}\label{cor:2DNS}
Fix $N\geq 2$ and set $n=4N(N+1)$. Consider the random splitting of
the $\Nth$ Galerkin approximation of 2D Navier-Stokes associated to
$\{V_k\}_{k=0}^n$, where $V_0(x)=-\nu\Lambda x+F$ with $\Lambda$ the
$n$-by-$n$ diagonal matrix corresponding to the Laplacian discussed at
the beginning of this section, and $\{V_k\}_{k=1}^n$ the splitting
vector fields of 2D Euler from Section~\ref{sec:Euler}. If $F$ is \om{nondegenerate in the sense of
  \Cref{def:nondegenerate}}, then the random splitting has a unique,
and hence ergodic, invariant measure and the dynamics converge to this measure at an exponential rate in the sense of~\eqref{eq:ExpConvergence}.
\end{corollary}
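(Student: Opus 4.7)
The approach is to apply \tref{thrm:Ergodicity2} to the family $\mathcal{V} = \{V_k\}_{k=0}^n$. \aref{assump:VectorFields} is easy to verify: $V_0$ has the required form, $\Lambda$ is positive definite with $\Lambda x \cdot x \geq \|x\|^2$ (since $|k|^2 \geq 1$ on $\mathbb{Z}^2_N$), and each $V_k$ for $k \geq 1$ conserves enstrophy $\mathcal{E}(q) = \|q\|^2$ by Section~\ref{sec:Euler}, hence Euclidean norm. The task therefore reduces to finding $m \geq 0$ and $t \in \mathbb{R}^{m(n+1)}_+$ at which the Lie bracket condition holds at $\widetilde{x} \coloneqq \Phi^m_{ht}(x_*)$, where $x_* = \nu^{-1}\Lambda^{-1}F$.

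First I show $\widetilde{x}$ can be taken to be generic. Since $F$ is nondegenerate and $\Lambda^{-1}$ is diagonal with strictly positive entries, $x_*$ and $F$ share the same sparsity pattern, so $\AAc(x_*) = \AAc(F)$ and $x_*$ is also nondegenerate in the sense of \dref{def:nondegenerate}. Applying \pref{p:conteuler} and \cref{c:reverse} to the Euler-only splitting, $x_*$ can be driven in positive times through $q^*$ to any other nondegenerate point on $\mathcal{Q}(x_*)$, including a generic point $\widehat{x}$. To lift this to the Navier-Stokes splitting, which inserts a $V_0$-flow at the start of each cycle, I take the $V_0$-flow times uniformly small, in $(0,\epsilon)$. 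By continuity of the flow map in its time arguments and openness of the set of generic points in $\mathbb{R}^n$, for $\epsilon$ small enough the endpoint $\widetilde{x} \coloneqq \Phi^m_{ht}(x_*)$ is itself generic.

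It then remains to verify the Lie bracket condition at this generic $\widetilde{x}$. By \lref{lem6.1}, $\{V_1, \ldots, V_n\}$ span the codimension-2 tangent space $T_{\widetilde{x}}\mathcal{Q}_0(E(\widetilde{x}), \mathcal{E}(\widetilde{x}))$, so two additional independent directions in $\mathbb{R}^n$ are needed. The enstrophy-normal component of $V_0(\widetilde{x}) = -\nu\Lambda\widetilde{x} + F$ is
\begin{equation*}
	\nabla\mathcal{E}(\widetilde{x}) \cdot V_0(\widetilde{x}) = -2\nu\langle \Lambda\widetilde{x}, \widetilde{x}\rangle + 2\langle \widetilde{x}, F\rangle,
\end{equation*}
which is nonzero on an open dense set. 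The bracket $[V_0, V_k] = \nu\Lambda V_k + DV_k \cdot V_0$ contains the term $\nu\Lambda V_k$, which is tangent to the energy level set (since $\langle \Lambda V_k, \Lambda^{-1}\widetilde{x}\rangle = \langle V_k, \widetilde{x}\rangle = 0$ by enstrophy conservation of $V_k$) but generically transverse to the enstrophy level set. For a suitably chosen $V_k$ and a further small Euler perturbation of $\widetilde{x}$ (preserving genericity), these two normal contributions are linearly independent, completing the Lie bracket condition.

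The main technical obstacle is the last step: rigorously confirming that, at some reachable generic point, $V_0(\widetilde{x})$ and $[V_0, V_k](\widetilde{x})$ project to linearly independent vectors in the two-dimensional normal space $\mathbb{R}^n/T_{\widetilde{x}}\mathcal{Q}_0$. Heuristically this is an open and dense condition, but making it rigorous demands careful accounting of the quadratic structure of the $V_k$, the coefficients $C_{jk}$, and the spectrum of $\Lambda$. An alternative, softer route uses analyticity of all splitting vector fields together with \tref{thrm:nagano}, reducing the check to any single convenient point of the $\mathcal{V}$-orbit once the orbit is shown to be full-dimensional. Either way, \tref{thrm:Ergodicity2} then delivers unique ergodicity and the exponential convergence rate~\eqref{eq:ExpConvergence}.
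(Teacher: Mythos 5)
Your framework correctly mirrors the paper's: verify Assumption~\ref{assump:VectorFields}, use $\AAc(x_*)=\AAc(F)$ to transfer nondegeneracy of $F$ to $x_*$, drive $x_*$ to a generic point via \pref{p:conteuler} and \cref{c:reverse} with vanishingly small $V_0$-flow times, and invoke \tref{thrm:Ergodicity2}. That much is sound and matches the paper.

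The genuine gap is the one you flag yourself: the Lie bracket condition at a generic $\widetilde x$ is never actually established. Your normal-projection heuristic is suggestive but incomplete. You correctly note $[V_0,V_k]=\nu\Lambda V_k+DV_k\cdot V_0$ and that $\nu\Lambda V_k$ has zero energy-normal component since $\langle\Lambda V_k,\Lambda^{-1}\widetilde x\rangle=\langle V_k,\widetilde x\rangle=0$. But the other term is not negligible in either normal direction: differentiating the conserved-quantity identities $\langle V_k(q),q\rangle=0$ and $\langle V_k(q),\Lambda^{-1}q\rangle=0$ in the direction $V_0(\widetilde x)$ gives $\langle DV_k\cdot V_0,\widetilde x\rangle=-\langle V_k,V_0\rangle$ and $\langle DV_k\cdot V_0,\Lambda^{-1}\widetilde x\rangle=-\langle V_k,\Lambda^{-1}V_0\rangle$, so the $2\times2$ normal-projection matrix is not in the near-triangular form your sketch suggests, and ``open and dense'' is not a proof of nonsingularity at the particular reachable $\widetilde x$. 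The Nagano route you mention does not sidestep the work either: to know the $\{V_0,\dots,V_n\}$-orbit is $n$-dimensional, and hence that \tref{thrm:nagano} delivers the bracket condition everywhere, you still have to exhibit one point where the Lie algebra spans $\mathbb R^n$, which is the same verification. The paper completes the argument by explicit computation: it writes out the three nonzero components of $[V_0,W]$ for a single interacting triple in Equation~\eqref{eq:V0Brackets}, shows by Gaussian elimination that the $6\times6$ matrix~\eqref{1000} with columns $V_0$, the four triple vector fields, and $[V_0,W]$ (restricted to the six coordinates of that triple) has rank $6$ at every generic point, and then feeds this into the block-triangular spanning argument of Section~\ref{sec:EulerSpanning}. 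To finish your proof you must supply this, or an equivalent, concrete rank verification.
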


\noindent\begin{proof}
Recall in this case $V_0(x)=-\nu\Lambda x+F$ where $\Lambda$ is the diagonal matrix with diagonal entry $\lvert k\rvert^2$ in the slots corresponding to the coordinates $a_k$ and $b_k$. Fix $j,k,\ell\in\mathbb{Z}^2_N$ with $j+k-\ell=0$ and let $W$ be one of the vector fields $V_{a_ja_ka_\ell}$, $V_{a_jb_kb_\ell}$, $V_{b_ja_kb_\ell}$, or $V_{b_jb_ka_\ell}$. Letting e.g. $(x_j,x_k,x_\ell)=(a_j,a_k,a_\ell)$ when $W=V_{a_ja_ka_\ell}$ and similarly for the other cases, direct computation yields
\begin{equation}\label{eq:V0Brackets}
\begin{aligned}
	&[V_0,W]_j(x) = C_{k\ell}\left(F_kx_\ell+F_\ell x_k+\nu(\lvert j\rvert^2-\lvert k\rvert^2-\lvert \ell\rvert^2)x_kx_\ell\right), \\
	&[V_0,W]_k(x) = C_{j\ell}\left(F_jx_\ell+F_\ell x_j+\nu(\lvert k\rvert^2-\lvert j\rvert^2-\lvert \ell\rvert^2)x_jx_\ell\right), \\
	&[V_0,W]_\ell(x) = -C_{jk}\left(F_jx_k+F_k x_j+\nu(\lvert \ell\rvert^2-\lvert j\rvert^2-\lvert k\rvert^2)x_jx_k\right),
\end{aligned}
\end{equation}
where $[V_0,W]_j(x)$ is the component of $[V_0,W]$ corresponding to the component $x_j$ of $x$, and similarly for $[V_0,W]_k$ and $[V_0,W]_\ell$. As in the 2D Euler case, Gaussian elimination shows that the $6$-by-$6$ matrix (see~\eqref{100} for an explicit form of the middle $4$ columns)
\begin{align}\label{1000}
	\begin{pmatrix}
			\vline & \vline & \vline & \vline & \vline & \vline \\
			V_0 & V_{a_ja_ka_\ell} & V_{a_jb_kb_\ell} & V_{b_ja_kb_\ell} & V_{b_jb_ka_\ell} & [V_0,W] \\
			\vline & \vline & \vline & \vline & \vline & \vline
		\end{pmatrix}
\end{align}
is rank $6$ at every \om{generic\footnote{Recall a \textit{generic point} is one with all coordinates nonzero; see \Cref{def:generic}.}} point $q$ in $\mathbb{R}^n$. Thus $V_0$ and $[V_0,W]$ add two new directions to the splitting vector fields of 2D Euler and by an entirely similar argument to the spanning argument in Section~\ref{sec:EulerSpanning} we have that the Lie bracket condition holds at every such $q$. Furthermore, since $F$ \om{is nondegenerate} the controllability argument of Section~\ref{sec:EulerControl} implies $x_*$ can be evolved via the split dynamics to a generic point. The result then follows by Theorem~\ref{thrm:Ergodicity2}.
\end{proof}

\begin{remark}
A very similar argument to the one above proves unique ergodicity for Ekman damping as well, \ie when $\Lambda$ is the identity matrix on $\mathbb{R}^n$. In this case \eqref{eq:V0Brackets} becomes
\begin{align*}
	&[V_0,W]_j(x) = C_{k\ell}\left(F_kx_\ell+F_\ell x_k-\nu x_kx_\ell\right), \\
	&[V_0,W]_k(x) = C_{j\ell}\left(F_jx_\ell+F_\ell x_j-\nu x_jx_\ell\right), \\
	&[V_0,W]_\ell(x) = -C_{jk}\left(F_jx_k+F_k x_j-\nu x_jx_k\right),
\end{align*}
and the rest of the argument goes through unchanged.
\end{remark}

\noindent\textbf{Acknowledgments:} All authors thank the National Science Foundation  grant NSF-DMS-1613337 for partial support during this project. AA also gratefully acknowledges the partial support of NSF-CCF-1934964, and OM also thanks NSF-DMS-2038056 for partial support during this project. JCM thanks David
Herzog  and Brendan Williamson for discussions at the start of these
investigations. JCM thanks the hospitality and support of the Institute
for Advanced Study, where this manuscript was completed. We also thank
the referees for their insightful comments which improved both the form and the content of this paper.\\[-4pt]

\noindent \textbf{Data availability statement:} Data sharing not applicable to this article as no datasets were generated or analysed during the current study.



\bibliographystyle{abbrv}
\bibliography{refs}


\appendix



\section{Convergence lemmas}



\subsection{Semigroups, norms, and bounds}


In this subsection we elaborate on the semigroup framework of Section~\ref{sec:Convergence}. The notation and results are used extensively in the proofs of Lemmas~\ref{lem3.1} and~\ref{lem3.2}, which are given in subsections \ref{subsec:ProofLem3.1} and \ref{subsec:ProofLem3.2}, respectively.

Fix a $\mathcal{V}$-orbit $\mathcal{X}$. The $\mathcal{C}^2$ assumption implies the $V_k$, which act on functions $f$ via $V_kf(x)=Df(x)V_k(x)$, are linear operators from $\mathcal{C}^2(\mathcal{X})$ to $\mathcal{C}^1(\mathcal{X})$ and from $\mathcal{C}^1(\mathcal{X})$ to $\mathcal{C}(\mathcal{X})$. It also implies the semigroups $\{S_t\}_{t\geq 0}$ and $\{\widetilde{S}^{(k)}_t\}_{t\geq 0}$ defined in~\eqref{3.20} and~\eqref{3.21} are linear operators on $\mathcal{C}^k(\mathcal{X})$ for $k\leq 2$. Our aim now is to obtain bounds on norms of compositions of these random semigroups. For $i\leq j$ define $\Phi^{(i,j)}_{h\tau}\coloneqq \varphi^{(j)}_{h\tau_j}\circ\cdots\circ\varphi^{(i)}_{h\tau_i}$ and $\widetilde{S}^{(i,j)}_{h\tau}\coloneqq \widetilde{S}^{(i)}_{h\tau}\cdots\widetilde{S}^{(j)}_{h\tau}$. Note $\widetilde{S}^{(i,j)}_{h\tau}$ acts on functions $f$ via
\begin{align*}
	\widetilde{S}^{(i,j)}_{h\tau}f(x) &= f\left(\Phi^{(i,j)}_{h\tau}(x)\right)
		= f\left(\varphi^{(j)}_{h\tau_j}\circ\cdots\circ\varphi^{(i)}_{h\tau_i}(x)\right).
\end{align*}
So for any $f\in\mathcal{C}(\mathcal{X})$ with $\lVert f\rVert_\infty=1$, we have
\begin{align*}
	\lVert \widetilde{S}^{(i,j)}_{h\tau}f\rVert_\infty &= \lVert f(\Phi^{(i,j)}_{h\tau})\rVert_\infty
		= 1
\end{align*}
and hence $\lVert \widetilde{S}^m_{h\tau}\rVert_{0\to 0}=1$. Next, let $\varphi=\varphi^{(k)}$ for arbitrary $k$. Then
\begin{align*}
	\varphi_t(x) &= x + \int_0^t V(\varphi_s(x)) ds
\end{align*}
and so
\begin{align*}
	D\varphi_t(x) &= I + \int_0^t DV(\varphi_s(x))D\varphi_s(x) ds
\end{align*}
and
\begin{align*}
	D^2\varphi_t(x) &= \int_0^t D^2V(\varphi_s(x))\left(D\varphi_s(x),D\varphi_s(x)\right)+DV(\varphi_s(x))D^2\varphi_s(x) ds.
\end{align*}
In particular, $\lVert D\varphi_t(x)\rVert \leq 1 + C_*\int_0^t \lVert D\varphi_s(x)\rVert ds$ for all $x$ in $\mathcal{X}$ and Gr\"onwall's inequality implies
\begin{align}\label{eq:Gron1}
	\sup_{x\in\mathcal{X}}\lVert D\varphi_t(x)\rVert &\leq e^{C_*t},
\end{align}
where here and throughout $C_*$ is the constant from~\eqref{eq:Bounded} corresponding to $\mathcal{X}$. Similarly, since $\lVert D^2V\left(D\varphi,D\varphi\right)\rVert\leq \lVert D^2V\rVert\lVert D\varphi\rVert^2\leq C_*\lVert D\varphi\rVert^2$,
\begin{align*}
	\lVert D^2\varphi_t(x)\rVert \leq C_*\int_0^t \lVert D\varphi_s(x)\rVert^2+\lVert D^2\varphi_s(x)\rVert ds
		\leq C_*te^{2C_*t}+C_*\int_0^t\lVert D^2\varphi_s(x)\rVert ds
\end{align*}
and Gr\"onwall implies
\begin{align}\label{eq:Gron2}
	\sup_{x\in\mathcal{X}}\lVert D^2\varphi_t(x)\rVert \leq C_*te^{3C_*t}.
\end{align}
Note~\eqref{eq:Gron1} and~\eqref{eq:Gron2} hold uniformly over all $\varphi^{(k)}$. Thus, for $f\in C^1(\mathcal{X})$ with $\lVert f\rVert_1=1$,
\begin{align*}
	\left\lVert D\left(\widetilde{S}^{(i,j)}_{h\tau}f\right)\right\rVert &= \left\lVert Df\left(\Phi^{(i,j)}_{h\tau}\right)D\Phi^{(i,j)}_{h\tau}\right\rVert
		\leq \prod_{k=i}^{j} \lVert D\varphi^{(k)}_{h\tau_k}\rVert
		\leq e^{C_*h\sum_{k=i}^{j}\tau_k},
\end{align*}
where the first inequality follows from submultiplicity and the second from~\eqref{eq:Gron1}. Similarly,
\begin{align*}
	D^2\Phi^{(i,j)}_{h\tau} &= \sum_{k=i}^j D\varphi^{(j)}_{h\tau_j}\cdots D\varphi^{(k+1)}_{h\tau_{k+1}}D^2\varphi^{(k)}_{h\tau_k}\left(D\Phi^{(i,k-1)}_{h\tau}, D\Phi^{(i,k-1)}_{h\tau}\right)
\end{align*}
together with~\eqref{eq:Gron1} and~\eqref{eq:Gron2} gives
\begin{align*}
	\left\lVert D^2\Phi^{(i,j)}_{h\tau}\right\rVert &\leq \sum_{k=i}^j \left\lVert D\varphi^{(j)}_{h\tau_j}\right\rVert\cdots \left\lVert D\varphi^{(k+1)}_{h\tau_{k+1}}\right\rVert\left\lVert D^2\varphi^{(k)}\right\rVert\left\lVert D\Phi^{(i,k-1)}_{h\tau}\right\rVert^2 \\
		&\leq C_*\sum_{k=i}^j h\tau_k e^{C_*h\sum_{k+1}^j\tau_\ell}e^{3C_*h\tau_k}e^{2C_*h\sum_1^{k-1}\tau_\ell}
		\leq C_*he^{3C_*h\sum_{k=i}^j\tau_k}\sum_{k=i}^j \tau_k.
\end{align*}
Therefore
\begin{align*}
	\left\lVert D^2\left(\widetilde{S}^{(i,j)}_{h\tau}f\right)\right\rVert &= \left\lVert D^2f\left(\Phi^{(i,j)}_{h\tau}\right)\left(D\Phi^{(i,j)}_{h\tau},D\Phi^{(i,j)}_{h\tau}\right)+Df\left(\Phi^{(i,j)}_{h\tau}\right)D^2\Phi^{(i,j)}_{h\tau}\right\rVert \\
		&\leq \left\lVert D\Phi^{(i,j)}_{h\tau}\right\rVert^2 + \left\lVert D^2\Phi^{(i,j)}_{h\tau}\right\rVert
		\leq e^{2C_*h\sum_{k=i}^{j}\tau_k} + \left\lVert D^2\Phi^{(i,j)}_{h\tau}\right\rVert \\
		&\leq e^{2C_*h\sum_{k=i}^{j}\tau_k} + C_*he^{3C_*h\sum_{k=i}^j\tau_k}\sum_{k=i}^j \tau_k \\
		&\leq \left(1+C_*h\sum_{k=i}^j\tau_k\right)e^{3C_*h\sum_{k=i}^j \tau_k}.
\end{align*}
The above computations prove
\begin{lemma}\label{lem:Bounds}
For any $h>0$ and $i\leq j$, we have $\lVert \widetilde{S}^{(i,j)}_{h\tau}\rVert_{0\to 0}=1$ as well as
\begin{align*}
	\left\lVert \widetilde{S}^{(i,j)}_{h\tau}\right\rVert_{1\to 1}\leq e^{C_*h\sum_{k=i}^{j}\tau_k}
	\quad\text{and}\quad
	\left\lVert \widetilde{S}^{(i,j)}_{h\tau}\right\rVert_{2\to 2}\leq \left(1+C_*h\sum_{k=i}^j\tau_k\right)e^{3C_*h\sum_{k=i}^j \tau_k}.
\end{align*}
In particular, $\left\lVert \widetilde{S}^{(i,j)}_{h\tau}\right\rVert_{\ell\to \ell}\leq \left(1+C_*h\sum_{k=i}^j\tau_k\right)e^{3C_*h\sum_{k=i}^j \tau_k}$ for all $\ell\leq 2$.
\end{lemma}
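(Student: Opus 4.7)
The plan is to prove each of the three norm bounds by direct computation using the chain rule, submultiplicativity, and Grönwall's inequality applied to the variational equations. The entire argument rests on Assumption~\ref{assump:Bounded} (through the constant $C_*$ in \eqref{eq:Bounded}) and the semigroup identity $\widetilde{S}^{(i,j)}_{h\tau}f(x) = f(\Phi^{(i,j)}_{h\tau}(x))$.

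First I would dispatch the $\lVert\cdot\rVert_{0\to 0}$ bound: for any continuous $f$ with $\lVert f\rVert_0 = 1$, the supremum of $f \circ \Phi^{(i,j)}_{h\tau}$ is bounded by the supremum of $f$, giving equality since $\Phi^{(i,j)}_{h\tau}$ is a surjective diffeomorphism of $\mathcal{X}$. Next, for the $\lVert\cdot\rVert_{1\to 1}$ bound, differentiate: $D(f\circ \Phi^{(i,j)}_{h\tau}) = Df(\Phi^{(i,j)}_{h\tau}) \cdot D\Phi^{(i,j)}_{h\tau}$. Submultiplicativity together with the chain-rule factorization $D\Phi^{(i,j)}_{h\tau} = D\varphi^{(j)}_{h\tau_j}\cdots D\varphi^{(i)}_{h\tau_i}$ reduces the problem to controlling each $\lVert D\varphi^{(k)}_{h\tau_k}\rVert$. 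The variational equation $D\varphi^{(k)}_t(x) = I + \int_0^t DV_k(\varphi_s(x)) D\varphi_s(x)\, ds$ combined with $\lVert DV_k\rVert \leq C_*$ and Grönwall yields $\lVert D\varphi^{(k)}_t\rVert \leq e^{C_* t}$ uniformly in $x$ and $k$. Multiplying these bounds over $i\leq k\leq j$ produces the claimed $e^{C_* h\sum_k \tau_k}$ factor.

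For the $\lVert\cdot\rVert_{2\to 2}$ bound, differentiate twice:
\begin{equation*}
D^2(f\circ\Phi^{(i,j)}_{h\tau}) = D^2f(\Phi^{(i,j)}_{h\tau})\bigl(D\Phi^{(i,j)}_{h\tau}, D\Phi^{(i,j)}_{h\tau}\bigr) + Df(\Phi^{(i,j)}_{h\tau}) D^2\Phi^{(i,j)}_{h\tau}.
\end{equation*}
The first term is handled by the just-proved first-derivative bound squared. For $D^2\Phi^{(i,j)}_{h\tau}$, I would use the Leibniz-type expansion $D^2\Phi^{(i,j)}_{h\tau} = \sum_{k=i}^j D\varphi^{(j)}_{h\tau_j}\cdots D\varphi^{(k+1)}_{h\tau_{k+1}} D^2\varphi^{(k)}_{h\tau_k}\bigl(D\Phi^{(i,k-1)}_{h\tau}, D\Phi^{(i,k-1)}_{h\tau}\bigr)$. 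A second Grönwall argument applied to the $D^2\varphi$ variational equation, using $\lVert D^2 V_k\rVert\leq C_*$ and the already-established $\lVert D\varphi\rVert \leq e^{C_* t}$ bound, yields $\lVert D^2\varphi^{(k)}_t\rVert \leq C_* t e^{3C_* t}$. Combining these estimates via the triangle inequality and submultiplicativity gives $\lVert D^2\Phi^{(i,j)}_{h\tau}\rVert \leq C_* h\bigl(\sum_k \tau_k\bigr) e^{3C_* h\sum_k\tau_k}$, and the stated bound follows after adding the $e^{2C_*h\sum_k\tau_k}$ contribution from the first term and bounding the exponents uniformly. The final ``in particular'' statement is immediate since the $\ell=2$ bound dominates the $\ell=0,1$ bounds.

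The main obstacle is purely bookkeeping: tracking how the derivatives of the composition distribute across the factors and collecting the exponential constants so the final bound has the clean form stated. There is no conceptual difficulty beyond two applications of Grönwall to the first- and second-order variational equations; the real work was already done in setting up $C_*$ via Assumption~\ref{assump:Bounded}.
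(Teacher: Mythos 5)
Your proposal matches the paper's proof essentially line for line: the $0\to 0$ case by composition with a diffeomorphism of $\mathcal{X}$, the $1\to 1$ case via the chain-rule factorization of $D\Phi^{(i,j)}_{h\tau}$ and a Grönwall bound $\lVert D\varphi_t\rVert\leq e^{C_*t}$, and the $2\to 2$ case via the Leibniz expansion of $D^2\Phi^{(i,j)}_{h\tau}$ together with a second Grönwall estimate $\lVert D^2\varphi_t\rVert\leq C_*te^{3C_*t}$. The bookkeeping and constants are identical, so this is a correct reconstruction of the paper's argument.
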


\noindent Note that under the $\mathcal{C}^2$ assumption $\widetilde{S}^{(i,j)}_{h\tau}$ can also be regarded as a linear operator from $\mathcal{C}^2(
\mathcal{X})$ to $\mathcal{C}^1(
\mathcal{X})$. So since $\{f\in\mathcal{C}^2(\mathcal{X}) : \lVert f\rVert_2=1\}$ is a subset of $\{f\in\mathcal{C}^1(\mathcal{X}) : \lVert f\rVert_1=1\}$, we have
\begin{align}\label{eq:2to1}
	\left\lVert\widetilde{S}^{(i,j)}_{h\tau}\right\rVert_{2\to 1} &= \sup_{\lVert f\rVert_2=1}\left\lVert\widetilde{S}^{(i,j)}_{h\tau}f\right\rVert_1
		\leq \sup_{\lVert f\rVert_1=1}\left\lVert\widetilde{S}^{(i,j)}_{h\tau}f\right\rVert_1
		= \left\lVert\widetilde{S}^{(i,j)}_{h\tau}\right\rVert_{1\to 1}
		\leq e^{C_*h\sum_{k=i}^{j}\tau_k}.
\end{align}
We also have the following corollary of Lemma~\ref{lem:Bounds}.

\begin{cor}\label{cor:PolyBounds}
Fix $i\leq j$ and set $m\coloneqq j-i+1$. For all $\ell\leq 2$ and polynomial $p:\mathbb{R}^m_+\to\mathbb{R}$ there exists $h_*>0$ such that for all $h<h_*$,
\begin{align}
	\mathbb{E}\lVert p(\tau_i,\dots,\tau_j)\widetilde{S}^{(i,j)}_{h\tau}\rVert_{k\to k} &< \infty.
\end{align}
\end{cor}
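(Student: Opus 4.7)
The plan is to combine the uniform norm estimates from \Cref{lem:Bounds} with the finiteness of mixed moment-generating functions of independent exponential random variables. Specifically, \Cref{lem:Bounds} gives the pointwise bound
\begin{equation*}
	\bigl\lVert\widetilde{S}^{(i,j)}_{h\tau}\bigr\rVert_{\ell\to\ell} \leq \Bigl(1+C_*h\sum_{k=i}^{j}\tau_k\Bigr)\exp\Bigl(3C_*h\sum_{k=i}^{j}\tau_k\Bigr)
\end{equation*}
for every $\ell\leq 2$, so multiplying by $|p(\tau_i,\dots,\tau_j)|$ and taking expectations reduces the claim to showing that
\begin{equation*}
	\mathbb{E}\Bigl[\,q(\tau_i,\dots,\tau_j)\,\exp\Bigl(3C_*h\sum_{k=i}^{j}\tau_k\Bigr)\Bigr] < \infty
\end{equation*}
for the polynomial $q(\tau_i,\dots,\tau_j)\coloneqq |p(\tau_i,\dots,\tau_j)|\bigl(1+C_*h\sum_k\tau_k\bigr)$, whose coefficients depend on $h$ but are uniformly bounded for $h$ in a bounded range.

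Next, I would expand $q$ as a finite sum of monomials $\prod_{k=i}^{j}\tau_k^{\alpha_k}$ and invoke independence of the $\tau_k$ to factor the expectation as a product of one-dimensional integrals of the form $\mathbb{E}\bigl[\tau_k^{\alpha_k} e^{3C_*h\tau_k}\bigr]$. Because each $\tau_k$ has density $e^{-t}$ on $(0,\infty)$, every such integral equals $\Gamma(\alpha_k+1)\,(1-3C_*h)^{-(\alpha_k+1)}$ provided $3C_*h<1$. Choosing $h_*\coloneqq 1/(6C_*)$ (any threshold strictly below $1/(3C_*)$ will do) ensures that each factor is finite for all $h<h_*$, and therefore the full expectation is finite as well.

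Since there are only finitely many monomials in $q$ and each produces a finite contribution, summing yields the desired bound. As noted in \Cref{r:exponential}, the only property of the exponential distribution used here is its exponential tail, which guarantees the one-dimensional moment generating functions converge on a neighborhood of the origin; so the argument extends verbatim to any flow-time distribution satisfying the hypotheses of that remark.

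There is no substantive obstacle: the estimate in \Cref{lem:Bounds} already packages the analytical work, independence reduces the multidimensional integral to a product, and the exponential tails of $\tau_k$ supply finiteness once $h$ is small enough to keep the rate $3C_*h$ inside the radius of convergence of each $\mathbb{E}[e^{s\tau_k}]$. The only care needed is in selecting $h_*$ uniformly in $\ell\leq 2$ and in the degree of $p$, which is automatic since the exponent $3C_*$ in \Cref{lem:Bounds} does not depend on $\ell$ and the polynomial degree only affects constants, not the threshold $h_*$.
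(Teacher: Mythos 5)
Your proposal is correct and follows essentially the same route as the paper: apply \Cref{lem:Bounds} to bound the operator norm, then observe that the resulting polynomial-times-exponential integrand is integrable against the density $e^{-\sum t_k}$ once $3C_*h<1$, so any $h_*\le(3C_*)^{-1}$ works. One cosmetic slip: $q\coloneqq|p|\bigl(1+C_*h\sum_k\tau_k\bigr)$ is not itself a polynomial since $|p|$ is not, so you should first dominate $|p|$ by the polynomial with the absolute values of its coefficients before expanding into monomials; with that fix the factorization via independence goes through exactly as you describe.
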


\noindent\begin{proof} Writing $t=(t_i,\dots,t_j)$ and $dt=dt_i\cdots dt_j$, we have
\begin{align*}
	\mathbb{E}\lVert p(\tau_i,\dots,\tau_j)\widetilde{S}^{(i,j)}_{h\tau}\rVert_{\ell\to \ell} &= \int_{\mathbb{R}^m_+} \lvert p(t)\rvert\left\lVert \widetilde{S}^{(i,j)}_{ht}\right\rVert_{\ell\to \ell} e^{-\sum t_k} dt \\
		&\leq \int_{\mathbb{R}^m_+} \lvert p(t)\rvert\left(1+C_*h\sum_{k=i}^j t_k\right)e^{(3C_*h-1)\sum_{k=i}^j t_k} dt
\end{align*}
which is finite for all $h < h_*\coloneqq (3C_*)^{-1}$. \end{proof}


\subsection{Proof of Lemma~\ref{lem3.1}}
\label{subsec:ProofLem3.1}


We highlight the steps of the proof with italicized font. \\

\noindent\textit{Variation of constants.} We begin by differentiating $\widetilde{S}_{h\tau}$ in $h$:
\begin{align*}
	\partial_h\widetilde{S}_{h\tau} &= \sum_{k=1}^n \tau_k e^{h\tau_1}\cdots e^{h\tau_{k-1}}V_k e^{h\tau_k}\cdots e^{h\tau_n}
		= \sum_{k=1}^n \tau_k \widetilde{S}^{(1,k-1)}_{h\tau}V_k\widetilde{S}^{(k,n)}_{h\tau}.
\end{align*}
Next, commute $\widetilde{S}^{(1,k-1)}_{h\tau}$ and $V_k$ via $[\widetilde{S}^{(1,k-1)}_{h\tau}, V_k]\coloneqq\widetilde{S}^{(1,k-1)}_{h\tau}V_k-V_k\widetilde{S}^{(1,k-1)}_{h\tau}$ to get
\begin{align*}
		\partial_h\widetilde{S}_{h\tau} &= \sum_{k=1}^n \tau_kV_k\widetilde{S}_{h\tau}+\sum_{k=1}^n \tau_k[\widetilde{S}^{(1,k-1)}_{h\tau}, V_k]\widetilde{S}^{(k,n)}_{h\tau}
		= V\widetilde{S}_{h\tau}+(V_\tau-V)\widetilde{S}_{h\tau}+E_{h\tau}
\end{align*}
where $V_\tau\coloneqq \sum_{k=1}^n \tau_kV_k$ and $E_{h\tau}\coloneqq \sum_{k=1}^n \tau_k[\widetilde{S}^{(1,k-1)}_{h\tau}, V_k]\widetilde{S}^{(k,n)}_{h\tau}$. So, by variation of constants,
\begin{align}\label{A.1}
	\widetilde{S}_{h\tau}-S_h &= \int_0^h S_{h-r}(V_\tau-V)\widetilde{S}_{r\tau} dr+\int_0^hS_{h-r}E_{r\tau} dr.
\end{align}
Call $S_{h-r}(V_\tau-V)\widetilde{S}_{r\tau}$ \textit{error term 1} and $S_{h-r}E_{r\tau}$ \textit{error term 2}. These terms will be treated separately in what follows. First however, we invoke variation of constants again to get an expression for $[\widetilde{S}^{(1,k-1)}_{r\tau}, V_k]$ that will be used to control error term 2. Differentiating in $r$ gives
\begin{align*}
	\partial_r[\widetilde{S}^{(1,k-1)}_{r\tau}, V_k] &= \sum_{j=1}^{k-1}\tau_j[\widetilde{S}_{r\tau}^{(1,j-1)}V_j\widetilde{S}_{r\tau}^{(j,k-1)},V_k] \\
		&= \sum_{j=1}^{k-1}\tau_j\bigg([V_j\widetilde{S}_{r\tau}^{(1,k-1)},V_k]+\big[[\widetilde{S}_{r\tau}^{(1,j-1)},V_j]\widetilde{S}_{r\tau}^{(j,k-1)},V_k\big]\bigg) \\
		&= \sum_{j=1}^{k-1} \tau_jV_j[\widetilde{S}_{r\tau}^{(1,k-1)},V_k] + \sum_{j=1}^{k-1} \tau_j\bigg([V_j,V_k]\widetilde{S}_{r\tau}^{(1,k-1)}+\big[[\widetilde{S}_{r\tau}^{(1,j-1)}, V_j]\widetilde{S}_{r\tau}^{(j,k-1)},V_k\big]\bigg).
\end{align*}
The second equality follows from commuting $\widetilde{S}^{(1,j-1)}_{h\tau}$ and $V_j$ as before, and the third follows from the identity $[XY,Z]=X[Y,Z]+[X,Z]Y$. So, by variation of constants,
\begin{equation}\label{eq:VarConst}
\begin{aligned}
[\widetilde{S}_{r\tau}^{(1,k-1)},V_k] &= \sum_{j=1}^{k-1}\int_0^r\tau_j e^{(r-s)\sum_{j=1}^{k-1}\tau_jV_j}[V_j,V_k]\widetilde{S}^{(1,k-1)}_{s\tau}ds \\
		&\qquad+\sum_{j=1}^{k-1}\int_0^r \tau_j e^{(r-s)\sum_{j=1}^{k-1}\tau_jV_j}\big[[\widetilde{S}^{(1,j-1)}_{s\tau},V_j]\widetilde{S}^{(j,k-1)}_{s\tau},V_k\big]ds.
\end{aligned}
\end{equation}
Note $\lVert e^{(r-s)\sum_{j=1}^{k-1}\tau_jV_j}\rVert_{0\to 0}=1$. So, by Corollary~\ref{cor:PolyBounds} the integrands above satisfy
\begin{align*}
	\mathbb{E}\lVert \tau_j e^{(r-s)\sum_{j=1}^{k-1}\tau_jV_j}[V_j,V_k]\widetilde{S}^{(1,k-1)}_{s\tau}\rVert_{2\to 0} &\leq \lVert [V_j,V_k]\rVert_{2\to 0}\mathbb{E}\lVert \tau_j\widetilde{S}^{(1,k-1)}_{s\tau}\rVert_{2\to 2}
		< C
\end{align*}
and
\begin{align*}
	\mathbb{E}\big\lVert \tau_j e^{(r-s)\sum_{j=1}^{k-1}\tau_jV_j}\big[[\widetilde{S}^{(1,j-1)}_{s\tau},V_j]\widetilde{S}^{(j,k-1)}_{s\tau},V_k\big]\big\rVert_{2\to 0} &\leq \mathbb{E}\big\lVert \tau_j \big[[\widetilde{S}^{(1,j-1)}_{s\tau},V_j]\widetilde{S}^{(j,k-1)}_{s\tau},V_k\big]\big\rVert_{2\to 0}
		< C
\end{align*}
for some $C$. Therefore
\begin{align}\label{A.2}
	\mathbb{E}\lVert [\widetilde{S}_{r\tau}^{(1,k-1)},V_k]\rVert_{2\to 0} &\leq 2\sum_{j=1}^{k-1}\int_0^r C ds
		\leq Cr
\end{align}
for some new constant $C$ (we will often absorb arbitrary constants into existing ones). \\

\noindent\textit{Error term 1.} Rewrite error term 1 as
\begin{equation}\label{A.3}
\begin{aligned}
S_{h-r}(V_\tau-V)\widetilde{S}_{r\tau} &= \sum_{k=1}^n (\tau_k-1)S_{h-r}V_k\widetilde{S}_{r\tau} \\
		&= \sum_{k=1}^n (\tau_k-1)S_{h-r}V_k\widetilde{S}^{(1,k-1)}_{r\tau}\widetilde{S}^{(k+1,n)}_{r\tau} \\
		&\qquad+\sum_{k=1}^n (\tau_k-1)S_{h-r}V_k\widetilde{S}^{(1,k-1)}_{r\tau}(e^{r\tau_kV_k}-I)\widetilde{S}^{(k+1,n)}_{r\tau} \\
		&\eqqcolon \mathcal{A}_1+\mathcal{A}_2
\end{aligned}
\end{equation}
where $\mathcal{A}_1$ and $\mathcal{A}_2$ are the first and second sums in the preceding expression. The second equality is obtained by adding and subtracting the identity $I$ as follows:
\begin{align*}
	\widetilde{S}_{r\tau} &= \widetilde{S}^{(1,k-1)}_{r\tau}\big(e^{r\tau_kV_k}-I+I\big)\widetilde{S}^{(k+1,n)}_{r\tau}
		= \widetilde{S}^{(1,k-1)}_{r\tau}\widetilde{S}^{(k+1,n)}_{r\tau}+\widetilde{S}^{(1,k-1)}_{r\tau}(e^{r\tau_kV_k}-I)\widetilde{S}^{(k+1,n)}_{r\tau}.
\end{align*}
Notice $\widetilde{S}^{(1,k-1)}_{r\tau}\widetilde{S}^{(k+1,n)}_{r\tau}$ does not depend on $\tau_k$. So, since the $\tau_i$ are independent with mean $1$,
\begin{align}\label{A.4}
	\mathbb{E}(\mathcal{A}_1) &= \sum_{k=1}^n S_{t-r}V_k\mathbb{E}(\tau_k-1)\mathbb{E}\big(\widetilde{S}^{(1,k-1)}_{r\tau}\widetilde{S}^{(k+1,n)}_{r\tau}\big)
		= 0.
\end{align}
For the second sum, Taylor expanding $r\mapsto e^{r\tau_kV_k}$ about $r=0$ with remainder gives
\begin{align*}
	e^{r\tau_kV_k}-I &= r\tau_kV_ke^{r_*\tau_kV_k}
\end{align*}
for some $r_*\in[0,r]$. Therefore
\begin{align*}
	\mathcal{A}_2 &= r\sum_{k=1}^n \tau_k(\tau_k-1)S_{h-r}V_k\widetilde{S}^{(1,k-1)}_{r\tau}V_ke^{r_*\tau_kV_k}\widetilde{S}^{(k+1,n)}_{r\tau}
\end{align*}
and by Lemma~\ref{lem:Bounds} and Corollary~\ref{cor:PolyBounds},
\begin{align}\label{A.5}
	\lVert\mathbb{E}(\mathcal{A}_2)\rVert_{2\to 0} &\leq Cr\sum_{k=1}^n \mathbb{E}\lVert\widetilde{S}^{(1,k-1)}_{r\tau}\rVert_{1\to 1}\mathbb{E}\lVert\tau_k(\tau_k-1)\widetilde{S}^{(k,n)}_{r\tau}\rVert_{2\to 2}
		\leq Cr
\end{align}
for some $C>0$. Combining Equations~\eqref{A.3},~\eqref{A.4}, and~\eqref{A.5} gives
\begin{align}\label{A.6}
	\lVert\mathbb{E}(S_{h-r}(V_\tau-V)\widetilde{S}_{r\tau})\rVert_{2\to 0} &\leq Cr.
\end{align}

\noindent\textit{Error term 2.} Recall error term 2 is $S_{h-r}E_{r\tau}\coloneqq \sum_{k=1}^n \tau_kS_{h-r}[\widetilde{S}^{(1,k-1)}_{r\tau}, V_k]\widetilde{S}^{(k,n)}_{r\tau}$. So, we have that
\begin{align*}
	\lVert S_{h-r}E_{r\tau}\rVert_{2\to 0} &\leq \sum_{k=1}^n \tau_k\lVert S_{h-r}\rVert_{0\to 0}\lVert [\widetilde{S}^{(1,k-1)}_{r\tau},V_k]\rVert_{2\to 0}\lVert \tau_k\widetilde{S}^{(k,n)}_{r\tau}\rVert_{2\to 2}\,.
\end{align*}
Note $[\widetilde{S}^{(1,k-1)}_{r\tau},V_k]$ is independent of $\tau_k$. So, by~\eqref{A.2} Corollary~\ref{cor:PolyBounds},
\begin{align}\label{A.7}
	\lVert \mathbb{E}( S_{h-r}E_{r\tau})\rVert_{2\to 0} &\leq Cr
\end{align}
for some $C>0$. \\

\noindent\textit{Final step.} Combining~\eqref{A.1},~\eqref{A.6}, and~\eqref{A.7} and absorbing constants into $C$, we have
\begin{align*}
	\lVert P_h-S_h\rVert_{2\to 0} &= \lVert \mathbb{E}(\widetilde{S}_{h\tau}-S_h)\rVert_{2\to 0} \\
		&\leq \int_0^h \lVert \mathbb{E}(S_{h-r}(V_\tau-V)\widetilde{S}_{r\tau})\rVert_{2\to 0} dr+\int_0^h\lVert \mathbb{E}(S_{h-r}E_{r\tau})\rVert_{2\to 0}  dr \\
		&\leq C\int_0^h r dr
		= \tfrac{1}{2}Ch^2. \QED
\end{align*}


\subsection{Concentration of the sum of exponential random variables}


The proof of Lemma~\ref{lem3.2} will itself use two lemmas.

\begin{lemma}\label{lemA1}
Let $\{\tau_k\}_{k=1}^\infty$ be iid exponential with mean $1$. For any $m\in\mathbb{N}$, $K>0$ and $\beta>1$,
\begin{align}\label{A.141}
	\mathbb{P}\left(\sum_{k=1}^m\tau_k>Km^\beta\right) &\leq 2^me^{-\frac{1}{2}Km^\beta}.
\end{align}
\end{lemma}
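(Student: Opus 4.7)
The plan is a standard Chernoff/exponential Markov bound. First I would recall that the moment generating function of a mean-one exponential random variable $\tau_k$ is $\mathbb{E}[e^{\lambda \tau_k}] = (1-\lambda)^{-1}$ for $\lambda < 1$, and by independence
\begin{equation*}
  \mathbb{E}\Bigl[e^{\lambda \sum_{k=1}^m \tau_k}\Bigr] = (1-\lambda)^{-m}.
\end{equation*}

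Next I would apply Markov's inequality to the nonnegative random variable $e^{\lambda \sum_{k=1}^m \tau_k}$ at threshold $e^{\lambda K m^\beta}$, giving
\begin{equation*}
  \mathbb{P}\Bigl(\textstyle\sum_{k=1}^m \tau_k > K m^\beta\Bigr)
  \;\leq\; \frac{(1-\lambda)^{-m}}{e^{\lambda K m^\beta}}
\end{equation*}
for every $\lambda \in (0,1)$. Specializing to $\lambda = 1/2$ yields $(1-\lambda)^{-m} = 2^m$ and $e^{\lambda K m^\beta} = e^{K m^\beta/2}$, so
\begin{equation*}
  \mathbb{P}\Bigl(\textstyle\sum_{k=1}^m \tau_k > K m^\beta\Bigr)
  \;\leq\; 2^m e^{-K m^\beta / 2},
\end{equation*}
which is exactly \eqref{A.141}.

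There is no real obstacle here; the only choice to make is the parameter $\lambda$, and $\lambda=1/2$ is both convenient and compatible with the constants stated in the lemma. Sharper bounds are available by optimizing over $\lambda$, but the form $2^m e^{-Km^\beta/2}$ is already summable in $m$ once $\beta > 0$ and $K$ is not too small, which is all that will be needed in the subsequent Borel--Cantelli argument underlying Lemma~\ref{lem3.2}.
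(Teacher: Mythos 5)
Your proposal is correct and is essentially the paper's own argument: both apply Markov's inequality to $e^{\frac{1}{2}\sum \tau_k}$, using $\mathbb{E}[e^{\tau/2}]=2$ for a mean-one exponential, to obtain $2^m e^{-Km^\beta/2}$. The only cosmetic difference is that you first write the Chernoff bound with a free parameter $\lambda$ before specializing to $\lambda=1/2$, whereas the paper plugs in $1/2$ immediately.
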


\noindent\begin{proof}
Note if $\tau\sim\text{Exp}(1)$ then $\mathbb{E}(e^{\tau/2})=2$. So, by Markov's inequality and independence,
\begin{align*}
	\mathbb{P}\left(\sum_{k=1}^m\tau_k>Km^\beta\right) &= \mathbb{P}\left(e^{\frac{1}{2}\sum_{k=1}^m\tau_k}>e^{\frac{1}{2}Km^\beta}\right)
		\leq e^{-\frac{1}{2}Km^\beta}\left(\mathbb{E}\left[e^{\frac{1}{2}\tau}\right]\right)^m
		= 2^me^{-\frac{1}{2}Km^\beta}. \qedhere
\end{align*}
\end{proof}

\begin{lemma}\label{lemA2}
Let $\{\tau_k\}_{k=1}^\infty$ be iid exponential with mean $1$. For any $m\in\mathbb{N}$ and $K\in (0,1)$,
\begin{align}
	\mathbb{P}\left(\bigg\lvert\sum_{k=1}^m \tau_k-1\bigg\rvert > Km\right) &< 2e^{-\frac{1}{2}K^2m}.
\end{align}
\end{lemma}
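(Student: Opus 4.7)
The approach is a standard two-sided Chernoff (exponential-moment) bound applied to each tail of the deviation and combined by a union bound. (The natural reading of the event is $\{|\sum_{k=1}^m\tau_k-m|>Km\}$, consistent with $\mathbb{E}\sum\tau_k=m$ and with how this estimate is invoked in the sketch at the end of \Cref{sec:Convergence}; the ``$-1$'' appears to be a typo for ``$-m$''.) Decompose the symmetric event into its upper and lower tails,
\begin{equation*}
\Bigl\{\Bigl|\sum_{k=1}^m\tau_k-m\Bigr|>Km\Bigr\}=\Bigl\{\sum\tau_k>(1+K)m\Bigr\}\cup\Bigl\{\sum\tau_k<(1-K)m\Bigr\},
\end{equation*}
so that bounding each tail by $e^{-K^2m/2}$ will yield the stated inequality via a union bound, the factor of $2$ coming for free.

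For each tail, apply Markov's inequality to the exponentiated sum using the moment generating function $\mathbb{E}[e^{\lambda\tau}]=(1-\lambda)^{-1}$ (valid for $\lambda<1$), which by independence factorizes into an $m$-th power for $\sum\tau_k$. This yields, for any $\lambda\in(0,1)$,
\begin{equation*}
\mathbb{P}\Bigl(\sum\tau_k>(1+K)m\Bigr)\leq\exp\bigl(m[-\lambda(1+K)-\log(1-\lambda)]\bigr),
\end{equation*}
and, for any $\lambda>0$, the parallel bound $\exp(m[\lambda(1-K)-\log(1+\lambda)])$ for the lower tail. Optimizing $\lambda$ in each case (stationary points $\lambda=K/(1+K)$ and $\lambda=K/(1-K)$ respectively) produces the classical Cram\'er rate functions $K-\log(1+K)$ for the upper tail and $-K-\log(1-K)$ for the lower tail.

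The final step is to compare each exponent to $K^2/2$. The lower-tail estimate is immediate from its all-positive Taylor series, $-K-\log(1-K)=\sum_{j\geq 2}K^j/j\geq K^2/2$ on $K\in(0,1)$. The upper-tail estimate is the main (minor) obstacle: the alternating series $K-\log(1+K)=K^2/2-K^3/3+K^4/4-\cdots$ does not dominate $K^2/2$ pointwise, so a slightly finer argument is needed. Either one uses the sub-exponential MGF estimate $-\lambda-\log(1-\lambda)\leq\lambda^2$ valid for $\lambda\in[0,1/2]$ (which gives the Chernoff bound $e^{-K^2m/c}$ with a clean constant $c$), or one balances the two tails: since the lower tail is strictly sharper than $e^{-K^2m/2}$, the sum of the two Chernoff bounds remains dominated by $2e^{-K^2m/2}$ on $(0,1)$ by an elementary calculus verification. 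Either way the factor $2$ in the statement is exactly what absorbs the mild gap in the upper-tail rate.
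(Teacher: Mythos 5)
Your overall strategy matches the paper's proof exactly---two-sided Chernoff plus a union bound---and you correctly locate the real difficulty: the optimal upper-tail rate $K-\log(1+K)=K^2/2-K^3/3+K^4/4-\cdots$ is strictly smaller than $K^2/2$ for every $K>0$. However, neither of your proposed patches closes this gap. Fix~(a), the inequality $-\lambda-\log(1-\lambda)\le\lambda^2$ on $[0,1/2]$, optimized at $\lambda=K/2$ yields only $e^{-K^2m/4}$, not $e^{-K^2m/2}$. Fix~(b) is simply false: at $K=0.99$, $m=5$, the sum of the two optimized Chernoff bounds is $e^{-5(K-\log(1+K))}+e^{-5(-K-\log(1-K))}\approx 0.221>0.173\approx 2e^{-5K^2/2}$, so the lower-tail slack does not absorb the upper-tail deficit. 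In fact the lemma \emph{as stated} is false: because $K-\log(1+K)$ is the exact large-deviations rate for the event $\sum_{k=1}^m\tau_k>(1+K)m$, the actual probability eventually dominates $2e^{-K^2m/2}$ as $m\to\infty$ for every fixed $K\in(0,1)$. The paper's own proof conceals the same defect with a sign error in its Taylor step: it asserts $\gamma^{-1}\log(1-\gamma)>-1-\gamma/2$ on the grounds that the Lagrange point $\gamma_1$ lies in $(-\gamma,0)$, but $\gamma_1\in(0,\gamma)$ and the inequality reverses (numerically, at $\gamma=1/2$ one has $\gamma^{-1}\log(1-\gamma)\approx-1.386<-1.25$).

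The damage to the paper is cosmetic rather than structural. Lemma~\ref{lemA2} is invoked in the proof of Lemma~\ref{lem3.2} only to obtain summable probability bounds, and any estimate of the form $2e^{-c(K)m}$ with $c(K)>0$---for instance the honest rate $K-\log(1+K)$, or the $K^2/4$ available from your fix~(a)---serves that purpose equally well. The correct repair is therefore to weaken the exponent (or to restrict $K$ and track the resulting constant), rather than to try to salvage the constant $1/2$, which is simply unattainable for the upper tail.
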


\noindent\begin{proof}
Fix $m$. For any $\gamma\in(0,1)$,
\begin{align*}
	\mathbb{P}\left(\bigg\lvert\sum_{k=1}^m \tau_k-1\bigg\rvert >
  Km\right) 
		&= \mathbb{P}\left(\sum_{k=1}^m \tau_k > (1+K)m\right)+\mathbb{P}\left(-\sum_{k=1}^m \tau_k > -(1-K)m\right) \\
		&= \mathbb{P}\left(e^{\gamma\sum_{k=1}^m \tau_k} > e^{(1+K)\gamma m}\right)+\mathbb{P}\left(e^{-\gamma\sum_{k=1}^m \tau_k} > e^{-(1-K)\gamma m}\right) \\
		&\leq e^{-(1+K)\gamma m}\left(\mathbb{E}\left[e^{\gamma \tau}\right]\right)^m + e^{(1-K)\gamma m}\left(\mathbb{E}\left[e^{-\gamma \tau}\right]\right)^m \\
		&= e^{-(1+K)\gamma m}\left(1-\gamma\right)^{-m} + e^{(1-K)\gamma m}\left(1+\gamma\right)^{-m} \\
		&= \exp\left(-\gamma m\left[1+K+\frac{\log(1-\gamma)}{\gamma}\right]\right)+\exp\left(\gamma m\left[1-K-\frac{\log(1+\gamma)}{\gamma}\right]\right).
\end{align*}
The inequality is Markov's inequality and the equality immediately after the inequality follows from independence together with $\mathbb{E}[\exp(\alpha \tau)]=(1-\alpha)^{-1}$ for any $\alpha\in (-1,1)$. The other steps are all algebraic manipulations. By Taylor's theorem with remainder there exists $\gamma_1\in(-\gamma,0)$ such that
\begin{align*}
	\frac{1}{\gamma}\log(1-\gamma) &= -1-\frac{\gamma}{2(1-\gamma_1)^2}
		> -1-\frac{\gamma}{2},
\end{align*}
where the inequality follows since $\gamma_1<0$. Therefore
\begin{align*}
	\exp\left(-\gamma m\left[1+K+\frac{\log(1-\gamma)}{\gamma}\right]\right) &\leq \exp\left(-\gamma m\left[K-\frac{\gamma}{2}\right]\right).
\end{align*}
Similarly,
\begin{align*}
	\exp\left(\gamma m\left[1-K-\frac{\log(1+\gamma)}{\gamma}\right]\right) &\leq \exp\left(-\gamma m\left[K-\frac{\gamma}{2}\right]\right).
\end{align*}
So combining with the first computation of this proof and taking $\gamma=K$ gives
\begin{align*}
	\mathbb{P}\left(\bigg\lvert\sum_{k=1}^m \tau_k-1\bigg\rvert > Km\right) &\leq 2\exp\left(-\gamma m\left[K-\frac{\gamma}{2}\right]\right)
		= 2e^{-\frac{1}{2}K^2m}. \qedhere
\end{align*}
\end{proof}


\subsection{Proof of Lemma~\ref{lem3.2}}
\label{subsec:ProofLem3.2}


Fix $t>0$. The argument is similar to that of Lemma~\ref{lem3.1}. \\

\noindent\textit{Variation of constants}. Fix $m\in\mathbb{N}$. Since $\widetilde{S}^m_{h\tau}=\exp(h\tau_1V_1)\cdots\exp(h\tau_{mn}V_{mn})$,
\begin{align*}
	\partial_h\widetilde{S}^m_{h\tau} &= \sum_{k=1}^{mn} \tau_k\widetilde{S}^{(1,k-1)}_{h\tau}V_k\widetilde{S}^{(k,mn)}_{h\tau}
		= \sum_{k=1}^{mn} \tau_kV_k\widetilde{S}^m_{h\tau}+\tau_k[\widetilde{S}^{(1,k-1)}{h\tau}, V_k]\widetilde{S}^{(k,mn)}_{h\tau} \\
		&= mV\widetilde{S}^m_{h\tau}+\sum_{k=1}^{mn}(\tau_k-1)V_k\widetilde{S}^m_{h\tau}+\sum_{k=1}^{mn}\tau_k[\widetilde{S}^{(1,k-1)}_{h\tau}, V_k]\widetilde{S}^{(k,mn)}_{h\tau},
\end{align*}
where the second equality is obtained by commuting $\widetilde{S}^{(1,k-1)}_{h\tau}$ and $V_k$, and the third by replacing $\tau_k$ with $\tau_k-1+1$. So, setting $E_{h\tau}^{(m)}\coloneqq \sum_{k=1}^{mn}\tau_k[\widetilde{S}^{(1,k-1)}_{h\tau}, V_k]\widetilde{S}^{(k,mn)}_{h\tau}$, variation of constants implies
\begin{align*}
	\widetilde{S}^m_{h\tau}-S_{hm} &= \int_0^h S_{m(h-r)}\left(\sum_{k=1}^{mn}(\tau_k-1)V_k\right)\widetilde{S}^m_{r\tau} dr + \int_0^h S_{m(h-r)}E_{r\tau}^{(m)} dr.
\end{align*}
Therefore, since $\lVert S_{m(h-r)}\rVert_{0\to 0}=1$,
\begin{align*}
	\lVert\widetilde{S}^m_{h\tau}-S_{hm}\rVert_{2\to 0} &\leq \int_0^h \bigg\lVert\sum_{k=1}^{mn}(\tau_k-1)V_k\bigg\rVert_{1\to 0}\left\lVert\widetilde{S}^m_{r\tau}\right\rVert_{2\to 1} dr + \int_0^h \lVert E_{r\tau}^{(m)}\rVert_{2\to 0} dr.
\end{align*}
Let $I_1(h)$ and $I_2(h)$ denote the first and second integrals, respectively. Then for any $\epsilon>0$,
\begin{align}\label{A.13}
	\mathbb{P}\left(\lVert\widetilde{S}^m_{h\tau}-S_{hm}\rVert_{2\to 0} > \frac{\epsilon}{m}\right) &\leq \mathbb{P}\left(I_1(h) > \frac{\epsilon}{2m}\right)+\mathbb{P}\left(I_2(h) > \frac{\epsilon}{2m}\right).
\end{align}
We consider the two probabilities on the right, called the \textit{first} and \textit{second probabilities}, separately. \\

\noindent\textit{First probability}. Note $\sum_{k=1}^{mn}(\tau_k-1)V_k=\sum_{k=1}^n\sum_{j=1}^m (\tau_j^{(k)}-1)V_k$ where $\tau^{(k)}_j\coloneqq \tau_{(j-1)n+k}$. So
\begin{align*}
	\bigg\lVert\sum_{k=1}^{mn}(\tau_k-1)V_k\bigg\rVert_{1\to 0} &\leq C_*\sum_{k=1}^n\bigg\lvert\sum_{j=1}^m \tau_j^{(k)}-1\bigg\rvert,
\end{align*}
and together with Lemma~\ref{lem:Bounds} and Equation~\eqref{eq:2to1},
\begin{align*}
	I_1(h) &\leq C_*\sum_{k=1}^n\bigg\lvert\sum_{j=1}^m \tau_j^{(k)}-1\bigg\rvert\int_0^h \prod_{k=1}^n e^{C_*r\sum_{j=1}^m \tau_j^{(k)}} dr\,.
\end{align*}
Therefore
\begin{align*}
	\mathbb{P}\left(I_1(h) > \frac{\epsilon}{2m}\right) &\leq \mathbb{P}\left(C_*\sum_{k=1}^n\bigg\lvert\sum_{j=1}^m \tau_j^{(k)}-1\bigg\rvert\int_0^h \prod_{k=1}^n e^{C_*r\sum_{j=1}^m \tau_j^{(k)}} dr > \frac{\epsilon}{2m}\right) \\
		&\leq \sum_{k=1}^n\mathbb{P}\left(\bigg\lvert\sum_{j=1}^m \tau_j^{(k)}-1\bigg\rvert\int_0^h \prod_{k=1}^n e^{C_*r\sum_{j=1}^m \tau_j^{(k)}} dr > \frac{\epsilon}{2C_*mn}\right).
\end{align*}
The second inequality follows from a union bound together with the fact that for any nonnegative random variables $X_k$ and constant $c$, $\{\sum_{k=1}^n X_k>c\}\subseteq\cup_{k=1}^n \{X_k>c/n\}$. Set
\begin{align*}
	A(h) \coloneqq \bigcap_{k=1}^n\left\{h\sum_{j=1}^m \tau_j^{(k)} \leq \alpha\right\}
		\quad\text{and}\quad
		B_k(h) &\coloneqq \left\{\bigg\lvert\sum_{j=1}^m \tau_j^{(k)}-1\bigg\rvert\int_0^h \prod_{k=1}^n e^{C_*r\sum_{j=1}^m \tau_j^{(k)}} dr > \frac{\epsilon}{2C_*mn}\right\}
\end{align*}
for arbitrary $\alpha>0$ and note that
\begin{align*}
	A(h)\cap B_k(h) &\subseteq \left\{\bigg\lvert\sum_{j=1}^m \tau_j^{(k)}-1\bigg\rvert he^{C_*n\alpha} > \frac{\epsilon}{2C_*mn}\right\}
		\eqqcolon B(h).
\end{align*}
Therefore
\begin{align*}
	\mathbb{P}\left(I_1(h) > \frac{\epsilon}{m}\right) &\leq \sum_{k=1}^n\mathbb{P}\left(B_k(h)\cap A(h)\right)+\mathbb{P}\left(B_k(h)\cap A(h)^c\right)
		\leq n\big[\mathbb{P}\left(B(h)\right)+\mathbb{P}\left(A(h)^c\right)\big].
\end{align*}
Set $h=t/m^2$. By Lemma~\ref{lemA2} for all $\epsilon>0$ such that $K\coloneqq\epsilon(2C_*tn)^{-1}e^{-C_*n\alpha}<1$,
\begin{align*}
	\mathbb{P}\left(B(h)\right) &= \mathbb{P}\left(\bigg\lvert\sum_{j=1}^m \tau_j^{(k)}-1\bigg\rvert > \frac{\epsilon m}{2C_*tne^{C_*n\alpha}}\right)
		\leq 2e^{-\frac{1}{2}K^2m}.
\end{align*}
And by Lemma~\ref{lemA1},
\begin{align*}
	\mathbb{P}\left(A(h)^c\right) &= \mathbb{P}\left(\bigcup_{k=1}^n\left\{\sum_{j=1}^m\tau_j^{(k)} > \frac{\alpha}{h}\right\}\right)
		\leq n\mathbb{P}\left(\sum_{j=1}^m \tau_j > \frac{\alpha m^2}{t}\right)
		\leq n2^me^{-\frac{1}{2}K'm^2}
\end{align*}
where $K'\coloneqq \alpha/t$. Therefore
\begin{align}\label{eq:I1Bound}
	\mathbb{P}\left(I_1(h) > \frac{\epsilon}{2m}\right) &\leq 2e^{-\frac{1}{2}K^2m}+2^m ne^{-\frac{1}{2}K'm^2}
		\leq 2^mCe^{-\frac{1}{2}Cm^2}
\end{align}
for some positive constant $C$ independent of $m$. \\

\noindent\textit{Second probability}. Recall $E_{r\tau}^{(m)}\coloneqq \sum_{k=1}^{mn}\tau_k[\widetilde{S}^{(1,k-1)}_{r\tau}, V_k]\widetilde{S}^{(k,mn)}_{r\tau}$. Also, from Equation~\eqref{eq:VarConst},
\begin{align*}
[\widetilde{S}_{r\tau}^{(1,k-1)},V_k]\widetilde{S}^{(k,mn)}_{r\tau} &= \sum_{j=1}^{k-1}\int_0^r\tau_j e^{(r-s)\sum_{j=1}^{k-1}\tau_jV_j}[V_j,V_k]\widetilde{S}^{(1,k-1)}_{s\tau}\widetilde{S}^{(k,mn)}_{r\tau}ds \\
		&\qquad+\sum_{j=1}^{k-1}\int_0^r \tau_j e^{(r-s)\sum_{j=1}^{k-1}\tau_jV_j}\big[[\widetilde{S}^{(1,j-1)}_{s\tau},V_j]\widetilde{S}^{(j,k-1)}_{s\tau},V_k\big]\widetilde{S}^{(k,mn)}_{r\tau}ds.
\end{align*}
Lemma~\ref{lem:Bounds} together with $\lVert [V_j,V_k]\rVert_{2\to 0} \leq \lVert V_j\rVert_{1\to 0}\lVert V_k\rVert_{2\to 1}+\lVert V_k\rVert_{1\to 0}\lVert V_j\rVert_{2\to 1}\leq 2C_*^2$ give
\begin{align*}
	\left\lVert [V_j,V_k]\widetilde{S}^{(1,k-1)}_{s\tau}\widetilde{S}^{(k,mn)}_{r\tau}\right\rVert_{2\to 0} &\leq 2C_*^2\left(1+C_*r\sum_{j=1}^{mn}\tau_j\right)e^{3C_*r\sum_1^{mn}\tau_j}.
\end{align*}
Also,
\begin{multline*}
	\big[[\widetilde{S}^{(1,j-1)}_{s\tau},V_j]\widetilde{S}^{(j,k-1)}_{s\tau},V_k\big] =\widetilde{S}^{(1,j-1)}_{s\tau}V_j\widetilde{S}^{(j,k-1)}_{s\tau}V_k-V_k\widetilde{S}^{(1,j-1)}_{s\tau}V_j\widetilde{S}^{(j,k-1)}_{s\tau} \\
	-V_j\widetilde{S}^{(1,k-1)}_{s\tau}V_k+V_kV_j\widetilde{S}^{(1,k-1)}_{s\tau}
\end{multline*}
together with Lemma~\ref{lem:Bounds} gives
\begin{align*}
	\left\lVert\big[[\widetilde{S}^{(1,j-1)}_{s\tau},V_j]\widetilde{S}^{(j,k-1)}_{s\tau},V_k\big]\widetilde{S}^{(k,mn)}_{r\tau}\right\rVert_{2\to 0} &\leq 4C_*^2\left(1+C_*r\sum_{j=1}^{mn}\tau_j\right)e^{3C_*r\sum_1^{mn}\tau_j}.
\end{align*}
Therefore for any $0\leq r\leq h$,
\begin{align*}
	\left\lVert E_{r\tau}^{(m)}\right\rVert_{2\to 0} &\leq \sum_{k=1}^{mn}\sum_{j=1}^{k-1}\tau_k\tau_j\int_0^r\left\lVert [V_j,V_k]\widetilde{S}^{(1,k-1)}_{s\tau}\widetilde{S}^{(k,mn)}_{r\tau}\right\rVert_{2\to 0} \\
		&\qquad\qquad\qquad\qquad\qquad +\left\lVert\big[[\widetilde{S}^{(1,j-1)}_{s\tau},V_j]\widetilde{S}^{(j,k-1)}_{s\tau},V_k\big]\widetilde{S}^{(k,mn)}_{r\tau}\right\rVert_{2\to 0} ds \\
		&\leq 6C_*^2r\bigg(1+C_*r\sum_{\ell=1}^{mn}\tau_\ell\bigg)e^{3C_*r\sum_1^{mn}\tau_\ell}\sum_{k=1}^{mn}\sum_{j=1}^{k-1}\tau_k\tau_j \\
		&\leq Ch\bigg(1+Ch\sum_{\ell=1}^{mn}\tau_\ell\bigg)e^{Ch\sum_1^{mn}\tau_\ell}\bigg(\sum_{k=1}^{mn}\tau_k\bigg)^2
\end{align*}
for some $C>0$. So, we have that
\begin{align*}
	I_2(h) &= \int_0^h\lVert E_{r\tau}^{(m)}\rVert_{2\to 0} dr
		\leq Ch^2\bigg(1+Ch\sum_{\ell=1}^{mn}\tau_\ell\bigg)e^{Ch\sum_1^{mn}\tau_\ell}\bigg(\sum_{k=1}^{mn}\tau_k\bigg)^2\,.
\end{align*}
For arbitrary $\alpha>0$, set
\begin{align*}
	A(h) &\coloneqq \left\{h\sum_{k=1}^{mn}\tau_k\leq \alpha\right\}
	\quad\text{and}\quad
	B(h)\coloneqq \left\{Ch^2\bigg(1+Ch\sum_{\ell=1}^{mn}\tau_\ell\bigg)e^{Ch\sum_1^{mn}\tau_\ell}\bigg(\sum_{k=1}^{mn}\tau_k\bigg)^2 > \frac{\epsilon}{2m}\right\}\,.
\end{align*}
Then taking $h=t/m^2$ as before,
\begin{equation}\label{eq:I2Bound}
\begin{aligned}
	\mathbb{P}\left(I_2(h) > \frac{\epsilon}{2m}\right) &= \mathbb{P}\left(A(h)\cap B(h)\right)+\mathbb{P}\left(A(h)^c\cap B(h)\right) \\
		&\leq \mathbb{P}\left(Ch^2\left(1+C\alpha\right)e^{C\alpha}\bigg(\sum_{k=1}^{mn}\tau_k\bigg)^2 > \frac{\epsilon}{2m}\right)+\mathbb{P}\left(h\sum_{k=1}^{mn}\tau_k>\alpha\right) \\
		&= \mathbb{P}\left(\sum_{k=1}^{mn}\tau_k > Km^{\frac{3}{2}}\right)+\mathbb{P}\left(\sum_{k=1}^{mn}\tau_k>\frac{\alpha m^2}{t}\right) \\
		&\leq n\left[\mathbb{P}\left(\sum_{k=1}^m\tau_k > K'm^{\frac{3}{2}}\right)+\mathbb{P}\left(\sum_{k=1}^m\tau_k>\frac{\alpha m^2}{nt}\right)\right] \\
		&\leq n\left(2^me^{-\frac{1}{2}K'm^{3/2}}+2^me^{-\frac{1}{2}K''m^2}\right) \leq 2^mC'e^{-\frac{1}{2}C'm^{3/2}}
\end{aligned}
\end{equation}
for some $C'>0$ where $K=(\epsilon(2t^2C(1+C\alpha)e^{C\alpha})^{-1})^{1/2}$, $K'=Kn^{-1}$, $K''=\alpha(nt)^{-1}$, and the second-to-last last inequality follows from Lemma~\ref{lemA1}. Combining~\eqref{A.13},~\eqref{eq:I1Bound}, and~\eqref{eq:I2Bound}  and taking $h=t/m^2$ we therefore have that for all $\epsilon$ sufficiently small,
\begin{align*}
	\mathbb{P}\left(\lVert\widetilde{S}^m_{t\tau/m^2}-S_{t/m}\rVert_{2\to 0} > \tfrac{\epsilon}{m}\right) &\leq 2^mC''e^{-\frac{1}{2}C''m^{3/2}}
\end{align*}
for some constant $C''>0$ independent of $m$. So, we have that
\begin{align*}
	\sum_{m=1}^\infty\mathbb{P}\left(\lVert\widetilde{S}^m_{t\tau/m^2}-S_{t/m}\rVert_{2\to 0} > \tfrac{\epsilon}{m}\right) &\leq \sum_{m=1}^\infty 2^mC''e^{-\frac{1}{2}C''m^{3/2}}
		< \infty. \QED
\end{align*}


\section{Controllability lemmas}


Combining the partial results obtained above we show the existence of transformations implementing the steps listed at the beginning of the section:

\begin{lemma}\label{l:contr1}
\om{If $q^{(0)}$ in $\mathcal{Q}_0$} is nondegenerate, then there exists $M_1$ and a sequence of transition times and interaction triples $\{ \iota(m),\tau(m)\}_{m = 1}^{M_1}$ such that $\Phi_{\tau(M_1)}^{\iota(M_1)}\circ\dots\circ \Phi_{\tau(1)}^{\iota(1)} (q^{(0)}) = q^{(1)}$ as in \eref{e:q0}.
\end{lemma}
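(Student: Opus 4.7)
The plan is to realize the combinatorial sequence of $\oplus$ operations supplied by \dref{def:nondegenerate} as a composition of flows with strictly positive times, and then to append a short further sequence of flows to activate the six coordinates demanded by \eqref{e:q0}.

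First, I would dynamically implement each abstract operation $\mathcal{A} \oplus \ll_i$ appearing in \eref{eq:nondegenerate}. By the definition of $\oplus$, for each $i$ there exist $\jj_i, \kk_i$ in the current active set such that $\iota_i \coloneqq (\jj_i, \kk_i, \ll_i)$ is an interacting triple in $\II$. If $\iota_i$ is of type a) in \eqref{e:types}, \lref{l:midmode}(b) yields a positive flow time $\tau(i)$ such that the coordinate $\ll_i$ becomes active after the flow $\Pit{\tau(i)}$ (with the other flow times set to $0$ as in \eqref{e:tauxis}). If $\iota_i$ is of type b), \cref{c:samenorm} plays the same role. In either case, taking $\tau(i)$ sufficiently small and invoking continuity of the flow ensures every previously active coordinate remains nonzero. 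Iterating over $i = 1, \dots, M$ yields a state whose active set contains $\{(1,0,+),\,(0,1,+),\,(j^*,-)\}$ for some $j^*$ with $|j^*|^2 > 1$.

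Next, I would apply a short additional sequence of flows to activate the remaining coordinates required by \eqref{e:q0}. The triple $((1,0,+), (0,1,+), (1,1,+))$ lies in $\II$ with all three $C_{\cdot\cdot}$ coefficients nonzero (since $(1,0)$ and $(0,1)$ are noncolinear with common norm $1 < |(1,1)|$), so \lref{l:midmode}(b) activates $a_{(1,1)}$. To activate $b_{(1,1)}$, I would transfer the imaginary amplitude at $b_{j^*}$ through a chain of triples of the form $((\jj^{(s)},-), (\kk^{(s)},+), (\jj^{(s+1)},-))$ that moves the imaginary excitation along a path from $j^*$ to $(1,1)$, each step realized by \lref{l:midmode}(b). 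Finally, the triples $((1,0,-), (0,1,+), (1,1,-))$ and $((1,0,+), (0,1,-), (1,1,-))$ activate $b_{(1,0)}$ and $b_{(0,1)}$ respectively.

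The main obstacle will be controlling the imaginary transfer step: at each stage one must select an interacting triple in $\II$ (hence with at least one nonvanishing $C_{\cdot\cdot}$) together with a real mode currently in the active set along which to route the imaginary amplitude. I would handle this by first enriching the pool of active real coordinates through additional $\oplus$ operations built from $a_{(1,0)}, a_{(0,1)}, a_{(1,1)}$ --- for instance, iteratively activating real modes along the rays $(k,0), (0,k), (k,1), (1,k)$ --- so that the imaginary chain has enough admissible routes from any $j^*$ with $|j^*|^2 > 1$ to $(1,1)$ through triples of distinct norms. Throughout the construction, \rref{r:exponential} allows each flow time to be chosen arbitrarily small, so that the nonvanishing of coordinates activated at earlier steps is preserved by continuity.
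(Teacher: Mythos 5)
Your overall architecture matches the paper's: first realize the $\oplus$-chain from the nondegeneracy hypothesis via flows with small positive times, then route imaginary amplitude from $j^*$ to $(1,1)$ through a sequence of triples pairing imaginary modes with an active real mode, and finally activate the remaining axis coordinates. However, there is a genuine gap in how you propose to activate $a_{(1,1)}$.

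You invoke the triple $\iota=((1,0,+),(0,1,+),(1,1,+))$ and claim ``all three $C_{\cdot\cdot}$ coefficients nonzero'' so that \lref{l:midmode}(b) activates $a_{(1,1)}$. Both steps fail. Since $|(1,0)|=|(0,1)|=1$, formula \eref{e:c} gives $C_{(1,0),(0,1)}=0$, so this is a type b) interaction in the sense of \eref{e:types}. \lref{l:midmode} assumes $|j|<|k|<|\ell|$ and hence does not apply. Worse, the relevant dynamics for a type b) triple are described by \lref{l:samenorm}: the $\ll$-coordinate (here $a_{(1,1)}$) is \emph{frozen} by the flow, $\dot q_\ll=0$, so no choice of time will activate it. Your plan thus never produces a nonzero $a_{(1,1)}$, and the state you reach fails \eref{e:q0}. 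The paper handles this by a different route: it first activates $(1,1,-)$ via the nearest-neighbor chain through non-axis imaginary modes paired with $(1,0,+)$ or $(0,1,+)$, and only at the very end activates $(1,1,+)$ by swapping the types inside the last chain triple $\iota(M')$ --- a triple that involves a non-axis mode and therefore has three distinct norms, making \lref{l:midmode}(b) applicable.

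A secondary concern: your proposed fix of ``enriching the pool of active real coordinates'' along the rays $(k,0)$ and $(0,k)$ is problematic, since triples of collinear indices such as $(1,0),(k,0),(k+1,0)$ have all $C_{\cdot\cdot}=0$ and lie outside $\II$. This enrichment is also unnecessary: the paper's chain uses only the two axis modes $(1,0,+)$ and $(0,1,+)$ as the real mode in each triple, and chooses a nearest-neighbor path from $j^*$ to $(1,1)$ that avoids the axes, which guarantees each triple along the path has three pairwise-distinct norms and hence is of type a).
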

\begin{proof}
If \eref{e:q0} is satisfied by $q^{(0)}$ we simply set $M_1 = 0$, $q^{(1)} = q^{(0)}$. If not, by \om{nondegeneracy} there exists a sequence of triples $\{\iota(m)\}_{m=1}^{M}$ with  $\iota(m) = \jj(m)\kk(m)\ll(m)$ such that $\AA_0 := \AA(q^{(0)})$ and $ \AA_m = \AA_{m-1} \oplus \ll(m)$ with $\{(0,1,+),(1,0,+),(j^*,-)\}\subset \AA_{M}$.
	We notice that all steps of this procedure satisfy, upon possibly reordering the indices within each triple, either the conditions of \lref{l:midmode} (b) or of \lref{l:samenorm}, so we sequentially choose $\tau(m) = \tau_+^{\iota(m)}$ from those lemmas.

	 To activate coordinate $(1,1,-)$ -- if this was not already done in the previous procedure -- we start with component $b_{j^*} \neq 0$ for $|j^*|\neq 1$ and consider a nearest neighbors path $\{\ell(n)\}_{n=1}^{M'}$ in $\ZN$ connecting $j^*$ to $(1,1)$ without performing any step on the axes. It is easy to see that such path can be realized through repeated application of \lref{l:midmode} (b)  by choosing for the $n$-th step the triples $\iota(n) = (0,1,+)(\ell(n),-)(\ell(n)\pm(0,1),-)$ or $\iota(n) = (1,0,+)(\ell(n),-)(\ell(n)\pm(1,0),-)$ for vertical and horizontal steps respectively.

   Finally, coordinates $(1,0,-)$ and $(0,1,-)$ can be activated by applying \lref{c:samenorm} to the triples $(1,0,-)(0,1,+)(1,1,-)$  and $(1,0,+)(0,1,-)(1,1,-)$ respectively, while $(1,1,+)$ is activated by  (b) by interchanging the type of modes $(1,1,-)$ and $(1,0,+)$ (or $(0,1,+)$)  in $\iota(M')$ from the previous paragraph to $(1,1,+)$ and $(1,0,-)$ (or $(0,1,-)$).
\end{proof}

\begin{lemma}\label{l:contr2}
\om{Let $q^{(1)}$ be a nondegenerate point in $\mathcal{Q}_0$ satisfying} \eref{e:q0}. Then there exists $M_2$ and a sequence of interacting triples and transition times  $\{\iota(m),\tau(m)\}_{m = 1}^{M_2}$ such that $\Phi_{\tau(M_2)}^{\iota(M_2)}\circ\dots\circ \Phi_{\tau(1)}^{\iota(1)} (q^{(1)}) = q^{(2)}$ \om{is a nondegenerate point in $\mathcal{Q}_0$} satisfying \eref{e:q20} and \eref{e:q21}.
\end{lemma}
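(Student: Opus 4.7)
The strategy is to iteratively drain the amplitude from each non-special mode $m \in \ZN \setminus \{(0,1), (1,0), (1,1), (N,N), (-N,N)\}$ using \lref{l:midmode}(a), accumulating the drained amplitude at the sink modes $(N,N)$ and $(-N,N)$ while keeping the three key modes $(0,1), (1,0), (1,1)$ active throughout.

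The plan is to process the non-special modes in an order of increasing magnitude $|m|$. For each such $m$, I construct an interacting triple $\iota \in \II$ in which $m$ sits in the middle slot and is the middle coordinate by magnitude, using one of the key axis modes $(1,0)$ or $(0,1)$ as the smallest element $j$ and a larger neighboring mode $\ell$ (either $m + j$ or $m - j$, whichever lies in $\ZN$) as the largest. For $m$ on an axis, $j$ must be chosen perpendicular to that axis in order to avoid $C_{jm} = 0$. Applying \lref{l:midmode}(a) at time $\tau_-^\iota$ then zeros $q_m$ while preserving the signs and, via the regime analysis in the proof of that lemma, the nonvanishing of the $j$- and $\ell$-coordinates. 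Because the processing order is by increasing magnitude, any amplitude deposited at $\ell$ will be drained in a subsequent step, forming a chain that ultimately terminates at the sink $(N,N)$ or $(-N,N)$.

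The main obstacle will be the edge cases. First, a handful of modes near the outer boundary of $\ZN$, for instance $m = (-k, N)$ with $k$ close to $N$, do not admit a triple in $\II$ in which $m$ is simultaneously the middle slot of the $\II$-triple and the middle by magnitude. For these, I will insert auxiliary steps that first transfer amplitude out of $m$ via a \lref{l:samenorm}-type flow between equal-magnitude pairs such as $\{(-k, N),(-N,k)\}$, or via \lref{l:midmode}(c), onto a mode that can then be drained by the main procedure. Second, the nondegeneracy condition \eref{e:degcond} and the implicit regime conditions governing which coordinate of the orbit $\QQ_\iota$ stays nonvanishing can fail on codimension-one subsets; observe however that \eref{e:degcond} holds automatically whenever $q_\ell = 0$ (as happens for as yet untouched large modes), and whenever it does threaten to fail a short preparatory application of \lref{l:midmode}(b) or \lref{l:samenorm} will perturb the state into the generic regime without altering the support structure. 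Throughout, the activity of $(1,0), (0,1), (1,1)$ is maintained by the sign-preservation property of \lref{l:midmode}(a) together with the fact that these key modes only ever appear in the smallest slot $j$ of a triple, never in the slot being drained. Once all non-special modes are exhausted, the resulting state satisfies \eref{e:q20} and \eref{e:q21}, giving the desired $q^{(2)}$.
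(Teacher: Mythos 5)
Your proposal is correct and takes essentially the same route as the paper: both drain each non-special mode via \lref{l:midmode}(a) applied to a nearest-neighbor triple with $(1,0)$ or $(0,1)$ in the smallest slot, funnelling amplitude outward toward $(N,N)$ and $(-N,N)$, and both cure failures of \eref{e:degcond} by a \lref{l:samenorm} rotation of $q_{(1,0)}, q_{(0,1)}$ using the triple $(1,0)(0,1)(1,1)$. The only real difference is the traversal order: you go by increasing $|m|$, while the paper sweeps columns and then the top row; both work, because in each case the middle-by-magnitude coordinate is the one zeroed, the amplitude is pushed to a strictly larger mode, and a zeroed mode is never re-touched. Two small corrections. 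First, the boundary ``edge case'' you flag (e.g.\ $m=(-k,N)$ with $k$ near $N$) is not actually an obstruction: \lref{l:midmode} orders the triple by magnitude, not by the algebraic $j+k=\ell$ slots, and every mode other than $(\pm N,N)$ has a $\ZN$-neighbor at distance $(1,0)$ or $(0,1)$ of strictly larger magnitude in which $m$ sits in the middle, so the basic drain already applies there (this is precisely what the paper's $(1,0,+)(l,N)(l-1,N)$ triples for $l<0$ do), and the auxiliary \lref{l:samenorm}/\lref{l:midmode}(c) detours you propose are unnecessary. Second, \lref{l:midmode}(b) cannot on its own restore \eref{e:degcond}, since flowing along $V_\iota$ leaves $E_\iota$ and $\EE_\iota$ invariant and therefore cannot change whether $E_\iota = |k|^2\EE_\iota$; the perturbation has to come from a triple overlapping $\iota$ only in a key axis mode, which is exactly what the \lref{l:samenorm} rotation of $q_{(1,0)}, q_{(0,1)}$ does, so only your backup suggestion there is the right one.
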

\noindent \begin{proof}
In this part of the proof, we only consider interactions involving triples $\iota(m)$ of the form
\begin{equ}\label{e:nnjump}
 \Big\{ (0,1)(l,h)(l,h\pm 1)\text{ or }
 (1,0)(l,h)(l\pm 1,h) : \text{ $|l|,|h|\leq N,~|(l,h)|\neq 1$} \Big\}\,.
\end{equ}
By \lref{l:midmode} (a), if $|j|<|k|<|\ell|$ and $(0,1),(l,h) \in \AAc(q)$ there exists $\tau(m) = \tau_-^{\iota(m)}$ such that defining $\AA_m = \AAc(\phi^{\iota(m)}_{\tau(m)}(q))$ we have  $(l,h) \not \in \AA_m$ and $(0,1) \in \AA_m$ (and similarly for $(1,0)$)
\footnote{Note that the same result can trivially be obtained if $(l,h)\not \in \AAc(q)$ setting $\tau_-^{\iota(m)}=0$}. Note that while a triple as above satisfies by assumption that $|j|<|k|<|\ell|$ and at least two of its coordinates are nonvanishing, it does not, in general, satisfy \eref{e:degcond}. However, assuming that $q$ does not satisfy \eref{e:degcond}, by \lref{l:samenorm} and setting $\iota' = (1,0)(0,1)(1,1)$,
there exists $\tau^{\iota'}$ such that $  |q_{(1,0)}| \neq |(\Phi_{\tau^{\iota'}}^{\iota'}(q))_{(1,0)}| >0$.
Since none of the coordinates in $\ZN\setminus \{(1,0)(0,1)(1,1)\}$ are affected by this operation, $(\Phi_{\tau^{\iota'}}^{\iota'}(q))$ satisfies \eref{e:degcond} and \lref{l:midmode} can be applied to this state.

To conclude the proof we identify a sequence of triples $\iota(m) = (j(m),k(m),\ell(m))\in \II$ of the form \eref{e:nnjump} such that for $\AA_0 = \AA(q^{(1)}) \subseteq \ZNp$
\begin{equ}
(((\AA_0 \ominus k(1)) \ominus k(2)) \ominus \dots) \ominus k(M_2) = \{(1,0,\chi),(0,1,\chi), (1,1\chi), (N,N\chi), (-N,N\chi)\,, \chi \in \{+,-\}\}\,.
	\end{equ}
	A possible such sequence is given by triples of the form
        \begin{align*}
         \Big\{ (1,0,+)(l,h, \chi)(l+1,h,\chi)~ : ~ (l,h) \in
          \{(0,2),\dots, (0,N)\} \,,\chi\in \{+,-\}\Big\}
        \end{align*}
to remove the vertical column of $\ZN$ (which cannot interact with $(0,1)$),  followed by
{\small
\begin{equ}
\Big\{	 \Big((0,1,+)(l,h,\chi)(l,h+1,\chi)~:~ (l,h) \in
\big\{(l,0),\dots, (l,N): |l|  \in (1,\dots,N-1) \big\}\setminus\{(1,1)\}\Big)\,,\chi\in \{+,-\}\Big\}\,,
\end{equ}
}where importantly the set of transitions for each $l$ is ordered. The above transformation zeroes all coefficients except those in the set $\{(1,1),(0,1),(1,0)\}\cup\{(l,N)~:~l \in (-N,\dots, N)\}$. We further remove the coefficients from $\{(l,N)~:~l \in (-N+1,\dots, N-1)\}$ by sequentially applying Lemma~\ref{l:midmode} to the ordered sequence of interacting triples
\begin{equ}\Big((1,0,+)(l,h,\chi)(l+1,h,\chi) ~:~ (l,h) \in \{(0,N),\dots, (N-1,N)\,,\chi\in \{+,-\}\}\Big)\,,\end{equ}
and then
\begin{equ}\Big((1,0,+)(l,h,\chi)(l-1,h,\chi)~:~ (l,h) \in \{(-1,N),\dots, (-N+1,N)\}\,,\chi\in \{+,-\}\Big)\,.\end{equ}
It is easy to check that each transition in the above construction sequentially satisfies the assumptions of \lref{l:midmode} (a), and that once a mode has been removed from $\AA$ it will not interact again in this procedure.
The fact that \eref{e:q21} holds follows from \eref{e:q0} and that in an interacting triple $\iota = \jkl$ with $|j|<|k|<|l|$ both modes $\jj$ and $\ll$ are in $\mathcal A$ at the end of the interaction by $\tau_-^\iota$.
\end{proof}

\begin{lemma}\label{l:contr3}
		Let \om{$q^{(2)}$ be a nondegenerate point in $\mathcal{Q}_0$ satisfying} \eref{e:q20} and \eref{e:q21}. Then there exists $M_3$ and a sequence of  interacting triples and transition times $\{\iota(m),\tau(m)\}_{m = 1}^{M_3}$ such that $\Phi_{\tau(M_3)}^{\iota(M_3)}\circ\dots\circ \Phi_{\tau(1)}^{\iota(1)} (q^{(2)}) = q^{(3)}$ \om{is a nondegenerate point in $\mathcal{Q}_0$} satisfying \eref{e:q30} and \eref{e:q31}.
\end{lemma}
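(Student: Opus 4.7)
The plan is to construct the required sequence of operations in two stages: Stage~I transfers all the amplitude of mode $(-N, N)$ into mode $(N, N)$, and Stage~II consolidates the amplitude of $(N, N)$ onto its imaginary part $b_{(N, N)}$ with nonnegative sign. A key structural obstruction is that $(\pm N, N)$ are the only lattice points of $\mathbb{Z}^2_N$ with norm $N\sqrt{2}$, so no direct equal-norm rotation between them is available via \lref{l:samenorm}; the transfer must be routed through intermediate modes of strictly smaller norm.

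For Stage~I, I would push the amplitude along the row $j_2 = N$ using the chain of interacting triples $\iota_i = ((1, 0), (i, N), (i + 1, N))$ for $-N \le i \le N - 1$, valid since $(1, 0) + (i, N) = (i + 1, N)$. Starting from $i = -N$, I would alternate two operations: first use \lref{l:midmode}~(b) to activate the next mode $(i + 1, N)$, then use \lref{l:midmode}~(a) or (c) to zero out the previous mode $(i, N)$. Each step is executed in all four type variants of~\eref{e:vf} as needed, so as to transfer both real and imaginary parts. After one full sweep, the amplitude is concentrated on $(N, N)$, and any excess accumulated on the driver modes $(1, 0), (0, 1), (1, 1)$ can be rebalanced via \lref{l:samenorm} on the equal-norm triple $((1, 0), (0, 1), (1, 1))$.

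For Stage~II, I would temporarily activate the auxiliary mode $(N, N - 1)$ via \lref{l:midmode}~(b) on the triple $((0, 1), (N, N - 1), (N, N))$, which is well ordered in norm since $|(N, N - 1)|^2 = 2N^2 - 2N + 1 < 2N^2$. Two type variants of this triple indirectly couple $a_{(N, N)}$ to $b_{(N, N)}$ through the intermediate mode: a type-$(+, +, +)$ application of \lref{l:midmode}~(c) moves $a_{(N, N)}$ amplitude into $a_{(N, N - 1)}$ and $a_{(0, 1)}$; a subsequent type-$(-, +, -)$ application of \lref{l:midmode}~(a) zeros $a_{(N, N - 1)}$, routing that amplitude into $b_{(N, N)}$ and $b_{(0, 1)}$. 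Since $b_{(N, N - 1)}$ is never activated in these steps, the mode $(N, N - 1)$ is fully deactivated at the end, and $q_{(N, N)}$ is purely imaginary. The nonnegativity of $b_{(N, N)}$ is arranged by the choice of direction in the Stage~I transfers, exploiting the periodicity of the orbits in \lref{l:midmode}.

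The main obstacle is verifying the nondegeneracy condition~\eref{e:degcond} at each step of the Stage~I sweep. The norm $|(i, N)|^2 = i^2 + N^2$ is non-monotonic in $i$ (minimized at $i = 0$, maximized at $i = \pm N$), so the role of ``largest'' mode in $\iota_i$ switches at $i = 0$, forcing a case split between parts (a) and (c) of \lref{l:midmode}; moreover~\eref{e:degcond} may fail unless the driver $q_{(1, 0)}$ has sufficient magnitude relative to the amplitude being transferred, which necessitates intermittent rebalancing using \lref{l:samenorm} on $((1, 0), (0, 1), (1, 1))$ to restore the necessary proportion. A secondary subtlety is that $(N, N)$ is always the largest mode in any triple it participates in, so \lref{l:midmode}~(a)--(c) preserves the sign of $q_{(N, N)}$; the nonnegativity of $b_{(N, N)}$ must therefore be secured by sign choices made during Stage~I rather than corrected afterward.
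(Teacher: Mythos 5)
Your Stage~I correctly identifies the geometric mechanism of the transfer (sweeping amplitude along the row $j_2 = N$ via the triples $((1,0),(i,N),(i+1,N))$), and you are right to flag the applicability of \eqref{e:degcond} as the main obstacle. But the fix you propose for that obstacle does not work, and this is exactly where the paper's argument has to do something substantially more elaborate. You suggest ``intermittent rebalancing using \lref{l:samenorm} on $((1,0),(0,1),(1,1))$ to restore the necessary proportion'' between the driver $q_{(1,0)}$ and the amplitude being moved. Lemma~\ref{l:samenorm}, however, only rotates the pair $(q_{(1,0)},q_{(0,1)})$ at fixed $q_{(1,1)}$, so it conserves $q_{(1,0)}^2+q_{(0,1)}^2$; it can never push $|q_{(1,0)}|$ above $\sqrt{q_{(1,0)}^2+q_{(0,1)}^2}$, which is a fixed, bounded quantity determined by the data. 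The condition for \lref{l:midmode}~(c) to apply in your Stage~I (zeroing the largest-norm mode $(-i,N)$ in the triple $((1,0),(-i+1,N),(-i,N))$) requires roughly $q_{(1,0)}^2 \gtrsim c_{N,i}\, q_{(-i,N)}^2$ for an explicit positive constant. When $q_{(-N,N)}$ carries almost all of the enstrophy, no amount of rotation among the small modes produces a driver of the required magnitude, so a single sweep cannot complete the transfer.

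This is precisely the point the paper makes explicit at the start of its proof: it may be impossible to empty $(-N,N)$ into $(-N+1,N)$ in one interaction, hence impossible to move everything to $(N,N)$ in a single pass. The paper's argument is an induction on the column index $i$ with a two-case split at each step. If \lref{l:midmode}~(c) is applicable (case~a), the source mode is emptied in one interaction and one recurses inward. If it is not (case~b), the paper instead performs an interaction that strictly decreases $q_{(-i,N)}^2$ by a \emph{fixed, $q_{(-i,N)}$-independent} amount $C_{N,i}\,q_{(1,0)}^2$ while restoring $|q_{(1,0)}|$ at the end of the cycle (verified via the energy and enstrophy conservation laws). Since this decrement is bounded below independently of how large $q_{(-i,N)}$ is, finitely many repetitions bring the state into case~a. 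This iteration-with-quantified-decrement is the missing idea in your proposal, and without it the proof does not close. Your Stage~II route through $(N,N-1)$ is a genuinely different mechanism for converting $a_{(N,N)}$ into $b_{(N,N)}$ (the paper instead builds the type change into the sweep itself, realizing it in the base case $i=0$ via the triples $(1,0,\pm)(0,N,\pm)(1,N,+)$ and then transferring back out), but it suffers from the same \eqref{e:degcond} applicability issue when $q_{(N,N)}$ is large, so it too would need the quantified iterative argument to be made rigorous.
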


Since it may not be possible to ``transfer'' the content of \emph{e.g.}, mode $(-N,N)$ to $(-N+1,N)$ through one single interaction with mode $(1,0)$ -- and therefore it won't be possible to transfer the amplitude of mode $(-N,N)$ to $(N,N)$ in one single ``pass'' -- we proceed to prove that, through a sequence of interactions, we can transfer a \emph{finite} and $q_{(-N,N)}$-independent amount of energy from mode $(-N,N)$ to $(N,N)$. Therefore, the transfer of amplitude from mode $(-N,N)$ to $(N,N)$ may be accomplished by repeating this sequence of interactions sufficiently many times.

The following corollary of \lref{l:samenorm} will be instrumental for the proof of \lref{l:contr3}:
\begin{corollary}\label{c:S4}
	Let $q_{(1,1)}, \qb{{(1,1)}} \neq 0$ then for any $q,q'$ with $q_\jj=q_\jj'$ for all $|j|>1$ there exist a sequence $\{\iota(m), \tau(m)\}_{m=1}^4$ such that $\Phi_{\tau(4)}^{\iota(4)} \circ\dots\circ \Phi_{\tau(1)}^{\iota(1)} (q) = q'$.
\end{corollary}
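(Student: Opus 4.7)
The plan is to realize $q \to q'$ by four applications of \lref{l:samenorm} to the four extended triples sharing the spatial support $\{(1,0), (0,1), (1,1)\}$. Since $q$ and $q'$ agree on every mode with $|j| > 1$ and each splitting vector field preserves both energy and enstrophy, the four real coordinates $v = (a_{(1,0)}, b_{(1,0)}, a_{(0,1)}, b_{(0,1)})$ and $v'$ (defined analogously from $q'$) necessarily satisfy $\|v\| = \|v'\|$, because $|(1,0)|^2 = |(0,1)|^2 = 1$ makes the contributions of these two modes to both $E$ and $\mathcal{E}$ coincide and equal to $\|v\|^2$ (respectively $\|v'\|^2$). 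The claim thereby reduces to reaching $v'$ from $v$ on a common $3$-sphere in $\mathbb{R}^4$ using four admissible rotations.

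The four admissible triples are obtained by assigning types $\chi \in \{+,-\}$ to $(1,0), (0,1), (1,1)$ subject to $\mathrm{T}(\jj)\cdot \mathrm{T}(\kk) = \mathrm{T}(\ll)$. Each satisfies $|\jj| = |\kk| = 1 \neq \sqrt{2} = |\ll|$, so \lref{l:samenorm} applies, and since these flows fix the $(1,1)$ mode, the nondegeneracy $a_{(1,1)}, b_{(1,1)} \neq 0$ is preserved throughout, yielding nonzero angular speeds and hence any rotation angle in $[0,2\pi)$ achievable in a positive time. The four rotations act in the coordinate planes spanned by $(a_{(1,0)}, a_{(0,1)})$, $(b_{(1,0)}, b_{(0,1)})$, $(a_{(1,0)}, b_{(0,1)})$, and $(b_{(1,0)}, a_{(0,1)})$, which in the enumeration $(v_1,v_2,v_3,v_4) = (a_{(1,0)}, b_{(1,0)}, a_{(0,1)}, b_{(0,1)})$ correspond respectively to the planes $(1,3)$, $(2,4)$, $(1,4)$, and $(2,3)$.

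I would then verify controllability on this $3$-sphere by applying the rotations in exactly the order above. Set $\rho_1^2 = v_1^2 + v_3^2$, $\rho_2^2 = v_2^2 + v_4^2$, $A = (v_1')^2 + (v_4')^2$, and $B = (v_2')^2 + (v_3')^2$; note $A + B = \|v'\|^2 = \|v\|^2 = \rho_1^2 + \rho_2^2$. The first two rotations, acting on disjoint coordinate pairs, produce intermediate coordinates $(u_1, u_2, u_3, u_4)$ subject only to $u_1^2 + u_3^2 = \rho_1^2$ and $u_2^2 + u_4^2 = \rho_2^2$. For the remaining two rotations to map $(u_1, u_4) \mapsto (v_1', v_4')$ and $(u_2, u_3) \mapsto (v_2', v_3')$, \lref{l:samenorm} requires $u_1^2 + u_4^2 = A$ and $u_2^2 + u_3^2 = B$; combined with the preceding identities this linear system is consistent and admits $u_3^2$ in the interval $[\max(0, \rho_1^2 - A),\, \min(\rho_1^2, B)]$, which is nonempty because $\rho_1^2 \leq \rho_1^2 + \rho_2^2 = A + B$.

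The main obstacle will be the bookkeeping of signs: the squared magnitudes fix $|u_i|$ but not the signs of the $u_i$, and the final two rotations realize $v'$ only when the signs of $(u_1, u_3)$ and $(u_2, u_4)$ are correlated with those of $(v_1', v_4')$ and $(v_2', v_3')$ in a specific way. Since \lref{l:samenorm} grants full control of the rotation angle in $[0,2\pi)$ in each of the first two steps, both sign choices for $(u_1, u_3)$ and independently for $(u_2, u_4)$ are attainable; a short case analysis then identifies the correct combination. Degenerate situations where $\rho_1$, $\rho_2$, $A$, or $B$ vanish, or where the interval for $u_3^2$ collapses to one of its endpoints, are handled by allowing one or more of the four flow times $\tau(m)$ to be zero, which \lref{l:samenorm} permits.
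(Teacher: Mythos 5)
The paper states Corollary~\ref{c:S4} without proof, as a direct consequence of Lemma~\ref{l:samenorm}, so there is no paper argument to compare against. Your proof is correct and almost certainly the intended one: reducing to transitivity on the $3$-sphere $\{\|v\| = \|v'\|\}$ in $\mathbb{R}^4$ via the four planar rotations afforded by Lemma~\ref{l:samenorm} for the triple $\{(1,0), (0,1), (1,1)\}$, noting that the hypothesis $a_{(1,1)}, b_{(1,1)} \neq 0$ is preserved by every one of these flows, and resolving the algebraic consistency for the intermediate squared magnitudes. Two minor remarks. First, the identity $\|v\| = \|v'\|$ is really an implicit hypothesis --- namely, that $q, q'$ lie in the same $\mathcal{Q}_0$ --- rather than a consequence of the stated conditions; without it the claim is false, and the applications in Lemma~\ref{l:contr4} supply it, but your phrasing (``necessarily satisfy $\|v\| = \|v'\|$'') is slightly circular as written. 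Second, the sign bookkeeping at the end is unnecessary: since Lemma~\ref{l:samenorm} realizes every angle $\theta \in [0, 2\pi)$, the rotation in plane $(1,4)$ takes any $(u_1, u_4)$ with $u_1^2 + u_4^2 = A$ to $(v_1', v_4')$, and likewise for plane $(2,3)$, regardless of the signs of the intermediate $u_i$; the consistency of the linear system for the $u_i^2$ that you established is all that is needed.
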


\noindent \begin{proof}[Proof of \lref{l:contr3}]
The desired result follows upon showing that for any $i \in \{-N,\dots,N\}$, setting $\ll = (-i,N,\chi), \ll' = {(i,N,\chi')}$ for $\chi, \chi' \in \{-,+\}$ there exists $M_{\ll, \ll'}$ and a sequence of triples and interaction times $\{\iota(m)$,  $\tau(m)\}_{m = 1}^{M_{\ll, \ll'}}$ such that for any $q$ satisfying $\bigcup_{|i'| < i}\{(i',N,+),{(i',N,-)}\} \cap \AAc(q) = \emptyset$
and $q' = \Phi_{\tau(M_{\ll, \ll'})}^{\iota(M_{\ll, \ll'})}\circ \dots \circ \Phi_{\tau(1)}^{\iota(1)}(q)$ we have
\begin{equ}\label{e:qcond}
	q_\jj' = \begin{cases} q_\jj \qquad & \text{for } \jj \in \ZN \setminus \{\ll,\ll'\}\,,\\
	0&\text{for } \jj = \ll \text{ if } \ll \neq \ll'\,,\end{cases}
\end{equ}
and for $\kk \in \{\ll, \ll'\}$,  $\text{sign}(q_\kk) = \text{sign}(q_\kk')$ holds if $q_\kk' \neq 0$ (recalling our choice of notation $\text{sign}(0)=+1$).
Indeed, if $\text{sign}(\qb{{(N,N)}})\geq 0$ we sequentially apply the above result to the pairs \begin{equ}(\ll,\ll') = ((N,N,+), (-N,N,+)), ((-N,N,+), (N,N,-)), ((-N,N,-), (N,N,-))\,.\end{equ}
Otherwise, when
$\text{sign}(\qb{{(N,N)}})=-1$
we first apply the above result to
$\ll={(N,N,-)}$, $\ll'={(-N,N,-)}$ and then proceed as in the previous case.

We prove the result above by induction on $i \in \{0,\dots,N\}$. The proof for $i \leq 0$ is analogous.

{\bf Base case ($i=0: (0,N,\chi)\to (0,N,\chi')$)}: If $\ll = \ll'$ there is nothing to show. We proceed to consider the case $\ll = (0,N,+)$, $\ll'={(0,N,-)}$, as the converse follows by analogous arguments. In this case, for a sufficiently small $\epsilon>0$ we consider the interactions $\iota = (1,0,+)(0,N,+)(1,N,+)$ and $\iota'= {(1,0,-)}{(0,N,-)}(1,N,+)$, running the corresponding flow maps by a small amount of time $\tau(\epsilon)$, $\tau'(\epsilon)$ such that $(\Phi_{\tau'(\epsilon)}^{\iota'} \circ \Phi_{\tau(\epsilon)}^\iota(q)_{{(0,N,-)}})^2 = \qb{{(0,N)}}^2+\epsilon$. We then apply \Cref{c:S4} to the coordinates $(1,0,+),{(1,0,-)}$ to return them in the initial configuration. Note that the existence of a uniform $\epsilon>0$ such that the transitions above can be performed in a single pair of interactions (and therefore the finiteness of the total number of interactions required to perform the desired transformation) follows from the fact that $\qb{{(0,N)}}$ is nondecreasing and the continuity of the dynamics together with \lref{l:midmode}.

{\bf Induction step ($i>0: (-i,N,\chi)\to (i,N,\chi')$)}:  We consider two possibilities for $q$:
a) there exists $q''$ with $|a_{(1,0)}''| \in [|a_{(1,0)}|/2,|a_{(1,0)}| ]$, $q_{(-i,N,\chi)}''=0$ and for $\iota'' = (1,0,+)(-i+1,N,\chi)(-i,N,\chi)$
	\begin{equs}
		E_{\iota''}(q) &= E_{\iota''}(q'')\,,\qquad
		\EE_{\iota''}(q) &= \EE_{\iota''}(q'')\,,
	\end{equs}
or b) such $q''$ does not exist.

In case a) the state $q''$ can be reached by letting $\iota = (1,0,+)(-i+1,N,\chi)(-i,N,\chi)$ interact for a finite amount of time $\tau$ from \lref{l:midmode} (c). Then, by the induction assumption there is a sequence of triples and interaction times allowing to reach a state $q'''$ with $q_{(-i+1,N,\chi)}''' = 0$, $q_{(i-1,N,\chi')}''' = q_{(-i+1,N,\chi)}''$ and $q_\jj''' = q_\jj''$ for all other $j \in \ZN$. The desired state can then be reached by application of \lref{l:midmode} (a) to the triple $\iota = (1,0,+)(i-1,N,\chi')(i,N,\chi')$\,. We proceed to check that the final state satisfies \eref{e:qcond}. Because modes $j \not \in \{(-i,N), \dots, (i,N), (1,0)\}$ did not interact in the procedure above for such $\jj$ we must have that  $q_\jj = q_\jj'$. The fact that for $j \in \{(-i,N), \dots, (i-1,N)\}$ $q_\jj' = 0$ follows by construction and the induction assumption. It remains to check that $|\qa{(1,0)}'| = |\qa{(1,0)}|$. Since the only modes affected by the above transformation are $(-i,N,\chi),(i,N\chi'),(1,0,+)$, this follows directly by   conservation of energy and enstrophy:
\begin{equs}
	(q_{(-i,N,\chi)})^2 + (q_{(i,N,\chi')})^2 + ({q_{(1,0,+)}})^2 & = (q_{(i,N,\chi')}')^2  + (q_{(1,0,+)}')^2\,,\\
			\frac{(q_{(-i,N,\chi)})^2}{N^2+i^2} + \frac{(q_{(i,N,\chi')})^2}{N^2+i^2} + ({q_{(1,0,+)}})^2 & = \frac{(q_{(-i,N,\chi)}')^2}{N^2+i^2} +  (q_{(1,0,+)}')^2\,.
\end{equs}

In case b) we proceed to show that case a) can be reached with a finite number of interactions. More specifically if condition a) is not satisfied we let the triple $\iota'' = (-i,N,\chi)(-i+1,N, \chi)(1,0, +)$ for $\chi \in \{+,-\}$ interact as described by \lref{l:midmode}
 for a time $\tau''$ to reach \om{a nondegenerate point $q''$ in $\mathcal{Q}_0$} with $q_\jj'' = q_\jj$ for $\jj \not \in \{(-i,N, \chi),(-i+1,N,\chi),(1,0,+)\}$, $\qa{(1,0)}'' = \qa{(1,0)}/2$ and $q_{(-i,N,\chi)}'',q_{(-i+1,N,\chi)}''$ satisfying the conservation laws
\begin{equs}
	(q_{(-i,N, \chi)})^2  + ({q_{(1,0,+)}})^2 & = (q_{(-i,N, \chi)}'')^2 + (q_{(-i+1,N,\chi)}'')^2 + (q_{(1,0,+)}/2)^2\,,\\
       \frac{(q_{(-i,N, \chi)})^2}{N^2+i^2}  + ({q_{(1,0,+)}})^2 & = \frac{(q_{(-i,N,\chi)}'')^2}{N^2+i^2} +\frac{(q_{(-i+1,N,\chi)}'')^2}{N^2+(i-1)^2}+  (q_{(1,0,+)}/2)^2\,,
\end{equs}
so that
$
	(q_{(-i,N,\chi)}'')^2 = (q_{(-i,N,\chi)})^2  - C_{N,i} (q_{(1,0)})^2
$
for $C_{N,i} = \frac 3 4  \frac{N^2+i^2}{i^2-(i-1)^2} ( N^2+(i-1)^2-1)$. We see that a positive, $q_{(1,0,+)}$-dependent amplitude is removed from $(q_{(-i,N,\chi)})^2$. Again applying the induction step and \lref{l:midmode} (a) to transfer, respectively, the amplitude from ${(-i+1,N,\chi)}$ to ${(i-1,N, \chi')}$ and from ${(i-1,N,\chi')}$ to ${(i,N,\chi')}$ we reach the state $q'$ with $q_\jj = q_\jj'$ for modes $j \not \in \{(-i,N), \dots, (i,N), (1,0)\}$ (since these modes either vanish in both cases or they did not interact). Further, by conservation of energy and enstrophy, we have that
\begin{equs}
	(q_{(-i,N,\chi)})^2  + (q_{(i,N,\chi')})^2+ ({q_{(1,0,+)}})^2 & = (q_{(-i,N,\chi)}'')^2 + (q_{(i,N,\chi')}'')^2 + (q_{(1,0,+)}'')^2\,,\\
			\frac{(q_{(-i,N,\chi)})^2}{N^2+i^2}  + \frac{(q_{(i,N,\chi')})^2}{N^2+i^2} + ({q_{(1,0,+)}})^2 & = \frac{(q_{(-i,N,\chi)}'')^2}{N^2+i^2} +\frac{(q_{(i,N,\chi')}'')^2}{N^2+i^2}+  (q_{(1,0,+)}'')^2\,,
\end{equs}
so that $|q_{(1,0,+)}''| = |q_{(1,0,+)}|$. This shows that the amplitude $C_{N,i} (q_{(1,0,+)})^2$ subtracted to $q_{(-i,N,\chi)}$ is constant at each cycle, showing by boundedness of $q_{(-i,N,\chi)}$ that with a finite number of iterations as the one described above we can reach state a), concluding the proof.
\end{proof}

\begin{lemma}\label{l:contr4}
		Let \om{$q^{(3)}$ be a nondegenerate point in $\mathcal{Q}_0$ satisfying} \eref{e:q30} and \eref{e:q31}. Then there exists $M_4$ and a sequence of interacting triples and transition times $\{\iota(m),\tau(m)\}_{m = 1}^{M_4}$ such that $\Phi_{\tau(M_4)}^{\iota(M_4)}\circ\dots\circ \Phi_{\tau(1)}^{\iota(1)} (q^{(3)}) = q^*$ \om{is a nondegenerate point in $\mathcal{Q}_0$} satisfying \eref{e:qstar}.
\end{lemma}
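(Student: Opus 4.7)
The plan combines the rotation technique of \lref{l:samenorm} with the chain-transfer method of \lref{l:contr3}, exploiting that the target $q^*$ is uniquely determined by the conserved quantities $E(q^{(3)})$ and $\mathcal E(q^{(3)})$. It proceeds in two main phases: first, rearrange amplitudes among the modes $(1,0),(0,1),(1,1)$ using rotations of the triple $\iota_0 = ((1,0),(0,1),(1,1))$; then drain the amplitude at $(1,1)$ into $b_{(N,N)}$ along a lattice path.

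For Phase~1, note that $\iota_0$ is of type b) in \eref{e:types} with $|j|=|k|=1$, so by \lref{l:samenorm} each of the four splittings in \eref{e:vf} yields a rotation that fixes $q_{(1,1)}$, with nonzero angular speed proportional to $a_{(1,1)}$ or $b_{(1,1)}$---both nonzero by \eref{e:q21}. These rotations act respectively on the planes $(a_{(1,0)},a_{(0,1)})$, $(a_{(1,0)},b_{(0,1)})$, $(b_{(1,0)},a_{(0,1)})$, $(b_{(1,0)},b_{(0,1)})$ of the $4$-dimensional subspace spanned by the $(1,0)$ and $(0,1)$ coordinates. A direct commutator computation in $\mathfrak{so}(4)$ shows these four generators together with their Lie brackets span all six coordinate planes, so they generate the full $\mathrm{SO}(4)$ action on that subspace. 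In particular, a finite composition can be chosen to reach any state on the $3$-sphere of fixed Euclidean norm, hence one with $b_{(1,0)}=b_{(0,1)}=0$ and $a_{(1,0)}=a_{(0,1)}>0$, while $q_{(1,1)}$ and $q_{(N,N)}$ remain unchanged.

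For Phase~2, following \lref{l:contr3} I transfer the amplitude at $(1,1)$ to $(N,N)$ along the path $(1,1)\to(2,1)\to\dots\to(N,1)\to(N,2)\to\dots\to(N,N)$, using at each step a triple of the form $((1,0),(k,l),(k+1,l))$ or $((0,1),(k,l),(k,l+1))$. Each such triple is of type a) with strictly ordered norms $|j|<|k|<|\ell|$, so \lref{l:midmode}(a) applied to splittings~1 and 2 (or their $b_j$-counterparts) zeros both the real and imaginary parts of the middle mode $\kk$ and transfers them to $\llb$. As in \lref{l:contr3}, when condition \eref{e:degcond} is violated at some step, a uniform finite quantum is transferred per cycle and a bounded number of iterations exhausts the relevant amplitude. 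At the final transfer into $(N,N)$, restricting to splittings of the triple $((0,1),(N,N-1),(N,N))$ that target $b_{(N,N)}$ (e.g.\ $V_{a_j b_k b_\ell}$) ensures $a_{(N,N)}$ remains zero.

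The principal obstacle is that the chain transfers of Phase~2 perturb the $(1,0),(0,1)$ components, and once $(1,1)$ is fully drained the small-triple rotations of Phase~1 vanish and can no longer correct them. This is resolved by interleaving the two phases: pause Phase~2 while $(1,1)\neq 0$, re-apply Phase~1 rotations to renormalize $(1,0),(0,1)$, and then continue draining the smaller residual $(1,1)$ amplitude, iterating finitely many times. Conservation of $E$ and $\mathcal E$ pins down $a^*, b^*$ uniquely, so the procedure terminates exactly at $q^*$.
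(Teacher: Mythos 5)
Your Phase 1 ($\mathrm{SO}(4)$ action on the $(1,0),(0,1)$ block generated by the four rotations, using the Lie bracket to fill in the missing coordinate planes) is a nice way to recover the freedom that the paper instead packages into \Cref{c:S4}; the $\mathfrak{so}(4)$ claim does hold, since the four one-parameter rotations plus their brackets span all six coordinate planes, and the orbit theorem gives reachability of any point on the $3$-sphere by finite compositions. The genuine gap is in Phase 2 and the final claim. You correctly identify the principal obstacle yourself (after draining $(1,1)$, the rotations of Phase 1 have zero angular speed), but the proposed interleaving does not close it, and the closing sentence ``Conservation of $E$ and $\mathcal E$ pins down $a^*,b^*$ uniquely, so the procedure terminates exactly at $q^*$'' is false as a proof step. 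Once all modes other than $(1,0),(0,1),(N,N)$ are zero, with $b_{(1,0)}=b_{(0,1)}=a_{(N,N)}=0$, the two conservation laws determine $b_{(N,N)}^2 = (E-\mathcal E)\left(\tfrac{1}{2N^2}-1\right)^{-1}$ and $a_{(1,0)}^2+a_{(0,1)}^2 = \mathcal E - b_{(N,N)}^2$, but they leave the ratio $a_{(1,0)}/a_{(0,1)}$ entirely free. Your chain $(1,1)\to(2,1)\to\dots\to(N,1)\to(N,2)\to\dots\to(N,N)$ uses $(1,0)$ as carrier for the horizontal leg and $(0,1)$ for the vertical leg, so it perturbs the two carrier modes asymmetrically and there is no reason for the final state to lie at $a_{(1,0)}=a_{(0,1)}$. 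Since the procedure lands somewhere on a one-parameter family of states consistent with conservation, but not necessarily at $q^*$, the lemma is not proved.

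The paper avoids exactly this problem by transferring the $(1,1)$ amplitude along the diagonal $(1,1)\to(2,2)\to\dots\to(N,N)$ using the triples $(1,0,+)(i-1,i-1,-)(i,i-1,-)$, $(0,1,+)(i-1,i-1,-)(i-1,i,-)$, $(0,1,+)(i,i-1,-)(i,i,-)$, $(1,0,+)(i-1,i,-)(i,i,-)$, which are manifestly balanced under the swap $(1,0)\leftrightarrow(0,1)$. It keeps track of \emph{symmetric} states ($q_{(i,i',\chi)}=q_{(i',i,\chi)}$) and, at each diagonal step, observes that the quantity $\Delta q(t)=a_{(1,0)}''(t)-a_{(0,1)}''(t)$, regarded as a continuous function of the first interaction time, changes sign over the admissible interval; the intermediate value theorem then produces a time at which $\Delta q=0$, so symmetry persists. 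This IVT balancing mechanism, absent from your argument, is precisely what guarantees $a_{(1,0)}=a_{(0,1)}$ holds when $(1,1)$ is finally depleted, and with it the landing at $q^*$. To salvage your route you would need either to choose a path symmetric under $(1,0)\leftrightarrow(0,1)$ and incorporate a similar IVT step, or to exhibit a mechanism that adjusts $a_{(1,0)}/a_{(0,1)}$ after $(1,1)$ is zero, which the available triples do not provide.
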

\noindent\begin{proof}
	We start the proof by applying \Cref{c:S4} to transform the state $q^{(3)}$ into $q = \Phi_{\tau(1)}(q^{(3)})$ satisfying $q_\jj^{(3)} = q_\jj$ for all $|j|>1$ and $\qa{(0,1)}=\qb{{(0,1)}}=\qb{{(1,0})}=\qa{(1,0)}>0$. Throughout this proof, we refer to states $q$ such that $q_{(i,i',\chi)} = q_{(i',i,\chi)}$ for all $i,i' \in (0,\dots, N)$, $\chi \in \{+,-\}$  as \emph{symmetric}.

We then proceed to transfer the amplitude from $\qa{(1,1)}$ to $\qb{(2,1)}, \qb{(1,2)}$ by transforming $q$ into another symmetric state $q'$ with ${(2,1,-)}, {(1,2,-)} \in \AAc(q')$ and $(1,1,+) \not \in \AAc(q')$. This can be done by letting triples $\iota(2) = {(1,0,-)}(1,1,+){(2,1,-)}\in \II$ and $\iota(3) = {(0,1,-)}(1,1,+){(1,2,-)}\in \II$
interact, and choosing the interaction times $\tau, \tau'(\tau)$ such that $\Phi_{\tau'(\tau)}^{\iota(3)}\circ \Phi_{\tau}^{\iota(2)}(q)_{(1,1 ,+)}=0$. Further, we note that the difference $\qb{{(1,2)}}'-\qb{{(2,1)}}'$
is negative for $\tau=0$, positive for $\tau'(\tau)=0$ and is continuous in $\tau$, so there must exist $\tau^*$ such that $\qb{{(1,2)}}'=\qb{{(2,1)}}'$.
To show that $q'$ is symmetric it only remains to show that $\qb{{(1,0)}}' = \qb{{(0,1)}}'$. This follows from the conservation laws:
{\small
\begin{equ}
 	B_{(1,0)(1,1)} \pc{(\qb{{(1,0)}}')^2-(\qb{{(1,0)}})^2} = B_{(2,1)(1,1)} (\qb{{(2,1)}}')^2 =	B_{(1,2)(1,1)} (\qb{{(1,2)}}')^2= B_{(0,1)(1,1)} \pc{(\qb{{(0,1)}}')^2-(\qb{{(0,1)}})^2}
\end{equ}
}where
\begin{equ}
	B_{jk} \eqdef \frac 1 {|j|^2} - \frac 1 {|k|^2}\,.
\end{equ}

Next, we let the triples $\iota(4) = (1,0,-)(0,1,+)(1,1,-)$ and $\iota(5) = (0,1,-)(1,0,+)(1,1,-)$ interact. By \lref{l:samenorm} there exists an interaction time such that the initial state $q'$ is mapped to  $q''$ with $\qb{{(1,0)}}'' = \qb{{(0,1)}}'' = 0$ and $\qa{{(1,0)}}'' = \qa{{(0,1)}}'' > 0$, so that ${(1,0,-)}, {(0,1,-)}\not \in \AAc(q'')$.

We then proceed to transfer the amplitude from modes ${(1,2,-)}$ and ${(2,1,-)}$ to ${(2,2,-)}$. This is done letting triples $\iota(6) = (1,0,+){(1,2,-)}{(2,2,-)}$ and $\iota(7) = (0,1,+){(2,1,-)}{(2,2,-)}$ interact until the modes ${(2,1,-)},{(1,2,-)}$ are depleted, as proved in \lref{l:midmode}. The symmetry of the final state $q'''$ is  again a consequence of the conservation laws:
{\small
\begin{equ}
 	B_{{(1,0)}{(2,2)}} \pc{(\qa{{(1,0)}}''')^2-(\qa{{(1,0)}}'')^2} = B_{{(2,1)}{(2,2)}} (\qb{{(2,1)}}'')^2 =	B_{{(1,2)}{(2,2)}} (\qb{{(1,2)}}'')^2= B_{{(0,1)}{(2,2)}} \pc{(\qa{{(0,1)}}''')^2-(\qa{{(0,1)}}'')^2}\,.
\end{equ}
}Summarizing, we have reached a symmetric state $q''' = \Phi_{\tau(7)}^{\iota(7)}\circ\dots\circ \Phi_{\tau(2)}^{\iota(2)}(q)$ with \begin{equ}
\AA(q''') = \{(1,0,+), (0,1,+), {(2,2,-)}, {(1,1,-)}, {(N,N,-)}\}\,.
\end{equ}

	The desired result then follows immediately if we can show that we can transfer the amplitude of mode $(i-1,i-1,-)$ to $(i,i,-)$ for $i\in (2,\dots, N)$ while preserving the fact that $\qa{(1,0)}' = \qa{(0,1)}'$. We show this by considering, sequentially, the interaction triples
  \begin{align*}
    &\iota(4i) = (1,0,+)(i-1,i-1,-)(i,i-1,-)\,,\qquad &\iota(4i+1)=(0,1,+)(i-1,i-1,-)(i-1,i,-)\,,\\
    &\iota(4i+2)=(0,1,+)(i,i-1,-)(i,i,-)\,,\qquad &\iota(4i+3)=(1,0,+)(i-1,i,-)(i,i,-)\,.\qquad\;\;\;\;\;
  \end{align*}
    More specifically, we consider the family of endpoints
	\begin{equ}
		q''(t) = \Phi_{\tau_-^{\iota(4i+3)}}^{\iota(4i+3)}\circ \Phi_{\tau_-^{\iota(4i+2)}}^{\iota(4i+2)}\circ \Phi_{\tau_-^{\iota(4i+1)}}^{\iota(4i+1)}\circ\Phi_{t}^{\iota(4i)}(q')\,,
	\end{equ}
	where $\tau_-^{\iota}$ is defined in \lref{l:midmode} (a). By construction, this sequence implies that $\qa{(i-1,i-1)}''= \qa{(i-1,i)}''= \qa{(i,i-1)}''=0$ and $\qa{(i,i)}''\neq 0$. It remains to prove that $\qa{(1,0)}'' = \qa{(0,1)}''$.
  As a composition of continuous functions, $q''(t)$ is continuous in $t$ and therefore so is $\Delta q(t) = \qa{(1,0)}''(t) - \qa{(0,1)}''(t)$. Further, since by symmetry $\qa{(1,0)}''(0) = \qa{(0,1)}''(\tau_-^{\iota(4i)})$, we must have $\mathrm{sign}( \Delta q(0)) = -\mathrm{sign}(\Delta q(\tau_-^{\iota_1})) $.
  This implies the existence of $\tau(4i) \in [0,\tau_-^{\iota_1}]$ with $\Delta q(0)=0$, concluding the proof.
\end{proof}

\end{document}